\documentclass[12pt,twoside,reqno]{amsart}
\pdfoutput=1

\usepackage[main=english]{babel}
\usepackage[autostyle,english=american]{csquotes}
\MakeOuterQuote{"}

\usepackage{amssymb,mathtools}
\usepackage{bm}
\usepackage{xcolor}
\usepackage{dirtytalk}

\usepackage[left=2cm,right=2cm]{geometry}

\usepackage{amsthm,thmtools,amsmath}

\usepackage[style=numeric-comp,url=false,isbn=false,maxnames=6, backend=biber]{biblatex}
\AtEveryBibitem{\clearlist{language}}

\usepackage[final]{graphicx}
\graphicspath{{/img}}
\usepackage{subcaption}
\usepackage{tikz}
\usetikzlibrary{babel,cd,shapes,3d,arrows,decorations.markings}
\tikzcdset{arrow style=math font}
\usetikzlibrary{decorations.pathmorphing,shapes}
\usetikzlibrary{decorations.pathreplacing}

\tikzset{
    cross/.style=
    {cross out, draw, solid, thin, 
    minimum size=1.4*(#1-\pgflinewidth), 
    inner sep=0pt, outer sep=0pt, rotate=20},
    cross/.default={3}
} 

\tikzset{
    circlecross/.style= 
    {circle, minimum width=8pt, draw, inner sep=0pt, path picture={\draw (path picture bounding box.south east) -- (path picture bounding box.north west) (path picture bounding box.south west) -- (path picture bounding box.north east);}},
    circlecross/.default={3}
} 

\tikzset{
    mid arrow/.style=
    {postaction={decorate,decoration={markings,mark=at position .5 with {\arrow[xshift=2pt,#1]{stealth}}}}},
} 

\tikzset{
    crosshor/.style=
    {cross out, draw, solid, thin, 
    minimum size=1.4*(#1-\pgflinewidth), 
    inner sep=0pt, outer sep=0pt, rotate=0},
    crosshor/.default={3}
}

\newcommand{\nodalconic}[1]{
    \begin{scope}[shift={#1}]
            \draw[dashed] 
                (-0.5,-0.7) arc [x radius=0.5,y radius=0.11,start angle=180,end angle=0];
            \draw 
                (-0.5,0.7) arc [x radius=0.5,y radius=0.1,start angle=180,end angle=0]
                (-0.5,-0.7) arc [x radius=0.5,y radius=0.1,start angle=180,end angle=360]
                (-0.5,0.7) arc [x radius=0.5,y radius=0.1,start angle=180,end angle=360];
            \fill[lightgray, opacity=0.25] 
                (-0.5,0.7) arc[x radius=0.5,y radius=0.1,start angle=180,end angle=0] -- (0,0) -- (-0.5,0.7);
            \fill[lightgray, opacity=0.25] 
                (-0.5,0.7) arc[x radius=0.5,y radius=0.1,start angle=180,end angle=360] -- (0,0) -- (-0.5,0.7);
            \fill[lightgray, opacity=0.25] 
                (-0.5,-0.7) arc[x radius=0.5,y radius=0.1,start angle=180,end angle=0] -- (0,0) -- (-0.5,-0.7);
            \fill[lightgray, opacity=0.25] 
                (-0.5,-0.7) arc[x radius=0.5,y radius=0.1,start angle=180,end angle=360] -- (0,0) -- (-0.5,-0.7);
            \draw 
                (-0.5,-0.7) -- (0.5,0.7)
                (-0.5,0.7) -- (0.5,-0.7);
    \end{scope}
}

\newcommand{\conic}[1]{
    \begin{scope}[shift={#1}]
            \draw[dashed] 
                (-0.5,-0.7) arc [x radius=0.5,y radius=0.1,start angle=180,end angle=0];
            \draw
                (-0.5,0.7) arc [x radius=0.5,y radius=0.1,start angle=180,end angle=0]
                (-0.5,0.7) arc [x radius=0.5,y radius=0.1,start angle=180,end angle=360]
                (-0.5,-0.7) arc [x radius=0.5,y radius=0.1,start angle=180,end angle=360];
            \fill[lightgray, opacity=0.25] 
                (-0.5,-0.7) arc [x radius=0.5,y radius=0.1,start angle=180,end angle=360] -- (0.5,-0.7) to[out=125,in=270] (0.25,0) to[out=90, in=-125] (0.5,0.7)  -- cycle;
            \fill[lightgray, opacity=0.25] 
                (-0.5,-0.7) to[out=55,in=270] (-0.25,0) to[out=90,in=305] (-0.5,0.7) arc[x radius=0.5,y radius=0.1,start angle=180,end angle=360] -- cycle;
            \fill[lightgray, opacity=0.25] 
                (-0.5,-0.7) to[out=55,in=270] (-0.25,0) to[out=90,in=305] (-0.5,0.7) arc[x radius=0.5,y radius=0.1,start angle=180,end angle=0] -- cycle;
            \fill[lightgray, opacity=0.25] 
                (-0.5,-0.7) arc [x radius=0.5,y radius=0.1,start angle=180,end angle=0] -- (0.5,-0.7) to[out=125,in=270] (0.25,0) to[out=90, in=-125] (0.5,0.7)  -- cycle;
            \draw 
                (0.5,-0.7) to[out=125,in=270] (0.25,0) to[out=90, in=-125] (0.5,0.7)
                (-0.5,-0.7) to[out=55,in=270] (-0.25,0) to[out=90,in=305] (-0.5,0.7);
    \end{scope}
}

\newcommand{\reddiscfromcentral}[2]{
    \pgfmathsetmacro{\ang}{atan2(#2,#1)}
    \pgfmathsetmacro{\half}{0.5*sqrt((#1)*(#1)+(#2)*(#2))}
    \begin{scope}[shift={(4,3)}, rotate=\ang]
        \fill[red, opacity=0.3]
            (0,0.05) to[out=8,in=180] (\half,0.1) arc[x radius=\half,y radius=0.1, start angle=90, end angle=-90] to[out=180,in=-8] (0,-0.05) -- cycle;
        \draw[red]
            (\half,0.1) arc[x radius=0.05,y radius=0.1, start angle=90,end angle=-90];
        \draw[red,densely dashed]
            (\half,-0.1) arc[x radius=0.05,y radius=0.1, start angle=270,end angle=90];
        \draw[red]
            (0,0.05) to[out=8,in=180] (\half,0.1) arc[x radius=\half,y radius=0.1, start angle=90,end angle=-90] (\half,-0.1) to[out=180,in=-8] (0,-0.05);
    \end{scope}
}

\usepackage[shortlabels]{enumitem}

\usepackage[final]{hyperref}
\usepackage[noabbrev,capitalize]{cleveref}
\creflabelformat{equation}{#2\textup{#1}#3}
\hypersetup{
  pdfcreator = {},
  pdfproducer = {}
}

\usepackage{wrapfig}

\declaretheorem[numberwithin=section]{theorem}
\declaretheorem[sibling=theorem]{lemma, proposition, corollary}
\declaretheorem[sibling=theorem,style=definition]{definition}
\declaretheorem[sibling=theorem,style=remark]{remark, example}

\newtheorem*{question}{Question}

\newtheorem{thmINTRO}{Theorem}

\newtheorem{appINTRO}{Application}

\newtheorem*{mnTRHM}{Main Theorem}

\newcommand\pmat[1]{\begin{pmatrix}#1\end{pmatrix}}

\def\C{\mathbb{C}}

\def\F{\mathbb{F}}

\def\Q{\mathbb{Q}}
\def\R{\mathbb{R}}

\def\Z{\mathbb{Z}}

\def\cb{{\mathcal B}}
\def\cc{{\mathcal C}}
\def\cd{{\mathcal D}}

\def\cj{{\mathcal J}}

\def\co{{\mathcal O}}

\def\ct{{\mathcal T}}

\newcommand{\mC}{\mathcal{C}}

\newcommand{\mS}{\mathcal{S}}
\newcommand{\fx}{\operatorname{Fix}}
\newcommand{\pr}{\operatorname{Pr}}
\newcommand{\symp}{\operatorname{Symp}}
\newcommand{\aut}{\operatorname{Aut}}

\newcommand{\vis}{\mathrm{vis}}
\newcommand{\std}{\mathrm{st}}
\newcommand{\orb}{\mathrm{orb}}
\newcommand{\str}{\mathrm{star}}
\newcommand{\TSR}{T^*S^1\times \mathbb{R}^2}
\newcommand{\ATF}{\mathfrak{A}}
\newcommand{\ha}{\widehat}
\newcommand{\til}{\widetilde}
\newcommand{\Lvis}{L_{p,q}^{\normalfont{\text{vis}}}}
\newcommand{\Lstd}{L_{p,q}^{\normalfont{\text{st}}}}

\begin{document}

\author[]{Nikolas Adaloglou}
\address{Nikolas Adaloglou, 
    imj-prg, 
    Sorbonne Université et Université Paris Cité, CNRS}
\email{adaloglou@imj-prg.fr} 

\author[]{Gerard Bargalló i Gómez}
\address{Gerard Bargalló i Gómez,
    Institut für Mathematik
Humboldt-Universität zu Berlin}
\email{gerard.bargallo.i.gomez@hu-berlin.de} 

\author[]{Johannes Hauber}
\address{Johannes Hauber,
    Institut de Math\'ematiques,
    Universit\'e de Neuch\^atel}
\email{johannes.hauber@unine.ch} 

\date{\today}

\title{The Nearby Lagrangian Conjecture for pinwheels}

\begin{abstract}
    The Lagrangian skeleton of the rational homology ball $B_{p,q}$, for $0<q<p$ coprime~integers, is an immersed but not embedded Lagrangian, called a $(p,q)$-pinwheel.
    We show that any two embeddings of Lagrangian $(p,q)$-pinwheels in $B_{p,q}$ are related by a compactly supported Hamiltonian isotopy, establishing Arnold's \textit{nearby Lagrangian conjecture}\/ for this wide class of singular Lagrangians.
    Our proof has two largely independent parts: the first uses \textit{neck-stretching}\/ and the \textit{symplectic rational blow-up}\/ to understand embeddings of pinwheels up to symplectomorphism;
    the second computes that $\symp_c(B_{p,q})$ is generated by a twist about the pinwheel, which we call the \textit{pintwist} $\tau_{p,q}$.
    We provide three applications of our methods: Gromov non-squeezing for pin-balls; a new proof of the local Lagrangian unknotting theorem of Eliashberg--Polterovich; and that the only Lagrangian $(n,m)$-pinwheel in $B_{p,q}$ is of type $(p,q)$. 
\end{abstract}

\maketitle

\section{Introduction}
\subsection{Short summary}

The goal of this paper is to establish the nearby Lagrangian conjecture (NLC), for a family of singular Lagrangians called \textit{Lagrangian $(p,q)$-pinwheels}, for any coprime integers $0<q<p$.
Our point of departure is that a Lagrangian $(p,q)$-pinwheel naturally appears as the Lagrangian skeleton, denoted by $L^{\std}_{p,q}$, of the Liouville manifold $B_{p,q}$.
By a fundamental result of Khodorovskiy, who initiated the study of Lagrangian pinwheels in \cite{Kho13}, every Lagrangian $(p,q)$-pinwheel in a symplectic $4$-manifold has a Weinstein-type neighbourhood which is symplectomorphic to a neighbourhood of the skeleton of $B_{p,q}$.
Therefore, one can view $B_{p,q}$ as the "cotangent bundle" of $L^{\std}_{p,q}$.
With this perspective, we will show that:

\begin{mnTRHM}
    Any Lagrangian $(p,q)$-pinwheel in $B_{p,q}$ is Hamiltonian isotopic to $L^{\std}_{p,q}$.
\end{mnTRHM}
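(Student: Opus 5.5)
The plan is to split the statement into two independent parts, as the abstract suggests: (A) any Lagrangian $(p,q)$-pinwheel $L\subset B_{p,q}$ is the image of $L^{\std}_{p,q}$ under some compactly supported symplectomorphism of $B_{p,q}$; and (B) $\symp_c(B_{p,q})$ is generated, up to compactly supported Hamiltonian isotopy, by the pintwist $\tau_{p,q}$, which preserves $L^{\std}_{p,q}$ setwise. Granting these, write $L=\phi(L^{\std}_{p,q})$ with $\phi\in\symp_c(B_{p,q})$. By (B), $\phi$ is isotopic through $\symp_c$ to some power $\tau_{p,q}^k$. The nontrivial element this isotopy produces lies in $H^1_c(B_{p,q};\R)$, which vanishes because $B_{p,q}$ is a rational homology ball; hence the isotopy can be taken Hamiltonian. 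Then $L$ is Hamiltonian isotopic to $\tau_{p,q}^k(L^{\std}_{p,q})=L^{\std}_{p,q}$.

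For (A), I would compactify. The rational blow-down picture identifies $B_{p,q}$ with the complement of a symplectic sphere $\Sigma$ (of degree $p$-type data, e.g.\ a conic-type curve) in a suitable weighted projective plane, or after resolution in a rational surface $X$, with $\partial B_{p,q}$ the lens space $L(p^2,pq-1)$ as contact boundary. Given $L\subset B_{p,q}$, I would apply Khodorovskiy's neighbourhood theorem to get a standard neighbourhood $U_L$ and perform the symplectic rational blow-up along $L$. This replaces $U_L$ by a neighbourhood of a linear plumbing chain $C_1\cup\dots\cup C_r$ of spheres with self-intersections given by the continued fraction of $p^2/(pq-1)$, producing a closed symplectic rational surface $\widetilde X$.

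Next I would neck-stretch along $\partial U_L$, or use $J$-holomorphic curves in $\widetilde X$ directly, to show that the resulting configuration $\Sigma\cup C_1\cup\dots\cup C_r$ is symplectically isotopic to the standard configuration obtained from $L^{\std}_{p,q}$. The tool is McDuff's classification of rational surfaces together with the foliation by $J$-spheres in the class of a line, or fibre classes after blowing up. Positivity of intersections with $\Sigma$ controls where curves can go. Using the theorem that the symplectomorphism group of the complement is connected, or equivalently isotopy uniqueness of such plumbing configurations (Lalonde–McDuff / Evans style arguments), the isotopy of configurations extends to an ambient compactly supported symplectomorphism. Blowing back down then yields $\phi$ with $\phi(L^{\std}_{p,q})=L$.

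For (B), I would again compactify and reduce $\symp_c(B_{p,q})$ to the symplectomorphism group of $\widetilde X$ fixing the chain $\Sigma\cup C_i$. That group is computable by Evans' method: use $J$-holomorphic fibrations to fibre it over spaces of configurations, which are contractible or have explicit homotopy type. The result is $\pi_0\symp_c(B_{p,q})\cong\Z$, generated by a boundary twist that is identified with $\tau_{p,q}$; for $p=2,q=1$ this recovers Seidel's Dehn twist on $T^*S^2$.

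The main obstacle I expect is the holomorphic-curve step in (A): proving that the blown-up chain is standard. One must rule out degenerate bubbling of the curves $C_i$ during the isotopy, especially the very negative spheres with self-intersection below $-2$, which are not automatically $J$-holomorphic for generic $J$. I would first try choosing $J$ adapted to the whole chain, making the chain $J$-holomorphic, and then running an inflation/isotopy argument on the complementary spheres, where positivity of intersections with $\Sigma$ and the $C_i$ pins down homology classes.
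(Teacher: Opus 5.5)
Your top-level split and the final flux step (using $H^1_c(B_{p,q};\R)\cong H_3(B_{p,q};\R)=0$) match the paper. The genuine gap is in (B): as you set it up, it does not compute the group you need. Symplectomorphisms of $\widetilde X$ that fix a neighbourhood of $\Sigma\cup C_1\cup\dots\cup C_r$ correspond, after blowing down, to compactly supported symplectomorphisms of $B_{p,q}$ that are the identity near $L^{\std}_{p,q}$. That is a stabiliser of the pinwheel, not $\symp_c(B_{p,q})$. Knowing its $\pi_0$ tells you nothing about whether it surjects onto $\pi_0\symp_c(B_{p,q})$. Given (A), the sequence
\[
\pi_0(\mathrm{Stab})\to\pi_0\symp_c(B_{p,q})\to\pi_0(\text{pinwheels})\to *
\]
shows that this surjectivity is equivalent to the theorem itself, so the argument is circular.

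What you need is a divisor that every compactly supported symplectomorphism can be isotoped to preserve, and whose complement has computable $\symp_c$. The paper uses the central cylinder $K_{p,q}$:
\begin{itemize}
\item It compactifies to a fibre $F$ of the regulation of $X_{p,q}$. For every $J\in\mathcal{J}_\tau(\mathcal{D})$ there is a unique embedded $J$-sphere in class $F$ through a given point of $D_0\setminus D_1$. Hence $\mS(K_{p,q})$ is weakly contractible and $\symp_c(B_{p,q})\simeq\pr(K_{p,q})$.
\item $B_{p,q}\setminus K_{p,q}$ is a star-shaped domain in $T^*Wh$. Its $\symp_c$ is weakly contractible by a second comb diagram for a conic in $B^4$, using Gromov's theorem.
\item Lifting to $A_{p-1}$ shows that the restriction map hits exactly the multiples of $\tau^p$, realised by $\tau_{p,q}$, and that the normal derivative hits only the identity component.
\end{itemize}
A generic appeal to ``Evans' method'' supplies none of this. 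The paper even notes that the equivariant Seidel--Evans route through $A_{p-1}$ breaks down for $p>2$.

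Part (A) is in the right spirit, but the decisive step is left open, and ``isotopy uniqueness of plumbing configurations'' is not something you can cite for these chains. The paper never isotopes the Wahl chain. It neck-stretches the $F$-ruling through a point of $D_0$ and applies adjunction, using:
\begin{itemize}
\item discrepancies in $(-1,0]$;
\item the negative entries of $M_{\mathcal{C}}^{-1}$;
\item $a_0=(pq-1)/p^2$ and $a_n=1/p^2$.
\end{itemize}
This forces the surviving component to be a $0$-sphere meeting only $D_0$ and $C_1$, once each. A $b_2$ count plus accompanying-number arithmetic then shows that the unique broken ruling is $(\mathcal{C}\setminus C_1)\cup E\cup(\mathcal{D}\setminus D_0)$. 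So the complement of the full chain $\mathcal{C}\cup E\cup\mathcal{D}$ is minimal, and a Gromov--McDuff-type recognition theorem for star-shaped subsets of $T^*S^1\times\R^2$ produces the symplectomorphism. This numerics is exactly what handles your worry about bubbling into very negative spheres.

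Two smaller slips. First, you may not assume $\tau_{p,q}$ preserves $L^{\std}_{p,q}$ setwise. The paper only shows that $\tau_{p,q}(L^{\std}_{p,q})$ is Lagrangian, hence Hamiltonian, isotopic to $L^{\std}_{p,q}$, and that suffices. Second, $B_{2,1}\cong T^*\R P^2$, not $T^*S^2$.
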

\vspace{4pt}
The technical core of the paper consists of two largely independent parts:
\begin{enumerate}
    \item
        We determine how the \textit{symplectic rational blow-up} along a $(p,q)$-pinwheel $L_{p,q}\subset B_{p,q}$ affects the symplectic topology of $B_{p,q}$.
        In particular, we show that there is a compactly supported symplectomorphism $\psi:B_{p,q}\rightarrow B_{p,q}$ which maps $L_{p,q}$ to $L^{\std}_{p,q}$.
        A key point is using the almost toric fibration on $B_{p,q}$ to construct a convenient compactification $X_{p,q}$ which makes the analysis of pseudoholomorphic curves manageable.
    
    \item
        We compute the weak homotopy type of $\symp_c(B_{p,q})$.
        By understanding how $\symp_c(B_{p,q})$ acts on a distinguished symplectic cylinder $K_{p,q}$, we are able to show that $\symp_c(B_{p,q})$ is weakly homotopy equivalent (w.h.e.) to $\Z$, generated by a very natural compactly supported symplectomorphism $\tau_{p,q}$, which we call the \textit{pintwist}.
\end{enumerate}

In Section \ref{section: background}, we collect the relevant background material concerning the symplectic and algebraic geometry of $B_{p,q}$ and pinwheels.
Section \ref{sec: Uniq up to symp} concerns the first technical part of the paper and Section \ref{sec: symp to ham} contains the second technical part.
In \cref{sec:application}, we provide three applications of the results of the previous sections
to related quantitative and Lagrangian embedding problems of the rational homology balls $B_{p,q}$.
Finally, in the Appendix we clarify an obtuse point in the definition of pinwheels and show that all pinwheels are locally visible, in the sense that the Khodorovskiy neighbourhood admits an almost toric fibration compatible with the pinwheel.

\vspace{4pt}
In the rest of the introduction we will first provide more context around the nearby Lagrangian conjecture and Lagrangian pinwheels.
Then we will give a thorough discussion of our main results and the techniques we will develop to obtain them.

\subsection{Context and motivation}
\subsubsection{The nearby Lagrangian conjecture}

The \textit{nearby Lagrangian conjecture}, attributed to V. I. Arnol'd, asserts that \textit{given a smooth closed manifold $Q$, any exact Lagrangian in $T^*Q$ is Hamiltonian isotopic to the zero section}.
It is one of the foundational open problems in symplectic geometry, as it implies, among other things, that the smooth topology of $Q$ and the symplectic topology of $T^*Q$ are strongly related.

The NLC can be broken into two different subconjectures:
\begin{enumerate}[label={\small\textbf{NLC}$_\arabic*$}, widest={\small\textbf{NLC}$_2$}, leftmargin=4em]
   \item
        Any exact Lagrangian embedding $Q\hookrightarrow T^*Q$ is Hamiltonian isotopic (possibly after reparametrization) to the zero section.
    
    \item
        Any exact Lagrangian $L\subset T^*Q$ is diffeomorphic to $Q$.
\end{enumerate}
The two subconjectures have a wildly different character depending on whether $T^*Q$ has dimension $4$ or $\geq 6$.
When $\dim T^*Q=4$, NLC$_2$ becomes manageable by the classification of compact surfaces and therefore research has focused on NLC$_1$, where the effectiveness of the theory of pseudoholomorphic curves has borne results.
Using positivity of intersections of pseudoholomorphic curves, which is specific to dimension $4$, the full NLC has been established for some simple smooth $2$-dimensional manifolds, namely $T^*S^2$ \cite{Hi04,LiWu12}, $T^*T^2$ \cite{DRGI16}, $T^*\mathbb{R}P^2$ \cite{HPW,Ad25} and in the non-compact case $T^*\mathbb{R}^2$ \cite{EliPol96}.
In addition, a version of the NLC has also been established in \cite{Dim17} for the Lagrangian \textit{Whitney sphere}, i.e.\ a Lagrangian sphere with a single positive self-intersection.

\vspace{4pt}
Considerable effort was needed to establish each of these results, and there does not seem to be a unified approach.
However, all of these ultimately leverage the fact that there exists some compactification of the respective cotangent bundle to a simple rational symplectic $4$-manifold, namely $\mathbb{C}P^2$ or $S^2\times S^2$, where rational pseudoholomorphic curves are very well understood.
The reader is pointed to the comprehensive survey \cite{PolSch24} for more information and anecdotes around these low-dimensional results and their history.

\vspace{4pt}
The situation is starkly different in higher dimensions.
There, already NLC$_2$ becomes extremely intricate; for example, it is not at all clear to what extent the smooth structure of $Q$ dictates the symplectic structure of $T^*Q$.
The most general step towards proving NLC$_2$ culminated in \cite{AK18}, building upon \cite{FuSeSm08, Ab12, Kra13} amongst others, where the authors show that any exact Lagrangian is simple homotopy equivalent to the zero section.
See also \cite{AbCoGuKr25, AbAGCoKr} for more recent results.
However, in high dimensions there has not been any progress towards NLC$_1$, because of the lack of control over the intersections of pseudoholomorphic curves.

\subsubsection{Pinwheels}
A $p$-pinwheel is the topological space obtained by attaching a $2$-cell to a $1$-cell via the $p$-to-$1$ covering map.
A Lagrangian $(p,q)$-pinwheel is a $p$-pinwheel whose $2$-cell is Lagrangian.
In addition, we require that the $2$-cell meets the $1$-cell in a specified way, which depends on $q$.
Since the definitions are somewhat technical, we refer to the Background Section \ref{section: pinwheels} for the details.
We note that $(1,1)$-pinwheels are just Lagrangian discs and $(2,1)$-pinwheels are Lagrangian projective planes.

\vspace{4pt}
Pinwheels were first defined by Khodorovskiy in \cite{Kho13}, in order to define the \textit{symplectic rational blow-up}, which is an operation that replaces a standard neighbourhood of a pinwheel with a chain of symplectic spheres, called the \textit{Wahl chain} $\mC_{p,q}$, where the self-intersection numbers of the exceptional spheres in the Wahl chain depend on the Hirzebruch--Jung continued fraction associated to $\frac{p^2}{pq-1}$.
The symplectic rational blow-up is the opposite operation of the \textit{symplectic rational blow-down}, defined by Symington in \cite{Sym98:RatBlo}.

\vspace{4pt}
Lagrangian pinwheels lie at the interface between symplectic geometry and algebraic geometry, specifically surface singularities.
In particular, one can view the $(p,q)$-pinwheel as the \textit{vanishing cycle} of the $\frac{1}{p^2}(1,pq-1)$ cyclic quotient singularity.
In this setting, the rational blow-up operation can be viewed as exchanging the smoothing of the singularity with the chain of exceptional spheres $\mC_{p,q}$ that appear in the minimal resolution of the $\frac{1}{p^2}(1,pq-1)$ singularity.\footnote{In addition, the symplectomorphism $\tau_{p,q}$ can be viewed as the symplectic monodromy associated to the smoothing of the singularity.}
This point of view was first adopted by Evans and Smith in their influential papers \cite{EvSm18,EvSm20} where they show that Lagrangian pinwheels are the right objects whose symplectic rigidity reflects the algebraic geometry of quotient singularities and their smoothings.

\vspace{4pt}
The rational homology ball $B_{p,q}$ is the smoothing, or Milnor fibre, of the $\frac{1}{p^2}(1,pq-1)$ cyclic quotient singularity.
As such, it admits a universal $p$-fold covering by the Milnor fibre of the $A_{p-1}$-singularity, from which one can infer most basic symplectic properties of $B_{p,q}$; see \cref{sec: intro bpq}.
By Khodorovskiy's neighbourhood theorem \cite[Lemma 3.3/3.4]{Kho13}, every $(p,q)$-pinwheel has a neighbourhood symplectomorphic to a neighbourhood of $L^{\std}_{p,q}$.
Hence, $L^{\std}_{p,q}$ can be viewed as the zero section of $B_{p,q}$.
Our main theorem is: 
\begin{thmINTRO}[\normalfont{NLC$_1$ for pinwheels; Theorems \ref{th:symp} and \ref{thrm: symp B_pq}}]
\label{th: nearby lagrangian for pinwheels}
    Let $L$ be a $(p,q)$-pinwheel in $B_{p,q}$. 
    Then, there exists a compactly supported Hamiltonian isotopy $\psi$ such that $\psi(L)=L^{\normalfont{\std}}_{p,q}$.
\end{thmINTRO}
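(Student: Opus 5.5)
The argument splits into the two steps announced in the summary: first move $L$ to $L^{\std}_{p,q}$ by a compactly supported symplectomorphism, then upgrade that symplectomorphism to a compactly supported Hamiltonian isotopy.

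\textbf{Step 1: uniqueness up to compactly supported symplectomorphism.} Let $L\subset B_{p,q}$ be a $(p,q)$-pinwheel.
\begin{itemize}
\item[(a)] Compactify $B_{p,q}$ to a closed rational symplectic $4$-manifold $X_{p,q}$, using the almost toric fibration on $B_{p,q}$. We take $X_{p,q}$ so that the complement $X_{p,q}\setminus B_{p,q}$ is a simple configuration of symplectic spheres $D$ at infinity.
\item[(b)] By Khodorovskiy's neighbourhood theorem, $L$ has a standard neighbourhood. Perform the symplectic rational blow-up along $L$. This replaces a neighbourhood of $L$ by the Wahl chain $\mC_{p,q}$ and yields a manifold $\widetilde X$.
\item[(c)] Neck-stretching along the boundary of the Khodorovskiy neighbourhood identifies $\widetilde X$ with the blow-up of $X_{p,q}$ along $L^{\std}_{p,q}$. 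Concretely, we show $\widetilde X$ is rational with the same cohomology class data. We then use $J$-holomorphic spheres, such as exceptional classes and fibre classes of a ruling, to show that the configuration $D\cup\mC_{p,q}$ is unique up to symplectic isotopy. Positivity of intersections keeps this manageable because $X_{p,q}$ was chosen to be simple.
\item[(d)] Conclude that there is a symplectomorphism of $\widetilde X$ taking one configuration $D\cup \mC_{p,q}$ to the other and fixing $D$ near infinity. Rationally blowing down then gives $\psi\in\symp_c(B_{p,q})$ with $\psi(L)=L^{\std}_{p,q}$.
\end{itemize}
An isotopy-extension argument, together with the fact that the configuration space of $D$ is connected, lets us assume $\psi$ is the identity near the boundary.

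\textbf{Step 2: from symplectic to Hamiltonian.} We show $\symp_c(B_{p,q})$ is weakly homotopy equivalent to $\Z$, generated by the pintwist $\tau_{p,q}$.
\begin{itemize}
\item[(a)] Consider the action of $\symp_c(B_{p,q})$ on the distinguished symplectic cylinder $K_{p,q}$.
\item[(b)] Show that the space of cylinders isotopic to $K_{p,q}$, with fixed boundary behaviour, is contractible up to a $\Z$ ambiguity that records twisting. The contractibility follows from the uniqueness of $J$-holomorphic curves in the compactification $X_{p,q}$.
\item[(c)] Show that the stabiliser of $K_{p,q}$ reduces to compactly supported symplectomorphisms of the complement of $K_{p,q}$. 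This complement should be a standard domain, such as a ball or $T^*S^1\times\R^2$-like region, where the group is contractible by Gromov-type results.
\end{itemize}

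\textbf{Conclusion.} Given Step 2, $\psi$ is isotopic through compactly supported symplectomorphisms to some $\tau_{p,q}^k$. The pintwist preserves $L^{\std}_{p,q}$, since it is a twist about the pinwheel, like a Dehn twist preserving its sphere. Hence $\tau_{p,q}^{-k}\circ\psi$ is isotopic to the identity in $\symp_c(B_{p,q})$ and still maps $L$ to $L^{\std}_{p,q}$. Since $H^1_c(B_{p,q};\R)=0$ ($B_{p,q}$ is a rational homology ball), any compactly supported symplectic isotopy is Hamiltonian, which gives the theorem.

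\textbf{Main obstacle.} I expect the hardest part to be Step 1(c): controlling the neck-stretched curves. We must rule out breaking into buildings with nontrivial components in the blow-up region. We must also show that the Wahl chain spheres and $D$ are realised by embedded $J$-curves for generic stretched $J$. My first attempt would use homological constraints from the continued fraction of $p^2/(pq-1)$ together with adjunction.
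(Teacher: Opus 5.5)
Your Step 1 is in line with the paper, but the paper does not identify $\widetilde X$ abstractly and then isotope configurations. It shows three things. First, the regulation by the compactified central cylinder persists after the rational blow-up: the limit component meets only $D_0$ and $C_1$, each once. Second, the unique broken ruling is $(\mC_{p,q}\setminus C_1)\cup(\cd_{p,q}\setminus D_0)\cup E$, with $E$ meeting both tails at their terminal spheres. Third, it obtains the symplectomorphism from a Gromov--McDuff-type recognition theorem for star-shaped domains in $\TSR$, applied to the minimal complement of the chain $\mC_{p,q}\cup E\cup\cd_{p,q}$. This avoids the uniqueness-of-symplectic-forms and configuration-isotopy results that your route would have to invoke.

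The genuine gap is in Step 2: you put the $\Z$ in the wrong place and skip a necessary step.
\begin{itemize}
\item The space $\mS(K_{p,q})$ of unparametrised cylinders is weakly contractible, with no $\Z$ ambiguity. This is exactly what uniqueness of the $F$-curve through a point of $D_0$ gives.
\item The setwise stabiliser $\pr(K_{p,q})$ cannot reduce to a contractible group. It contains $\tau_{p,q}$, which has infinite order in $\pi_0(\symp_c(B_{p,q}))$.
\item The twisting is detected inside the stabiliser by the restriction $\rho\colon\pr(K_{p,q})\to\symp_c(K_{p,q})\simeq\Z$. Its image is exactly the powers of $\tau^p$: a lift to $A_{p-1}$ acts on $K_{p-1}$ as some $\tau^k$, hence downstairs as $\tau^{pk}$, and $\tau_{p,q}$ realises $k=1$.
\item Whether you stabilise setwise or pointwise, only $\ker D$ is equivalent to $\symp_c(B_{p,q}\setminus K_{p,q})$, where $D\colon\fx(K_{p,q})\to\aut(\nu K_{p,q})\simeq\Z$ is the normal derivative. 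So you must show the image of $D$ is the identity component, or normal twisting contributes extra components.
\end{itemize}
The last point is not automatic, because $\aut(\nu K)$ is disconnected for a cylinder. The paper proves it on $A_{p-1}$. The Liouville flow onto the chain of matching spheres reduces to a Hamiltonian element of $\fx(K_{p-1})$. The long exact sequence for $\mS(K_{p-1})$ then exhibits that element as a monodromy around a loop of cylinders, so it preserves the normal direction.

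Your description of the complement is also wrong. $B_{p,q}\setminus K_{p,q}$ is not a ball or a $\TSR$-like region. It is a star-shaped domain in $T^*Wh$, whose skeleton is an immersed Lagrangian Whitney sphere, so no off-the-shelf Gromov-type result applies. Weak contractibility of $\symp_c(T^*Wh)$ is itself proved in the paper, via a second comb diagram for $B^4$ minus a smooth conic $C$. That argument uses:
\begin{itemize}
\item $\symp_c(B^4)\simeq pt$;
\item Dimitroglou Rizell's fibration theorem, to get $\mS(C)\simeq\C^*$;
\item the identification of the monodromy around the nodal conic with the Dehn twist generating $\symp_c(C)$.
\end{itemize}

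A smaller point: you claim $\tau_{p,q}$ preserves $L^{\std}_{p,q}$, which you have not justified. What is available, and suffices, is that $\tau_{p,q}(L^{\std}_{p,q})$ is Lagrangian isotopic to $L^{\std}_{p,q}$, because the central twist fixes the vanishing path. It is then Hamiltonian isotopic, since the relevant first cohomology vanishes.
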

By now, there is an extensive list of results studying qualitative and quantitative embeddings of the rational homology balls $B_{p,q}$ and pinwheels, see \cite{Kho13Bounds, EvSm18,EvSm20, ShSmi20,BrSch24,Ad24,AdHa25,ABEHS25,Bu25}.
These results show that Lagrangian pinwheels and their neighbourhoods naturally extend several classical theorems of symplectic geometry, e.g.\ Gromov's non-squeezing \cite{AdHa25}, the staircases of McDuff--Schlenk \cite{ABEHS25}, and Biran's Lagrangian barrier phenomena \cite{BrSch24}.

\subsection{Main results and outline of the paper}

In \cref{section: background} we set up notation and collect the relevant facts we will use in the rest of the paper.
To keep the paper at a reasonable length we point the reader to precise references for the various facts we use.
In particular, we will assume working familiarity with notions of almost toric fibrations and toric diagrams, at the level of Evans' invaluable book \cite{Ev23:Book}.

Theorem \ref{th: nearby lagrangian for pinwheels} is the combination of two independent results which form the technical body of the paper.
To show the nearby Lagrangian conjecture for a pinwheel $L \subset B_{p,q}$ we first show it to be true \textit{up to symplectomorphism}:
\begin{thmINTRO}\label{thrm_intro: B}
    Let $L$ be a Lagrangian $(p,q)$-pinwheel in $B_{p,q}$.
    Then, there exists $\psi\in \symp_c(B_{p,q})$ such that $\psi(L)=L^{\normalfont{\std}}_{p,q}$.
\end{thmINTRO}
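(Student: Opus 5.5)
The plan is to prove Theorem \ref{thrm_intro: B} by compactifying $B_{p,q}$, performing the symplectic rational blow-up along $L$, and comparing the result with the blow-up along $L^{\std}_{p,q}$.

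First, use the almost toric fibration on $B_{p,q}$ to build a closed compactification $X_{p,q}\supset B_{p,q}$. Its complement $D=X_{p,q}\setminus B_{p,q}$ should be a chain of symplectic spheres (a "divisor at infinity") whose self-intersections are read off from the toric diagram. We may assume $L$ lies in a large compact Liouville subdomain. Then, by Khodorovskiy's neighbourhood theorem, $L$ has a standard neighbourhood modelled on one of $L^{\std}_{p,q}$. Rationally blowing up $X_{p,q}$ along $L$ gives a symplectic manifold $\widetilde X_L$ containing a Wahl chain $\mC_{p,q}$ of spheres disjoint from $D$. Doing the same for $L^{\std}_{p,q}$ gives $\widetilde X_{\std}$, which should be an explicit rational (likely toric) surface in which $D\cup\mC_{p,q}$ forms a cycle or chain of toric divisors. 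Since $\widetilde X_L$ is rational (it is obtained by rational blow-up, hence has $b_2^+=1$ and contains spheres of nonnegative square in $D$), McDuff's classification identifies it with a blow-up of $\mathbb{C}P^2$ or $S^2\times S^2$.

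The core step is to show that the configuration $D\cup\mC_{p,q}\subset \widetilde X_L$ is symplectically isotopic, relative to $D$, to the standard configuration in $\widetilde X_{\std}$. For this, choose an almost complex structure adapted to $D$. Neck-stretch around the Wahl chain, or equivalently around the boundary of the neighbourhood of $L$. Then use positivity of intersections and the homology classes forced by intersections with $D$ to locate exceptional curves and fibre classes. Blowing these down successively reduces the picture to a known configuration in $\mathbb{C}P^2$ or $S^2\times S^2$. There, uniqueness of configurations of lines and conics up to symplectic isotopy (Gromov) gives the isotopy. The homology classes of the Wahl chain spheres should be pinned down by intersection numbers with $D$ and the continued fraction of $p^2/(pq-1)$.

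Granting this isotopy, extend it to an ambient symplectomorphism $\widetilde X_L\to\widetilde X_{\std}$ fixing a neighbourhood of $D$ and carrying $\mC_{p,q}$ to $\mC_{p,q}$. By standard neighbourhood theorems for plumbings, we may assume it matches the neighbourhoods of the Wahl chains. Rationally blowing these back down gives a symplectomorphism of $X_{p,q}$ that fixes a neighbourhood of $D$, hence restricts to some $\psi\in\symp_c(B_{p,q})$, and sends $L$ to $L^{\std}_{p,q}$. Here it helps that the rational blow-down is canonical up to symplectomorphism of the neighbourhood sending the pinwheel to the pinwheel. The Appendix result that pinwheels are locally visible may be needed to match the local models compatibly.

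The main obstacle is the curve analysis in the core step: proving that the relevant classes have embedded $J$-holomorphic representatives after neck-stretching, and excluding broken configurations in which components run into the Wahl chain. I would first try to choose $X_{p,q}$ so that $\widetilde X_{\std}$ is toric and has a ruling by spheres meeting $D$ once. Foliating by these curves should then force the Wahl chain into fibres or sections, and the isotopy can be built fibrewise.
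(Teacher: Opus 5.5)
You have the same architecture as the paper: the ATF compactification $X_{p,q}$, rational blow-up along $L$, neck-stretching, a ruling of the blow-up, then extension and blow-down. But the step you flag as the main obstacle is where essentially all of the proof lives, and as written it is only a hope. The missing idea is the \emph{persistence of the regulation}.

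The compactified central cylinder $K_{p,q}$ gives a square-zero fibre class $F$, and $X_{p,q}$ is regulated by $F$-curves for every $J$ making $\mathcal{D}$ holomorphic. After stretching the neck around $L$, you must show that the limit component through a point of $D_0$ becomes, in $\widetilde X_L$, an embedded square-zero sphere $\widetilde C$ with $\widetilde C\cdot D_i=\delta_{i0}$ and $\widetilde C\cdot C_j=\delta_{j1}$. The paper gets this from a sharp adjunction computation. It uses that the discrepancies lie in $(-1,0]$, that all entries of $M_{\mathcal{C}}^{-1}$ are negative, and that $a_0=(pq-1)/p^2$ and $a_n=1/p^2$. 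The computation first excludes $\widetilde C$ meeting $D_n$, which is where $F\cdot L\neq 0$ enters. It then forces a single transverse hit at an end of the Wahl chain, and $e_jf_j=pq-1$ selects $C_1$.

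One then needs the unique broken fibre to be $(\mathcal{C}\setminus C_1)\cup E\cup(\mathcal{D}\setminus D_0)$, with a single $(-1)$-sphere $E$ joining $C_m$ to $D_n$. This follows from $b_2(\widetilde X)=m+n+1$ and an accompanying-number argument. Without $E$, your claim that the classes are pinned down by intersection numbers with $D$ and the continued fraction is not enough. Both intersection matrices have determinant $\pm p^2$, so $\mathcal{C}\cup\mathcal{D}$ spans an index-$p^2$ sublattice of $H_2(\widetilde X;\mathbb{Z})$. The matrices alone therefore do not determine how the two chains sit relative to each other.

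Your finishing step is also not workable as stated. An ``isotopy'' between configurations in the two different manifolds $\widetilde X_L$ and $\widetilde X_{\mathrm{st}}$ presupposes the symplectomorphism you want to build. Blowing down and invoking uniqueness of lines and conics does not match the geometry. Contracting the broken fibre lands in $\mathbb{F}_{d_0}$, where $C_1$ becomes a section of square $-d_0$ and $D_0$ one of square $+d_0$. Lifting a symplectomorphism of such configurations back up would require uniqueness of the iterated blow-up balls relative to the configuration, which you do not address.

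The paper never blows down. Once $E$ is known, the complement of a neighbourhood of the whole linear chain $\mathcal{C}\cup E\cup\mathcal{D}$ is minimal, since it lies in the complement of a fibre of the minimal model. It has contact-type boundary $S^1\times S^2$, and in the visible case it is a star-shaped domain in $T^*S^1\times\mathbb{R}^2$. A Gromov--McDuff recognition theorem for such domains, derived from the three-spheres theorem in $S^2\times S^2$, then gives a symplectomorphism $\widetilde X_{\mathrm{st}}\to\widetilde X_L$ taking chain to chain and standard near it. Gluing this with $\iota\circ\iota_{\mathrm{vis}}^{-1}$, where $\iota$ comes from local visibility of $L$, yields $\psi\in\operatorname{Symp}_c(B_{p,q})$.
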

Section \ref{sec: Uniq up to symp} concerns the proof of Theorem \ref{thrm_intro: B}.
First, we set up a convenient compactification ${X}_{p,q}$ of $B_{p,q}$, which allows us to pass to the more standard setting of closed pseudoholomorphic curves.
Most of the geometry of ${X}_{p,q}$ is encoded in the compactifying divisor $\cd_{p,q}$.
Then, we show that rationally blowing up a $(p,q)$-pinwheel produces a symplectic manifold $\til{X}_{p,q}$ which is \textit{regulated}, in the sense of \cite{ABEHS25}.
Roughly, this means that $\til{X}_{p,q}$ admits a foliation by rational curves, all but finitely many of which are smooth. 
The main technical part of this section is to show that given \textit{any} Lagrangian $(p,q)$-pinwheel, the resulting blown-up manifolds are symplectomorphic via a symplectomorphism that preserves certain aspects of the corresponding regulations.
In particular, \cref{thm:regulation_of_tilX} shows that the regulation consists of exactly one broken fibre, and its shape (i.e.\ number of spheres and their self-intersections) depends only on $p$ and $q$.
This implies that the complements of the Khodorovskiy neighbourhoods of any two Lagrangian pinwheels are symplectomorphic.
The last part of this section then shows that such a symplectomorphism can be extended to the interior of the neighbourhoods, thus providing the desired symplectomorphism.

To upgrade the symplectomorphism of Theorem \ref{thrm_intro: B} to a Hamiltonian diffeomorphism, we compute the weak homotopy type of $\symp_c(B_{p,q})$ to obtain control over the symplectomorphism group.
We show that a certain natural symplectomorphism $\tau_{p,q}\in \symp_c(B_{p,q})$, first studied in Buck's PhD thesis \cite{Bu25}, generates the symplectic mapping class group.
Buck showed, using notions from Lagrangian Floer theory, that $\tau_{p,q}$ has infinite order in $\symp_c(B_{p,q})$.
Using completely different techniques, we show that, in fact, it generates the symplectic mapping class group.
The pintwist $\tau_{p,q}$ is very reminiscent of the usual Dehn--Seidel twist about a Lagrangian sphere or $\R P^2$.

\begin{thmINTRO}\label{thrm_intro: C}
    The group $\symp_c(B_{p,q})$ is weakly homotopy equivalent to $\Z$, with a generator being the \textit{pintwist} $\tau_{p,q}$.
    Therefore, for any $\psi\in \symp_c(B_{p,q})$ there exists $\phi\in \text{Ham}_c(B_{p,q})$ such that $\psi=\tau_{p,q}^n\circ\phi$.
\end{thmINTRO}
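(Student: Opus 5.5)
We follow the standard Gromov--Evans strategy for computing compactly supported symplectomorphism groups of Milnor fibres: exhibit $\symp_c(B_{p,q})$, up to weak homotopy, as a space of configurations of curves in a compactification, using fibration sequences. We work in the compactification $X_{p,q}$ with divisor $\cd_{p,q}$ from \cref{sec: Uniq up to symp}. Then $\symp_c(B_{p,q})$ is weakly equivalent to the group $\symp(X_{p,q},\cd_{p,q})$ of symplectomorphisms fixing a neighbourhood of $\cd_{p,q}$ pointwise. The distinguished symplectic cylinder $K_{p,q}$ should complete in $X_{p,q}$ to a sphere $\overline{K}_{p,q}$ meeting $\cd_{p,q}$ in two prescribed points.

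\textbf{Step 1 (restriction fibration).} Let $\mathcal{K}$ be the space of symplectic cylinders in $B_{p,q}$ which agree with $K_{p,q}$ near infinity and lie in the same homology class relative to the boundary. Evaluation gives a map $\symp_c(B_{p,q})\to\mathcal{K}$, $\psi\mapsto\psi(K_{p,q})$. We will show this is a Serre fibration and that it is surjective up to homotopy. The fibre is the stabiliser $\mathrm{Stab}(K_{p,q})$.

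\textbf{Step 2 ($\mathcal{K}$ is contractible).} Choose the subspace of almost complex structures $\cj$ on $X_{p,q}$ which make $\cd_{p,q}$ holomorphic and are standard near it; this space is contractible. For each $J\in\cj$ there should be a unique embedded $J$-sphere in the class of $\overline{K}_{p,q}$ through the two marked points of $\cd_{p,q}$. Existence comes from genericity and automatic regularity in dimension four: the class has nonnegative index and embedded curves are regular. Compactness is the delicate part. We must rule out nodal degenerations using positivity of intersections with the components of $\cd_{p,q}$ and adjunction. This should be exactly the kind of curve analysis already carried out for $X_{p,q}$ in \cref{sec: Uniq up to symp}, and we take it as the \textbf{main obstacle}: the divisor contains negative spheres, so classes splitting off multiples of divisor components must be excluded by area and intersection bookkeeping. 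Granting this, the map $\cj\to\mathcal{K}$ is a fibration with contractible fibres, and the usual argument gives that $\mathcal{K}$ is contractible.

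\textbf{Step 3 (the stabiliser).} By the symplectic neighbourhood theorem, $\mathrm{Stab}(K_{p,q})$ deformation retracts onto symplectomorphisms fixing a neighbourhood of $K_{p,q}$ up to the gauge group of its normal bundle. Maps of a compactly supported cylinder relative to its ends contribute a factor $\Z$, namely the rotation of the normal framing along the core circle. The complement $B_{p,q}\setminus\nu K_{p,q}$ should be a standard piece, a neighbourhood of a ball-like region in the ATF picture, whose compactly supported symplectomorphism group is contractible. We prove this contractibility by Gromov's theorem for $\symp_c(B^4)$, or for a toric domain, after identifying the complement via the almost toric fibration. Hence $\mathrm{Stab}(K_{p,q})\simeq\Z$, and the long exact sequence gives $\symp_c(B_{p,q})\simeq\Z$.

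\textbf{Step 4 (identifying the generator).} The generator corresponds to the twist of the normal framing along $K_{p,q}$. We check that $\tau_{p,q}$ maps to $\pm1$ in $\Z$ by computing its action on the framing, directly from its local model as a fibrewise rotation. This is consistent with Buck's result that $\tau_{p,q}$ has infinite order. Finally, $\pi_0$ of $\symp_c(B_{p,q})$ is $\Z$ and $\mathrm{Ham}_c(B_{p,q})$ is the identity component, since $H^1_c(B_{p,q};\R)=0$ for a rational homology ball. The second sentence of the theorem follows.
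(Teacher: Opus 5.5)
Your Steps 1 and 2 follow the paper's route: the action of $\symp_c(B_{p,q})$ on $\mS(K_{p,q})$, which is contractible because the fibre class in $X_{p,q}$ is rigid (\cref{lma:ruling_compactification}). Step 3, however, fails as written. The complement $B_{p,q}\setminus K_{p,q}$ is not a ball-like or toric region. Deleting the visible cylinder over the toric boundary leaves the node in the base, and the Liouville skeleton of the complement is an immersed Lagrangian Whitney sphere, so the complement is not even homologically a ball. Neither Gromov's theorem for $B^4$ nor a toric-domain argument applies to it directly.

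What is needed is twofold. First, embed $B_{p,q}\setminus K_{p,q}$ as a star-shaped domain in $T^*Wh$, so that its compactly supported symplectomorphism group agrees with that of $T^*Wh$. Second, prove separately that $\symp_c(D^*Wh)$ is weakly contractible. The paper does the second part by running the fibration argument once more, for a smooth conic $C\subset B^4$ with $B^4\setminus C=D^*Wh$. Dimitroglou Rizell's Lefschetz fibrations show that the space of such conics is homotopy equivalent to $\C^*$, and the monodromy around the nodal conic is the Dehn twist generating $\symp_c(C)$. Only then does the contractibility of $\symp_c(B^4)$ propagate down to $D^*Wh$.

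Your bookkeeping of the stabiliser also conflates two different copies of $\Z$. The stabiliser sits in two fibrations,
$$\fx(K_{p,q})\to\pr(K_{p,q})\xrightarrow{\rho}\symp_c(K_{p,q}), \qquad \symp_c(B_{p,q}\setminus K_{p,q})\to\fx(K_{p,q})\xrightarrow{D}\aut(\nu K_{p,q}).$$
Both bases have $\pi_0\cong\Z$ with contractible components: Dehn twists of the cylinder and rotations of the normal bundle, respectively. Even with a contractible complement, $\pi_0(\pr(K_{p,q}))$ is undetermined until you compute the images of $\rho$ and $D$. The truth is the opposite of your Step 4: the image of $D$ is only the identity component, so normal rotations contribute nothing, while the image of $\rho$ is exactly the subgroup generated by $\tau^p$.

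The paper proves both facts on the universal cover $A_{p-1}\to B_{p,q}$. A lift of $\psi\in\pr(K_{p,q})$ restricts to some $\tau^k$ on $K_{p-1}$, which forces $\rho(\psi)\simeq\tau^{pk}$. For $D$, one pushes supports off $K_{p-1}$ using the Liouville flow of the plumbing. The remaining Hamiltonian piece is then identified with the monodromy of a loop in $\mS(K_{p-1})$, which has trivial normal-derivative class.

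This divisibility is exactly what makes $\tau_{p,q}$ a generator. Since $\tau_{p,q}|_{K_{p,q}}\simeq\tau^p$, if $\tau$ itself extended, $\tau_{p,q}$ would be $p$ times a generator. Your proposed check that $\tau_{p,q}$ maps to $\pm1$ through its action on the normal framing is not well-defined, because $\tau_{p,q}$ does not fix $K_{p,q}$ pointwise. It also targets an invariant that vanishes on $\pi_0$.
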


\cref{sec: symp to ham} contains the proof of Theorem \ref{thrm_intro: C}.
The main point is to understand $\symp_c(B_{p,q})$ via its action on the space of symplectic cylinders $\mS(K_{p,q})$ which agree outside a compact set with the anticanonical divisor $K_{p,q}$, which we call \textit{the central cylinder} of $B_{p,q}$; this is detailed in \cref{sec: intro bpq}. 
The study of this action is carried out through the following diagram, which we call a \textit{Comb diagram}\footnote{Looking at the diagram from afar, it resembles a horizontal \textit{hair-comb}.}:
\begin{equation*}
    \begin{tikzcd}
    \symp_c(B_{p,q}\backslash K_{p,q}) \arrow[r] & \fx(K_{p,q}) \arrow[d, "D"] \arrow[r] & \pr(K_{p,q}) \arrow[d, "\rho"] \arrow[r] & \symp_c(B_{p,q}) \arrow[d, "\text{transitive action}" description] \\
                                   & \aut(\nu K_{p,q})\simeq \mathbb{Z}           & \symp_c(K_{p,q})\simeq \mathbb{Z}                & \mathcal{S}(K_{p,q})\simeq pt                            
    \end{tikzcd}
\end{equation*}
The spaces appearing in this diagram are the following: $\pr(K_{p,q})$ (resp.\ $\fx(K_{p,q})$) is the space of compactly supported symplectomorphisms that preserve (resp.\ fix) the central cylinder $K_{p,q}$ and $\aut(\nu K_{p,q})$ is the space of symplectic automorphisms of the normal bundle of $K_{p,q}$.
The diagram is built from the three fibrations obtained from the natural action of $\symp_c(B_{p,q})$ on $\mS(K_{p,q})$; the restriction map $\rho$; and the normal derivative map $D$.
The top row that holds the "teeth" of the comb is formed by inclusions.

The essential feature of the comb diagram is that it relates the homotopy groups of $\symp_c(B_{p,q})$ to those of $\symp_c(B_{p,q}\backslash K_{p,q})$.
The reason to consider the central cylinder as the divisor in the comb diagram is that the Liouville manifolds $B_{p,q}\backslash K_{p,q}$ can be identified with star-shaped domains in a single complete Liouville manifold $T^*Wh$.
We then show, again via the appropriate comb diagram, that the symplectomorphism group of $T^*Wh$ is weakly contractible and therefore $\symp_c(B_{p,q}\backslash K_{p,q})$ is also weakly contractible.

The symplectic geometry of $T^*Wh$, the cotangent bundle of the Whitney sphere, was first studied by Dimitroglou-Rizell in \cite{Dim17}, and we make good use of his analysis of the holomorphic curves therein.

\vspace{4pt}
The comb diagram has been the main tool in computing new symplectomorphism groups from old ones.
The idea is already implicit in Gromov's original paper \cite[Section 2.4]{Gr85} and was used in most subsequent computations, for example \cite{Abreu98,AbMcD00}, as well as \cite[Section 6]{Ev14} and very recently \cite[Equation (2.3)]{AtMaWu25}.
Given a symplectic manifold $(X,\omega)$ and some symplectic divisor $S$, the comb diagram allows one to relate $\symp_c(X)$, $\symp_c(X\backslash S)$ and the orbit space of $S$ under the action of $\symp(X)$, which is commonly shown to be a moduli space $\mathcal{S}(S)$ of symplectic surfaces.
Usually, this allows one to pass from a \textit{compact} manifold $X$ to the \textit{non-compact} $X\backslash S$.
One of the important novelties of our work is to show that this technique can also work when $X$ is not closed.
Indeed, to compute $\symp_c(B_{p,q})$, we will \textit{not} compactify $B_{p,q}$ but, rather, we will be viewing $B_{p,q}$ as a partial compactification of a space $D^*Wh$, the unit cotangent bundle of the Whitney sphere.
The reader is invited to compare our computation of $\symp_c(B_{2,1})$ to that of Evans, in \cite[Theorem 1.5]{Ev14}.\footnote{It is very tempting to try to mimic the Seidel--Evans approach, i.e.\ to first compute the $\mathbb{Z}_p$-equivariant symplectomorphism group and then to relate it to $\symp_c(B_{p,q})$. However, we were not able to make this approach work; in $A_{p-1}$ there are $p$ holomorphic foliations, and when $p>2$, they might intersect with each other in uncontrolled ways.}\footnote{The recent preprint of Keating--Smith--Wemyss \cite{KeSmWe26} contains a thorough discussion of computing certain algebraic aspects of symplectomorphism groups through mirror symmetry and the action of $\symp$ on the Fukaya category.}

\vspace{4pt}
The key hard input from holomorphic curves in this section is the content of Proposition \ref{prop: cylinders in Bpq are rigid}, analysing the properties of rational pseudoholomorphic curves in the compactification $X_{p,q}$.
We show that the compactified curves in the class of the regulation of $X_{p,q}$ cannot break, thereby establishing that $\mS(K_{p,q})$ is contractible.
$\symp_c(B_{p,q})$ retracts to $\pr(K_{p,q})$ and this is shown to be generated by $\tau_{p,q}$.

\vspace{4pt}
In the rest of the paper, we provide various applications of the techniques we develop to prove Theorems \ref{thrm_intro: B} and \ref{thrm_intro: C}.
The first application concerns a pin-version of Gromov's non-squeezing theorem.
Using the almost toric base diagrams of $B_{p,q}$, we can define various domains that generalize the usual toric ones.
In particular, we define the \textit{pin-ellipsoids} $E_{p,q}(\alpha,\beta)$ for $0<\alpha,\beta \leq \infty$.
In analogy with the standard situation, we let $B_{p,q}(r)=B_{p,q}(r,r)$ be the \textit{pin-ball}.
We show that they satisfy a pin-version of Gromov's non-squeezing theorem:
\begin{appINTRO}[Pin-ball non-squeezing]\label{app: 1}
    The pin-ball $B_{p,q}(1)$ symplectically embeds into the pin-cylinders $E_{p,q}(\alpha,\infty)$ and $E_{p,q}(\infty,\beta)$ if and only if $1\leq \alpha,\beta$.
\end{appINTRO}
For the proof, we assume that a symplectic embedding $\iota: B_{p,q}(1)\hookrightarrow E_{p,q}(\infty,\beta)$ exists and then rationally blow up along $\iota$.
The unique broken fibre of the regulation obtained on the blown-up manifold contains a single exceptional curve, whose area provides the desired obstruction.

The second application concerns an alternative proof of a classical theorem of Eliashberg and Polterovich \cite{EliPol96} concerning Lagrangian planes.
This theorem is of fundamental importance in $4$-dimensional symplectic geometry as it shows that one cannot produce Lagrangian knotting by considering some local operation on a Lagrangian surface.
(However, Lagrangian knotting \textit{does} occur on the global level, as was first exhibited by Seidel \cite{Sei99}.) 
\begin{appINTRO}[Eliashberg--Polterovich local unknottedness]\label{app: 2}
    Let $\Delta_{\std}$ be a standard Lagrangian disc with boundary on $S^3\subset \R^4$ and let $\Delta$ be a Lagrangian which agrees with $\Delta_{\std}$ near the boundary.
    Then, there exists a compactly supported Hamiltonian diffeomorphism $\psi\in \text{Ham}_c(B^4)$ such that $\psi(\Delta)=\Delta_{\std}$.
\end{appINTRO}
The idea of our proof follows a relative version of Theorem \ref{thrm_intro: B}, for the case $(p,q)=(2,1)$.
Since the Lagrangian discs $\Delta,\Delta_{\std}$ are standard near the boundary, we can use the symplectic cut operation to compactify the ball $B^4$ into $\C P^2$ and the Lagrangian discs into Lagrangian real projective planes.
Application \ref{app: 2} is equivalent to the fact that there exists a symplectomorphism of $\C P^2$ which is the identity near the compactifying line $\C P^1_{\infty}$ and maps the two Lagrangian projective planes onto each other.
We show this by carefully analysing the rational blow-up in this setting.

\vspace{4pt}
The final application concerns the \textit{nearby Lagrangian subconjecture 2}, adapted for pinwheels, namely answering the natural question: \textit{Which Lagrangian pinwheels exist in $B_{p,q}$?}
For example, the paper of Lekili--Maydanskiy \cite{LeMa14}, which originally studied symplectic aspects\footnote{Interestingly, in \cite{LeMa14} it is also shown that the symplectic homology of $B_{p,q}$ does not vanish in spite of having no exact Lagrangians for $p>2$.
Another notable symplectic feature of $B_{p,q}$, for $q=1$, is that by \cite{Kar18} the Kontsevich cosheaf conjecture is known to hold for them.} of $B_{p,q}$, shows that, for $p>2$, there are no smooth exact Lagrangian submanifolds in $B_{p,q}$.\footnote{Since $B_{2,1}$ is symplectomorphic to $T^*\mathbb{R}P^2$, it contains many Lagrangian $\mathbb{R}P^2$s, all Hamiltonian isotopic by \cite{HPW,Ad25} and by the main theorem of this paper.}
We extend these results to show that the only Lagrangian pinwheels in $B_{p,q}$ are those of $(p,q)$-type, i.e.\ of the same type as the Lagrangian skeleton of $B_{p,q}$.

\begin{appINTRO}
\label{app:3}
    If there is a symplectic embedding of a star-shaped subdomain of $B_{n,m}$ into $B_{p,q}$, then $(n,m)=(p,q)$.
    In particular, a Lagrangian pinwheel in $B_{p,q}$ is of $(p,q)$-type.
\end{appINTRO}

The proof is reminiscent of the one in \cite[Section 5.4]{ABEHS25}, which itself is an elaboration of \cite{EvSm18}.
In particular, we assume the existence of a symplectic embedding $\iota:B_{n,m}\hookrightarrow B_{p,q}$ and then stretch the neck around the contact boundary of $\iota(B_{n,m})$ to find specific rational curves in the rational blow-up $\til{X}_{p,q}$.
The numerics for these curves, encoded in the adjunction formula, imply that $(n,m)=(p,q)$.
The corresponding statement for pinwheels follows by considering the associated Khodorovskiy neighbourhood.
This result fits into the growing literature on embeddings of rational homology balls; see, for instance, Evans and Smith~\cite{EvSm18} and the more topologically flavoured works \cite{EtHyPi2023, LiPa22, LiPa24, Ow2018:equiemb, Ow20:nonsymp, GoOw25}.

\subsection{Relation with ABEHS}
Many of the ideas contained in this paper, especially \cref{sec: constructing the symp}, grew out of the collaboration of the first and third authors with Brendel, Evans, and Schlenk that culminated in \cite{ABEHS25}.
In a certain sense, this paper can be seen as a prequel to \cite{ABEHS25}, as the theorems here can be seen as local versions of the isotopy and embedding theorems of \cite{ABEHS25}, concerning embeddings of $B_{p,q}$ into $\C P^2$.

\vspace{4pt}
Theorem \ref{thrm_intro: B}, which constructs the symplectomorphism relating the pinwheels, should be compared with \cite[Corollary 5.2.2]{ABEHS25}.
In fact, our situation is much more straightforward; $B_{p,q}$ is the natural manifold containing $L_{p,q}$; therefore, the arithmetic appearing in the adjunction formula of the neck-stretched curves in Section \ref{sec: Uniq up to symp} is somewhat more manageable.

\vspace{4pt}
On the other hand, upgrading the symplectomorphism to a Hamiltonian one is automatic in the setting of \cite{ABEHS25}, because $\symp(\C P^2)$ is connected by Gromov \cite{Gr85}.
However, apart from the calculation of $\symp_c(B_{p,1})$, which is due to Gromov \cite{Gr85} for $p=1$ and to Evans \cite{Ev14} for $p=2$, the symplectomorphism group of $\symp_c(B_{p,q})$ for other $p>2$ was entirely unknown before.\footnote{Recall that $B_{1,1}$ is just the standard ball $B^4$ and $B_{2,1}$ is the unit disc bundle of $\R P^2$.}
Partial progress was made by Buck \cite{Bu25}, who showed that $\tau_{p,q}$ has infinite order.

\vspace{4pt}
Finally, the quantitative embedding obstructions of \cref{app: 1} are obtained similarly to \textit{Markov staircases}, in the sense that in both cases the obstructions are extracted from the area of exceptional curves in the broken fibres after blowing up.

\subsection{Acknowledgments}
This article is intellectually deeply indebted to Jonny Evans as well as to the  collaboration \cite{ABEHS25}.
We wish to thank Russell Avdek, Matthew Buck, Jonny Evans, Georgios Dimitroglou-Rizell, Paolo Ghiggini, Marco Golla, Vincent Humilière, Yanki Lekili, Lukas Nakamura, Federica Pasquotto, Felix Schlenk, George Politopoulos, Joel Schmitz and Chris Wendl for many useful and stimulating discussions. 
Finally, N.A.\ wishes to acknowledge the influence of his master's thesis advisor, Will Merry.
In a certain sense, this paper is the conclusion of the project N.A.\ undertook under Will's supervision. 

G.B.i.G. was funded by the Deutsche Forschungsgemeinschaft (DFG, German Research Foundation) under Germany's Excellence Strategy – The Berlin Mathematics Research Center MATH+ (EXC-2046/2, project ID: 390685689).

\section{Background}
\label{section: background}

In this section we will collect basic definitions and properties of the rational homology ball $B_{p,q}$ and Lagrangian pinwheels.
We have tried to keep the exposition tight and self-contained.
However, in order to keep the paper at a reasonable length, for certain technical aspects (which will not be relevant for the rest of the paper), the reader will be referred to the literature.
In particular, we assume working familiarity with almost toric fibrations (ATFs), at the level of \cite{Ev23:Book,Sym02:Fourtwo}.

Unless stated otherwise, we always assume that $0<q<p$ are two coprime integers, and we will denote the group of the $p$th roots of unity by $\bm{\mu}_p:=\{\zeta \in \C \mid \zeta^p=1\}$.

\subsection{The rational homology ball $B_{p,q}$ and its cylinder $K_{p,q}$}
\label{sec: intro bpq}

Here we review some important facts about the symplectic geometry of the rational homology balls $B_{p,q}$.
Following Lekili--Maydanskiy \cite{LeMa14}, we view $B_{p,q}$ as a $p$-fold quotient.
Consider the complex hypersurface $A_{p-1}=\{z_1z_2=z_3^p+1\}\subset \mathbb{C}^3$ with the restriction of the standard symplectic form $\omega_{\std}$ on $\C^3$.\footnote{In fact, this is the Milnor fibre of the $A_{p-1}$-singularity, but for the sake of brevity, we choose this more compact notation.}
There is a free $\mathbb{Z}_p$-K\"ahler action on $A_{p-1}$ given by 
\begin{equation}\label{eq:action_weights}
    \zeta\cdot(z_1,z_2,z_3)=(\zeta z_1,\zeta^{-1}z_2,\zeta^qz_3)\quad\text{where } \zeta\in \bm{\mu}_p.
\end{equation}

\begin{definition}\label{def:Bpq}
    The symplectic manifold $(B_{p,q},\omega_{p,q})$ is defined to be the quotient of $(A_{p-1},\omega_{\std})$ by this $\Z_p$-action.
\end{definition}

There are two different fibrations on $B_{p,q}$ that we will use to understand its symplectic geometry.
One is a \textit{Lefschetz (orbi-)fibration}\footnote{We will not focus on the orbifold aspects of this fibration, since we will mostly work on the relevant honest Lefschetz fibration on the universal cover, which is $A_{p-1}$.} and the other is an \textit{almost toric fibration}.

\subsubsection{The Lefschetz fibration}
\label{subsec:Lefschetz_fibration}

Consider the usual Lefschetz fibration $\tilde{\pi}:A_{p-1}\rightarrow \mathbb{C}$ given by projecting onto the third coordinate $z_3$.
The only nodal fibres sit over the $p$th roots of unity $\bm{\mu}_p$, and the $\mathbb{Z}_p$-action defined in \eqref{eq:action_weights} respects the fibration (in the sense that it maps fibres to fibres).
This is shown in \cref{fig:Lefschetz_fib}.
Notably, the action preserves the central fibre $\tilde{\pi}^{-1}(0)$.

\begin{definition}[Central cylinder]
\label{def: central cylinders}
    The \textit{central cylinder} of $(A_{p-1},\omega_{\std})$ is the fibre $K_{p-1}:=\tilde{\pi}^{-1}(0)$.
    The \textit{central cylinder} of $(B_{p,q},\omega_{p,q})$ is the quotient of $K_{p-1}$ under the corresponding action.
    We will denote this cylinder by $K_{p,q}$.
\end{definition}

\begin{remark}\label{rmk: central cylinders are smoothings}
    The hyperplane section $\{z_3=0\}\cap \{z_1z_2=z_3^p\}=:\widetilde{K}_{p-1}$ is just the usual nodal conic in the plane $\{z_3=0\}$.
    The smoothing $\{z_1z_2=z_3^p+1\}$ of $\{z_1z_2=z_3^p\}$ induces the usual smoothing of $\til{K}_{p-1}$ to a smooth affine conic, which is precisely $K_{p-1}$.
    In other words, the two nested singularities $\til{K}_{p-1}\subset \til{A}_{p-1} = \{z_1z_2=z_3^p\}$ get simultaneously smoothed to $K_{p-1}\subset A_{p-1}$.
    We will use this later to show that the symplectic monodromy of $A_{p-1}$ restricts to the symplectic monodromy of $K_{p-1}$.
\end{remark}

\begin{figure}[ht]
    \centering
    \begin{tikzpicture}
        \begin{scope}[shift={(4.5,0)}]
            \filldraw[fill=lightgray,opacity=0.5,draw=none] 
                (1,0.1) -- (7,0.1) -- (7.5,1.7) -- (0.5,1.7) -- cycle;
            \node at (0.6,0.5) {\small $\mathbb{C}$};
            \node at (4.5,4.05) {\small $K_{p,q}$};
            \conic{(4,3)}
            \nodalconic{(6,3)}
            \conic{(2.5,3)}
            \draw[->] (2.5,2.1) -- (2.5,1.1);
            \draw[->] (6,2.1) -- (6,1.1);
            \draw[->] (4,2.1) -- (4,1.1);
            \fill (2.5,1) circle (0.6 pt);
            \draw[dotted]
                (2,1) arc [x radius=2,y radius=0.5,start angle=180,end angle=0];
            \draw[dotted]
                (2,1) arc [x radius=2,y radius=0.5,start angle=-180,end angle=0];
            \coordinate (a) at (6,1);
            \draw (a) node[crosshor]{};
            \coordinate (z) at (4,1);
            \node[right=1pt] at (a) {\tiny $1$};
            \draw (z) node[crosshor]{};
            \fill (4,1) circle (0.6 pt) node[below right=-2pt] {\tiny $0$};
            \draw
                (3.75,3) arc[x radius=0.25,y radius=0.05, start angle=-180, end angle=0];
            \draw[densely dashed]
                (3.75,3) arc[x radius=0.25,y radius=0.05, start angle=180, end angle=0];
            \path[postaction={decorate}, decoration={markings, mark=at position 0.10 with {\arrow[scale=0.8]{>}}, mark=at position 0.43 with {\arrow[scale=0.8]{>}}, mark=at position 0.76 with {\arrow[scale=0.8]{>}}}]
            (4,3) ellipse[x radius=0.25,y radius=0.05];
        \end{scope}

        \draw[->] (3,3) -- (5,3);
        \node at (4,3) [above] {\small $p:1$};
        
        \begin{scope}[shift={(-4.5,0)}]
            \filldraw[fill=lightgray,opacity=0.5,draw=none] 
                (1,0.1) -- (7,0.1) -- (7.5,1.7) -- (0.5,1.7) -- cycle;
            \node at (0.6,0.5) {$\mathbb{C}$};
            \node at (4.5,4.05) {\small $K_{p-1}$};
            \conic{(4,3)}
            \nodalconic{(2.3,{3-sqrt(3)/4})}
            \nodalconic{(6,3)}
            \nodalconic{(3,{3+sqrt(3)/4})}
            \draw[->] (4,2.1) -- (4,1.1);
            \draw[->] (3,{2.1+sqrt(3)/4}) -- (3,{1.1+sqrt(3)/4});
            \draw[->] (6,2.1) -- (6,1.1);
            \draw[->] (2.3,{2.1-sqrt(3)/4}) to[out=270, in=135] (2.9,{1.1-sqrt(3)/4});
            \draw[dotted]
                (2,1) arc [x radius=2,y radius=0.5,start angle=180,end angle=0];
            \draw[dotted]
                (2,1) arc [x radius=2,y radius=0.5,start angle=-180,end angle=0];
            \coordinate (a) at (6,1);
            \coordinate (b) at (3,{1+sqrt(3)/4});
            \coordinate (c) at (3,{1-sqrt(3)/4});
            \node[right=1pt] at (a) {\tiny $1$};
            \node[xshift=-5.5pt, yshift=3.5pt]  at (b) {\tiny $\zeta$};
            \node[below left=-2pt]  at (c) {\tiny $\zeta^2$};
            \draw (a) node[crosshor]{};
            \draw (b) node[crosshor]{};
            \draw (c) node[crosshor]{};
            \fill (4,1) circle (0.6 pt) node[below right=-2pt] {\tiny $0$};
        \end{scope}
    \end{tikzpicture}
    \caption{The Lefschetz fibrations defined on $A_{p-1}$ and $B_{p,q}$. Note that the central cylinder $K_{p,q}$ is $p$-fold covered by a generic fibre of the Lefschetz fibration defined on $B_{p,q}$.}
    \label{fig:Lefschetz_fib}
\end{figure}

\subsubsection{The almost toric fibration} 

The symplectic manifold $B_{p,q}$ also admits an almost toric fibration, as is thoroughly explained in \cite{Ev23:Book, Ev24:KIAS}.
Its almost toric base diagram $\ATF_{p,q}$ is as shown in \cref{fig:atfBpq}.
Representing $B_{p,q}$ via its almost toric base diagram has many advantages.
For example, it is easy to talk about geometrically interesting objects in $B_{p,q}$ and to come up with potential ways to prove theorems about them.
Domains in $B_{p,q}$ of interest to us are so-called pin-ellipsoids and pin-balls, which were introduced in \cite{AdHa25} and explored further in \cite{ABEHS25}.

\begin{definition}[Pin-ellipsoid and pin-ball]
     For $\alpha,\beta \in \mathbb{R}_{>0}$, the $(p,q)$-\textit{pin-ellipsoid} $E_{p,q}(\alpha,\beta) \subseteq B_{p,q}$ is defined by its almost toric base diagram $\ATF_{p,q}(\alpha,\beta)$, which is shown in \cref{fig:atfBpq}.
     The \textit{pin-ball} is defined to be $B_{p,q}(\alpha)=E_{p,q}(\alpha,\alpha)$.
\end{definition}

\begin{remark}
    As can be seen in the almost toric base diagrams in \cref{fig:atfBpq}, the contact boundary of $E_{p,q}(\alpha,\beta)$ is the lens space $L(p^2,pq-1)$.
    Equivalently, one can argue that the boundary of $B_{p,q}$ is the same (as contact manifolds) as the boundary of a small neighbourhood of the quotient singularity $\frac{1}{p^2}(1,pq-1)$ which is the lens space $L(p^2,pq-1)$ by definition; see \cref{remark: Bpq as smoothed cqs} for more details.
\end{remark}

While the fibrations are in a sense dual to each other, meaning that the Lefschetz fibration is a symplectic fibration and the ATF is a Lagrangian fibration, certain information is visible in both.
For example, the central cylinder $K_{p,q} \subseteq B_{p,q}$ is defined to be the fibre over zero, but can also be seen over the toric boundary of $\ATF_{p,q}$, i.e.\ as a visible surface in the sense of Symington \cite[Section 7]{Sym02:Fourtwo}, as explained in \cite[Section 7]{Ev23:Book}.

\begin{lemma}
    The visible surface over the toric boundary on the usual ATF on $B_{p,q}$ (resp.\ $A_{p-1}$) is the central cylinder $K_{p,q}$ (resp.\ $K_{p-1}$).
    In particular, the central cylinders are Poincaré dual to the first Chern class.
\end{lemma}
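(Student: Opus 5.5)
We first treat $A_{p-1}$ and then pass to the quotient $B_{p,q}$ by equivariance. On $A_{p-1}=\{z_1z_2=z_3^p+1\}$ we use the standard Hamiltonian circle action
\[
e^{i\theta}\cdot(z_1,z_2,z_3)=(e^{i\theta}z_1,e^{-i\theta}z_2,z_3),
\]
with moment map $\mu=\tfrac12(|z_1|^2-|z_2|^2)$. We pair it with the second function $|z_3|^2$, or more precisely a smoothing of it as in \cite[Section 7]{Ev23:Book}. This gives the usual almost toric fibration $(\mu,H)$ on $A_{p-1}$.

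The circle action commutes with the $\Z_p$-action \eqref{eq:action_weights}, since both act diagonally by scalars on the coordinates. It therefore descends to $B_{p,q}$, and this descended fibration is the usual ATF with base $\ATF_{p,q}$ (after the standard rescaling of the circle action).

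The toric boundary of the base is the locus where the fibre is a circle rather than a torus. For the function $|z_3|$, these are the points where the torus fibres degenerate to circles, namely where the second coordinate attains its minimum $|z_3|=0$. The preimage of this edge is $\{z_3=0\}=\tilde\pi^{-1}(0)=K_{p-1}$, which is the visible surface over the edge. In Symington's sense it is the symplectic surface obtained by collapsing the circle fibres over an edge.

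We have to check that $K_{p-1}$ is a symplectic cylinder fibred over the edge by orbits of the action, and that these orbits are exactly the collapsed cycles. On $K_{p-1}$ we have $z_1z_2=1$, so $K_{p-1}\cong\C^*$. The function $\mu$ restricted to it has level sets which are single circle orbits. Hence the boundary edge, parametrised by $\mu\in\R$, is a line whose preimage is exactly $K_{p-1}$.

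Both $K_{p-1}$ and the fibration are $\Z_p$-invariant. Taking the quotient identifies the visible surface over the toric boundary of $\ATF_{p,q}$ with $K_{p-1}/\Z_p=K_{p,q}$.

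For the Chern class statement, we argue algebro-geometrically on $A_{p-1}$. The holomorphic volume form is
\[
\Omega=\frac{dz_1\wedge dz_2}{z_3^{p-1}}
\]
(up to a constant), and it is the residue form on the smooth hypersurface. Hence the function $z_3$ gives a meromorphic section of the anticanonical bundle: $z_3^{\,p-1}\Omega^{-1}$ is holomorphic and vanishes along $\{z_3=0\}$. We want a section vanishing to order exactly one, so a more natural choice is the form $\Omega'=dz_1\wedge dz_2/z_3$, which has a simple pole along $K_{p-1}$. To see this, note that $dz_1\wedge dz_2/(pz_3^{p-1})$ is the nonvanishing residue form. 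Then $\Omega'=\text{(nonvanishing)}\cdot z_3^{p-2}$, which has the wrong order. This is the delicate point.

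The cleaner route is the general fact \cite[Section 7]{Ev23:Book}, \cite{Sym02:Fourtwo}: in an almost toric manifold, the preimage of the toric boundary, i.e.\ the union of visible surfaces over the edges, is Poincaré dual to $c_1$. This holds because the complement of the toric boundary deformation retracts onto a Lagrangian torus fibration region, where $TX$ trivialises as $T\Lambda\otimes\C$. A standard trivialisation is given by the torus directions, and it extends across the nodal fibres with zero index. So $c_1$ is supported on the toric boundary with multiplicity one.

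We apply this to the ATF on $B_{p,q}$ and on $A_{p-1}$ directly. The main obstacle is justifying that the nodes do not contribute. We handle this by noting that the trivialising frame $(\partial_{\theta_1},\partial_{\theta_2})$ extends over a nodal neighbourhood, whose Lefschetz-type local model has no Chern contribution. This follows as in Symington, or alternatively by computing $c_1$ via $H^2(B_{p,q})\cong\Z_p$ and comparing with $[K_{p,q}]$ in $H_2(B_{p,q},\partial B_{p,q})$.
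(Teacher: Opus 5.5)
Your overall route is the paper's. The first part spells out by hand what the paper takes from \cite[Example 7.6]{Ev23:Book}, and it is correct: the boundary $\{|z_3|^2=0\}$ of the base of $(\mu,|z_3|^2)$ has preimage $K_{p-1}$, and everything is $\bm{\mu}_p$-equivariant. For the Chern class you end up invoking the same general fact \cite[Proposition 8.2]{Sym02:Fourtwo} that the paper cites.

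However, your own justification of the step you call the main obstacle is wrong as written. The frame $(\partial_{\theta_1},\partial_{\theta_2})$ does not extend over a neighbourhood of a nodal fibre. The affine monodromy around a node is a nontrivial shear, so this frame is not even single-valued over a punctured neighbourhood of the node in the base. Moreover, action--angle coordinates do not exist on the nodal fibre at all. So the claim that nodes contribute nothing rests entirely on Symington's local analysis, together with adapting his closed-manifold statement to Borel--Moore homology or compact subdomains, as in the paper's footnote. Your alternative via $H^2(B_{p,q};\Z)\cong\Z_p$ is not carried out.

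The residue-form computation you abandoned actually gives a cleaner, self-contained proof by a genuinely different route; you only chose the wrong form. The residue form $\Omega=dz_1\wedge dz_2/(p z_3^{p-1})$ (up to sign) is holomorphic and nowhere vanishing on $A_{p-1}$. Since $z_3$ vanishes transversally along the regular fibre $K_{p-1}=\tilde{\pi}^{-1}(0)$, the form $\Omega/z_3=dz_1\wedge dz_2/(p z_3^{p})$ has no zeros and a simple pole exactly along $K_{p-1}$. This form is manifestly invariant under \eqref{eq:action_weights}: both $dz_1\wedge dz_2$ and $z_3^p$ are invariant, or equivalently, pulling back by $\zeta$ multiplies both $\Omega$ and $z_3$ by $\zeta^q$. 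Hence it descends to $B_{p,q}$ with a simple pole exactly along $K_{p,q}$.

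So $K_{p,q}$ is an anticanonical divisor, i.e.\ $\mathrm{PD}[K_{p,q}]=c_1(B_{p,q})$ in $H^2(B_{p,q};\Z)\cong H_2^{BM}(B_{p,q};\Z)$. This is the symplectic $c_1$, because the quotient complex structure is compatible with $\omega_{p,q}$. On $A_{p-1}$ both sides vanish, since $K_{p-1}$ is cut out by the global function $z_3$.

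This argument avoids the nodal analysis and the closed-versus-open issue altogether. What the ATF route buys instead is the uniform principle that the preimage of the toric boundary is Poincar\'e dual to $c_1$. The paper reuses that principle for the compactification, e.g.\ $K_X=-F-\sum_i D_i$.
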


\begin{proof}
    The first part is proven in \cite[Example 7.6]{Ev23:Book}.
    The statement about the Chern class follows from \cite[Proposition 8.2]{Sym02:Fourtwo}.\footnote{Symington only states the result for closed symplectic manifolds. However, the proof generalizes immediately. To obtain a precise statement one then has to either work with Borel--Moore homology or with compact subsets, e.g.\ $E_{p,q}(\alpha,\beta)\subseteq B_{p,q}$.}
\end{proof}

\begin{figure}[htb]
  \begin{center}   
    \begin{tikzpicture}
    \begin{scope}[shift={(-6,0)}]
      \filldraw[lightgray,opacity=0.75] (0,1.5) -- (0,0) -- (6,1.5) -- cycle;
      \draw[thick] (0,1.5) -- (0,0) -- (6,1.5);
      \draw[dashed] (1,0.5) node[cross] {} -- (0,0);
    \end{scope}
    \begin{scope}[shift={(1,0)}]
      \filldraw[lightgray,opacity=0.75] (0,1) -- (0,0) -- (6,1.5) -- cycle;
      \draw[thick] (0,1) -- (0,0) -- (6,1.5);
      \draw [decorate,decoration={brace,amplitude=5pt,raise=1ex}] (0,0) -- (0,1) node[midway,xshift=-3ex]{\footnotesize $\beta$};
      \draw [decorate,decoration={brace,amplitude=5pt,raise=1ex}] (6,1.5) -- (0,0) node[midway,xshift=1.5ex,yshift=-2.5ex]{\footnotesize $\alpha$};
      \node at (6,1.5) [right] {\footnotesize $(\alpha p^2,\alpha(pq-1))$};
      \draw[dashed] (1,0.5) node[cross] {} -- (0,0);
      \draw[thick,dash pattern=on 7pt off 3pt] (0,1) -- (6,1.5);
    \end{scope}
    \end{tikzpicture}
    \caption{On the left the almost toric base diagram $\ATF_{p,q}$ of $B_{p,q}$. The branch cut points into the $(p,q)$-direction. Note that over the toric boundary lies the embedded symplectic cylinder $K_{p,q}$ and the Lagrangian pinwheel's core circle is an essential loop on the central cylinder $K_{p,q}$. On the right, the almost toric base diagram $\ATF_{p,q}(\alpha,\beta)$ of the pin-ellipsoid $E_{p,q}(\alpha,\beta)$.}
    \label{fig:atfBpq}
  \end{center}
\end{figure}

\subsection{Pinwheels}
\label{section: pinwheels}

\begin{definition}
    A \textit{topological $p$-pinwheel $P_p$} is the topological space $D^2/{\sim}$, where the equivalence relation on the unit disc $D^2 \subseteq \C$ is given by $z \sim z'$ if and only if $z,z'\in \partial D^2$ and $z=e^{2\pi i k/p}z'$ for some $k\in \mathbb{Z}$.
\end{definition}

\begin{definition}[Pinwheel]
\label{def:pinwheel_prelim}
    A \textit{Lagrangian $(p,q)$-pinwheel} in a symplectic manifold $(X,\omega)$, usually denoted by $L_{p,q}$, is a Lagrangian immersion $f:D^2 \looparrowright X$ such that
    \begin{enumerate}
        \item 
            $f$ factors through a continuous embedding $P_p\hookrightarrow X$, where $P_p$ is a $p$-pinwheel;
        \item 
            the restriction $f|_{D^2\setminus \partial D^2}$ is a Lagrangian embedding;
        \item 
            there is a Darboux chart of a neighbourhood $U$ of $f(\partial D^2)$ in $(X,\omega)$ to an open neighbourhood $V\subseteq T^*S^1\times \C$ that maps the \textit{core circle} $f(\partial D^2)$ to the zero section of $T^*S^1$ times $0\in \C$, such that in these coordinates the map $f$ takes the form
            \[f(t,s)=\left(pt,\frac{q}{2p}s^2,s e^{iqt}\right),\]
            where $(t,s)\in \partial D^2 \times [0,\varepsilon)\subseteq D^2$ are collar coordinates near the boundary of the parametrizing disc.
            See \cref{fig:pinwheel_core}.
    \end{enumerate}
\end{definition}

\begin{remark}
    A Lagrangian $(2,1)$-pinwheel is just a Lagrangian $\R P^2$, which can be described as a disc capping off a Möbius band.
    Similarly, for a general $L_{p,q}$, a neighbourhood of the core circle consists of $p$ flanges meeting at the core and winding around it $q$ times, as can be seen in the coordinates in \cref{def:pinwheel_prelim} $(3)$.
    The boundary circle of this neighbourhood projects to a $(p,q)$-torus knot\footnote{Here the projection is from $S^1\times D^3$ onto $S^1 \times$ the last two components of $D^3$.}; this circle is then capped off by a disc.
    See \cref{fig:pinwheel_core}.
\end{remark}

\begin{figure}[ht]
    \centering
    \includegraphics[width=0.8\linewidth]{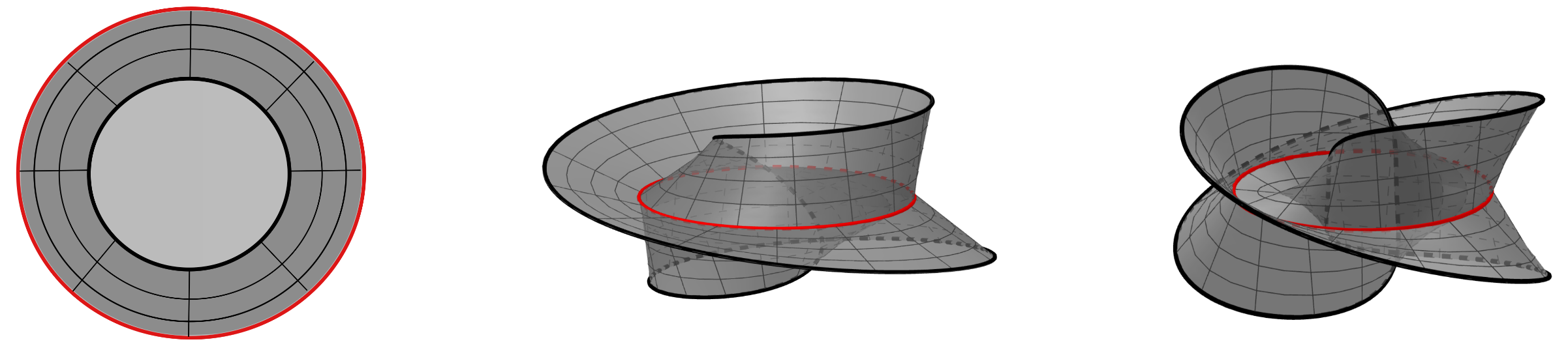}
    \caption{On the left the domain of parametrization of a Lagrangian pinwheel with a collar of its boundary (that becomes the core, in red) highlighted. On the right, the image of the collar neighbourhood for the $(3,1)$- and $(3,2)$-cases.}
    \label{fig:pinwheel_core}
\end{figure}

\begin{remark}
    Our definition of a Lagrangian pinwheel is stronger than the original one given by Khodorovskiy \cite[Definition 3.1]{Kho13}.
    There, the way the flanges meet the core circle is only prescribed up to homotopy, whereas we require that they meet in a specific geometric way.
    The reason for our stronger definition is very natural: for purely linear algebra reasons, two $(p,q)$-pinwheels cannot be related by a smooth ambient Hamiltonian isotopy if their flanges do not form at each point of the core circle $p$-tuples of Lagrangian planes that are related by a linear symplectic map.
    
    In contrast to the problem of Hamiltonian isotopies between pinwheels, for problems of existence of pinwheels or, more generally, of embeddings of rational homology balls $B_{p,q}$, the difference between these two definitions is immaterial.
    Indeed, it is proved in \cite[Lemma 3.3]{Kho13} that any pinwheel can be $C^0$-perturbed, through Lagrangian immersions, so that its flanges meet the core in the standard way.
\end{remark}

\begin{example}(Visible Pinwheel)
    As explained in \cite[Chapter 5]{Ev23:Book} and \cite[Section 7]{Sym02:Fourtwo}, over the branch cut in $\ATF_{p,q}$ one can explicitly construct Lagrangian $(p,q)$-pinwheels.
    If the intersection of the constructed Lagrangian pinwheel with any regular fibre over the branch cut consists of a single circle, then we will call it the \textit{visible Lagrangian $(p,q)$-pinwheel} and denote it by $L_{p,q}^{\text{vis}}$.
\end{example}

\begin{remark}
    A priori calling such a pinwheel "the" visible Lagrangian $(p,q)$-pinwheel is not justified.
    However, one can prove that any two choices of visible pinwheels are related by a Hamiltonian isotopy localised around the branch cut.
    Moreover, having chosen a visible Lagrangian $(p,q)$-pinwheel, the nodal slide can be realised in such a way that it preserves the pinwheel.
    See \cite[Subsection 7.4]{GroVar23} for more details on this.
    These two observations then justify talking about "the" visible Lagrangian $(p,q)$-pinwheel in $B_{p,q}$.
\end{remark}

\begin{definition}[Khodorovskiy neighbourhood]
    A neighbourhood with contact-type boundary $U\subseteq (X,\omega)$ of a Lagrangian pinwheel $L_{p,q} \subseteq (X,\omega)$ is called a \textit{Khodorovskiy neighbourhood} if its completion $\overline{U}$ along its contact boundary yields a pair $(\overline{U},L_{p,q})$ that is symplectomorphic to $(B_{p,q},\Lvis)$.
\end{definition}

Extending Khodorovskiy's neighbourhood construction in \cite[Section 3.4]{Kho13}, we prove that any Lagrangian pinwheel admits a Khodorovskiy neighbourhood.

\begin{theorem}[\normalfont{\cref{thm:pinwheels_locally_visible}}]
\label{thm:loc_vis}
    Suppose that $L_{p,q} \subseteq (X,\omega)$ is a Lagrangian pinwheel in a symplectic manifold. Then for some $\epsilon >0$ there exists a symplectic embedding $\psi: B_{p,q}(\epsilon) \hookrightarrow (X,\omega)$ such that $\psi(\Lvis)=L_{p,q}$.
\end{theorem}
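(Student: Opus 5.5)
The plan is to construct the embedding in two stages: first a neighbourhood of the core circle, then the disc part. The local model near the core is fixed by \cref{def:pinwheel_prelim}(3). The complement of the core in the pinwheel is an honest embedded Lagrangian disc, so Weinstein-type arguments apply to it.

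\emph{Stage 1: the core.} By \cref{def:pinwheel_prelim}(3), a neighbourhood $U$ of the core circle of $L_{p,q}$ is symplectomorphic to a neighbourhood $V\subseteq T^*S^1\times\C$. In $V$ the flanges take the explicit form $f(t,s)=(pt,\tfrac{q}{2p}s^2,se^{iqt})$. The visible pinwheel $\Lvis\subseteq B_{p,q}$ is also a $(p,q)$-pinwheel, so its core has a chart of the same form. I would first arrange that the two local models agree exactly, not merely up to a chart. To do this, I would use the appendix-style analysis of the ATF near the branch cut, where the circle over a point of the cut together with the $p$ flanges is given in action--angle coordinates. The goal is to produce a symplectomorphism $\phi_0$ from a neighbourhood of the core of $\Lvis$ to $U$ that carries the visible flanges onto the model flanges. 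Composing the two Darboux charts gives this directly once both pinwheels are written in the normal form of \cref{def:pinwheel_prelim}(3). The subtlety is to check that the visible pinwheel genuinely meets its core in this standard way and not just up to homotopy. I would verify this by an explicit computation in the local ATF model of a neighbourhood of the branch cut.

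\emph{Stage 2: the disc.} Outside a collar of the core, both pinwheels are embedded Lagrangian discs whose boundaries are $(p,q)$-torus-knot-like circles on the flanges. The map $\phi_0$ already identifies these pieces near the boundary. I would extend $\phi_0$ along the discs using a relative Weinstein neighbourhood theorem. First choose a diffeomorphism of the open discs that agrees with $\phi_0$ near the collar. Then extend it to a symplectic embedding of tubular neighbourhoods, via identifications with neighbourhoods of zero sections of cotangent bundles, compatible with $\phi_0$ on the overlap. Finally, use a Moser argument relative to the region where $\phi_0$ is already symplectic; this step is standard because the disc is contractible, so relative $H^2$ vanishes.

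\emph{Conclusion.} Gluing the two pieces gives a symplectic embedding $\psi$ of some open neighbourhood $N$ of $\Lvis$ in $B_{p,q}$ into $X$, with $\psi(\Lvis)=L_{p,q}$. Since $B_{p,q}$ retracts onto $\Lvis$ via the Liouville flow, the pin-ball $B_{p,q}(\epsilon)$ lies inside $N$ for $\epsilon$ small. I expect the main obstacle to be the compatibility in Stage 1: the overlap region where the flange collars meet the disc is not simply connected near the core, so the gluing there needs care. I would handle it by making the Moser isotopy in Stage 2 fixed on a whole neighbourhood of the core region.
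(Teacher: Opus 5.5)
Your argument is correct, but it follows a different route from the paper's.

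\textbf{Your route.} You prove a Weinstein-type uniqueness statement for pinwheel neighbourhoods, essentially Khodorovskiy's theorem, by comparing $L_{p,q}$ directly with the model $\Lvis\subseteq B_{p,q}$. The normal form of \cref{def:pinwheel_prelim}(3) gives $\phi_0$ with $\phi_0\circ f^{\vis}=f$ on a collar. A relative Weinstein/Moser argument extends this over the capping disc. The conformal Liouville flow then shrinks a pin-ball into the resulting neighbourhood.

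\textbf{The paper's route.} It builds the almost toric fibration directly on a neighbourhood of the given pinwheel.
\begin{enumerate}
\item It takes the visible fibration near the core supplied by \cref{lemma:minimal=good}, and a toric fibration on a Weinstein neighbourhood of the capping disc.
\item It performs a nodal trade on the disc piece, invoking \cref{th:Eliashberg_Polterovich} to know that the disc produced by the trade can be taken to be the original one.
\item It glues the two fibrations.
\item It uses a Groman--Varolgunes nodal slide to push the node across the region where the fibres are only annuli, arriving at base $\ATF_{p,q}(\epsilon)$ with $L_{p,q}$ visible.
\end{enumerate}

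\textbf{Comparison.} Your route is more elementary: it needs neither the Eliashberg--Polterovich theorem nor a nodal slide in a non-standard setting, because visibility is transported from the model rather than constructed. What it costs is two facts about the model that the paper asserts but you must verify. First, $\Lvis$ is itself in good normal form; \cref{lemma:minimal=good} also gives this. Second, $\Lvis$ coincides with the Liouville skeleton $\Lstd$, so that $\phi^{-T}$ realises $(B_{p,q}(1),e^{-T}\omega)\cong B_{p,q}(e^{-T})$ inside your neighbourhood with the visible pinwheel of $B_{p,q}(e^{-T})$ going to $\Lvis$. The paper's route, in exchange, exhibits the fibration intrinsically around an arbitrary pinwheel.

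\textbf{Two points to make explicit when you write it up.}
\begin{itemize}
\item In Stage 2, the initial diffeomorphism must also have symplectic linearization along the disc and match $d\phi_0$ over the collar. This is possible because symplectic bundle maps extending $dg$ and preserving the Lagrangian form an affine space of symmetric shears, so the extension exists.
\item The glued map is injective on a small enough neighbourhood by compactness, since it is a local diffeomorphism that is injective on $\Lvis$.
\end{itemize}
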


From the dual point of view, one can also construct pinwheels using the Lefschetz-type fibration on $B_{p,q}$.

\begin{example}
    Consider the Lefschetz fibration $\tilde{\pi}:A_{p-1}\rightarrow \C$, as introduced in \cref{subsec:Lefschetz_fibration}.
    The fibration has $p$ critical points, so there are also $p$ vanishing thimbles.
    These are just $p$ Lagrangian discs with common boundary on the central cylinder $K_{p-1}$.
    As shown in \cite[Proposition 5.1.9]{Kar18}, one can take these thimbles to consist of the Lagrangian skeleton of the standard Liouville vector field.
    Passing to the quotient, these Lagrangian discs are identified, and since the central cylinder is preserved, one obtains a Lagrangian $(p,q)$-pinwheel in $B_{p,q}$, with its core circle on $K_{p,q}$.
    This is shown in \cref{fig:Vanishing_thimbles}.
    
    Since the vanishing thimbles in $A_{p-1}$ are visible in the almost toric fibration, the $(p,q)$-pinwheel constructed via the quotient above is also visible for the standard almost toric fibration on $B_{p,q}$.
    The visibility statements are explained in detail in \cite[Remark 7.10]{Ev23:Book}.
\end{example}

\begin{definition}[Standard pinwheel]
    The Lagrangian $(p,q)$-pinwheel resulting after taking the quotient of the Lagrangian skeleton of $A_{p-1}$ will be denoted by $L^{\std}_{p,q}\subseteq B_{p,q}$.
\end{definition}

\begin{remark}
    The standard pinwheel $L^{\std}_{p,q}\subseteq B_{p,q}$ and the visible pinwheel $L^{\vis}_{p,q}\subseteq B_{p,q}$ coincide. 
    They are the same Lagrangian pinwheel from two different viewpoints.
    See for example \cite{Ev24:KIAS}.
\end{remark}

\begin{figure}[htb]
  \begin{center}   
\begin{tikzpicture}
        \begin{scope}[shift={(4.5,0)}]
            \filldraw[fill=lightgray,opacity=0.5,draw=none] 
                (1,0.1) -- (7,0.1) -- (7.5,1.7) -- (0.5,1.7) -- cycle;
            \node at (0.6,0.5) {$\mathbb{C}$};
            \conic{(4,3)}
            \nodalconic{(6,3)}
            \draw[->] (6,2.1) -- (6,1.1);
            \draw[->] (4,2.1) -- (4,1.1);
            \draw[dotted]
                (2,1) arc [x radius=2,y radius=0.5,start angle=180,end angle=0];
            \draw[dotted]
                (2,1) arc [x radius=2,y radius=0.5,start angle=-180,end angle=0];
            \coordinate (a) at (6,1);
            \draw (a) node[crosshor]{};
            \coordinate (z) at (4,1);
            \draw[red] (a) -- (4,1);
            \node[right=1pt] at (a) {\tiny $1$};
            \draw (z) node[crosshor]{};
            \draw
                (3.75,3) arc[x radius=0.25,y radius=0.05,start angle=-180,end angle=0];
            \draw[dashed]
                (3.75,3) arc[x radius=0.25,y radius=0.05,start angle=180,end angle=0];
            \draw[red]
                (4,2.95) arc[x radius=0.25,y radius=0.05, start angle=270, end angle=0]
                (4,3.05) arc[x radius=0.25,y radius=0.05, start angle=90, end angle=-90]
                ;
            \draw[red]
                (5,3.1) arc[x radius=0.05,y radius=0.1, start angle=90,end angle=-90];
            \draw[red,densely dashed]
                (5,2.9) arc[x radius=0.05,y radius=0.1, start angle=270,end angle=90];
            \draw[red]
                (4,3.05) to[out=8,in=180] (5,3.1) arc[x radius=1,y radius=0.1, start angle=90,end angle=-90] to[out=180,in=-8] (4,2.95);
            \fill[red, opacity=0.4]
                (3.75,3) arc[x radius=0.25,y radius=0.05, start angle=180, end angle=90] 
                to[out=8,in=180] (5,3.1) arc[x radius=1,y radius=0.1, start angle=90,end angle=-90]
                to[out=180,in=-8] (4,2.95) arc[x radius=0.25,y radius=0.05, start angle=270, end angle=0]
                -- cycle;
            \fill (4,1) circle (0.6 pt) node[below right=-2pt] {\tiny $0$};
            \path[postaction={decorate}, decoration={markings, mark=at position 0.10 with {\arrow[scale=0.8]{>}}, mark=at position 0.43 with {\arrow[scale=0.8]{>}}, mark=at position 0.76 with {\arrow[scale=0.8]{>}}}]
            (4,3) ellipse[x radius=0.25,y radius=0.05];
        \end{scope}

        \draw[->] (3,3) -- (5,3);
        \node at (4,3) [above] {\small $p:1$};
        
        \begin{scope}[shift={(-4.5,0)}]
            \filldraw[fill=lightgray,opacity=0.5,draw=none] 
                (1,0.1) -- (7,0.1) -- (7.5,1.7) -- (0.5,1.7) -- cycle;
            \node at (0.6,0.5) {$\mathbb{C}$};
            \conic{(4,3)}
            \nodalconic{(2.3,{3-sqrt(3)/4})}
            \nodalconic{(6,3)}
            \nodalconic{(3,{3+sqrt(3)/4})}
            \draw[->] (4,2.1) -- (4,1.1);
            \draw[->] (3,{2.1+sqrt(3)/4}) -- (3,{1.1+sqrt(3)/4});
            \draw[->] (6,2.1) -- (6,1.1);
            \draw[->] (2.3,{2.1-sqrt(3)/4}) to[out=270, in=135] (2.9,{1.1-sqrt(3)/4});
            \draw[dotted]
                (2,1) arc [x radius=2,y radius=0.5,start angle=180,end angle=0];
            \draw[dotted]
                (2,1) arc [x radius=2,y radius=0.5,start angle=-180,end angle=0];
            \coordinate (a) at (6,1);
            \coordinate (b) at (3,{1+sqrt(3)/4});
            \coordinate (c) at (3,{1-sqrt(3)/4});
            \draw[red] (a) -- (4,1);
            \draw[red] (b) -- (4,1);
            \draw[red] (c) -- (4,1);
            \node[right=1pt] at (a) {\tiny $1$};
            \node[xshift=-5.5pt, yshift=3.5pt]  at (b) {\tiny $\zeta$};
            \node[below left=-2pt]  at (c) {\tiny $\zeta^2$};
            \draw (a) node[crosshor]{};
            \draw (b) node[crosshor]{};
            \draw (c) node[crosshor]{};
            \draw[red]
                (3.75,3) arc[x radius=0.25,y radius=0.05,start angle=-180,end angle=0];
            \draw[red, dashed]
                (3.75,3) arc[x radius=0.25,y radius=0.05,start angle=180,end angle=0];
            \reddiscfromcentral{-1}{sqrt(3)/4}
            \reddiscfromcentral{-1.7}{-sqrt(3)/4}
            \reddiscfromcentral{2}{0}
            \fill[red, opacity=0.6]
                (3.75,3) arc[x radius=0.25,y radius=0.05,start angle=-180,end angle=0] -- cycle;
            \fill[red, opacity=0.6]
                (3.75,3) arc[x radius=0.25,y radius=0.05,start angle=180,end angle=0] -- cycle;
            \fill (4,1) circle (0.6 pt) node[below right=-2pt] {\tiny $0$};
        \end{scope}
    \end{tikzpicture}
    \caption{The Lefschetz fibrations on $A_{p-1}$ and on $B_{p,q}$. Also shown are the vanishing thimbles and vanishing paths.}
    \label{fig:Vanishing_thimbles}
  \end{center}
\end{figure}

\subsection{Pintwists}
\label{subsec:pin_twist}

We now introduce the map $\tau_{p,q}\in \symp_c(B_{p,q})$ which will act as the analogue of the Dehn--Seidel twist about a Lagrangian sphere.
It was originally defined by Buck \cite[Proposition 5.4]{Bu25}, adapting arguments from \cite{Se20}, and we will follow his approach.
Since $\tau_{p,q}$ has only recently been introduced, we repeat the general strategy of the definition for the benefit of the reader, and refer to \cite{Bu25} for details.
First, we define a twist $\tau_{p-1}\in \symp_c(A_{p-1})$ on $A_{p-1}$ and then push it down to $\symp_c(B_{p,q})$ via the universal $p$-fold covering.
Therefore, most properties of $\tau_{p,q}$ will be derived from those of $\tau_{p-1}$.

For a thorough discussion of how one defines the symplectic monodromy of smoothings of singularities, we refer to \cite{Bu25}, as well as \cite[Section 6.3]{McDSa:Intro} for a more introductory discussion of symplectic connections.

\begin{definition}\label{def: pintwist}
    The map $\tau_{p-1}\in \symp_c(A_{p-1})$ is defined to be the symplectic monodromy map associated to the $A_{p-1}$ singularity.
\end{definition}

\begin{remark}
    Recall that a priori the symplectic monodromy of the fibration $(z_1,z_2,z_3)\rightarrow z_1z_2-z_3^p$ will have non-compact support.
    Seidel's construction in \cite{Se20} uses a cut-off function to produce the compactly supported map $\tau_{p-1}$.
    Also, notice that $\tau_{p-1}$ is very far from generating $\symp_c(A_{p-1})$.
    Indeed, as was shown in \cite{Ev14} and \cite{Wu14}, $\symp_c(A_{p-1})$ is weakly homotopy equivalent to the braid group with $p$ strands.
\end{remark}

Since the $\frac{1}{p}(1,-1,q)$-action, as defined in \eqref{eq:action_weights}, is unitary, it preserves the horizontal connection, and therefore $\tau_{p-1}$ is equivariant with respect to this action and it descends to $B_{p,q}$.

\begin{definition}[Pintwist]
    The \textit{pintwist} $\tau_{p,q}\in \symp_c(B_{p,q})$ is the map induced by ${\tau}_{p-1}$ after taking the quotient of $A_{p-1}$.
\end{definition}

\begin{remark}
    The pintwist can also be understood intrinsically: from the perspective of \cref{remark: Bpq as smoothed cqs}, the pintwist is the monodromy of the smoothing of the $\frac{1}{p^2}(1,pq-1)$ cyclic quotient singularity.
\end{remark}

We now collect from \cite[Section 5.2]{Bu25} the main properties of the pintwist that we will need later.
Points $(1)$ and $(3)$ follow by direct adaptations of arguments in \cite[Lemma 5.6]{Bu25}. 

\begin{lemma}[\normalfont{Basic properties of pintwists}]\label{lem: basic prop pintwists}
    Let $\tau_{p-1}: A_{p-1}\to A_{p-1}$ be the symplectic monodromy map and let $\tau_{p,q}:B_{p,q}\rightarrow B_{p,q}$ be the pintwist.  
    \begin{enumerate}
        \item 
            The map $\tau_{p-1}$ preserves the central cylinder $K_{p-1}$ and $\tau_{p-1}|_{K_{p-1}}=\tau$, where $\tau\in \symp_{c}(K_{p-1})$ is the usual Dehn twist.
        \item 
            The pintwist preserves the central cylinder $K_{p,q}$.
            In particular, $\tau_{p,q}\vert_{K_{p,q}}$ is isotopic to $\tau^p $, where $\tau\in \symp_c(K_{p,q})$ is the usual Dehn twist.
        \item 
            The standard pinwheel $\Lstd$ is Lagrangian, and thus Hamiltonian, isotopic to $\tau_{p,q}( \Lstd )$.
        \end{enumerate}
\end{lemma}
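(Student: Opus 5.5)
The plan is to prove the three points in order, working on the universal cover $A_{p-1}$ and pushing down along the $\Z_p$-quotient.

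\textbf{Point (1).} By Remark~\ref{rmk: central cylinders are smoothings}, the smoothing $\{z_1z_2=z_3^p+t\}$ restricts on the $\Z_p$-invariant hyperplane $\{z_3=0\}$ to the smoothing $\{z_1z_2=t\}$ of the nodal conic. The symplectic connection on the total space of the family $(z_1,z_2,z_3)\mapsto z_1z_2-z_3^p$ is the $\omega_{\std}$-orthogonal complement of the fibres. I will first check that at points of $\{z_3=0\}$ this horizontal lift is tangent to $\{z_3=0\}$. This holds because the gradient of $z_1z_2-z_3^p$ has vanishing $z_3$-component there (for $p\ge2$). It follows that parallel transport around the loop $t=e^{i\theta}$ preserves $K_{p-1}$ and restricts to the parallel transport of the conic family $z_1z_2=t$, whose monodromy is the model Dehn twist on $T^*S^1$. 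Seidel's cut-off, used to make $\tau_{p-1}$ compactly supported, is a function of the moment map $|z_1|^2-|z_2|^2$ type radial coordinate. I will choose it compatibly so that its restriction to $K_{p-1}$ is exactly the standard compactly supported Dehn twist $\tau$. I expect this compatibility of the cut-off to be the main technical point, and will follow Buck's Lemma 5.6 closely.

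\textbf{Point (2).} The pintwist $\tau_{p,q}$ is induced from the equivariant map $\tau_{p-1}$, and $K_{p,q}=K_{p-1}/\Z_p$, so $\tau_{p,q}$ preserves $K_{p,q}$. For the restriction, I will identify $K_{p-1}\cong T^*S^1$ with the $\Z_p$-action given by rotation of the $S^1$ factor, since $\zeta$ acts on the conic by $(\zeta z_1,\zeta^{-1}z_2)$. The quotient map is then the $p$-fold covering $S^1\to S^1$, extended to cotangent bundles; it multiplies the cotangent coordinate by $p$ (equivalently, it rescales the symplectic form). A Dehn twist upstairs is the time-one map of a Hamiltonian of the form $h(|\xi|)$ with total twisting one full turn along the core. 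Pushing it down, the core circle downstairs is traversed once while the upstairs twist winds once around the $p$-fold cover, which gives $p$ turns downstairs. Hence the induced map is isotopic in $\symp_c(K_{p,q})$ to $\tau^p$. To make this precise, I will compute the variation (flux) on the arc class connecting the two ends: upstairs the variation of the arc is the core class, and the image of the arc downstairs lifts to $p$ arcs. Since $\symp_c(T^*S^1)\simeq\Z$ is detected by this variation, this settles point (2).

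\textbf{Point (3).} The Lagrangian skeleton of $A_{p-1}$ consists of the $p$ vanishing thimbles over the segments from $0$ to the roots of unity. The monodromy $\tau_{p-1}$, being a global parallel transport around a large loop, can be isotoped through equivariant compactly supported symplectomorphisms to a map that acts as a rotation by $2\pi/p$-steps on the base. More directly, I will consider the family of thimble configurations obtained by parallel transporting the skeleton along the rotation $z_3\mapsto e^{i\theta}z_3$ of the critical values. For each $\theta$ this family consists of $\Z_p$-invariant unions of $p$ Lagrangian thimbles meeting on $K_{p-1}$, and it descends to a path of Lagrangian $(p,q)$-pinwheels in $B_{p,q}$. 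At $\theta=2\pi$ it lands on $\tau_{p,q}(\Lstd)$.

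It remains to upgrade this Lagrangian isotopy to a Hamiltonian one. Since $H^1(P_p;\R)=0$ (because $H_1(P_p)=\Z_p$), the flux vanishes. The standard argument, extending the isotopy via the Khodorovskiy neighbourhood theorem (\cref{thm:loc_vis}) applied in families and then integrating, produces a compactly supported Hamiltonian isotopy.
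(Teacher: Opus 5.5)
Your arguments for (1) and (2) are essentially the paper's. For (1), the paper also uses that the symplectic connection of $z_1z_2-z_3^p$ restricts to the subfamily $\{z_3=0\}$; your observation that $\partial_{z_3}(z_1z_2-z_3^p)$ vanishes there is exactly the reason. For (2), the paper likewise obtains $\tau^p$ from the $p$-fold covering $K_{p-1}\to K_{p,q}$; your variation computation just makes this explicit.

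For (3) your route is genuinely different from the paper's. The paper stays in a single fibre and works in the base of the Lefschetz fibration. Following Buck, the pintwist covers the central twist of the disc, which fixes the vanishing path $\gamma$, so the thimble $\Lstd$ over $\gamma$ is carried to a Lagrangian isotopic one. Banyaga's isotopy extension, together with vanishing rational cohomology, then gives a Hamiltonian isotopy. Your discarded first sentence gestures at this route.

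You instead work in the base $t$ of the smoothing. You use only that the skeleton $S_\theta$ of $\{z_1z_2-z_3^p=e^{i\theta}\}$ varies continuously, and you transport it back by $\Phi_\theta^{-1}$. This is softer and more general: it needs no description of the monodromy as a lift of a base mapping class, and it works for any family of Weinstein fibres. The paper's route, in exchange, gives a concrete base picture of $\tau_{p,q}$ that fits Buck's framework.

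Two points need care.

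\textbf{The endpoint.} $\Phi_{2\pi}$ is the raw parallel transport, which exists globally since $|\nabla(z_1z_2-z_3^p)|$ is bounded below on the fibres over the unit circle. You are silently using that Seidel's compactly supported $\tau_{p-1}$ agrees with $\Phi_{2\pi}$, up to a symmetry of $S_0$, on the bounded region swept out by the transported skeleta. This can be arranged by cutting off outside that region, but it should be said.

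\textbf{The Hamiltonian upgrade.} As written, it invokes a parametric version of \cref{thm:loc_vis} that the paper does not prove, and \emph{integrating} is not an argument for a singular Lagrangian. Your construction makes this unnecessary. Let $R_\theta(z_1,z_2,z_3)=(e^{-i\theta}z_1,z_2,e^{-i\theta/p}z_3)$; this is a unitary map from $\{z_1z_2-z_3^p=e^{i\theta}\}$ to $A_{p-1}$, it commutes with the $\Z_p$-action, and it carries $S_\theta$ to $S_0$. Then the maps $\Psi_\theta=\Phi_\theta^{-1}\circ R_\theta^{-1}$ form a $\Z_p$-equivariant symplectic isotopy of $A_{p-1}$ with $\Psi_0=\mathrm{id}$ and $\Psi_\theta(S_0)=\Phi_\theta^{-1}(S_\theta)$. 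This isotopy descends to $B_{p,q}$ and is Hamiltonian because $H^1(B_{p,q};\R)=0$. Cutting off its generating Hamiltonians outside a neighbourhood of the compact swept set gives the required compactly supported Hamiltonian isotopy.
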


\begin{proof}
    The first point follows from the fact that the natural symplectic connection (coming from the symplectic form on $\C^3$) of the fibration $(z_1,z_2,z_3)\rightarrow z_1z_2-z_3^p$ restricts to a symplectic connection on the sub-fibration given by $K_{p-1}^w=\{z_1z_2-z_3^p=w\}\cap \{z_3=0\}$, where $K_{p-1}^1$ is the central cylinder.
    Therefore, the symplectic monodromy $\tau_{p-1}$ induces a symplectic monodromy of cylinders, which can be seen to be the usual monodromy of the smoothing of the double line to the smooth conic.
    
    The second point follows by inspection of the commutative diagram:
    \[
    \begin{tikzcd}
    	{(A_{p-1},K_{p-1})} &&& {(A_{p-1},K_{p-1})} \\
    	\\
    	{(B_{p,q},K_{p,q})} &&& {(B_{p,q},K_{p,q})}
    	\arrow["{\tau_{p-1}}"{description}, from=1-1, to=1-4]
    	\arrow["{\frac{1}{p}(1,-1,q)}"{description}, from=1-1, to=3-1]
    	\arrow["{\frac{1}{p}(1,-1,q)}"{description}, from=1-4, to=3-4]
    	\arrow["{\tau_{p,q}}"{description}, from=3-1, to=3-4]
    \end{tikzcd}
    \]
    Since $\tau_{p-1}$ preserves $K_{p-1}$, after taking the quotient we see that $\tau_{p,q}$ preserves $K_{p,q}$.
    The map $\tau_{p-1}|_{K_{p-1}}$ twists a latitude of $K_{p-1}$ once.
    Since $K_{p-1}\rightarrow K_{p,q}$ is a $p$-fold cover, $\tau_{p,q}|_{K_{p,q}}$ induces $p$ twists of the latitude of $K_{p-1}$ and is therefore a $p$th power of the usual Dehn twist.
    
    The last point follows verbatim from the proof of \cite[Lemma 5.6]{Bu25}, by observing that the arguments work for a vanishing thimble in $B_{p,q}$ over any path $\gamma$ in the base, and in particular, the pinwheel $L^{\std}_{p,q}$.
    Recall that $L^{\std}_{p,q}$ is the vanishing thimble of the Lefschetz fibration on $B_{p,q}$, which lives over a path $\gamma$ connecting the values $0$ and $1$.
    The \textit{central twist} $\tau$ defined in \cite[Section 5.1]{Bu25}, which is a mapping class of the disc with a marking at $1$, fixes $\gamma$; therefore $\tau_{p,q}(L^{\std}_{p,q})$ is Lagrangian isotopic to~$L^{\std}_{p,q}$. See also \cref{fig:Vanishing_thimbles}.
    
    The Lagrangian isotopy may be upgraded to a Hamiltonian one by appealing to Banyaga's Symplectic isotopy extension theorem (\cite{Ba78}, see also \cite[Theorem 3.3.2]{McDSa:Intro}), since all positive-degree rational cohomology groups of $B_{p,q}$ and of its $(p,q)$-pinwheels vanish.\footnote{One could also show that the usual Flux arguments for Lagrangian isotopies carry through, thus only using that $H^1(L_{p,q},\Q)=0$.}
\end{proof}

\subsection{Resolutions, rational blow-ups and neck-stretching}
\label{subsec:res_blow_neck}

This subsection closely follows the setup of \cite[Sections 2 and 3]{ABEHS25}.

Let $0<a<n$ be two coprime integers and consider the action of the group of $n$th roots of unity $\bm{\mu}_n$ on $\C^2$ with weights $(1,a)$, i.e.\ $\zeta \cdot (z_1,z_2)=(\zeta z_1,\zeta^a z_2)$ for $\zeta\in \bm{\mu}_n$.
Denote the quotient of $\C^2$ by this action by $\C^2/_{1/n(1,a)}$.
This quotient is a toric orbifold, whose singular point is modelled on the cyclic quotient singularity $\frac{1}{n}(1,a)$.
Since the action is K\"ahler, the quotient is an orbifold which admits a K\"ahler form.
Depending on the context, we will focus more on the complex structure or the symplectic form.
\textit{Wahl singularities} are just the family of cyclic quotient singularities of the form $\frac{1}{p^2}(1,pq-1)$.

\begin{remark}\label{remark: Bpq as smoothed cqs}
    By a deep result \cite{KoShBa88}, Wahl singularities are the cyclic quotient singularities which admit $\mathbb{Q}$-Gorenstein smoothings whose Milnor fibre is a rational homology ball.
    Such a Milnor fibre is the rational homology ball $B_{p,q}$ introduced above, and the $(p,q)$-pinwheel is the vanishing cycle of the smoothing.
\end{remark}

Denote $\co_{p,q}:=\C^2/_{1/p^2(1,pq-1)}$ and recall that its moment image $\Delta_{p,q}$ is the wedge bounded by the vectors $(0,1)$ and $(p^2,pq-1)$, i.e.\ the same as $\ATF_{p,q}$ without the branch cut and the node. 
This is discussed, for example, in \cite[Chapter 3]{Ev23:Book}.
By analogy with the almost toric base diagram $\ATF_{p,q}(\alpha,\beta)$, define the compact toric orbifold $\co_{p,q}(\alpha,\beta)$ via its moment image, which is shown in \cref{fig:toricOpq} (a), for positive real numbers $\alpha$ and $\beta$.
Since we will be interested mainly in uniqueness questions of Lagrangian pinwheels, we will fix for $\lambda >0$ the "shape" $\Delta_{p,q}(\lambda,\lambda)$ in the moment image $\Delta_{p,q}$ and denote the corresponding compact orbifold by $\cb_{p,q}(\lambda)$, whose moment image is shown in \cref{fig:toricOpq}~(b).
\begin{figure}[htb]
    \begin{center}   
        \begin{tikzpicture}[scale=0.68]
            \begin{scope}[shift={(-11,0)}]
                \node at (6,0) {$(a)$};
                \filldraw[lightgray,opacity=0.75] (0,1) -- (0,0) -- (6,1.5) -- cycle;
                \draw[thick] (0,1) -- (0,0) -- (6,1.5);
                \draw [decorate,decoration={brace,amplitude=5pt,raise=1ex}] (0,0) -- (0,1) node[midway,xshift=-3ex]{\footnotesize $\beta$};
                \draw [decorate,decoration={brace,amplitude=5pt,raise=1ex}] (6,1.5) -- (0,0) node[midway,xshift=1.5ex,yshift=-2.5ex]{\footnotesize $\alpha$};
                \node at (6,1.5) [above] {\footnotesize $(\alpha p^2,\alpha(pq-1))$};
                \node at (0,1) [above] {\footnotesize $(0,\beta)$};
                \draw[thick,dash pattern=on 7pt off 3pt] (0,1) -- (6,1.5);
            \end{scope}
            \begin{scope}[shift={(-2,0)}]
                \node at (6,0) {$(b)$};
                \filldraw[lightgray,opacity=0.75] (0,1.5) -- (0,0) -- (6,1.5) -- cycle;
                \filldraw[gray,opacity=0.75] (0,1.5) -- (0,1) -- (4,1) -- (6,1.5) -- cycle;
                \draw[thick] (0,1.5) -- (0,0) -- (6,1.5);
                \draw[thick,dash pattern=on 7pt off 3pt] (0,1.5) -- (6,1.5);
                \node at (6,1.5) [above] {\footnotesize $(\lambda p^2,\lambda(pq-1))$};
                \node at (0,1.5) [above] {\footnotesize $(0,\lambda)$};
            \end{scope}
            \begin{scope}[shift={(7,-0.5)}]
                \node at (6,0) {$(c)$};
                \filldraw[lightgray,opacity=0.75] (0,1.5) -- (0,0.5) -- (2,0.5) -- (6,1.5) -- cycle;
                \draw[thick] (0,1.5) -- (0,0.5) -- (2,0.5) -- (6,1.5);
                \draw[thick,dash pattern=on 7pt off 3pt] (0,1.5) -- (6,1.5);
            \end{scope}
        \end{tikzpicture}
        \caption{(a) The moment image $\Delta_{p,q}(\alpha,\beta)$ of the compact toric orbifold $\co_{p,q}(\alpha,\beta)$.
        (b) The moment image of $\mathcal{B}_{p,q}(\lambda)$ with a region used to define the neck-stretching setup shaded in darker gray. (c) The moment image for a choice of Kähler structure on $\widetilde{U}_\std$. All of the figures are illustrations of the situation in the $(p,q)=(2,1)$ case.}
        \label{fig:toricOpq}
    \end{center}
\end{figure}

Cyclic quotient surface singularities admit a unique \textit{minimal resolution}, meaning that any other resolution is obtained from the minimal one via a sequence of complex blow-ups.
To describe the minimal resolution of $\C^2/_{1/n(1,a)}$, it is enough to prescribe the self-intersections of the spheres in the chain of exceptional spheres $\mathcal{C}$ introduced by the resolution.
A minimal resolution is given by a chain of spheres $\mathcal{C}=(C_1,\dots, C_m)$ with self-intersections $C_i^2=-b_i$, where $[b_1,\dots,b_m]$ is the \textit{Hirzebruch–Jung continued fraction
expansion} (HJ continued fraction in the following) of $n/a$, i.e.\
\[[b_1,\ldots,b_m]\coloneqq b_1-
\frac{1}{b_2 - \frac{1}{\ddots - \frac{1}{b_m}}}=\frac{n}{a}.\]
In the case of Wahl singularities the HJ continued fraction of $p^2/(pq-1)=[b_1,\ldots,b_m]$ is called a \textit{Wahl chain}.
If the context is clear, we will also refer to the chain of rational curves appearing in the minimal resolution of a Wahl singularity as a \textit{Wahl chain}.
Before we move on to discuss the rational blow-up and the neck-stretching setup that we are going to use later, let us record some useful algebraic properties of HJ continued fractions.
Recall that the \textit{dual} HJ continued fraction of $\frac{p}{q}=[x_1,\ldots,x_r]$ is given by $\frac{p}{p-q}=[y_1,\ldots,y_s]$.

\begin{lemma}[\normalfont{\cite[Proposition-Definition 2.3]{UrZu25:BirMarkov} and \cite[Section 1]{Ur25:Lecture}}]
\label{lma:HJ_facts}
    Assume that $\frac{p}{q}=[x_1,\ldots,x_r]$ and $\frac{p}{p-q}=[y_1,\ldots,y_s]$.
    Denote by $[q]^{-1}$ the unique integer $0<[q]^{-1}<p$ that satisfies $[q]^{-1}q\equiv 1 \mod{p}$.
    \begin{enumerate}[label=\roman*)]
        \item 
            We have $ [x_1,\ldots,x_r,1,y_s,\ldots,y_1]=0$ and $\frac{p}{[q]^{-1}}=[x_r,\ldots,x_1]$.
        \item 
            The Wahl chain, as well as its dual, can be computed from the HJ continued fractions of $\frac{p}{q}$ and $\frac{p}{p-q}$ via
            \begin{equation}\label{lma:Hj_facts_p2}
                \frac{p^2}{pq-1}=[x_1,\ldots,x_{r-1},x_r+y_s,y_{s-1},\ldots,y_1] 
            \end{equation}
            and
            \begin{equation}\label{lma:Hj_facts_p2_dual}
                \frac{p^2}{p(p-q)+1}=[y_1,\ldots,y_s,2,x_r,\ldots,x_1].
            \end{equation}
        \item 
            We have
            \begin{equation}\label{eq:HJ_facts_matrices}
                \begin{pmatrix}
                    p & -[q]^{-1}\\
                    q & \frac{1-q[q]^{-1}}{p}
                \end{pmatrix}
                =
                \begin{pmatrix}
                    x_1 & -1\\
                    1 & 0
                \end{pmatrix}
                \dots 
                \begin{pmatrix}
                    x_r & -1\\
                    1 & 0
                \end{pmatrix}
                .
            \end{equation}
    \end{enumerate}  
\end{lemma}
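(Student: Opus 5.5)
The plan is to prove all three items inside the matrix calculus of item iii). Item iii) comes first, and the other two then follow from explicit matrix identities. Write
\[
M(x):=\begin{pmatrix} x & -1\\ 1 & 0\end{pmatrix},\qquad M(x_1,\dots,x_r):=M(x_1)\cdots M(x_r).
\]

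\emph{Step 1 (item iii).} Induct on $r$, using $[x_1,\dots,x_r]=x_1-1/[x_2,\dots,x_r]$. Assume $M(x_2,\dots,x_r)$ has first column $(p',q')^T$ with $p'/q'=[x_2,\dots,x_r]$ in lowest terms. Then $M(x_1,\dots,x_r)$ has first column $(x_1p'-q',\,p')^T$, and the ratio of these entries is $[x_1,\dots,x_r]=p/q$. Since $\det M=1$ and $\gcd(p,q)=1$, this first column is $(p,q)^T$.

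Write the second column as $(-s,t)^T$. The determinant condition $pt+qs=1$ forces $s\equiv q^{-1}\pmod p$. To get $0<s<p$, run the same induction: with all $x_i\ge2$, the top row $(p,-s)$ satisfies $p>s\ge0$ by the recursion $p=x_1p'-q'$, $s=p'$, since $s=p'<p$. Then $t=(1-q[q]^{-1})/p$ follows.

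\emph{Step 2 (item i).} For the reversal statement, use $M(x)^T=JM(x)J^{-1}$ with $J=\mathrm{diag}(1,-1)$. This gives
\[
M(x_r,\dots,x_1)=J\,M(x_1,\dots,x_r)^T\,J^{-1},
\]
whose first column is $(p,[q]^{-1})^T$, which is exactly $p/[q]^{-1}=[x_r,\dots,x_1]$.

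For the vanishing statement, use the blow-down identity $M(a)M(1)M(b)=M(a-1)M(b-1)$, which is a direct $2\times2$ check. I would induct on $p$ via the standard recursion for dual fractions. If $x_r=2$, the dual chain ends in $y_s$ with $y_s-1$ belonging to the dual of the shortened fraction; symmetrically if $y_s=2$. Contracting the central $1$ repeatedly reduces the chain to $M(1)$, and $M(1)$ has $[1]=1$. The base case is
\[
[x_1,\dots,x_r,1,y_s,\dots,y_1]=[1,1]=1-1=0.
\]
One could alternatively cite Riemenschneider's point diagram for the combinatorics of the duality.

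\emph{Step 3 (item ii).} Use the identity $M(a)M(0)M(b)=-M(a+b)$, again a direct check. It gives
\[
M(x_1,\dots,x_{r-1},x_r+y_s,y_{s-1},\dots,y_1)=-M(x_1,\dots,x_r)\,M(0)\,M(y_s,\dots,y_1).
\]
By Step 1 and the reversal in Step 2, $M(y_s,\dots,y_1)$ has first column $(p,[p-q]^{-1})^T=(p,p-[q]^{-1})^T$. Multiplying out, the first column of the product is
\[
\pm\bigl(p\cdot p,\; pq-1\bigr)^T
\]
after using $q[q]^{-1}\equiv1$, which gives the Wahl chain formula.

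The dual formula is handled the same way with the middle factor $M(2)$, namely $M(y_1,\dots,y_s)M(2)M(x_r,\dots,x_1)$. Its first column should come out as $(p^2,\,p(p-q)+1)^T$.

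The main obstacle is the bookkeeping in Step 2's duality induction: getting the recursion between $[x_i]$ and $[y_j]$ right when $x_r$ or $y_s$ equals $2$. Another delicate point is keeping track of the sign $\pm$ and the $\bmod\ p$ representatives in Step 3, so that the entries really equal $p^2$ and $pq-1$ rather than merely agree with them up to sign or modulo $p$.
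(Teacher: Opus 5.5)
Your argument is correct in substance, but it takes a different route from the paper. The paper does not prove this lemma; it imports it from the cited references. You instead derive everything from item iii) together with two $2\times 2$ identities, the reversal $M(x)^T=JM(x)J$ and $M(a)M(0)M(b)=-M(a+b)$. This makes the lemma self-contained and turns ii) into a first-column computation in which the sign and the representatives take care of themselves. Using the exact entry $(1-q[q]^{-1})/p$ from iii) and the first column $(p,p-[q]^{-1})^T$ of $M(y_s,\dots,y_1)$, one gets $-M(x_1,\dots,x_r)M(0)(p,p-[q]^{-1})^T=(p^2,pq-1)^T$ on the nose. Similarly, with the first column $(p,[q]^{-1})^T$ of $M(x_r,\dots,x_1)$, one gets $M(y_1,\dots,y_s)M(2)(p,[q]^{-1})^T=(p^2,p(p-q)+1)^T$. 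Both chains have all entries $\ge 2$, so Step 1 identifies these columns with the continued fractions. The usual combinatorial treatment (Riemenschneider duality, blow-downs to $[1,1]$) carries more geometric meaning, but your route does not need it.

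Two points need fixing.

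First, in Step 1 the second-column recursion is misstated. Left multiplication by $M(x_1)$ gives $q=p'$, not $s=p'$, and in general $s\ne p'$: for $5/2=[3,2]$ one has $s=3$ but $p'=2$. You can repair this in either of two ways.
\begin{enumerate}
\item[(a)] Multiply on the right instead. The second column of $M(x_1,\dots,x_{r-1})M(x_r)$ is minus the first column of $M(x_1,\dots,x_{r-1})$, so $s_r=p_{r-1}$ and $p_r=x_rp_{r-1}-s_{r-1}$. By induction $0<s_{r-1}<p_{r-1}$, which gives $0<s_r<p_r$.
\item[(b)] Observe that $JM^TJ$ is the matrix of the reversed chain and has first column $(p,s)^T$. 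Then $0<s<p$ follows from the first-column part of Step 1.
\end{enumerate}

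Second, your inductive proof of the vanishing in i) has two problems.
\begin{enumerate}
\item[(a)] It rests on two unproved facts: that one of $x_r,y_s$ equals $2$, and the recursion for the dual of the shortened fraction.
\item[(b)] Its endpoint is misdescribed. The chain never reduces to $M(1)$, whose value is $1$. It reduces (from $[2,1,2]$ when $p=2$) to $M(1)^2$, whose first column is $(0,1)^T$.
\end{enumerate}
You can drop the induction entirely. We have $M(1)(p,p-[q]^{-1})^T=([q]^{-1},p)^T$ and $M(x_1,\dots,x_r)([q]^{-1},p)^T=(p[q]^{-1}-[q]^{-1}p,\;q[q]^{-1}+1-q[q]^{-1})^T=(0,1)^T$. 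Reading the value off the first column, as you do throughout (the chain contains a $1$), this is exactly $[x_1,\dots,x_r,1,y_s,\dots,y_1]=0$.
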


Since we will mainly be interested in resolutions of $\cb_{p,q}(\lambda)$, let us define a resolution in this case rigorously.

\begin{definition}\label{dfn:resolution}
    A \textit{chain-shaped resolution} of $\mathcal{B}_{p,q}(\lambda)$ is given by a tuple $((\til{U},\til{J}),\pi)$, where $(\til{U},\til{J})$ is a smooth complex manifold and $\pi\colon\til{U}\to \mathcal{B}_{p,q}(\lambda)$ is a holomorphic map that satisfies
    the following property: 
    $\til{U}$ contains a linear chain of embedded rational complex curves $\cc=\{C_1,\ldots,C_m\}$ such that $\pi(\cc)=x$, where $x \in \mathcal{B}_{p,q}(\lambda)$ is the unique orbifold point, and the restriction
    $$
    \til{U}\setminus\cc \to \mathcal{B}_{p,q}(\lambda)\setminus\{x\}
    $$
    is a biholomorphism.
\end{definition}

\begin{remark}
    Since all the resolutions that we will encounter are chain-shaped, a resolution is always understood to be chain-shaped in the following.
\end{remark}

\begin{example}
    The minimal resolution of $\cb_{p,q}(\lambda)$ is just given by the minimal subdivision of the fan spanned by the inward normals to the two edges of $\Delta_{p,q}$.
    This minimal subdivision can be computed by the HJ continued fraction of $p^2/(pq-1)=[b_1,\ldots,b_m]$ and determines the resolution as a normal toric variety, see \cite[Definition 2.1.4]{ABEHS25}.
    We will denote this toric variety by $\til{U}_\std$.
\end{example}

Choosing appropriate additional data consisting of areas for the exceptional spheres in the minimal resolution and a potential function, we can induce a Kähler structure on $\til{U}_\std$.
We will denote the corresponding Kähler form by $\til{\omega}$.
Moreover, the usual moment image of a minimal resolution of $\Delta_{p,q}(\lambda,\lambda)$ obtained by symplectic cuts, as shown in \cref{fig:toricOpq} (c), is the moment image of the symplectic manifold $(\til{U}_\std,\til{\omega})$.\footnote{Of course the moment image will depend on the choices of data on $\til{U}_\std$.}

We will now define the \textit{rational blow-up} in a way that naturally incorporates the neck-stretching setup needed later.
Let $N$ be a neighbourhood of $\partial B_{p,q}(\lambda)$ corresponding to one in $\cb_{p,q}(\lambda)$ as shown in \cref{fig:toricOpq}.
Using such an $N$, we can define a complex structure $J_N$ on $N$ by pulling back the natural complex structure on $\cb_{p,q}(\lambda) \subseteq \co_{p,q}$.
\begin{definition}\label{def:acs_neck}
    Assume that $\iota:B_{p,q}(\lambda) \hookrightarrow (X,\omega)$ is a symplectic embedding. 
    Then define
    $$
    \cj_N:=\{J \in \mathcal{J}_\tau(X,\omega) \mid J|_{\iota(N)} = \iota_*J_N \}
    $$
    to be the space of all $\omega$-tame almost complex structures on $X$ that agree with $\iota_*J_N$ on $\iota(N)$. 
\end{definition}
In particular, the almost complex structures $J \in \cj_N$ are adjusted to the neck $N$ and therefore yield neck-stretching sequences of tame almost complex structures: starting with $J=:J_0 \in \cj_N$ we obtain a neck-stretching sequence $\{J_t\}_t \subseteq \mathcal{J}_\tau(X,\omega)$.\footnote{For details on this, see \cite{BEHWZ03, CieMoh05, ElGiHo00}. Neck-stretching along the contact boundary of normal neighbourhoods of Lagrangian pinwheels is discussed in \cite{Vi16, ABEHS25, EvSm18}.}

Assume that $\iota:B_{p,q}(\lambda) \hookrightarrow (X,\omega)$ is a symplectic embedding and denote $U:= \iota(B_{p,q}(\lambda))$ and $V:= X \setminus U$.
Moreover, let $\overline{U}$ and $\overline{V}$, respectively, be the symplectic completions of $U$ and $V$.
By considering the neck $N$ and its completion along the concave end we obtain a complex manifold $(\overline{N}_-,\overline{J}_N)$ that is isomorphic to a punctured neighbourhood of the orbifold $\co_{p,q}$.
Denote the orbifold obtained by adding the point at negative infinity to $\overline{V}$ by $(\widehat{X},\widehat{J})$.
Note that $(\widehat{N},\widehat{J})$, which we define to be the orbifold obtained by adding the point at negative infinity to $(\overline{N}_-,\overline{J}_N)$, is a copy of $\cb_{p,q}(\lambda)$ inside~$(\widehat{X},\widehat{J})$.

\begin{definition}[Rational blow-up]
    Given a symplectic embedding $\iota: B_{p,q}(\lambda) \hookrightarrow (X,\omega)$, a resolution $\til{X}$ of $\widehat{X}$ is given by exchanging $\widehat{N} \subseteq \widehat{X}$ with the resolution $\til{U}_\std$ of $\cb_{p,q}(\lambda)$.
    The symplectic manifold thereby obtained, $\til{X}=V \cup \til{U}_\std$, is called the \textit{rational blow-up} of $X$ along $\iota$.
\end{definition}

\begin{remark}
    The symplectic form $\til{\omega}$ on the rational blow-up $\til{X}$ coincides with $\omega|_V$ on $V$ and with $\til{\omega}$ in $\til{U}_\std$.
    Moreover, this procedure also respects the almost complex geometry in the sense that $\til{X}$ carries an almost complex structure $\til{J}$ that coincides with $J|_V$ on $V$ and agrees with the integrable complex structure on $\til{U}_\std$.
\end{remark}

The setup we are now in is summarized in the following figure, taken from \cite{ABEHS25}.

\begin{figure}[htb]
  \begin{center}
    \begin{tikzpicture}
      \filldraw[draw=none,fill=lightgray] (0,0)  -- (-0.5,0.5) -- (1.5,0.5) -- (1,0) -- cycle;
      \draw (0,0) -- (-0.5,0.5) to[out=135,in=180] (0.5,2.5) to[out=0,in=45] (1.5,0.5) -- (1,0) to[out=-135,in=0]
      (0.5,-0.5) to[out=180,in=-45] (0,0);
      \node at (0.5,0.25) {\(N\)};
      \draw[decorate,decoration={brace,amplitude=5pt}] (-0.9,0.5) -- (-0.9,2.5) node[midway,xshift=-1em] {\(V\)};
      \draw[decorate,decoration={brace,amplitude=5pt}] (-0.9,-0.5) -- (-0.9,0.5) node[midway,xshift=-1em] {\(U\)};
      \node at (1.7,2.5) {\((X,J_0)\)};
      \begin{scope}[shift={(3.5,0)}]
        \filldraw[draw=none,fill=lightgray] (0.5,-0.5)  -- (-0.5,0.5) -- (1.5,0.5) -- (1,0) -- cycle;
        \draw (0,0) -- (-0.5,0.5) to[out=135,in=180] (0.5,2.5) to[out=0,in=45] (1.5,0.5) -- (1,0) -- (0.5,-0.5) -- cycle;
        \node at (0.5,0.15) {\(\overline{N}_-\)};
        \node at (1.7,2.5) {\((\overline{V},\overline{J})\)};
        \node[white] at (0.5,-0.5) {\(\bullet\)};
      \end{scope}
      \begin{scope}[shift={(7,0)}]
        \filldraw[draw=none,fill=lightgray] (0.5,-0.5)  -- (-0.5,0.5) -- (1.5,0.5) -- (1,0) -- cycle;
        \draw (0,0) -- (-0.5,0.5) to[out=135,in=180] (0.5,2.5) to[out=0,in=45] (1.5,0.5) -- (1,0) -- (0.5,-0.5) -- cycle;
        \node at (1.7,2.5) {\((\ha{X},\ha{J})\)};
        \node (p) at (0.5,-0.5) {\(\bullet\)};
        \node at (0.5,0.15) {\(\ha{N}\)};
        \node at (p) [below] {\(x\)};
      \end{scope}
      \begin{scope}[shift={(10.5,0)}]
        \filldraw[draw=none,fill=lightgray] (1.5,0.5) to[out=-135,in=90] (1.5,-0.5) --
        (1,-0.75) -- (0.5,-0.5) -- (0,-0.75) -- (-0.5,-0.5) to[out=90,in=-45] (-0.5,0.5);
        \draw (-0.5,0.5) to[out=135,in=180] (0.5,2.5)
        to[out=0,in=45] (1.5,0.5) to[out=-135,in=90] (1.5,-0.5) --
        (1,-0.75) -- (0.5,-0.5) -- (0,-0.75) -- (-0.5,-0.5) to[out=90,in=-45] (-0.5,0.5);
        \node at (1.7,2.5) {\((\til{X},\til{J})\)};
        \draw (-0.5-0.2,-0.5+0.1) -- (0+0.2,-0.75-0.1) node
        [pos=0.3,below] {\(C_1\)};
        \draw (0-0.2,-0.75-0.1) -- (0.5+0.2,-0.5+0.1);
        \draw (0.5-0.2,-0.5+0.1) -- (1+0.2,-0.75-0.1);
        \draw (1-0.2,-0.75-0.1) -- (1.5+0.2,-0.5+0.1) node
        [pos=0.8,below] {\(C_m\)};
        \node at (0.5,-0.5) [below] {\(\cdots\)};
        \draw[decorate,decoration={brace,amplitude=5pt}] (2.4,2.5) -- (2.4,0.5) node[midway,xshift=+1em] {\(V\)};
        \draw[decorate,decoration={brace,amplitude=5pt}] (2.4,0.5) -- (2.4,-0.8) node[midway,xshift=1em] {\(\til{U}\)};
      \end{scope}
      \draw[->] (9.8,0) -- (8.5,0) node [midway,above] {\(\pi\)};
      \draw[decoration=snake,decorate,->] (1.5,0) -- (3,0) node
      [midway,above] {stretch};
      \node at (2.25,0) [below] {\(J_t\)};
      \draw[right hook->] (5,0) -- (6.5,0);
    \end{tikzpicture}
    \caption{The relations between the spaces defined in \cref{subsec:res_blow_neck}.}
    \label{fig:spaces}
  \end{center}
\end{figure}

The upshot of this setup is the following lemma for pseudoholomorphic curves in the three manifolds $(\overline{V},\overline{J}), (\ha{X},\ha{J})$ and $(\til{X},\til{J})$.

\begin{lemma}[\normalfont{\cite[Lemma 3.1.7/3.1.8]{ABEHS25}}]
\label{lma:bijections}
    There are bijections between the following collections of objects:
    (a) irreducible finite-energy punctured \(\overline{J}\)-holomorphic curves \(C\) in \(\overline{V}\);
    (b) irreducible orbifold \(\ha{J}\)-holomorphic curves \(\ha{C}\) in \(\ha{X}\); and 
    (c) irreducible \(\til{J}\)-holomorphic curves \(\til{C}\) in \(\til{X}\) other than \(C_1,\ldots,C_m\).
    Moreover:
    \begin{enumerate}[label=\roman*)]
        \item 
            Any irreducible $\til{J}$-holomorphic curve in $\til{X}$ which is not one of the $C_i$ must enter the interior of~$V$.
        \item 
            By choosing $J$ generically on $V$, we can ensure that all curves that enter $V$ are regular so that the only non-regular $\til{J}$-holomorphic curves are amongst the $C_1,\ldots,C_m$.
    \end{enumerate}
\end{lemma}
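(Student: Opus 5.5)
The three correspondences are realised by two explicit operations: compactifying the negative puncture of \(\overline{V}\) by the orbifold point \(x\), and pulling back along the resolution map \(\pi\colon \til{X}\to\ha{X}\). I would prove the lemma by composing these.

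For (a)\(\leftrightarrow\)(b), start with an irreducible finite-energy punctured \(\overline{J}\)-holomorphic curve \(C\subset\overline{V}\). Its negative ends lie in the cylindrical end \(\overline{N}_-\). On \(\overline{N}_-\) the structure \(\overline{J}_N\) is, by construction in \cref{def:acs_neck}, the integrable structure of a punctured neighbourhood of the orbifold point of \(\co_{p,q}\). I will lift each negative end to the uniformising chart \(\C^2\setminus\{0\}\). There it becomes a holomorphic curve in a punctured ball whose energy is finite, so it has finite area near the origin. Removal of singularities, applied either to the lifted Reeb asymptotics or via Bishop/Shiffman for analytic sets of finite area, shows that the lift extends holomorphically across \(0\). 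This gives an orbifold \(\ha{J}\)-holomorphic curve \(\ha{C}\subset\ha{X}\) passing through \(x\).

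Conversely, any irreducible orbifold curve in \(\ha{X}\) meets \(x\) in finitely many local branches. Removing \(x\) leaves a curve in \(\overline{V}\) with finitely many negative punctures. Its energy is finite because the symplectic and contact forms on the cylindrical end are comparable to the Kähler form near \(x\). Irreducibility is preserved in both directions, since removing or adding a finite set of points does not disconnect an irreducible analytic curve.

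For (b)\(\leftrightarrow\)(c), the map \(\pi\) is a biholomorphism away from \(\cc=C_1\cup\dots\cup C_m\). Given \(\ha{C}\), I take the closure of \(\pi^{-1}(\ha{C}\setminus\{x\})\) in \(\til{X}\); this is the proper transform. Near \(\cc\) everything is integrable, since \(\til{U}_\std\) is a complex manifold and \(\pi\) is a holomorphic resolution. The closure is therefore an irreducible analytic curve by Remmert--Stein, and it is not one of the \(C_i\). Conversely, for an irreducible \(\til{J}\)-curve \(\til{C}\neq C_i\), the image \(\pi(\til{C})\) is an orbifold curve. These two constructions are inverse to each other, because curves other than the \(C_i\) are determined by their restriction to the complement of the finite set \(\til{C}\cap\cc\).

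For i), suppose an irreducible \(\til{J}\)-curve \(\til{C}\) is contained in \(\til{U}_\std\). Its image \(\pi(\til{C})\) is then a compact holomorphic curve in \(\cb_{p,q}(\lambda)\), which is a quotient of a ball in \(\C^2\). Lifting to the uniformising chart and using the maximum principle (or the plurisubharmonicity of \(|z|^2\)), this image must be a point. Hence \(\til{C}\subset\cc\), and by irreducibility \(\til{C}\) is one of the \(C_i\).

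For ii), I would use a standard argument for curves passing through an open set where \(J\) is free (McDuff--Salamon, somewhere-injective curves). For a generic choice of \(J|_V\), every simple curve that meets \(V\) is regular. Multiply covered curves are handled as in the cited source, but only simple curves are needed for the statement. I expect the main obstacle to be the analytic care at the orbifold point in the first step. The asymptotic ends converge to Reeb orbits of the lens space \(L(p^2,pq-1)\), which may be multiply covered. One must check that the lift to \(\C^2\) closes up to a genuine holomorphic disc in the orbifold sense, and that the correspondence preserves the number and multiplicity of branches.
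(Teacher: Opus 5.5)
Your plan follows the standard argument for this result: removable singularities in the uniformising chart at $x$, proper transform under $\pi$, absence of compact curves in the orbifold ball, and transversality for simple curves passing through the region where $J$ is free. Parts (a)$\leftrightarrow$(b), i) and ii) are fine as sketched.

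The step that fails as written is the claim that the closure of $\pi^{-1}(\ha{C}\setminus\{x\})$ is analytic ``by Remmert--Stein''. That theorem extends a pure $k$-dimensional analytic set $A\subset M\setminus E$ across an analytic set $E$ only when $\dim E<k$. Here $E=\cc$ and $A$ are both curves. In this equidimensional situation the conclusion is false in general: the graph of $e^{1/z}$ in $\C^2\setminus\{z=0\}$ has non-analytic closure. So knowing the set is analytic off $\cc$ is not enough; you must use that $\ha{C}$ is analytic \emph{at} $x$. Remmert--Stein is in fact the right tool in (a)$\to$(b), where the set being crossed is the point $0$ of the uniformising chart.

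The repair is as follows. Near $x$, $\ha{C}$ is an analytic subset of the normal complex space $\cb_{p,q}(\lambda)$: it is the image under the finite quotient map of the extended lifted discs. Since $\pi$ is holomorphic, $\pi^{-1}(\ha{C})$ is an analytic subset of $\til{U}_\std$. The closure of $\pi^{-1}(\ha{C}\setminus\{x\})$ is the union of the irreducible components of $\pi^{-1}(\ha{C})$ not contained in $\cc$. It is irreducible because $\ha{C}\setminus\{x\}$ is.

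For the converse direction (c)$\to$(b), you should likewise justify that $\pi(\til{C})$ is an orbifold curve near $x$. For example, use Remmert's proper mapping theorem, together with the same lift-after-branched-cover argument near $u^{-1}(\cc)$ that you use at the negative ends. At those ends, Riemann's theorem for bounded holomorphic maps already suffices once you pass to the finite cover of the punctured disc on which the lift to $\C^2\setminus\{0\}$ exists. With these points fixed, the argument is complete.
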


\subsection{The topology of the rational blow-up}
\label{subsec:top_rat_blow}

To understand the topology of $\widetilde{X}$, we want to leverage the fact that the boundary of $B_{p,q}(\lambda)$ is given by a rational homology sphere, since it is diffeomorphic to the lens space $L(p^2,pq-1)$.
Therefore, the rational homology of $\widetilde{X}$ splits into two parts: the rational homology of $\widetilde{U}_\std$ and the rational homology of $V$.
We will mostly do this in order to compute intersection numbers of curves in $\widetilde{X}$ by computing them locally in $\widetilde{U}_\std$ and~$V$ and to express the Chern class of $\widetilde{X}$ in terms of the Chern classes of $\widetilde{U}_\std$ and $V$.
Given a class $A\in H_2(\widetilde{X};\Z)$, let $A_\Q$ be the corresponding class in $H_2(\widetilde{X};\Q)$. 
\begin{lemma}[\normalfont{\cite[Lemma 3.2.2]{ABEHS25}}]
\label{lma:new_splitting}
    Let $\widetilde{X}$ be the rational blow-up along a symplectic embedding $\iota:B_{p,q}(\lambda) \hookrightarrow (X,\omega)$.
    Given a class \(A\in H_2(\widetilde{X};\Z)\) there exist rational numbers \(a_1,\ldots,a_m\) and a class \(A_X\in H_2(X;\Z)\) such that
    \begin{equation}\label{eq:new_splitting}
        A_\Q =\frac{1}{p}A_X+\sum_{j=1}^ma_j[C_j].
    \end{equation}
    Moreover, the integral class $pA_X$ can be represented by a cycle contained in $V$.
\end{lemma}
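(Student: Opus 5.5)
We plan to use a Mayer--Vietoris argument for the decomposition $\widetilde{X}=V\cup\widetilde{U}_\std$ in rational homology. The two pieces meet along a collar of the lens space $Y:=\partial B_{p,q}(\lambda)\cong L(p^2,pq-1)$, which has $H_1(Y;\Z)=\Z/p^2$ and $H_2(Y;\Q)=0$. The Mayer--Vietoris sequence with $\Q$-coefficients then gives an isomorphism
\[
H_2(\widetilde{X};\Q)\cong H_2(V;\Q)\oplus H_2(\widetilde{U}_\std;\Q).
\]
Since $\widetilde{U}_\std$ deformation retracts onto the chain $C_1\cup\dots\cup C_m$, the second summand is spanned by $[C_1],\ldots,[C_m]$. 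So there are rationals $a_j$ and a class $B\in H_2(V;\Q)$ with $A_\Q=B+\sum_j a_j[C_j]$.

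Next we control the denominator of $B$ with an integral argument. Integrally, the Mayer--Vietoris map $H_2(\widetilde{X};\Z)\to H_1(Y;\Z)=\Z/p^2$ sends $A$ to the class of the boundary of its $V$-part. Cut a cycle representing $A$ transversally along $Y$ to get $A=A_V+A_U$, where $A_V$ is a chain in $V$ with $\partial A_V=\gamma\subset Y$. The key claim is that $[\gamma]$ is $p$-torsion in a suitable sense. Indeed, $\gamma$ bounds $-A_U$ in $\widetilde{U}_\std$, so $[\gamma]$ lies in the kernel of $H_1(Y)\to H_1(\widetilde{U}_\std)=0$, which gives no constraint. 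We therefore use the $V$ side instead. We need $p\gamma$ to bound in $V$ up to a boundary in $Y$, i.e.\ $[p\gamma]$ should vanish in the image relevant for $V$. The cleaner route is via $X=V\cup U$ with $U=B_{p,q}(\lambda)$ a rational homology ball. There $H_1(U;\Z)=\Z/p$, and $H_1(Y)=\Z/p^2\to H_1(U)=\Z/p$ is surjective with kernel $p\Z/p^2$. Since $H_1(\widetilde{U}_\std)=0$, the class $[\gamma]$ is arbitrary a priori, but $p[\gamma]$ lies in the kernel $p\Z/p^2$. Hence $p\gamma$ bounds a $2$-chain $D$ in $U$.

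With $D$ in hand, define $A_X:=[pA_V+D]\in H_2(X;\Z)$. To obtain the statement exactly as written, with $\tfrac1p A_X$ and $pA_X$ represented in $V$, we also use that $p^2[\gamma]=0$ in $H_1(Y)$: $p^2\gamma=\partial E$ with $E\subset Y$. Then $p(pA_V)-E$ is a cycle inside $V$ representing $pA_X$ rationally, since $pD-E$ is a $2$-cycle in the rational homology ball $U$ and is therefore rationally trivial (integrally torsion; absorb this by adjusting the choice). Rationally $B=[A_V]+$(correction in $Y$), and $H_2(Y;\Q)=0$, so $B=\tfrac1p A_X$ under the inclusion-induced identification $H_2(V;\Q)\to H_2(X;\Q)$ and $\to H_2(\widetilde X;\Q)$. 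The identification of $H_2(V;\Q)$ inside both $H_2(X;\Q)$ and $H_2(\widetilde{X};\Q)$ again comes from Mayer--Vietoris, because both $U$ and $Y$ are rationally acyclic in positive degrees.

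The main obstacle is the bookkeeping of the torsion: showing that $pA_X$, rather than only $p^2A_X$, is represented by an honest integral cycle in $V$, and that the coefficient is exactly $1/p$. We would handle this by choosing $A_X$ carefully, namely by pushing $pA_V+D$ so that the $U$-part $D$ is replaced by its cone inside the collar, using the order-$p$ structure of $H_1(U)$. If that fails, we would fall back on citing the splitting as proved in \cite[Lemma 3.2.2]{ABEHS25}.
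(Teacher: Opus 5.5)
The paper does not prove this lemma; it quotes it from ABEHS. Your strategy is the right one: split $H_2(\widetilde{X};\Q)$ by Mayer--Vietoris along $Y$, then cut a representing cycle along $Y$ and cap off $p\gamma$ in $U$. As written, though, it does not prove the integral half of the statement, which you flag yourself as the unresolved ``main obstacle''.

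\textbf{Integral part.} The missing input is that $H_2(U;\Z)=0$ integrally, not just rationally. Indeed, $U=B_{p,q}(\lambda)$ deformation retracts onto the pinwheel $P_p=S^1\cup_p D^2$. Its cellular chain complex $\Z\xrightarrow{\cdot p}\Z\xrightarrow{0}\Z$ gives $H_2=0$ and $H_1=\Z/p$. First correct the sign: set $A_X:=[pA_V-D]$, since your $pA_V+D$ has boundary $2p\gamma$. Then the cycle $pD-E\subset U$ is an integral boundary, and
\[
pA_X=[p^2A_V-E]-[pD-E]=[p^2A_V-E]\in H_2(X;\Z),
\]
with $p^2A_V-E\subset V$. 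So there is no torsion to ``absorb'', and replacing $D$ by a cone in the collar is unnecessary. The factor $p$ is also sharp: the Mayer--Vietoris boundary of $A_X$ is $p[\gamma]\in H_1(Y;\Z)\cong\Z/p^2$. This is nonzero when $[\gamma]$ generates, so $A_X$ itself is in general not carried by $V$.

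\textbf{The coefficient $1/p$.} This should be derived rather than read off from ``$B=[A_V]+$ correction'', since $A_V$ is not a cycle. Regard $Y$ as a collar common to both pieces and write the cut cycle as $Z=A_V+A_U$. Then
\[
p^2Z=(p^2A_V-E)+(p^2A_U+E),
\]
and the second cycle lies in $\widetilde{U}_\std$, so it equals $\sum_j n_j[C_j]$ with $n_j\in\Z$. Dividing by $p^2$ gives
\[
A_\Q=\tfrac{1}{p^2}[p^2A_V-E]+\sum_j\tfrac{n_j}{p^2}[C_j]=\tfrac1p A_X+\sum_j a_j[C_j].
\]
Here $\tfrac1pA_X$ is read in $H_2(\widetilde{X};\Q)$ through the maps $H_2(X;\Q)\cong H_2(V;\Q)\hookrightarrow H_2(\widetilde{X};\Q)$ from your first paragraph. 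With these two fixes the argument is complete.
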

The point of the lemma is that we can now determine the numbers $a_j$ just by knowing how $A$ intersects the classes $C_i$.
Indeed, let $\tau_j=A\cdot C_j$ and consider the tuples $\bm{\tau}=(\tau_1,\dots,\tau_m)$ and $\bm{a}=(a_1,\dots,a_m)$.
Define the intersection matrix of the Wahl chain by $M_{i,j}=C_i\cdot C_j$.
By construction, we have that $\bm\tau=M\bm{a}$. 
Therefore to determine the tuple $\bm{a}$, it is enough to compute $M^{-1}$, if we know the intersection profile of $A$ with the Wahl chain.

In the sequel it will be crucial not only to know the inverse of the intersection matrix of a Wahl chain, but also the inverse of a chain of exceptional curves introduced during the minimal resolution of a general cyclic quotient singularity.
\begin{lemma}[\normalfont{\cite[Lemma 3.2.4]{ABEHS25}}]
\label{lma:M_inverse}
    Suppose that $0<a<n$ are coprime integers and that the minimal resolution of the cyclic quotient singularity $\frac{1}{n}(1,a)$ is governed by the HJ continued fraction $\frac{n}{a}=[b_1,\ldots,b_m]$.
    Consider the intersection matrix $M_{i,j}=C_{i}\cdot C_j$ of the associated chain-shaped resolution, where $C_i^2=-b_i$.
    Then $M$ is invertible and the entries of its inverse $M^{-1}$ are given by
    $$
        M^{-1}_{ij}=
        \begin{cases}
            -(e_if_j)/n\quad\mbox{if }i\leq j\\
            -(e_jf_i)/n\quad\mbox{if }i>j
        \end{cases},
    $$
    where the integers $e_i$ and $f_i$ are defined via the negative continued fractions $e_i/e_{i-1} = [b_{i-1},\ldots,b_{1}]$ and $f_i/f_{i+1}=[b_{i+1},\ldots,b_m]$.
    We will call the numbers $e_i$ and $f_i$ left and right "accompanying numbers" in the following.\footnote{Here "left" and "right" should indicate that the $e$-sequence is defined from the prefixes, whereas the $f$-sequence is defined from the suffixes of $[b_1,\ldots,b_m]$.}
\end{lemma}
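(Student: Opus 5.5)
The plan is to verify directly that the proposed matrix $N$ with entries $N_{ij}=-e_{\min(i,j)}f_{\max(i,j)}/n$ satisfies $MN=\mathrm{Id}$. Invertibility of $M$ then follows at once, since $M$ is square.

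First I fix the normalisations of the accompanying numbers. Set $e_0=0$ and $e_1=1$, so that $e_{i+1}=b_ie_i-e_{i-1}$ for $i\ge 1$. This recursion is exactly the statement $e_{i+1}/e_i=[b_i,\ldots,b_1]$, and the numerator of $[b_m,\ldots,b_1]$ is again $n$. Similarly, set $f_{m+1}=0$ and $f_m=1$, so that $f_{i-1}=b_if_i-f_{i+1}$ for $i\le m$. Iterating the HJ recursion gives $f_1=a$ and $f_0=n$; by the reversal statement in \cref{lma:HJ_facts}, the same recursion gives $e_{m+1}=n$. I also need the Wronskian-type identity
\[
e_{i+1}f_i-e_if_{i+1}=n \quad\text{for all } 0\le i\le m .
\]
To prove it, note that the quantity $W_i=e_{i+1}f_i-e_if_{i+1}$ is independent of $i$: substituting both recursions gives $W_i-W_{i-1}=0$. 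Evaluating at $i=0$ gives $W_0=e_1f_0-e_0f_1=n$.

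Next I compute $(MN)_{kj}$. Since $M$ is tridiagonal with $M_{kk}=-b_k$ and $M_{k,k\pm1}=1$, we have
\[
(MN)_{kj}=N_{k-1,j}-b_kN_{kj}+N_{k+1,j},
\]
with the convention that the out-of-range terms vanish; this is consistent with $e_0=0$ and $f_{m+1}=0$.

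If $k<j$, all three row indices $k-1,k,k+1$ are at most $j$. The expression is then $-\frac{f_j}{n}\left(e_{k-1}-b_ke_k+e_{k+1}\right)=0$ by the $e$-recursion.

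If $k>j$, all three row indices are at least $j$. The expression is $-\frac{e_j}{n}\left(f_{k-1}-b_kf_k+f_{k+1}\right)=0$ by the $f$-recursion.

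If $k=j$, the three entries are $N_{k-1,k}=-e_{k-1}f_k/n$, $N_{kk}=-e_kf_k/n$ and $N_{k+1,k}=-e_kf_{k+1}/n$. Using $b_ke_k-e_{k-1}=e_{k+1}$, the sum becomes
\[
-\frac{1}{n}\left(e_{k-1}f_k-b_ke_kf_k+e_kf_{k+1}\right)=\frac{1}{n}\left(e_{k+1}f_k-e_kf_{k+1}\right)=1
\]
by the Wronskian identity.

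Hence $MN=\mathrm{Id}$ and $M^{-1}=N$.

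The only delicate point is bookkeeping. The $e_i,f_i$ must be normalised so that the boundary conventions $e_0=0,e_1=1$ and $f_{m+1}=0,f_m=1$ match the continued fractions stated in the lemma, and the constant Wronskian must come out to exactly $n$ rather than some other multiple. I would anchor this by checking the small case $m=1$: there $M=(-b_1)$, $n=b_1$, $e_1=f_1=1$, and the formula gives $M^{-1}=-1/n$, as it should. The general case then follows by the computation above.
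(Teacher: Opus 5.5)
Your proof is correct and is essentially the argument the paper relies on. The paper gives no proof beyond citing \cite[Lemma 3.2.4]{ABEHS25}, but the remark after the lemma records exactly your ingredients: the three-term recursions, the boundary values $e_0=0$, $e_1=1$, $f_0=n$, $f_1=a$, $f_m=1$, $f_{m+1}=0$, and the constant Wronskian $e_{i+1}f_i-e_if_{i+1}=n$, from which $MN=\mathrm{Id}$ follows by your tridiagonal computation. Your identification $f_0=n$, $f_1=a$ via backward uniqueness against the HJ remainder sequence (using $\gcd(n,a)=1$) correctly fixes the normalisation; the claim $e_{m+1}=n$ is true (it also follows from $W_m=n$) but is not needed.
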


\begin{remark}
    \cite[Lemma 3.2.4]{ABEHS25} is stated only for Wahl chains, but the proof does not depend on the HJ continued fraction being Wahl.
    Recall from the proof of \cite[Lemma 3.2.4]{ABEHS25} that these accompanying numbers satisfy the recursive formulas:
    \begin{equation}\label{eq:rec_acomp}
        e_{i+1}=b_ie_i-e_{i-1}\qquad\text{and}\qquad f_{i+1}=b_if_i-f_{i-1}
    \end{equation}
    and the identities: 
    \begin{equation}\label{eq:id_acomp}
        e_{i+1}f_i-e_if_{i+1}=n \qquad\text{and}\qquad e_if_i-e_{i-1}f_{i+1}=a.
    \end{equation}
    The initial conditions for the recursions are
    \begin{equation}\label{eq:initial_acomp}
        e_0=0,\qquad e_1=1,\qquad f_0=n,\qquad f_1=a.
    \end{equation}
    It follows from the definition of the accompanying numbers and the recursive formula that the sequence $(e_i)_i$ is increasing while the sequence $(f_i)_i$ is decreasing, and that all consecutive accompanying numbers are coprime.
    Moreover, the extremal values of the accompanying numbers are given by
        \begin{equation}\label{eq:extremal_acomp}
        e_m=[a]^{-1},\qquad e_{m+1}=n,\qquad f_m=1,\qquad f_{m+1}=0,
    \end{equation}
    where $0<[a]^{-1}<n$ is the unique integer satisfying $[a]^{-1}a \equiv 1 \mod{n}$.
\end{remark}

The accompanying numbers are also very useful in determining the Chern class of a small neighbourhood of the chain of spheres $\mathcal{C}$ in the resolution of a singularity.
Let $\til{U}$ be a neighbourhood of $\mC$.
By Alexander--Lefschetz duality we obtain $H^2(\widetilde{U};\Q)\cong H_2(\widetilde{U};\Q)$, which implies that we can write the canonical class $K_{\widetilde{U}}$ of $\widetilde{U}$ as
\begin{equation*}
    K_{\widetilde{U}}=\sum_{j=1}^m k_jC_j,
\end{equation*}
where the $C_j$ are classes Poincaré dual to the corresponding exceptional curves.
\begin{definition}[Discrepancies]
    The coefficients $k_j$ are called the \textit{discrepancies} of the singularity.
\end{definition}

\begin{lemma}[\normalfont{\cite[Section 2.1.]{haktelurz}}]
\label{lma:discrepancies}
    Suppose that $0<a<n$ are two coprime integers.
    The discrepancies of a cyclic quotient singularity $\frac{1}{n}(1,a)$ can be computed from its accompanying numbers via
    \begin{equation}\label{eq:discrepancies_acomp}
        k_j=-1+\frac{e_j+f_j}{n}.
    \end{equation}
    Moreover, the discrepancies of Wahl singularities, i.e.\ cyclic quotient singularities of the form $\frac{1}{p^2}(1,pq-1)$, satisfy $k_j \in (-1,0]$.
\end{lemma}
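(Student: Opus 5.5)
Both claims come out of adjunction applied to the exceptional curves, so the plan is to characterise $K_{\til U}$ by its pairings with the chain, solve for it using the inverse matrix of \cref{lma:M_inverse}, and then rewrite the result in terms of the accompanying numbers.

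\textbf{Step 1: set up the linear system.} Each $C_i$ is a smooth rational curve with $C_i^2=-b_i$, so adjunction gives
\[
K_{\til U}\cdot C_i=-2-C_i^2=b_i-2 .
\]
Writing $K_{\til U}=\sum_j k_jC_j$, this reads $M\bm k=\bm\beta$ with $\beta_i=b_i-2$. By \cref{lma:M_inverse} the matrix $M$ is invertible, so the discrepancies are uniquely determined.

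\textbf{Step 2: verify the formula rather than invert.} Put $k_j=-1+(e_j+f_j)/n$ and check $\sum_j M_{ij}k_j=b_i-2$ directly. Here $M_{ii}=-b_i$, $M_{i,i\pm1}=1$, and all other entries vanish.

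\emph{The constant $-1$.} Its contribution is $-(-b_i+\delta_{i>1}+\delta_{i<m})$. For interior $i$ this equals $b_i-2$, which is already the target. So it remains to show that, for interior $i$,
\[
e_{i-1}-b_ie_i+e_{i+1}=0,\qquad f_{i-1}-b_if_i+f_{i+1}=0 .
\]
Both identities are exactly the recursions \eqref{eq:rec_acomp}.

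\emph{The endpoints.} For $i=1$ the missing neighbour term is supplied by the initial values $e_0=0$ and $f_0=n$, giving a correction $+n/n=1$. For $i=m$ it is supplied by $e_{m+1}=n$ and $f_{m+1}=0$, again a correction of $+1$. In both cases the $+1$ compensates the missing $-1$ from the absent neighbour, so the endpoint rows also close. This endpoint bookkeeping is the only delicate point in the formula.

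\textbf{Step 3: the bound for Wahl singularities.} Now take $n=p^2$ and $a=pq-1$.

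\emph{Strict lower bound.} Since $e_j\ge1$ and $f_j\ge1$ for $1\le j\le m$, we get $k_j>-1$ immediately.

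\emph{Upper bound.} We need $e_j+f_j\le p^2$ for all $j$. This is the expected main obstacle, because $(e_j)$ increases while $(f_j)$ decreases, so the sum is not monotone. My first attempt would use the identity $e_{j+1}f_j-e_jf_{j+1}=n$ from \eqref{eq:id_acomp}.

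\emph{Fallback by induction on the Wahl chain.} If that identity does not give the bound directly, I would induct along the Wahl chain. Every Wahl chain arises from $[4]$ by repeatedly applying $[b_1,\dots,b_m]\mapsto[2,b_1,\dots,b_m+1]$ or its mirror; for $[4]$ the sum is $e_1+f_1=1+1=2\le4$. I would track how the accompanying numbers transform under each move, using \eqref{lma:Hj_facts_p2}. Alternatively, it suffices to note that $\sum_j(1+k_j)C_j$ has integral intersection with each $C_i$. Then the standard fact that log-terminal discrepancies of $T$-singularities lie in $(-1,0]$ follows from the existence of the $\mathbb{Q}$-Gorenstein smoothing \cite{KoShBa88}: $K$ is $\mathbb{Q}$-Cartier of index $p$, and the canonical cover $A_{p-1}$ is a canonical singularity, so its discrepancies are $\ge0$. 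Pulling back along the cover then gives $k_j\le0$, and this is the route I would actually write.
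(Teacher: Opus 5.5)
Steps 1 and 2 are fine, and so is the lower bound $k_j>-1$. For the endpoint rows, the cleanest bookkeeping is to note that the formula extended to $j=0$ and $j=m+1$ gives $k_0=k_{m+1}=0$ by \eqref{eq:initial_acomp} and \eqref{eq:extremal_acomp}. Every row is then the same three-term identity.

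The gap is the upper bound $k_j\le 0$, which you never prove. Your first two attempts are left unfinished, and the route you say you would write runs in the wrong direction. Take a quasi-\'etale cover $Y\to X$, such as the index-one cover $A_{p-1}\to\frac{1}{p^2}(1,pq-1)$. A divisor $F$ over $Y$ lying over a divisor $E$ over $X$ with ramification index $r$ satisfies $a(F,Y)+1=r\,(a(E,X)+1)$. So canonicity of $Y$, i.e.\ $a(F,Y)\ge 0$, only yields $a(E,X)\ge \tfrac1r-1$, which is a \emph{lower} bound. No argument of this kind can give $k_j\le 0$. Blowing up a general point of $C_j$ produces a divisor over $X$ with discrepancy $k_j+1>0$, and nothing in the cover argument distinguishes it from the $C_j$. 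Your remark about $\sum_j(1+k_j)C_j$ has the same problem. It only computes $(K_{\til U}+\sum_j C_j)\cdot C_i=-2+(\text{number of neighbours of } C_i)\le 0$, which again gives $1+k_j\ge 0$.

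The missing input is minimality of the resolution, $b_i\ge 2$, which your Step 3 never uses. With it the bound follows from what you already have. You have $\bm k=M^{-1}\bm\beta$ with $\beta_i=b_i-2\ge 0$, and all entries of $M^{-1}$ are negative by \cref{lma:M_inverse}, so $k_j\le 0$.

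Alternatively, your first attempt closes in one line. The integer sequences $(e_i)$ and $(f_i)$ are strictly increasing and strictly decreasing respectively; this is itself a consequence of $b_i\ge 2$ via \eqref{eq:rec_acomp}. Hence \eqref{eq:id_acomp} gives
\[
n=e_{j+1}f_j-e_jf_{j+1}\ \ge\ (e_j+1)f_j-e_jf_{j+1}\ =\ f_j+e_j\,(f_j-f_{j+1})\ \ge\ e_j+f_j .
\]
Either way the bound holds for every cyclic quotient singularity. None of the Wahl hypothesis, the induction over Wahl chains, or the $\mathbb{Q}$-Gorenstein smoothing is needed.
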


\begin{remark}[\normalfont{\cite[Section 5.2]{EvSm20}}]
\label{rmk:splitting_Chern}
    Given a symplectic embedding $\iota: B_{p,q}(\lambda)\hookrightarrow X$, denote the rational blow-up along $\iota$ by $\widetilde{X}=V \cup \widetilde{U}_\std$, where $V=X\setminus \iota(B_{p,q}(\lambda))$, as before. 
    Mayer--Vietoris yields that $H^2(\widetilde{X},\Q)\cong H^2(V,\Q)\oplus H^2(\widetilde{U}_\std,\Q)$, since $\widetilde{U}_\std\cap V$ is just the boundary lens space $\Sigma_{p,q}$ of $B_{p,q}(\lambda)$, which is a rational homology sphere.
    By naturality of the Chern class, we have that the Chern class of $\widetilde{X}$ splits as the sum of the Chern classes of $\widetilde{U}_\std$ and $V$.
    Calculating the discrepancies $k_j$ amounts precisely to calculating the Chern class of $\widetilde{U}_\std$.
\end{remark}

\section{Part 1: Uniqueness up to symplectomorphism}
\label{sec: Uniq up to symp}

In \cref{sec:compactification} we introduce our compactification $X_{p,q}$ of $B_{p,q}$ and explain how the central cylinder $K_{p,q}$ becomes a fibre class under the compactification procedure, as well as how $X_{p,q}$ is birationally derived from a Hirzebruch surface.
In \cref{sec:trafo_regulation} we prove the existence of a regulation of $X_{p,q}$ (\cref{fig:Compactification_Bpq_regulation}).
We then show how this regulation "persists" under rationally blowing up $X_{p,q}$ along an embedding $\iota: B_{p,q}(\lambda)\hookrightarrow X$ (\cref{thm:regulation_of_tilX}).
Finally, in \cref{sec: constructing the symp} we conclude the proof of \cref{thrm_intro: B}.

\subsection{Compactifying $B_{p,q}$}
\label{sec:compactification}

We start by explaining a compactification of the rational homology ball $B_{p,q}$.
There is a lot of freedom in doing so, and we present the compactification most suited for our purposes.\footnote{For example, different ones are considered in \cite{BhuOno12,PaPaShUr18,Bu25}. However, it is not hard to understand how to pass from one to another via blow-ups and blow-downs of appropriate divisors. The one in \cite[Section 9.3]{Ev23:Book} is intimately related to ours and we will comment on this relation later.}
\begin{example}
    It is well known that $B_{2,1}\cong T^*\R P^2$ admits a compactification to $\C P^2$, as discussed in \cite{Ad25,PaPaShUr18}; that\footnote{The space $B_{2;1,1}$ fits into a larger family $B_{d;p,q}$, which specializes to $B_{p,q}$ for $d=1$. In line with \cref{remark: Bpq as smoothed cqs}, these can be characterized as the Milnor fibres of those cyclic quotient singularities that admit $\Q$-Gorenstein smoothings. See \cite[Section 7]{Ev23:Book} for details on this.} $B_{2;1,1} \cong T^*S^2$ compactifies to $S^2 \times S^2$, see \cite{Hi04,LiWu12}; and that the family $B_{n,1}$ admits compactifications to rational and ruled surfaces, as explained in \cite{AdHa25, Sym01:GenRatBlo}.    
\end{example}

Given the almost toric base diagram $\ATF_{p,q}$ of $B_{p,q}$, as shown in \cref{fig:atfBpq}, for a positive real number $\alpha>0$ let $X^{\orb}_{p,q}(\alpha)$ be the possibly singular symplectic manifold obtained by performing a horizontal symplectic cut at height $\alpha$, capping off the almost toric base diagram to a closed triangle. 
Notice that the left upper corner is Delzant while the upper right corner is non-Delzant if $(p,q) \neq (2,1)$.
Let $x_{\orb}$ be the orbifold point defined by the upper right corner.

\begin{lemma}
    The orbifold point $x_{\orb}$ is equivalent to the quotient singularity $\frac{1}{pq-1}(1,p^2)$.
\end{lemma}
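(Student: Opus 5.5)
The plan is to read off the singularity type from the toric corner at $x_{\orb}$. Away from the node and branch cut, the almost toric base diagram $\ATF_{p,q}$ is toric near the upper right corner. So a neighbourhood of $x_{\orb}$ is modelled on the toric orbifold whose moment polygon is the cone at that corner. The cut at height $\alpha$ introduces the horizontal edge. Hence the corner is bounded by two primitive edge directions: the edge pointing back along the ray, direction $-(p^2,pq-1)$, and the horizontal cut edge, direction $(-1,0)$. First I will check that the branch cut, which points in the $(p,q)$-direction from the node near the origin, can be chosen away from this corner. This is harmless by a nodal slide, so the corner is genuinely toric.

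The standard recipe then applies (e.g.\ \cite[Chapter 3]{Ev23:Book}). A toric corner with primitive edge vectors $u,v$, where $|\det(u,v)|=n$, is equivalent to $\frac{1}{n}(1,a)$. Here $a$ is found by applying an $SL_2(\Z)$ transformation sending the corner to the standard wedge spanned by $(0,1)$ and $(n,-a)$, or equivalently $(n,a)$ up to orientation conventions. I compute
\[
\det\pmat{p^2 & 1\\ pq-1 & 0} = -(pq-1),
\]
so $n=pq-1$.

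Next I choose $A\in SL_2(\Z)$ with $A(1,0)^T=(0,1)^T$, so that the horizontal edge becomes the vertical one. I then compute the image of $(p^2,pq-1)$. For example, $A=\pmat{0&-1\\1&0}$ sends $(p^2,pq-1)$ to $(-(pq-1),p^2)$. After the obvious reflection and sign adjustments, the corner becomes the wedge spanned by $(0,1)$ and $(pq-1,\,\pm p^2)$. Reducing $p^2$ modulo $pq-1$ then puts it in the normalized form of $\Delta_{n,a}$ used in \cref{sec:intro bpq} for $\co_{p,q}$.

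The main obstacle is bookkeeping of conventions. I need to make sure the resulting weight is $p^2$ rather than its negative, or its inverse modulo $pq-1$. Note that $\frac1n(1,a)\cong\frac1n(1,a^{-1})$ by swapping coordinates, so only the sign matters. I will fix this by comparing with the known corner $\Delta_{p,q}$: the origin corner, spanned by $(0,1)$ and $(p^2,pq-1)$, is stated to be $\frac{1}{p^2}(1,pq-1)$. I will apply exactly the same normalization to the new corner, and check the sign on the case $(p,q)=(3,1)$, where the weight should read $\frac12(1,9)=\frac12(1,1)$. Finally, I will note that $\gcd(p^2,pq-1)=1$, so the corner is non-Delzant precisely when $pq-1>1$, i.e.\ $(p,q)\neq(2,1)$, consistent with the remark before the statement.
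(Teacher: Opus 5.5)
Your proposal is correct and is essentially the paper's proof: your $90^\circ$ rotation followed by $x\mapsto -x$ is exactly the reflection across the diagonal $x=y$ that the paper uses. After negating the corner, this map sends it to the wedge spanned by $(0,1)$ and $(pq-1,p^2)$, and the sign you worry about is forced once you apply one linear map to the whole cone rather than adjusting edge vectors individually. One small caveat: your proposed sanity check at $(p,q)=(3,1)$ cannot detect a sign error, since $\pm1$ agree modulo $2$. A case such as $(5,2)$ would, because there $\frac19(1,7)\cong\frac19(1,4)$, whereas $\frac19(1,-7)=\frac19(1,2)$ is a different singularity.
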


\begin{proof}
    After a reflection along the $x=y$ axis, the non-Delzant corner is equivalent to the quotient singularity defined by the vectors $(0,1)$ and $(pq-1,p^2)$, which is the claimed quotient singularity.
\end{proof}

Now, we would like to minimally resolve the singularity $x_{\orb}$ in order to obtain a smooth compactification of $B_{p,q}$. 
However, since $pq-1<p^2$ we cannot directly apply the usual algorithm that computes the Hirzebruch--Jung continued fraction in order to determine a minimal resolution, as described for example in \cite[Chapter 9]{Ev23:Book}.
So we need to further modify the singularity to be in normal form. 

\begin{lemma}\label{le:minimal_res_orbifold_compactification}
    The quotient singularity $x_{\orb}$ is equivalent to $\frac{1}{pq-1}(1,p^2-d_0(pq-1))$, where $d_0$ is the unique positive integer such that $(pq-1)d_0<p^2\leq(pq-1)(d_0+1)$.
    In particular, the thereby introduced compactifying divisor has intersection profile according to the mixed continued fraction
    \[
        \frac{p^2}{pq-1}=[-d_0;d_1,\ldots,d_n]=d_0+\frac{1}{d_1-\frac{1}{\cdots-\frac{1}{d_{n}}}},
    \]
    i.e.\ the spheres $D_1,\ldots,D_n$ introduced in the process have self-intersection $-d_1,\ldots,-d_n$, while the sphere $D_0$ has self-intersection $+d_0$.
\end{lemma}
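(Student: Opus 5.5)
The proof is an explicit toric computation at the corner $x_{\orb}$, in two steps: first put the corner in normal form by an $SL_2(\Z)$ change of coordinates, then read off the resolution chain and the self-intersection of the cut edge.

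\textbf{Step 1: normal form.} By the previous lemma, after the reflection the corner is the cone spanned by primitive edge vectors $(0,1)$ and $(pq-1,p^2)$. Apply the shear $\pmat{1&0\\-d_0&1}$, which fixes $(0,1)$ and sends $(pq-1,p^2)$ to $(pq-1,\,p^2-d_0(pq-1))$. Shears preserve the cone up to integral affine equivalence, so the singularity type is unchanged. The defining inequality for $d_0$ gives
\[
0<p^2-d_0(pq-1)\le pq-1 .
\]
Equality would require $pq-1\mid p^2$. Since $p\cdot q-(pq-1)=1$, $p$ is a unit mod $pq-1$, so this forces $pq-1=1$, i.e.\ $(p,q)=(2,1)$. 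That is exactly the Delzant case, which we exclude (or treat as empty). Otherwise $a:=p^2-d_0(pq-1)$ satisfies $0<a<pq-1$. It is coprime to $pq-1$ because $\gcd(p^2,pq-1)=1$. Hence the corner is the cyclic quotient singularity $\frac{1}{pq-1}(1,a)$ in normal form.

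\textbf{Step 2: resolution.} Resolve the corner minimally by symplectic cuts along the HJ subdivision of the cone, as in the standard algorithm. This introduces spheres $D_1,\dots,D_n$ with $D_i^2=-d_i$, where $\frac{pq-1}{a}=[d_1,\dots,d_n]$. Then
\[
\frac{p^2}{pq-1}=d_0+\frac{a}{pq-1}=d_0+\frac{1}{[d_1,\ldots,d_n]},
\]
which is the stated mixed continued fraction.

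\textbf{Step 3: the edge $D_0$.} Let $D_0$ be the sphere over the horizontal cut edge. Its self-intersection is computed by the usual toric rule $u_{-}+u_{+}=-D_0^2\,u_0$, where $u_0$ is the primitive edge vector of $D_0$ and $u_\pm$ are the edge vectors of its two neighbours. Use the edge directions of the resolved polygon: $(-1,0)$ for $D_0$; at the Delzant corner, the vertical edge $(0,1)$ (pointing down, $(0,-1)$); and at the other end, the first new edge from the HJ subdivision. In the sheared frame this becomes a short linear-algebra check, yielding $D_0^2=+d_0$; the $d_0$ appears precisely because of the shear.

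\textbf{Main obstacle.} The only delicate part is bookkeeping: fixing orientation and reflection conventions so that the signs come out right, i.e.\ $D_0^2$ positive rather than $-d_0$, and the ordering of the $D_i$ starts adjacent to $D_0$. I would sanity-check against $(p,q)=(3,1)$: $9/2=4+\frac{1}{2}$, so we expect $D_0^2=4$ and a single $(-2)$-sphere. This is consistent with blowing up a cusp-like $\frac{1}{2}(1,1)$ point, matching the known compactification of $B_{3,1}$.
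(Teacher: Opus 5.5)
Your proposal is correct and follows the paper's proof: the same shear along $(0,1)$ puts $x_{\orb}$ in the normal form $\frac{1}{pq-1}(1,l)$ with $l=p^2-d_0(pq-1)$, and the HJ expansion of $(pq-1)/l$ combined with $\frac{p^2}{pq-1}=d_0+\frac{l}{pq-1}$ gives the mixed continued fraction. Your Step~3, which the paper leaves implicit, also checks out: in the reflected and sheared frame, $D_0$ has inward normal $(1,0)$, the first HJ ray gives $D_1$ inward normal $(0,1)$ (this uses $l<pq-1$), and the vertical edge at the Delzant corner maps to direction $(1,-d_0)$ with inward normal $(-d_0,-1)$, so $(0,1)+(-d_0,-1)=-d_0\,(1,0)$ yields $D_0^2=d_0$. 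One caveat for $(p,q)=(2,1)$: \emph{treating the resolution as empty} would give $D_0^2=4\neq d_0=3$, so the formula holds there only under the paper's convention of inserting one $(-1)$-sphere, matching $\frac{4}{1}=[-3;1]$.
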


\begin{proof}
    Let $d_0$ be as in the statement of the lemma and define $0<l=p^2-d_0(pq-1)$. 
    Then, the pair of vectors $\{(0,1),(pq-1,p^2)\}$ is related to the pair of vectors $\{(0,1),(pq-1,l)\}$ by a shear along $(0,1)$.
    Therefore the quotient singularity $x_{\orb}$ is equivalent to the quotient singularity $\frac{1}{pq-1}(1,l)$. 
    Since by construction $l\leq pq-1$ the usual algorithm to compute a minimal resolution can be applied to $\frac{pq-1}{l}$. 
    This is exactly the result of
    \[
        \frac{p^2}{pq-1}=d_0+\frac{1}{\frac{pq-1}{l}}
    \]
    after taking the negative continued fraction expansion of $\frac{pq-1}{l}$.
\end{proof}

\begin{figure}[ht]
  \centering
  \begin{tikzpicture}[scale=0.8]
      \begin{scope}[shift={(-9,0)}]
        \fill[opacity=0.1] (0,2) -- (0,-0.5) -- (1,0) -- (5,2);
        \draw[thick] (0,2) -- (0,-0.50);
        \draw[thick] (0,-0.5) -- (5,2);
        \draw[dashed] (0.8,0.2) node[cross] {} -- (0,-0.5);
    \end{scope}
    
    \draw[->] (-4.5,1) -- node[anchor=south] {} node[anchor=north] {} (-3,1);
    
    \begin{scope}[shift={(-2.5,0)}]
        \fill[opacity=0.1] (0,2) -- (0,-0.5) -- (1,0) -- (5,2);
        \draw[thick] (0,-0.5) -- (0,2) -- (5,2) -- (0,-0.5);
        \fill (5.02,2) circle (1.4pt);
        \draw[thick, black] 
            (5,2) node[above]{$x_{\orb}$};
        \draw[dashed] (0.8,0.2) node[cross] {} -- (0,-0.5);
    \end{scope}

    \draw[->] (2.5,1) -- node[anchor=south] {} node[anchor=north] {} (4,1);
    
    \begin{scope}[shift={(4.5,0)}]
        \fill[opacity=0.1] (0,2) -- (0,-0.5) -- (1,0) -- (1.4,0.2) -- (1.8,0.45) -- (2.2,0.8) -- (3,1.7) -- (3.2,2);
        \draw[thick] (0,-0.5) -- (1.4,0.2) -- (1.8,0.45) -- (2.2,0.8);
        \draw[thick] (3,1.7) -- (3.2,2) -- (0,2) -- (0,-0.5);
        \fill 
            (1.4,0.2) circle (1.4pt)
            (1.8,0.45) circle (1.4pt)
            (2.2,0.8) circle (1.4pt)
            (3,1.7) circle (1.4pt)
            (3.2,2) circle (1.4pt)
            (0,2) circle (1.4pt);
        \draw[very thick] (1.4,0.2) -- node[below right, xshift=-5pt,yshift=3pt] {\small $D_{n}$} (1.8,0.45) -- node[below right, xshift=-3pt,yshift=4pt] {\small $D_{n-1}$} (2.2,0.8) -- (2.4,0.8+0.9/4);
        \draw[very thick, densely dotted] (2.4,0.8+0.9/4) -- (2.8,1.7-0.9/4);
        \draw[very thick] (2.8,1.7-0.9/4) -- (3,1.7) -- node[below right, xshift=-4pt,yshift=3pt] {\small $D_{1}$} (3.2,2);
        \draw[very thick] (3.2,2) -- node[above, yshift=-1pt] {\small $D_0$}  (0,2);
        \draw[dashed] (0.8,0.2) node[cross] {} -- (0,-0.5);
    \end{scope}

  \end{tikzpicture}
  \caption{The steps taken to compactify the rational homology ball $B_{p,q}$ to $X_{p,q}$. This introduces the compactifying divisor $\mathcal{D}_{p,q}=(D_0,\ldots,D_n)$.}
  \label{fig:Compactification_Bpq}
\end{figure}

\begin{remark}\label{re:type_of_quotient_sing}
    Note that in the computation of the type of the quotient singularity in \cref{le:minimal_res_orbifold_compactification} we went through a transformation with determinant equal to $-1$, i.e.\ the reflection along the $x=y$ axis.
    One can avoid this and just apply a shear along the branch cut given by 
    $$
        \begin{pmatrix} 1-pq & p^2\\ -q^2 & 1+pq \end{pmatrix}
    $$
    in the middle figure of \cref{fig:Compactification_Bpq}.
    Then the horizontal edge will point in the $(pq-1,q^2)$-direction after applying the shear and therefore we see that $x_\text{orb}$ is modelled on the quotient singularity $\frac{1}{pq-1}(1,q^2)$.
    Computing the Hirzebruch--Jung continued fraction, and therefore the minimal resolution of the singularity, gives the Hirzebruch--Jung continued fraction associated to the singularity $\frac{1}{pq-1}(1,p^2-d_0(pq-1))$ in reverse order.
    This is because $0<p^2-d_0(pq-1)\leq pq-1$ is the unique integer such that $q^2(p^2-d_0(pq-1))\equiv 1 \mod{pq-1}$ and hence the claim follows from \cref{lma:HJ_facts}.
    
    We will be switching between these different viewpoints freely, as they highlight different aspects of the compactification.
    For example, viewing the orbifold point as one that is modelled on the $\frac{1}{pq-1}(1,q^2)$ singularity allows for quick and easy computations, while the mixed continued fraction introduced in \cref{le:minimal_res_orbifold_compactification} captures the structure of the compactification more transparently.
\end{remark}

\begin{definition}\label{def:compactification}
    Define the compactification of the rational homology ball $B_{p,q}$ to be the closed symplectic manifold $\left(X_{p,q}(\alpha;\bm{\rho}),\omega(\alpha;\bm{\rho})\right)$ obtained by capping the almost toric base diagram $\ATF_{p,q}$ at height $\alpha>0$ and resolving the quotient singularity $x_\text{orb}$, giving the chain of spheres $D_0,\ldots,D_n$ areas according to $\bm{\rho} \in \mathbb{R}^{n+1}_{>0}$, i.e.\ $\int_{D_i}\omega(\alpha;\bm{\rho})=\rho_i$.
\end{definition}

\begin{remark}
    From now on "compactifying $B_{p,q}$" will mean compactifying $B_{p,q}$ according to \cref{def:compactification} unless specified otherwise, and we will often drop all the decorations in the definition above, whenever it is understood that we just consider some, meaning a suitable, compactification.
    We will write $(X_{p,q},\omega)$ for a compactification as above and write $\mathcal{D}_{p,q}=\{D_0,\ldots,D_n\}$ for the compactifying divisor.
    
    Moreover, even though the horizontal cut on $B_{2,1}$ does not produce a non-Delzant corner, we adopt the convention to introduce a compactifying divisor of the form $\mathcal{D}_{2,1}=(D_0,D_1)$, where $D_0^2=+3$ and $D_1^2=-1$, such that the compactification $X_{2,1}$ fits into the family $X_{n,1}$.
\end{remark}

\begin{example}\label{ex:numbers_HJ}
    We will use two examples to illustrate the theorems and lemmata pictorially throughout the paper.
    We give the relevant associated integers here. 
    If $(p,q)=(5,2)$ then the relevant Hirzebruch--Jung continued fraction and the corresponding $d_0$ are:
    $$
        \frac{p^2}{pq-1}=\frac{25}{9}=[3,5,2], \qquad \frac{pq-1}{q^2}=\frac{9}{4}=[3,2,2,2] \quad \text{and}\qquad d_0=c_1-1=2,
    $$
    where $c_1=3$ is the first component of the Wahl chain.
    This means that the Wahl chain $\mathcal{C}_{5,2}$ has intersection profile $(-3,-5,-2)$ and the compactifying divisor $\mathcal{D}_{5,2}$ has intersection profile $(+2,-2,-2,-2,-3)$.
    
    For the family $(p,q)=(n,1)$ we have:
    $$
        \frac{n^2}{n-1}=[n+2,\underbrace{2,\ldots,2}_{(n-2)\text{-times}}], \qquad \frac{n-1}{1}=[n-1] \quad \text{and}\qquad d_0=c_1-1=n+1.
    $$
\end{example}

We collect some small facts about the compactification we constructed.
The most crucial object is the embedded symplectic sphere that is obtained from the central cylinder $K_{p,q}$ under the compactification procedure, as we will show that this sphere is part of a well-behaved holomorphic foliation of $X_{p,q}$.

\begin{lemma}\label{lma:central_becomes_fibre}
    The central cylinder $K_{p,q}$, living over the toric boundary of $\ATF_{p,q}$, corresponds in the compactification to an embedded symplectic sphere of square zero.
\end{lemma}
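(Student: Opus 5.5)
We argue directly from the almost toric picture, using that $K_{p,q}$ is the visible surface over the toric boundary of $\ATF_{p,q}$. The toric boundary of $\ATF_{p,q}$ consists of two rays from the origin, in directions $(0,1)$ and $(p^2,pq-1)$. Over each open edge the preimage is a cylinder; at the origin the fibre collapses to a point, since the corner is Delzant after the nodal trade, or we may simply use the earlier lemma identifying the visible surface with $K_{p,q}$. So over the union of the two rays we see $K_{p,q}\cong S^1\times\R$, with its two ends running out along the two rays.

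After the horizontal cut at height $\alpha$, the vertical ray is truncated at the Delzant corner $(0,\alpha)$. There the circle fibration over the vertical edge collapses, which caps one end of the cylinder with a disc. The other ray is truncated at $x_{\orb}$, and after resolving, the edge $(0,0)$--$x_{\orb}$ is replaced by an edge ending at a Delzant corner where it meets $D_n$. This caps the second end. The visible surface over this closed polygonal edge is therefore an embedded symplectic sphere $F$ meeting $D_0$ and $D_n$ transversally once each. Strictly speaking the visible surface is the preimage of the left edge together with the bottom edge up to $D_n$. Its symplectic area is positive, and it is smooth because all corners it passes through are Delzant. This sphere $F$ is the compactified $K_{p,q}$.

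To compute $F\cdot F$, we push $F$ off itself. In the base, take the visible surface over a path parallel to the boundary edges, running just inside the polygon and ending on the two adjacent edges $D_0$ and $D_n$. Its endpoints are transverse hits on $D_0$ and $D_n$, not on $F$. Such a parallel visible surface is disjoint from $F$, so $F^2=0$, provided the path can be chosen to avoid the branch cut. This is the main obstacle. The branch cut emanates from the corner at the origin, so any parallel pushoff near that corner must cross it, and the monodromy there may twist the pushoff.

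To handle this, we would first perform a nodal slide moving the node far into the interior, or all the way to the opposite side. We would then check that the pushoff still lies over a path with consistent collapsing directions. Alternatively, we can compute $F^2$ by adjunction: $F$ is a sphere, so $c_1(F)=2+F^2$. Since $K_{p,q}$ is Poincaré dual to $c_1(B_{p,q})$ and is disjoint from the pushoff region, we can compute $c_1(F)$ via the toric boundary. The anticanonical divisor of $X_{p,q}$ is $F+D_0+\cdots+D_n$, the full boundary visible surface, so
\[
c_1(F)=F^2+F\cdot D_0+F\cdot D_n=F^2+2 .
\]
This is only consistent with adjunction, so it gives no information.

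So the real content is the pushoff argument, and we would instead compute $F^2$ by the toric edge formula at the two ends. A visible sphere over an edge between the edge vectors $u_{\mathrm{prev}}$ and $u_{\mathrm{next}}$ has self-intersection $-\det(u_{\mathrm{prev}},u_{\mathrm{next}})$, after transporting across the cut with the monodromy. We would verify that the determinant of the primitive directions of $D_0$, namely $(1,0)$, and of $D_n$ equals $0$ after applying the monodromy matrix of \cref{re:type_of_quotient_sing}. Equivalently, after the shear these two edges become parallel, with opposite direction vectors, which forces $F^2=0$.
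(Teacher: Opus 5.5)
Your final argument is the paper's proof: transport across the branch cut so the two boundary rays carrying $K_{p,q}$ form one straight edge, note that its neighbours $D_0$ and $D_n$ become parallel, and conclude $F^2=-\det(u_{\mathrm{prev}},u_{\mathrm{next}})=0$. Two things need repair.

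First, your picture of the origin is wrong. The fibre over the origin does not collapse, because the origin is not a vertex at all. Take the monodromy $M=\begin{pmatrix}1-pq & p^2\\ -q^2 & 1+pq\end{pmatrix}$. Then $M(p,q)^T=(p,q)^T$ and $M(p^2,pq-1)^T=(0,-1)^T$. So in the genuine affine structure the slanted ray is the straight continuation of the vertical one, and the fibre over the origin is a circle. That is exactly why the visible surface is a cylinder. A collapsing circle there would give two planes meeting in a point, contradicting your own conclusion $K_{p,q}\cong S^1\times\R$. This misconception is also what produces your pushoff obstacle. Apply $M$ to the half-plane bounded by the line $\R(p,q)$ that contains the slanted edge. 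The cut then runs from the node outward to $D_0$ and misses a thin strip along $x=0$, so there is nothing to overcome.

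Second, the decisive fact is only announced ("we would verify"), never justified. Under this shear, $x_{\orb}=(\alpha p^2/(pq-1),\alpha)\mapsto(0,-\alpha/(pq-1))$. The piece of $D_0$ adjacent to $x_{\orb}$ acquires direction $M(1,0)^T=-(pq-1,q^2)^T$. So $x_{\orb}$ becomes the normal-form corner $\frac{1}{pq-1}(1,q^2)$ on the vertical line, with inward normals $(1,0)$ and $(-q^2,pq-1)$. For $(p,q)\neq(2,1)$ we have $q^2<pq-1$. Hence $(0,1)$ lies in this normal cone, and no nonzero lattice point $(x,y)$ of the cone has $x+y\le 0$. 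Therefore $(0,1)$ is the boundary lattice point of the convex hull adjacent to $(1,0)$. In other words, the exceptional edge of the minimal resolution meeting the vertical line, namely $D_n$, is horizontal. For $(2,1)$, the conventional blow-up of the smooth corner also inserts the normal $(1,0)+(-1,1)=(0,1)$. Consequently $F$ lies over a vertical segment between the horizontal edges $D_0$ (untouched by the shear) and $D_n$. A neighbourhood of $F$ is then toric and modelled on a neighbourhood of a Hirzebruch fibre, so $F^2=0$. Concretely, the visible sphere over $x=\epsilon$ with collapsing class $(0,1)$ at both ends is a disjoint pushoff. This is precisely what the paper means by "the first symplectic cut of the minimal resolution is horizontal". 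The adjunction detour can simply be dropped.
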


\begin{proof}
    After applying a shear along the branch cut to the middle figure of \cref{fig:Compactification_Bpq}, the $\frac{1}{pq-1}(1,q^2)$ singularity will sit on the bottom left edge of the ATF triangle. 
    Then the first symplectic cut of the minimal resolution of the $\frac{1}{pq-1}(1,q^2)$ singularity is horizontal, meaning that the central cylinder yields a square zero embedded symplectic sphere after the compactification.
\end{proof}

\begin{remark}\label{rmk:properties_fibre}
    In the following we will therefore refer to this square zero embedded symplectic sphere $F$ as a "fibre" and call $F \in H_2(X_{p,q};\mathbb{Z})$ the "fibre class".
    See \cref{fig:Compactification_Bpq_regulation} for another representation of the ATF on the compactification $X_{p,q}$.
    
    Moreover, we have that $F\cdot L_{p,q}\equiv q \mod{p}$, which is a direct consequence of the fact that the Poincaré dual of $c_1(B_{p,q})$ is represented by $K_{p,q}$ and that $c_1(B_{p,q})=q \in H^2(B_{p,q};\mathbb{Z})\cong  \mathbb{Z}_p$ as shown in \cite[Section 2]{EvSm18}. This can also be deduced from the way that $F$ and $L_{p,q}$ intersect, as shown in \cref{fig:Compactification_Bpq_regulation}.
\end{remark}

\begin{figure}[ht]
  \centering
  \begin{tikzpicture}[scale=0.7]
        \fill[opacity=0.1] 
            (0,4) -- (0,0) -- (2,0) -- (5,1) -- (15/2,2) -- (31/6,3) -- (15/2+5/3,2+2/3) -- (10,3) -- (12,4) -- (0,4);
        \draw[thick] 
            (0,4) -- (0,0);
        \draw[thick, teal] (31/6,3) to[out=330,in=140] (5+5/4,1.5);
        \node[text=teal] (E) at (5.4,2) {\footnotesize $E$};
        \draw[very thick] 
            (0,0) node{\tiny \(\bullet\)} -- node[anchor=north, yshift=2pt]{\footnotesize $D_{4}$} 
            (2,0) node{\tiny \(\bullet\)} -- node[anchor=north west, xshift=-3pt,yshift=3pt]{\footnotesize $D_{3}$} 
            (5,1) node{\tiny \(\bullet\)} -- (15/2,2) node[anchor=north]{\footnotesize $D_{2}$};
        \draw[very thick] 
            (15/2+5/3,2+2/3) -- 
            (10,3) node{\tiny \(\bullet\)} -- node[anchor=north]{\footnotesize $D_{1}$} 
            (12,4) node{\tiny \(\bullet\)} -- node[anchor=south, yshift=-2pt]{\footnotesize $D_{0}$} 
            (0,4) node{\tiny \(\bullet\)};
        \draw[thick, red]
            (31/6,3) -- (0,3-62/30) node[circlecross]{};
        \node[text=red] (L) at (-0.8,3-62/30) {\footnotesize $L_{5,2}$};
        \draw[thick, blue] (2,4) to[out=280,in=100] node[anchor=east]{\footnotesize $F$} (1,0);
        \draw[dashed] (15/2,2) -- (31/6,3) node[cross]{};
        \draw[dashed] (31/6,3) -- (15/2+5/3,2+2/3);
  \end{tikzpicture}
    \caption{The compactification $X_{5,2}$. This almost toric base diagram is related to the one partially shown on the right in \cref{fig:Compactification_Bpq} via a rotation of the branch cut. All the relevant objects of the compactification are included in the figure: (1) the compactifying divisor $\mathcal{D}_{5,2}=\{D_0,\ldots,D_4\}$; (2) the Lagrangian pinwheel $L_{5,2}$; (3) a fibre class $F$ and (4) the distinguished exceptional divisor $E$. In \cref{fig:compactificationBpq_birational} it is shown how $X_{5,2}$ is related to the Hirzebruch surface $\mathbb{F}_2$.}
  \label{fig:Compactification_Bpq_regulation}
\end{figure} 

\begin{lemma}\label{lma:distinguished_E}
    If $q\neq 1$ there is a visible exceptional divisor $E$ in the ATF defined on $X_{p,q}$ which connects the node to one of the edges appearing in the toric resolution of the $\frac{1}{pq-1}(1,q^2)$ singularity.
    We refer to this exceptional divisor as the distinguished exceptional class $E\in H_2(X_{p,q};\Z)$.
\end{lemma}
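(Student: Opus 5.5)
We construct $E$ explicitly as a visible surface in the almost toric base diagram of $X_{p,q}$ and then verify that it is an exceptional sphere. We work in the picture of \cref{fig:Compactification_Bpq_regulation}. There the node of $\ATF_{p,q}$ sits near the edges $D_1,\ldots,D_n$ coming from the toric resolution of the $\frac{1}{pq-1}(1,q^2)$ singularity (\cref{re:type_of_quotient_sing}). The branch cut has been rotated by a nodal slide/mutation so that it points towards this chain of edges.

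The first step is to locate an edge $D_j$, with $1\le j\le n$, whose primitive outward conormal differs from the eigendirection of the node by a suitable amount. Concretely, a visible sphere over a segment from the node to an edge is a smooth embedded sphere exactly when the segment points in the eigendirection $(p,q)$ of the monodromy (after the change of coordinates). The segment must also hit the edge $D_j$ in its interior, and the tangent direction of $D_j$ together with $(p,q)$ must form a $\Z$-basis. We would check this last condition, $\det\big((p,q),\,v_j\big)=\pm1$ with $v_j$ the primitive direction of $D_j$, using the matrix identity \eqref{eq:HJ_facts_matrices} applied to the continued fraction $\frac{pq-1}{q^2}$. The direction vectors of the edges in a toric resolution are the successive columns of partial products of the HJ matrices, so one of them is adjacent, in the Farey sense, to $(p,q)$. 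The hypothesis $q\neq1$ is exactly what makes $\frac{pq-1}{q^2}$ a genuine resolution with $n\ge1$ edges beyond $D_0$. When $q=1$ the orbifold corner is absent or trivial, and the candidate segment would land on $D_0$ or on a corner instead.

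Given such a segment, we invoke Symington's construction of visible surfaces \cite[Section 7]{Sym02:Fourtwo}, or \cite[Chapter 7]{Ev23:Book}. The preimage of the segment is a smoothly embedded symplectic sphere: it collapses to a point at the node and meets $D_j$ transversally at an interior point, so it is a disc capped by a disc. Its self-intersection is computed by the standard formula for visible spheres from a node to an edge. This formula gives $-1$ precisely when the segment is a single node-to-edge segment with unimodular intersection with the edge, exactly as in the familiar exceptional spheres seen after a nodal trade. We therefore obtain $E^2=-1$. Adjunction, or directly $c_1(E)=1$ by Symington's Chern class description, then gives that $E$ is an exceptional sphere, and we set $E\in H_2(X_{p,q};\Z)$ to be its class.

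The main obstacle is the arithmetic step: showing that some edge $D_j$ in the resolution chain is hit by the eigenray from the node, transversally at an interior point and with unimodular determinant, for all coprime $q\neq1$. We would address it by writing the edge directions explicitly via the accompanying numbers of $\frac{1}{pq-1}(1,q^2)$ and the recursion \eqref{eq:rec_acomp}. We would then locate $(p,q)$ between two consecutive edge directions, and use that consecutive directions form a basis to pin down $j$.
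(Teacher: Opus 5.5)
There is a genuine gap. The criterion you use for when a visible surface from the node to an edge closes up is the criterion for a visible \emph{Lagrangian}, so it gives the wrong arithmetic target.

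In action--angle coordinates, consider a visible surface over a segment of direction $v$ whose circle has direction $w$. Then $\omega(v\,\partial_p, w\,\partial_\theta)=\langle v,w\rangle$. To cap off at the node, the circle must be the vanishing cycle $w_0$, and $w_0$ pairs to zero with the eigendirection $v_0=(p,q)$. So a surface leaving the node \emph{along} the eigenline is the Lagrangian thimble; this is exactly how the pinwheel is visible over the cut. Over an edge $D_j$ the only circle that collapses is the one given by its normal $\nu_j$. The equality $w_0=\pm\nu_j$ holds precisely when $D_j$ is \emph{parallel} to $(p,q)$, which is incompatible with your condition $\det((p,q),v_j)=\pm1$. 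Under your condition, the thimble reaches $D_j$ with boundary a circle mapping with degree $|\det((p,q),v_j)|=1$ onto the collapsed fibre. That gives a Lagrangian disc with boundary on $D_j$: not a closed sphere, and not symplectic. Hence the "disc capped by a disc" step and the conclusion $E^2=-1$ do not apply to the object you construct.

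What is actually needed, and what the paper establishes, is different. $E$ is a visible \emph{symplectic} sphere whose circle is $w_0$ throughout. It leaves the node transversally to the eigenline and caps off on an edge parallel to $(p,q)$, as in an almost toric blow-up. The arithmetic statement is therefore that the minimal resolution of $\frac{1}{pq-1}(1,q^2)$ contains an edge with direction exactly $(p,q)$. The paper writes $\frac{p}{q}=[x_1,\ldots,x_r]$ and $\frac{p}{p-q}=[y_1,\ldots,y_s]$, and starts from the non-minimal resolution $[x_1,\ldots,x_r,2,y_s,\ldots,y_1,1]$. This blows down to $[x_1,\ldots,x_r,2,y_s,\ldots,y_j-1]$ as long as some $y_j\neq 2$. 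By \eqref{eq:HJ_facts_matrices}, the edge following the $x$-block (the one attached to the connecting $2$) points in direction $(p,q)$.

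When $q=1$, all $y_i$ equal $2$ and the connecting sphere is contracted as well, so this edge disappears; that is the real role of the hypothesis $q\neq1$. Your explanation of the hypothesis is also off. For $q=1$ and $p\geq 3$ the resolution of $\frac{1}{p-1}(1,1)$ is a genuine single $-(p-1)$-sphere, and its edge direction is unimodular with $(p,1)$. So your criterion would not distinguish $q=1$ at all. Once $E$ is constructed correctly, your last step does go through: $E$ meets the boundary only once, so $c_1(E)=1$, and adjunction gives $E^2=-1$.
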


\begin{remark}
    The situation of the lemma is shown in the example $(p,q)=(5,2)$ in \cref{fig:Compactification_Bpq_regulation}.
    Recall from \cref{ex:numbers_HJ} that in the case $(p,q)=(n,1)$ the compactification is already a rational and ruled surface, because the compactification only introduces two compactifying divisors.
\end{remark}

\begin{proof}[Proof of \cref{lma:distinguished_E}]
    Assume that $\frac{p}{q}=[x_1,\ldots,x_r]$ and $\frac{p}{p-q}=[y_1,\ldots,y_s]$.
    It is easy to see that a resolution of the form $[x_1,\ldots,x_r,2,y_s,\ldots,y_1,1]$ is a non-minimal resolution of the $\frac{1}{pq-1}(1,q^2)$ singularity.\footnote{This resolution is obtained by introducing a vertical symplectic cut in the middle step of \cref{fig:Compactification_Bpq}. Then the thereby introduced singularity is exactly the dual singularity to $\frac{p^2}{pq-1}$, which has a minimal resolution as discussed in equation \eqref{lma:Hj_facts_p2_dual}.
    The compactification constructed in this manner is exactly the one used in \cite{Bu25} and \cite[Figure 9.11]{Ev23:Book}.
    The string is in the reverse order to the one given in \eqref{lma:Hj_facts_p2_dual} because of a reversal of orientation.\label{foot:compactification}}
    This resolution covers the minimal one by consecutively blowing down $(-1)$-spheres, and this blow-down procedure will contract the whole string $[y_1,\ldots,y_s]$ if and only if it is a string of the form $[2,\ldots,2]$, meaning that $(p,q)=(n,1)$.
    Note that in this case $\frac{n}{1}=[n]$, and since the connecting $(-2)$-sphere is also blown down in the process we see that this yields exactly the compactification $X_{n,1}$, which is a rational and ruled surface.
    If not, the string $[y_1,\ldots,y_s]$ contains a first entry $y_j$ that is not equal to $2$ and therefore
    \begin{equation}\label{eq:res_of_pq-1}
        \frac{1}{pq-1}(1,q^2)=[x_1,\ldots,x_r,2,y_s,\ldots,y_j-1].
    \end{equation}
    This concludes the proof, because $\frac{p}{q}=[x_1,\ldots,x_r]$ implies that the edge associated to the connecting $2$ in the HJ continued fraction of $\frac{1}{pq-1}(1,q^2)$ is pointing in the $(p,q)$-direction.
\end{proof}

\begin{corollary}\label{cor:compactification_birationally_derived}
    The compactification $X_{p,q}$ is obtained from the standard toric model of $\F_{d_0}$ via a sequence of toric blow-ups and a non-toric blow-up of a distinguished edge, meaning that it is birationally derived from the Hirzebruch surface $\F_{d_0}$.
\end{corollary}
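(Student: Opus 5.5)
The plan is to read the statement off the almost toric base diagram of $X_{p,q}$, running the compactification procedure of \cref{fig:Compactification_Bpq} backwards. First I will dispose of the nodal part. Perform a nodal trade at the node of $\ATF_{p,q}$, sliding the node along its eigenline into the edge it points at. The branch cut points in the $(p,q)$-direction, and by \cref{lma:distinguished_E} the visible exceptional sphere $E$ ends on the edge of the toric resolution of $\frac{1}{pq-1}(1,q^2)$ that points in the $(p,q)$-direction. This is the edge attached to the connecting $2$ in \eqref{eq:res_of_pq-1}. Sliding the node to that edge turns the neighbourhood of $E$ into a toric corner. Reversing the trade, $X_{p,q}$ is a non-toric blow-up of a toric surface $Y_{p,q}$ at a point in the interior of this distinguished edge. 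In $Y_{p,q}$ that edge carries a sphere whose self-intersection is one higher.

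Next I will identify $Y_{p,q}$ as an iterated toric blow-up of $\F_{d_0}$ by describing its fan. The rays of the fan of $Y_{p,q}$ are:
\begin{itemize}
\item the inward normals of the two edges of the original wedge $\Delta_{p,q}$;
\item the normal of the horizontal cut, which gives $D_0$;
\item the normals of the edges introduced by resolving $x_{\orb}$, namely $D_1,\dots,D_n$, or in the sheared picture the chain $[x_1,\dots,x_r,2,y_s,\dots,y_j-1]$.
\end{itemize}
By \cref{lma:central_becomes_fibre}, after the shear the first cut of the resolution is horizontal, so the fan contains the opposite pair of rays $\pm(0,1)$. The fibre $F$ has square $0$, coming from the central cylinder. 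Since $D_0^2=+d_0$ by \cref{le:minimal_res_orbifold_compactification}, the fan also contains a ray whose sphere has self-intersection $d_0$. A complete smooth fan containing two opposite rays becomes the fan of some $\F_k$ after repeatedly deleting rays $v$ whose neighbours sum to $v$; these are the toric $(-1)$-curves not equal to $F$ or $D_0$. Deleting rays only adds blow-downs, so it never lowers $D_0^2$. I would arrange that the curves blown down never meet $D_0$. Then $D_0$ survives with square $d_0$ as the positive section, and the surface reached is $\F_{d_0}$.

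As a sanity check, in the case $q=1$ the remark after \cref{lma:distinguished_E} says $X_{n,1}$ is already ruled and no non-toric step is needed. There $D_0^2=n+1=d_0$, which is consistent with getting $\F_{d_0}$ directly.

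The main obstacle is the combinatorial bookkeeping in this last step. I must check that the successive toric blow-downs can be chosen to avoid $D_0$. Equivalently, the chain $D_1,\dots,D_n$ and the remaining edges, after shrinking the distinguished edge back by one, must contract to the single complementary pair of $\F_{d_0}$: a section of square $-d_0$ together with fibres. I would verify this using the identity $[x_1,\ldots,x_r,1,y_s,\ldots,y_1]=0$ of \cref{lma:HJ_facts}. This identity shows that once the connecting $2$ is lowered to $1$ by undoing the non-toric blow-up, the string collapses completely under iterated blow-downs of $(-1)$-spheres. What remains is the toric boundary of a Hirzebruch surface, and its index is fixed to be $d_0$ by $D_0$.
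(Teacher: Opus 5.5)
Your skeleton is the paper's. You undo the non-toric blow-up along $E$, so that the connecting $(-2)$-sphere becomes a $(-1)$-sphere, and then blow down toric $(-1)$-curves without ever touching $D_0$. However, the two almost toric steps meant to realise this do not work as written.

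First, the node's eigenline is the line in the $(p,q)$-direction, and by \cref{lma:distinguished_E} the distinguished edge also points in the $(p,q)$-direction. They are parallel, so no slide along the eigenline ever reaches that edge. This parallelism is exactly Symington's picture of an almost toric blow-up on that edge, with $E$ visible from the node down to it. Moreover, a slide or trade by itself only re-presents $X_{p,q}$; the blow-down is a separate step. What you need is simply to blow down $E$, which meets $\cd_{p,q}$ once, in the connecting sphere, and is disjoint from $F$. In the base this amounts to gluing the triangle of the almost toric blow-up back in.

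Second, your list of rays counts the compactified central cylinder twice. The inward normals of the two wedge edges span a cone of determinant $p^2$. That is the $\frac{1}{p^2}(1,pq-1)$ corner, which is smooth in $X_{p,q}$ only because the node's cut ends there: shearing by the monodromy sends the edge direction $(p^2,pq-1)$ to $(0,-1)$, aligned with $(0,1)$. So the boundary of $Y_{p,q}$ is the cycle $F,D_n,\ldots,D_1,D_0$, with the connecting sphere now of square $-1$. The opposite rays are those of $D_n$ and $D_0$, as $F^2=0$ forces.

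The continued-fraction step also cites the wrong string. The identity $[x_1,\ldots,x_r,1,y_s,\ldots,y_1]=0$ is not about any chain occurring in $Y_{p,q}$. There the chain $D_n,\ldots,D_1$ reads $[x_1,\ldots,x_r,1,y_s,\ldots,y_j-1]$, which is not zero: for $(p,q)=(5,2)$ it is $[3,2,1,2]$, whose tail $[2,1,2]$ already vanishes. Nor should that chain collapse completely, since $D_n$ has to survive as the $(-d_0)$-section. The zero continued fraction is the chain strictly between $D_n$ and $D_0$, namely $[x_2,\ldots,x_r,1,y_s,\ldots,y_j-1]$.

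With the fan corrected you need no Hirzebruch--Jung identity at all. The opposite rays for $D_n$ and $D_0$ force this chain to be a zero continued fraction, with all entries $\geq 1$ once its length is at least two. A string of integers $\geq 2$, possibly followed by a single $1$, never evaluates to $0$. Hence the chain always contains a $(-1)$-curve not adjacent to $D_0$. Blowing these down one at a time never touches $D_0$; the last one, in $[1,1]$, is the curve meeting $D_n$. You therefore land on some $\F_k$ with $k=D_0^2=d_0$. This disposes of what you called the main obstacle and coincides with the paper's final step of contracting the $(-1)$-sphere that meets the negative section.
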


\begin{remark}
    This sequence of blow-ups/downs is shown in \cref{fig:compactificationBpq_birational} for the example $(p,q)=(5,2)$.
\end{remark}

\begin{proof}[Proof of \cref{cor:compactification_birationally_derived}]
    In the case $(n,1)$ there is nothing to prove, since the compactification $X_{n,1}$ is already $\F_{d_0}$, as $d_0=n+1$ in this case according to \cref{ex:numbers_HJ}.
    Hence, we assume that $(p,q)\neq (n,1)$.
    In this case the minimal resolution of the singularity $\frac{1}{pq-1}(1,q^2)$ is given by 
    $$
        \frac{1}{pq-1}(1,q^2)=[x_1,\ldots,x_r,2,y_s,\ldots,y_j-1],
    $$
    as deduced in \eqref{eq:res_of_pq-1}.
    It follows from \cref{lma:distinguished_E}, see also \cref{fig:Compactification_Bpq_regulation}, that 
    $$[x_1,\ldots,x_r,1,y_s,\ldots,y_j-1]=0,$$ 
    i.e.\ that the HJ continued fraction is a zero continued fraction, and therefore there is a unique sequence of interior blow-downs to $[1,1]$.
    Now contracting the $(-1)$-sphere that intersects the negative section will leave the positive section unchanged, and therefore the "minimal model" of $X_{p,q}$ is $\F_{d_0}$.
\end{proof}

\begin{figure}[htb]
\begin{center}
    \begin{tikzpicture}
    \begin{scope}[shift={(0,-1.375)}]
    
    \draw[thick] (-0.2,0) to[out=5,in=175] (3.2,0);
    \node (d4) at (1.5,-0.2) {\tiny $D_4=-H+2E_1$};
    \node (d4s) at (1.5,0.3) {\tiny $-3$};

    \draw[thick] (0,-0.2) to[out=85,in=275] (0,3.2);
    \node[rotate=90] (f) at (-0.2,1.6) {\tiny $F=H-E_1$};
    \node[rotate=90] (fs) at (0.3,1.6) {\tiny $0$};

    \draw[thick] (-0.2,3) to[out=355,in=185] (6,3);
    \node (d0) at (1.7,3.2) {\tiny $D_0=2H-E_1-E_2$};
    \node (d0s) at (1.7,2.6) {\tiny $+2$};

    \draw[thick] (3,-0.2) to[out=80,in=190] (4.2,1.2);  
    \node[anchor=west] (d3) at (3.3,0.5) {\tiny $D_3=H-E_1-E_2-E_3$};
    \node (d3s) at (3,0.7) {\tiny $-2$};

    \draw[thick] (3.8,0.8) to[out=80,in=190] (5.2,2.2);  
    \node (d2s) at (4.1,1.9) {\tiny $-2$};
    \node[anchor=west] (d1) at (4.15,1.55) {\tiny $D_2=E_3-E_4$};

    \draw[thick] (4.8,1.8) to[out=80,in=190] (6.2,3.2); 
    \node (d1s) at (4.9,2.7) {\tiny $-2$};
    \node[anchor=west] (d1) at (5.1,2.5) {\tiny $D_1=E_1-E_3$};

    \draw[thick] (3,1.6) to[out=0,in=160] (4.2,1.4);
    \node (e4) at (3.2,1.8) {\tiny $E_4$};
    \node (e4s) at (3.2,1.4) {\tiny $-1$};

    \end{scope}
    
    \begin{scope}[shift={(9,-1.5)}]
    \draw[thick] (0,-0.2) to[out=120,in=240] (0,1.2);
    \draw[thick] (0,0.8) to[out=120,in=240] (0,2.2);
    \draw[thick] (0,1.8) to[out=120,in=240] (0,3.2);
    \draw[thick] (-0.8,-0.05) to[out=0,in=170] (0.2,-0.1);
    \draw[thick] (-0.8,3.05) to[out=0,in=190] (0.2,3.1);
    
    \node at (-0.45,0.5) {\tiny $-2$};
    \node at (-0.45,1.5) {\tiny $-1$};
    \node at (-0.45,2.5) {\tiny $-2$};
    \node at (-0.45,-0.2) {\tiny $-3$};
    \node at (-0.45,3.2) {\tiny $+2$};
    \end{scope}

    \draw[->] (6.3,0.35) to[out=0,in=160] (8,0.15);
    \draw[->] (8,-0.05) to[out=180,in=350] (6.3,0.15);
    
    \begin{scope}[shift={(11,-1.5)}]
    \draw[thick] (0,-0.2) to[out=110,in=250] (0,1.7);
    \draw[thick] (0,1.3) to[out=110,in=250] (0,3.2);
    \draw[thick] (-0.8,-0.05) to[out=0,in=170] (0.2,-0.1);
    \draw[thick] (-0.8,3.05) to[out=0,in=190] (0.2,3.1);
    
    \node at (-0.45,0.75) {\tiny $-1$};
    \node at (-0.45,2.25) {\tiny $-1$};
    \node at (-0.45,-0.2) {\tiny $-3$};
    \node at (-0.45,3.2) {\tiny $+2$};
    \end{scope}

    \draw[->] (9.3,0.10) to[out=10,in=170] (10.5,0.10);
    \draw[->] (10.5,-0.10) to[out=190,in=350] (9.3,-0.10);

    \begin{scope}[shift={(13,-1.5)}]
    \draw[thick] (0,-0.2) to[out=105,in=255] (0,3.2);
    \draw[thick] (-0.8,-0.05) to[out=0,in=170] (0.2,-0.1);
    \draw[thick] (-0.8,3.05) to[out=0,in=190] (0.2,3.1);
    
    \node at (-0.45,1.5) {\tiny $0$};
    \node at (-0.1,1.5) {\tiny $A$};
    \node at (-0.45,-0.2) {\tiny $-2$};
    \node at (-0.6,0.2) {\tiny $A-B$};
    \node at (-0.45,3.2) {\tiny $+2$};
    \node at (-0.6,2.8) {\tiny $A+B$};
    \end{scope}

    \draw[->] (11.2,0) to[out=340,in=130] (12.6,-1);
    \draw[->] (12.4,-1.1) to[out=150,in=320] (11.2,-0.2);
    
    \end{tikzpicture}
    \caption{This figure shows how the compactification $X_{5,2}$ is birationally derived from the Hirzebruch surface $\F_2$ via a sequence of blow-ups. On the right we only display the blow-down sequence of the broken fibre. The homology classes of the divisors and the fibre on the left are easily calculated from the sequence of blow-ups, using the usual identification $S^2\times S^2 \# \overline{\C P^2}\cong X_2$. In the language of \cref{lma:distinguished_E} this means that $E_4$ is the distinguished exceptional class.}
    \label{fig:compactificationBpq_birational}
\end{center}
\end{figure}
As a last point on the compactification $X_{p,q}$ we discuss some homological considerations that we will need later.
Assume that $\iota:B_{p,q}(\varepsilon)\hookrightarrow B_{p,q}$ is a symplectic embedding.
After choosing suitable compactification data, meaning that we compactify away from the embedding, we view $\iota$ as an embedding into the compactification $X$ of $B_{p,q}$.

Rationally blowing up along $\iota$ yields a closed symplectic manifold $\widetilde{X}$ that contains the Wahl chain $\cc_{p,q}$ and the compactifying divisor $\cd_{p,q}$.
These two chains form a basis of $H_2(\widetilde{X};\Q)$.
Given any curve $\widetilde{C}$ in $\widetilde{X}$ we can express this curve in rational homology as:
$$
    \widetilde{C}=\sum_{i=0}^n a_iD_i + \sum_{j=1}^m c_j C_j \in H_2(\widetilde{X};\Q).
$$
Define $(M_\cd)_{ij}=D_i\cdot D_j$, the intersection matrix of the compactifying divisor, and write $\xi_i\vcentcolon=\widetilde{C}\cdot D_i$.
This results in $\bm{\xi}=M_\cd \bm{a}$.
Recall from \cref{def:compactification} that the subchain $\cd'\vcentcolon=(D_1,\ldots,D_n)$ of the compactifying divisor corresponds to taking a minimal resolution of the cyclic quotient singularity $\frac{1}{pq-1}(1,[q^2]^{-1})$, where $0<[q^2]^{-1}<pq-1$ is the unique integer that satisfies $q^2[q^2]^{-1}\equiv1 \mod{pq-1}$.
In particular, if we assume that $(pq-1)/[q^2]^{-1}=[d_1,\ldots,d_n]$ and consider the accompanying numbers $g_i,h_j$ as defined in \cref{lma:M_inverse}, where they are denoted by $e_i, f_j$, we obtain
\begin{equation}\label{eq:Inverse_MD'}
(M_{\cd'})^{-1}_{ij}=
    \begin{cases}-(g_i h_j)/(pq-1)\mbox{ if }i\leq j\\
      -(g_j h_i)/(pq-1)\mbox{ if }i>j 
    \end{cases}.
\end{equation}
Defining $\bm{e}_1\vcentcolon=(1,0,\ldots,0)$, the first canonical basis vector of length $n$, and $\bm{v}_{\cd'}\vcentcolon=(pq-1)(M_{\cd'})^{-1}\bm{e}_1$, the first column of $(pq-1)(M_{\cd'})^{-1}$, we can give a compact formula for $(M_{\cd})^{-1}$ using Schur complements:
\begin{equation}\label{eq:inverse_MD}
    M_{\cd}=
    \begin{pmatrix} 
        d_0 & \bm{e}_1^T \\ 
        \bm{e}_1 & M_{\cd'}
    \end{pmatrix}
    \quad\text{and}\quad 
    (M_{\cd})^{-1}=\frac{1}{p^2}
    \begin{pmatrix} 
        pq-1 & \bm{v}_{\cd'}^T \\ 
        \bm{v}_{\cd'} & p^2(M_{\cd'})^{-1}+\frac{1}{(pq-1)}\bm{v}_{\cd'}\cdot\bm{v}_{\cd'}^T
    \end{pmatrix},
\end{equation}
where $d_0$ was defined in \cref{le:minimal_res_orbifold_compactification}.

\subsection{Persistence of regulations}
\label{sec:trafo_regulation}

Recall the structure of the compactification $X_{p,q}$ of $B_{p,q}$ that we have constructed so far: it is birationally derived from a rational and ruled surface; there is a distinguished fibre class $F \in H_2(X;\mathbb{Z})$; the compactifying divisor $\mathcal{D}_{p,q}$ contains a positive section and a negative section; and if $\dim(H_2(X;\mathbb{Q}))>2$, which means that $q\neq 1$, there is a distinguished exceptional divisor $E$, which intersects the compactifying divisor in a single point.
This was proven in \cref{cor:compactification_birationally_derived}, \cref{lma:central_becomes_fibre}, \cref{lma:distinguished_E} and the setup is shown in \cref{fig:Compactification_Bpq_regulation}.

We now move on to showing that $F$ is part of a well-behaved foliation of $X$, which is constructed by holomorphic-curves techniques.
Recall the language of "regulations" and "rulings" from \cite[Section 4.1]{ABEHS25}:
an almost complex manifold $(Y,J)$ is said to admit a $J$-holomorphic regulation in the class $A\in H_2(Y;\mathbb{Z})$ if $A$ is a fibre class, i.e.\ has self-intersection equal to zero, and pairs with the first Chern class of $Y$ according to the adjunction formula, and the evaluation map $\text{ev}:\overline{\mathcal{M}}_{0,1}(A,J) \to Y$ has degree one.
Moreover, curves in $\overline{\mathcal{M}}_{0,0}(A,J)$ are referred to as rulings and they are called "broken" if they belong to $\partial \overline{\mathcal{M}}_{0,1}(A,J)$ and "smooth" otherwise. 
Define the space of almost complex structures
$$
    \mathcal{J}_\tau(\mathcal{D}):=\{J\in \mathcal{J}_\tau(X,\omega)\mid \text{all irreducible components of $\mathcal{D}$ are $J$-holomorphic}\},
$$
where $\mathcal{J}_\tau(X,\omega)$ denotes the space of tame almost complex structures on $(X,\omega)$. 
This space is non-empty since $\cd$ is a normal crossing divisor, and it is weakly contractible \cite[Appendix]{Ev14}.
The following two lemmas were first proven by Buck \cite[Section 2.4]{Bu25} and our proofs are inspired by Buck's proofs. 
In our case we can give simplified proofs of the lemmas, since our setup is significantly simpler.

\begin{lemma}
\label{lma:regulation_lma1}
    If $\dim (H_2(X;\mathbb{Q}))>2$, then the distinguished exceptional class $E$ is $J$-holomorphically represented by a unique smooth rational curve, which we denote by $E_J$, for any $J \in \mathcal{J}_\tau(\mathcal{D})$.
\end{lemma}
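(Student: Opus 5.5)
Since $E$ is an exceptional class, $E\cdot E=-1$ and $c_1(E)=1$, so the index of the moduli space $\mathcal{M}_{0,0}(E,J)$ is $2c_1(E)-2=0$. The plan is to show that $E$ has a $J$-holomorphic representative for generic tame $J$, that this persists to every $J\in\mathcal{J}_\tau(\mathcal{D})$ because no bubbling can occur, and that the representative is unique by positivity of intersections.

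\textbf{Existence for generic $J$.} The class $E$ is represented by the visible embedded symplectic $(-1)$-sphere of \cref{lma:distinguished_E}. In the rational surface $X$ (\cref{cor:compactification_birationally_derived}), an exceptional class has nonzero Gromov--Witten invariant $GW_{0,0}(E)=1$. Hence for generic $J\in\mathcal{J}_\tau(\mathcal{D})$ there is an embedded $J$-sphere in class $E$.

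\textbf{Passing to arbitrary $J\in\mathcal{J}_\tau(\mathcal{D})$.} For a given $J$, take a sequence of generic $J_k\to J$ inside $\mathcal{J}_\tau(\mathcal{D})$. By Gromov compactness the $E_{J_k}$ converge to a stable $J$-curve $\sum m_i A_i$ representing $E$. The aim is to show this limit is a single smooth component. The components fall into two kinds.

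\begin{enumerate}
\item \emph{Components lying in $\mathcal{D}$.} Since $E$ meets $\mathcal{D}$ in a single point, positivity of intersections gives $E\cdot D_j\ge 0$ for all $j$, with exactly one value equal to $1$ and the rest $0$. Write $E=\sum_j \alpha_j D_j + R$, where $R$ collects the components not in $\mathcal{D}$. Then $R\cdot D_j\ge 0$, and the negative-definiteness of the intersection form on $(D_1,\ldots,D_n)$ constrains the $\alpha_j$. Using the explicit inverse \eqref{eq:inverse_MD}, all entries of $M_{\mathcal{D}'}^{-1}$ are negative, so the $\alpha_j$ are forced to be nonpositive. Pairing with $\omega$, or with $F$ via $F\cdot E=0$ and $F\cdot D_0=1$, should then exclude any nonzero $\alpha_j$.
\item \emph{Components not in $\mathcal{D}$.} Each such component has positive area and nonnegative intersection with every $D_j$. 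Its homology class is determined by its intersections with the basis $\mathcal{D}$, because $E$ together with the $D_i$ spans $H_2(X;\mathbb{Q})$. Suppose the limit had two or more such components. Then the sum of their classes would split $E$ into pieces, each with nonnegative pairing against $F$ and against the $D_j$. Area considerations and adjunction (every irreducible component has $c_1\ge 1$ only when it is a $(-1)$-curve or of positive square) would make $c_1(E)=1$ impossible unless there is just one component, of multiplicity $1$.
\end{enumerate}

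I expect the main obstacle to be this combinatorial exclusion of broken configurations, in particular the components contained in $\mathcal{D}$ and the possible negative-square spheres. I would first try to handle it by computing $E$'s coefficients $\bm{a}=M_{\mathcal{D}}^{-1}\bm{\xi}$ explicitly and checking their signs.

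\textbf{Uniqueness.} Suppose $E'$ is a second $J$-curve in class $E$ distinct from $E_J$. Then positivity of intersections gives $E\cdot E\ge 0$, contradicting $E\cdot E=-1$. Smoothness of $E_J$ follows from adjunction: $E\cdot E+2-c_1(E)=0$, so the genus defect vanishes and $E_J$ is embedded.
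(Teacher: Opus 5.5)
\textbf{Verdict: there is a genuine gap.} The proposal does not prove the central claim of the lemma, namely that every stable $J$-curve in class $E$ is a single component of multiplicity one.

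\textbf{Existence and uniqueness.} These steps are fine in outline. Using $GW(E)\neq 0$ is a legitimate alternative to the paper's connectedness argument. The paper argues that the set of $J\in\mathcal{J}_\tau(\mathcal{D})$ for which $E$ is represented is non-empty because $E$ is visible, open by automatic transversality since the index is $0>-2$, and closed by Gromov compactness together with no-bubbling. For your route, take $J_k\to J$ generic in the \emph{full} space of tame structures. Elements of $\mathcal{J}_\tau(\mathcal{D})$ are never regular in the Gromov--Witten sense, since each $D_i$ with $D_i^2\le -2$ is a $J$-curve of negative index. However, both routes depend on the no-bubbling claim, and that is where your argument fails.

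\textbf{Item 1: the sign argument is wrong.} Pairing $E=R+\sum_j\alpha_jD_j$ with $D_1,\dots,D_n$ gives $M_{\mathcal{D}'}\bm{\alpha}'=\bm{e}_r-\bm{w}-\alpha_0\bm{e}_1$, where $\bm{w}=R\cdot\mathcal{D}'\ge 0$. Since $M_{\mathcal{D}'}^{-1}$ is entrywise negative, the terms $\bm{w}$ and $\alpha_0$ push $\bm{\alpha}'$ upwards, so no sign follows. Concretely, take $\alpha_0=0$ and $\bm{w}=h_r\bm{e}_n$. Then $\alpha_k=0$ for $k\le r$ and $\alpha_k=(g_kh_r-g_rh_k)/(pq-1)\in\mathbb{Z}_{>0}$ for $k>r$, and this is consistent with every pairing $E\cdot D_j$. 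So intersection numbers with $\mathcal{D}$ alone cannot rule out components lying inside $\mathcal{D}$.

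\textbf{Item 2: it relies on facts not yet available.}
\begin{itemize}
\item You use $F\cdot R_\ell\ge 0$. For arbitrary $J\in\mathcal{J}_\tau(\mathcal{D})$ nothing yet makes $F$ nef. That is \cref{lma:ruling_compactification}, which the paper proves after this lemma and using it.
\item Your adjunction dichotomy is not valid for non-generic $J$. For a simple sphere, adjunction only gives $c_1(C)\le C\cdot C+2$.
\end{itemize}

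\textbf{The missing input.} The paper's no-bubbling step rests on the fact that $\omega$ is exact on $X\setminus\mathcal{D}\cong B_{p,q}$. Hence every component not contained in $\mathcal{D}$ must meet $\mathcal{D}$. It then contributes at least $1$ to the intersection with $\mathcal{D}$, whereas $E$ meets $\mathcal{D}$ exactly once. You never invoke this.

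\textbf{Components inside $\mathcal{D}$ need separate care.} The count above does not control them, because $D_j\cdot\sum_iD_i\le 0$ for $j\ge 1$. Pairing with $D_0$ gives $0=R\cdot D_0+\alpha_0d_0+\alpha_1$, using $E\cdot D_0=0<D_0^2=d_0$; this kills $\alpha_0$ and $\alpha_1$. What remains is exactly the configuration exhibited in Item 1.

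\textbf{A possible fix.} Reverse the order of the two lemmas.
\begin{enumerate}
\item The closedness argument of \cref{lma:ruling_compactification} goes through without the $E_J$ term: the first row of $M_{\mathcal{D}'}^{-1}$ forces $\sum_i h_iw_i=1$. This yields the $J$-regulation in class $F$.
\item Then $F$ is $J$-nef, and $F\cdot E=0$ confines every component of the limit to the broken ruling containing $D_1,\dots,D_{n-1}$.
\item Inside that ruling, an inverse-matrix argument of the type used in the proof of \cref{thm:regulation_of_tilX} identifies the limit with a single $(-1)$-sphere.
\end{enumerate}
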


\begin{proof}
    The proof proceeds in two steps: we first show that any curve in the moduli space $\mathcal{M}_{0,0}(E,J)$ is smooth, i.e.\ $\overline{\mathcal{M}}_{0,0}(E,J)=\mathcal{M}_{0,0}(E,J)$, and then conclude that $\mathcal{M}_{0,0}(E,J)$ is non-empty for all $J \in \mathcal{J}_\tau(\mathcal{D})$.
    
    For the first step suppose that $J \in \mathcal{J}_\tau(\mathcal{D})$ and that $S \in \overline{\mathcal{M}}_{0,0}(E,J)$ is a stable curve with irreducible components $S_1,\dots,S_N$.
    Since $X\setminus \mathcal{D}$ is a Liouville domain, no component of $S$ can be completely contained in $X\setminus \mathcal{D}$ and, furthermore, since $E$ intersects $\cd$ exactly once by positivity of intersections, we must have that $[S_j]=E$ for one $j$.
    This immediately implies $S=S_j$.
    
    Now denote by $\mathcal{J}_\tau(\cd,E) \subseteq \mathcal{J}_\tau(\cd)$ the subset of all almost complex structures such that $E$ is $J$-holomorphically represented.
    We know by \cref{lma:distinguished_E} that this subset is non-empty, and applying automatic transversality, see \cite{HoLiSi97} or \cite[Chapter 2]{Wen18:Jhollow}, we obtain that each $\mathcal{M}_{0,0}(E,J)$ is a compact smooth manifold of dimension 0 for each $J\in \mathcal{J}_\tau(\cd,E)$ and that $\mathcal{J}_\tau(\cd,E)$ is open. The automatic transversality criterion and dimension statement follow from a direct computation: 
    $$
    \text{ind}(u)=-2+2c_1(E)=0>-2,
    $$
    for $u \in \mathcal{M}_{0,0}(E,J)$.
    Moreover, the first argument shows that $\mathcal{J}_\tau(\cd,E)$ is also closed. 
    Indeed, assume that $(J_\nu)_\nu \subseteq \mathcal{J}_\tau(\cd,E)$ is a sequence converging to $J \in \mathcal{J}_\tau(\cd)$.
    Then for each $\nu$ there is a unique embedded curve $u_\nu \in \mathcal{M}_{0,0}(E,J_\nu)$.
    Gromov compactness implies that there is a subsequence of $(u_\nu)_\nu$ converging to a stable curve $u_\infty \in \overline{\mathcal{M}}_{0,0}(E,J)$. Consequently, $J \in \mathcal{J}_\tau(\cd,E)$ and connectedness of $\mathcal{J}_\tau(\cd)$ then implies the lemma, i.e.\ $\mathcal{J}_\tau(\cd)=\mathcal{J}_\tau(\cd, E)$.
\end{proof}

\begin{lemma}\label{lma:ruling_compactification}
    For any $J \in \mathcal{J}_\tau(\mathcal{D})$, the compactification $X$ admits a non-degenerate $J$-holomorphic regulation in the class $F$ with at most one broken ruling.
    There exists a broken ruling if and only if $\dim(H_2(X;\mathbb{Q}))>2$.
    Moreover, if it exists it is given by $\mathcal{T}_J = (\mathcal{D}\setminus \{D_0,D_n\})\cup E_J$, i.e.\ the compactifying divisor without the positive and negative section and the exceptional divisor representing the distinguished class $E$.
\end{lemma}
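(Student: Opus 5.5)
We will follow the standard McDuff-type argument for rulings in the class $F$, arguing first for an integrable toric $J_0 \in \mathcal{J}_\tau(\mathcal{D})$ and then for arbitrary $J$ via Gromov compactness and positivity of intersections with $\mathcal{D}$.

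\textbf{Numerics of $F$.} By \cref{lma:central_becomes_fibre}, $F$ is represented by an embedded symplectic sphere with $F\cdot F=0$. Adjunction then gives $c_1(F)=2$, so the index of an embedded $F$-curve is $-2+2c_1(F)=2$ and, unmarked, such curves come in a $0$-dimensional family up to the $2$-dimensional deformation. Automatic transversality \cite{HoLiSi97} applies since $2>-2$, hence every embedded $F$-sphere is regular for every $J$. Positivity of intersections together with $F\cdot F=0$ shows that two distinct smooth $F$-curves are disjoint, and that through every point there passes at most one. From the ATF picture (\cref{fig:Compactification_Bpq_regulation}) we read off $F\cdot D_0=F\cdot D_n=1$ and $F\cdot D_i=0$ for $0<i<n$; moreover $F\cdot E=0$, which follows from the homology classes of \cref{cor:compactification_birationally_derived}.

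\textbf{Classification of broken rulings.} Let $S\in\partial\overline{\mathcal{M}}_{0,0}(F,J)$ with components $S_k$ and $J\in\mathcal{J}_\tau(\mathcal{D})$.
\begin{enumerate}
\item As in \cref{lma:regulation_lma1}, no $S_k$ lies in the Liouville domain $X\setminus\mathcal{D}$, so every non-divisor component meets $\mathcal{D}$ positively.
\item Since $F\cdot D_0=1$, exactly one component, say $S_1$, meets $D_0$ (with multiplicity one). $D_0$ itself cannot be a component, since $D_0^2=d_0>0$ would force $F\cdot D_0\ge d_0$ after accounting for the other components. The same reasoning applies to $D_n$ using $F\cdot D_n=1$.
\item Every remaining component either is some $D_i$ with $0<i<n$, or meets only the interior divisors $D_1,\dots,D_{n-1}$ and has zero total pairing with them.
\end{enumerate}
Writing $[S]=\sum a_i D_i+(\text{rest})$ and using the inverse formula \eqref{eq:inverse_MD}, we can pin down the coefficients. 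The key input is that $H_2(X;\mathbb{Q})$ is spanned by $\mathcal{D}$ when $q=1$ and by $\mathcal{D}\cup\{E\}$ otherwise, so non-divisor components are forced to lie in classes built from $E$ and fibre pieces. Combined with the area and adjunction constraints (each component is rational with $c_1>0$ unless it is a divisor sphere), this should force the non-divisor components to be exactly $E_J$ once each, with the interior divisors $D_1,\dots,D_{n-1}$ appearing with the multiplicities dictated by the blow-down sequence of \cref{cor:compactification_birationally_derived}, i.e.\ $F=E+\sum_{0<i<n}m_iD_i$. The $D_i$ and $E_J$ are unique $J$-curves (the latter by \cref{lma:regulation_lma1}), so there is at most one broken ruling, and it equals $\mathcal{T}_J$. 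When $q=1$ there are no interior divisors and no $E$, so no breaking is possible.

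\textbf{Existence and degree one.} For the toric $J_0$ the regulation is visible as the toric fibres over the vertical direction, and the evaluation map has degree one. For general $J$ we use connectedness of $\mathcal{J}_\tau(\mathcal{D})$ together with the compactness just established: $\overline{\mathcal{M}}_{0,1}(F,J)$ varies in a compact cobordism over paths of almost complex structures (the boundary strata have codimension $\ge 2$, or are the fixed configuration $\mathcal{T}_J$), so the degree of $\mathrm{ev}$ is invariant. Non-degeneracy follows from regularity of the smooth fibres and of the components of $\mathcal{T}_J$. The existence of a broken ruling when $\dim H_2>2$ follows because $E_J$ must lie in some ruling, and $E\cdot E=-1\neq0$ forbids it from being a smooth fibre.

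The main obstacle is the classification step: excluding exotic breakings in which components with $c_1\le 0$ or multiply covered pieces appear. We expect to handle this via the explicit homology coefficients from \eqref{eq:inverse_MD} and the birational description of \cref{cor:compactification_birationally_derived}, mirroring Buck's argument \cite{Bu25}.
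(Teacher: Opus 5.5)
\textbf{There is a genuine gap. The classification of nodal limits in the class $F$ is the whole content of the lemma, and it is never carried out; several of the inputs you propose for it are also wrong.} Your sentence ``this should force the non-divisor components to be exactly $E_J$'' is a hope rather than an argument. The specific problems are these.
\begin{itemize}
\item $H_2(X;\Q)$ is already spanned by $\cd$, because its complement is a rational homology ball and so $M_\cd$ is invertible. The useful consequence, which the paper exploits, is that every class equals $\sum a_iD_i$ with $\bm a=M_\cd^{-1}\bm\xi$ determined by its intersection profile $\bm\xi$.
\item The lemma is stated for every $J\in\cj_\tau(\cd)$, not for generic $J$. So you cannot use genericity or index arguments to get $c_1>0$ for non-divisor components.
\item Your exclusion of $D_0$ as a component needs $d_0\ge2$, but $d_0=1$ occurs, e.g.\ for $(p,q)=(3,2)$. ``The same reasoning'' says nothing about $D_n$, since $D_n^2<0$.
\item The predicted shape $F=E+\sum m_iD_i$ is wrong: $E$ need not have multiplicity one. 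For $(5,2)$, where $E$ meets $D_2$, one gets $F=D_1+2D_2+D_3+2E$.
\item Most importantly, you never exclude a splitting $F=[A]+[B]$ into two non-divisor spheres, with $A$ meeting only $D_0$ and $B$ meeting only $D_n$. Your step (2) says each section is hit by exactly one component, not that it is the same component. This splitting must be ruled out even when $q=1$, so your claim that ``no breaking is possible'' there is unjustified.
\end{itemize}
The paper disposes of this splitting numerically. $A$ has intersection profile $\bm e_1$ with $\cd$, hence $A\cdot A=(M_\cd^{-1})_{00}=(pq-1)/p^2\notin\Z$.

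\textbf{The missing idea is a point constraint.} Instead of classifying all of $\partial\overline{\mathcal M}_{0,0}(F,J)$, the paper fixes $x\in D_0\setminus D_1$. It shows that the set of $J\in\cj_\tau(\cd)$ admitting an embedded $F$-sphere through $x$ is open (by automatic transversality) and closed.

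For closedness, take a Gromov limit and write $[F]=[S]+a[E_J]+\sum_{i\ge1}b_i[D_i]$, where $S$ is the union of components not contained in $\cd'\cup E_J$. Since $x\in S\cap D_0$, pairing with $D_0$ gives $b_1=0$. The first row of $(M_{\cd'})^{-1}$ then reads $\sum_ih_iw_i+ah_r=1$. Here $\bm w=S\cdot\cd'\ge0$, and the right accompanying numbers satisfy $h_n=1<h_i$ for $i<n$. This forces $a=0$ and $\bm w=\bm e_n$, hence $\bm b=0$. The non-integrality of $A\cdot A$ above then shows that $S$ is a single embedded sphere.

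Connectedness of $\cj_\tau(\cd)$ together with \cite[Proposition 4.1.4]{ABEHS25} gives the regulation, with every ruling through $D_0\setminus D_1$ smooth. Hence at most the ruling through $D_0\cap D_1$ is broken. It contains $D_1$ because $F\cdot D_1=0$, and propagating along the chain shows it is $(\cd'\setminus D_n)\cup E_J$.

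Your cobordism argument for $\deg(\mathrm{ev})$ only becomes usable once such a non-breaking statement is in hand.
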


\begin{proof}
    The proof is very similar to that of the previous lemma and is easy if there is no distinguished exceptional class; therefore, we omit this case.
    Assume that a distinguished exceptional class exists.
    Fix a point $x \in D_0\setminus D_1$, i.e.\ a point on the positive section away from the intersection of $D_0$ with the chain of negative spheres $\cd'=\{D_1,\ldots,D_n\}$.
    Define
    $$
        \cj_\tau(\cd,F,x):= \{J \in \cj_\tau(\cd) \mid \exists \text{ an embedded $J$-holomorphic sphere in the class $F$ through $x$}\}.
    $$
    By \cref{lma:central_becomes_fibre}, see also \cref{fig:Compactification_Bpq_regulation}, this space is non-empty and it is open by automatic transversality, see again \cite{HoLiSi97} or \cite[Chapter 2]{Wen18:Jhollow}, because for $u \in \mathcal{M}_{0,0}(F,J)$ we have
    $$
        \text{ind}(u)=-2 + 2c_1(F)=2>0.
    $$
    We now want to show that $\cj_\tau(\cd,F,x)$ is closed.
    To do so, assume that $(J_\nu)_\nu \subseteq \cj_\tau(\cd,F,x)$ is a sequence of almost complex structures converging to $J\in \mathcal{J}_\tau(\cd)$.
    Then consider for all $\nu$ the unique (by positivity of intersection) element $u_\nu \in \mathcal{M}_{0,0}(F,J_\nu)$ passing through $x$.
    By Gromov compactness, there is a subsequence of $(u_\nu)_\nu$ that Gromov-converges to a stable $J$-holomorphic curve $u_\infty \in \overline{\mathcal{M}}_{0,0}(F,J)$.
    Now write $S$ for the union of all irreducible components of $u_\infty$ that are not contained in $\cd' \cup E_J$.
    Then we can write
    \begin{equation}\label{eq:stable_curve_sum_ii}
        [F]=[S]+a[E_J]+\sum_{i=1}^n b_i [D_i],
    \end{equation}
    where $a$ and the $b_i$ are the covering multiplicities.
    Intersecting \eqref{eq:stable_curve_sum_ii} with $D_0$ yields $1=D_0\cdot S + b_1$, but we know that $x$ is on some component of $S$, which implies $D_0\cdot S \geq 1$ and so we obtain $b_1=0$ and $D_0\cdot S = 1$.
    Now intersect \eqref{eq:stable_curve_sum_ii} with $\cd'=\{D_1,\ldots,D_n\}$.
    Writing this as a system of equations we have
    \begin{equation}\label{eq:intersection_eq_stable_ii_1}
        \bm{e}_n=\bm{w}+a\bm{e}_r + M_{\cd'}\bm{b},
    \end{equation}
    where $\bm{e}_j$ is the $j$th canonical basis vector, $\bm{w}=S \cdot \cd'$, $r$ is the unique index associated to the intersection between the distinguished exceptional class $E$ and $\cd'$; and $M_{\cd'}$ is the intersection matrix of $\cd'$.
    Then \eqref{eq:intersection_eq_stable_ii_1} is equivalent to
    \begin{equation}\label{eq:intersection_eq_stable_ii_2}
        \bm{b}=(M_{\cd'})^{-1}(\bm{e}_n-\bm{w}-a\bm{e}_r).
    \end{equation}
     Using the formula for the inverse of $M_{\cd'}$ given in \eqref{eq:Inverse_MD'} and that $b_1=0$, the first row of this equation is given by 
    \begin{equation}\label{eq:intersection_eq_stable_ii_3}
        \sum_{i=1}^n h_i w_i + a h_r = 1,
    \end{equation}
    where the $h_i$ are the right accompanying numbers of the $\frac{1}{pq-1}(1,[q^2]^{-1})$ singularity, meaning in particular that $h_n=1$ and $h_i > 1$ for $i < n$.
    Because $r< n-1$ by \cref{lma:distinguished_E}, which proves that the distinguished exceptional class does not intersect $\cd'$ at one of its ends, we see that \eqref{eq:intersection_eq_stable_ii_3} forces $\bm{w}=\bm{e}_n$ as well as $a=0$.
    This implies that \eqref{eq:intersection_eq_stable_ii_1} reads
    $$
        M_{\cd'}\bm{b}=\bm{0},
    $$
    which means $\bm{b}=\bm{0}$, i.e.\ $[F]=[S]$.
    
    What is left to prove is that $S$ contains exactly one component.
    As before, every component in $S$ has to intersect $\cd$, as otherwise it would be contained in $B_{p,q}$, which is Liouville.
    Therefore, by positivity of intersection, there are at most two components of $S$ because $F\cdot D_0=1$ and $F\cdot D_n=1$.
    So assume that there are two components and pick the component $A$ that is intersecting $D_0$.
    Then we can write $A=\sum_{i=1}^n a_i [D_i]$ for some rational coefficients $a_i$, and because the intersection profile of $A$ with $\cd$ is given by $(1,0,\ldots,0)$ we obtain $\bm{e}_1=M_\cd \bm{a}$.\footnote{Note that if $S$ had one component the intersection profile would be $(1,0,\ldots,0,1)$.}
    The self-intersection of the rational curve $A$ is therefore given by
    $$
        A\cdot A=\bm{a}^T M_\cd \bm{a} = a_1.
    $$
    We can compute $a_1$ via $\bm{a}=M_\cd^{-1} \bm{e}_1$, which yields $a_1=\frac{pq-1}{p^2}$ by \eqref{eq:inverse_MD},
    an obvious contradiction. 
    Hence the limit curve $u_\infty$ is an embedded rational curve passing through $x$ and $\cj_\tau(\cd,F,x)$ is closed.
    The connectedness of $\cj_\tau(\cd)$ therefore implies $\cj_\tau(\cd,F,x)=\cj_\tau(\cd)$ as desired.
    
    By \cite[Proposition 4.1.4]{ABEHS25} this means that for all $J \in \cj_\tau(\cd)$ the compactification $X$ admits a non-degenerate $J$-holomorphic regulation in the class $F$.
    Also, since the point $x\in D_0\setminus D_1$ was arbitrary, this means that through every point $x \in D_0\setminus D_1$ and for every $J \in \cj_\tau(\cd)$ there is a smooth $J$-holomorphic ruling passing through $x$.
    
    Moreover, by \cite[Proposition 4.1.4]{ABEHS25} we know that for $J\in \cj_\tau(\cd)$ there is a ruling passing through $x \in D_0 \cap D_1$. 
    But then inductively we can show that this ruling has to contain all of $(\cd'\setminus D_n) \cup E_J$: it has to contain $D_1$, because $D_1\cdot F =0$ and then this reasoning propagates.  
    This means that this ruling $\ct_J$ is broken and is given by $(\cd'\setminus D_n) \cup E_J$.  
\end{proof}

With these two lemmas in place we now proceed to study the behaviour of the regulation after rationally blowing up along an embedding $\iota:B_{p,q}(\varepsilon)\hookrightarrow X$.
The precise description of how the blown-up fibre class interacts with the compactifying divisor and Wahl chain is stated in \cref{thm:regulation_of_tilX}.
As in \cref{subsec:res_blow_neck}, denote $U=\iota(B_{p,q}(\varepsilon)) \subseteq X$ and $V=X\setminus U$, and consider a neck-stretching sequence of almost complex structures $\{J_t\}_t$ such that $J_0 \in \cj_N(\mathcal{D})$, where $\cj_N(\mathcal{D})\subseteq \cj_\tau(\mathcal{D})$ denotes the subspace of almost complex structures that are standard on a neck of the embedded ball $B_{p,q}(\varepsilon)$, as defined in \cref{def:acs_neck}.

\begin{lemma}[\normalfont{\cite[Lemma 5.4.1]{ABEHS25}}]
    Fix a point $x$ on the positive section $D_0\subseteq X$ and for every $t$ consider the unique $J_t$-holomorphic curve $F_t$ in the class $F\in H_2(X;\Z)$ passing through $x$.
    For a subsequence $t_j\to \infty$ the sequence $F_{t_j}$ converges to a limit building with components in both $\overline{U}$ and~$\overline{V}$.
\end{lemma}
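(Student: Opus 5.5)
The plan is to apply SFT compactness \cite{BEHWZ03} to the sequence $F_t$ and then rule out the two degenerate outcomes: a limit building lying entirely in $\overline{V}$, or one lying entirely in $\overline{U}$. The curves $F_t$ are embedded $J_t$-spheres of fixed class $F$, so their energy is uniformly bounded by $\omega(F)$. Hence a subsequence $F_{t_j}$ converges to a holomorphic building. Its levels live in $\overline{V}$ (top), in the symplectization of the lens space $L(p^2,pq-1)$ (middle), and in $\overline{U}\cong B_{p,q}$ (bottom).

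First I rule out that the limit lies entirely in $\overline{V}$. In that case the limit would be an unpunctured closed $\overline{J}$-curve, since any negative puncture forces a component in the lower levels. It would therefore be a sphere in class $F$ in $X$ that avoids $U$, i.e.\ a closed curve disjoint from $\iota(L_{p,q})$. But $F\cdot L_{p,q}\equiv q\not\equiv 0 \mod p$ by \cref{rmk:properties_fibre}, with the intersection number read in $\Z_p$ via the $\Z_p$-homology class of the pinwheel. So a cycle in class $F$ must meet the embedded pinwheel $\iota(L_{p,q})$. Making this rigorous requires that $\iota(L_{p,q})$ carries the same mod-$p$ class as $L^{\std}_{p,q}$. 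I would justify this using that $H_2(B_{p,q};\Z)=0$ and $H_1(B_{p,q};\Z)=\Z_p$, so that the pairing of $F$ with any pinwheel is determined by linking with the boundary lens space. This forces the building to have a nonempty lower part. This step is the main obstacle, because one must pin down the mod-$p$ intersection for an arbitrary embedding. As a fallback, I would use that the pinwheel generates $H_2(X,X\setminus U)$ together with the generator of the torsion.

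Second, I rule out that the limit lies entirely in $\overline{U}$, i.e.\ that no component enters $\overline{V}$. The curves $F_{t}$ all pass through the fixed point $x\in D_0$. Choosing the neck away from the divisor $\cd$, the point $x$ lies in $V$, so the top level contains a component through $x$. Alternatively, every component of a closed curve in $\overline{U}$ would be a closed curve in the Liouville manifold $B_{p,q}$, which is impossible by exactness. Combining the two steps, the limit building has components in both $\overline{U}$ and $\overline{V}$. As a consistency check, the top-level component through $x$ meets $D_0$ once, which matches $F\cdot D_0=1$.
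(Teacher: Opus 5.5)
Your proof is correct and follows the paper's argument: SFT compactness gives the limit building, the intersection of each $F_t$ with $D_0\subset V$ forces a component in $\overline{V}$, and $F\cdot L_{p,q}\equiv q \bmod p$ from \cref{rmk:properties_fibre} forces a component in $\overline{U}$. The step you flag as the main obstacle is settled, as in that remark, by naturality of the first Chern class. Since $K_{p,q}$ is Poincar\'e dual to $c_1(B_{p,q})$ and $\iota^*c_1(B_{p,q})=c_1(B_{p,q}(\varepsilon))$ for any symplectic embedding, you get $F\cdot\iota(L_{p,q})\equiv q\not\equiv 0 \bmod p$ without any linking-form or relative-homology argument.
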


\begin{proof}
    The existence of the subsequence that yields a holomorphic limit building follows from the SFT compactness theorem \cite{BEHWZ03,CieMoh05}.
    That the completions contain components follows from the fact that each $F_t$ has non-trivial intersection with the divisor $D_0$ and the Lagrangian pinwheel $\iota(L_{p,q})$, as discussed in \cref{rmk:properties_fibre}.
\end{proof}

Now consider the component $C$ of the holomorphic limit building that is contained in $\overline{V}$ and intersects the divisor $D_0$.
We will continue investigating this component in the following lemmas and it should always be understood that $C$ is exactly this component.
Rationally blowing up along the embedding $\iota$ yields a symplectic manifold $\widetilde{X}$.
We denote the curve that corresponds (cf. \cref{lma:bijections}) to $C$ under this procedure by $\widetilde{C}$.
The Wahl chain~$\cc_{p,q}$ and the compactifying divisor~$\cd_{p,q}$ form a basis of $H_2{(\widetilde{X};\Q)}$.
We express $\widetilde{C}$ as well as the canonical class $K=-\text{PD}(c_1(\widetilde{X}))$ of $\widetilde{X}$ in this basis:
\begin{equation}\label{eq:KandC}
    K=-F-\sum_{i=0}^n D_i+\sum_{j=1}^m k_jC_j,\qquad \widetilde{C}=\sum_{i=0}^n a_i D_i +\sum_{j=1}^m c_j C_j,
\end{equation}
where $\bm{k}=(k_1,\ldots,k_m$) are the discrepancies.\footnote{The formula for the canonical class $K$ of $\widetilde{X}$ follows from the fact that the canonical class $K_X$ of $X$ is given by $K_X=-F-\sum_{i=0}^n D_i$, since $X$ is toric and the toric boundary is Poincaré dual to the first Chern class. Moreover, the splitting of the first Chern class is discussed in \cref{rmk:splitting_Chern}.}

Recall that $(M_\cc)_{ij}=C_i\cdot C_j$ denotes the intersection matrix of the Wahl chain and that defining $\chi_j\vcentcolon=\widetilde{C}\cdot C_j$ yields the equation $\bm{\chi}=M_\cc \bm{c}$.
Analogously we write $(M_\cd)_{ij}=D_i\cdot D_j$ for the intersection matrix of the compactifying divisor, and writing $\xi_i\vcentcolon=\widetilde{C}\cdot D_i$ means $\bm{\xi}=M_\cd \bm{a}$.
The two pairings that will play a role in the following are the self-intersection of $\widetilde{C}$
\begin{equation}\label{eq:C_selfpairing}
    \widetilde{C}\cdot\widetilde{C}=\bm{\xi}^T M_\cd^{-1} \bm{\xi} + \bm{\chi}^T M_\cc^{-1}\bm{\chi}
\end{equation}
and the pairing of $\widetilde{C}$ with the canonical class $K$
\begin{equation}\label{eq:KC_pairing}
    K\cdot \widetilde{C}=-a_0-a_n-\bm{1}^T \bm{\xi}+\bm{k}^T\bm{\chi}.
\end{equation}
These two equations immediately follow from the fact that $\cc_{p,q}$ and $\cd_{p,q}$ are disjoint and that $F$ intersects $\cd_{p,q}$ once positively at each end.

\begin{lemma}[\normalfont{\cite[Proposition 5.4.3]{ABEHS25}}]
\label{lma:intersection_profile_CD}
    The intersection profile of $\widetilde{C}$ with $\cd$ is given by $\bm{\xi}=(1,0,\ldots,0)$.
    Moreover, $\widetilde{C}$ is an embedded curve of self-intersection $\widetilde{C}^2\leq 0$.
\end{lemma}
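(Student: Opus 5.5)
We first determine the intersection profile $\bm{\xi}$ of $\widetilde{C}$ with $\cd$. The argument is positivity of intersections and homological bookkeeping in the limit building. Since $J_0\in\cj_N(\cd)$ and the neck $N$ lies away from $\cd$, every $J_t$ keeps each $D_i$ holomorphic. Hence all curves in the limit building meet the $D_i$ non-negatively, and components in $\overline{U}$ do not meet $\cd$ at all. The total intersection of the building with $\cd$ equals $F\cdot\cd$, and $F\cdot D_0=F\cdot D_n=1$ while $F\cdot D_i=0$ for $0<i<n$. By definition $C$ is the component meeting $D_0$, so $\xi_0\ge1$, which forces $\xi_0=1$. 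The remaining intersections of $C$ (and of the other $\overline{V}$-components) with $D_1,\dots,D_n$ sum to exactly $1$, supported on $D_n$.

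To exclude $\xi_n=1$, we argue as in the proof of \cref{lma:ruling_compactification}. Suppose $\widetilde{C}$ met both $D_0$ and $D_n$. Then all of the building's intersection with $\cd$ would be carried by $\widetilde{C}$. The other components in $\overline{V}$ would then either be disjoint from $\cd$, which is impossible since they would live in a Liouville region after capping, or be components of $\cd$ itself. The second case is excluded because the building's intersection with $D_0$ is already used up. The key claim to establish is that $C$ has genuine punctures asymptotic to Reeb orbits of the lens space $\partial U$. This holds because $F$ meets the pinwheel $\iota(L_{p,q})$ with $F\cdot L\equiv q\not\equiv 0 \bmod p$, so the building must enter $\overline{U}$. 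One then computes $\widetilde{C}\cdot\widetilde{C}$ and $K\cdot\widetilde{C}$ via \eqref{eq:C_selfpairing} and \eqref{eq:KC_pairing}, and shows that the profile $(1,0,\dots,0,1)$ together with any admissible $\bm{\chi}$ violates the adjunction inequality $\widetilde{C}^2+K\cdot\widetilde{C}+2\ge 0$. The same computation also has to use the index constraint that the building has total index $2$. For the profile $(1,0,\ldots,0)$, formula \eqref{eq:inverse_MD} gives the contribution $\bm{\xi}^TM_\cd^{-1}\bm{\xi}=(pq-1)/p^2$.

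For the second part, embeddedness comes from the adjunction formula in $\widetilde{X}$. Since $\widetilde{C}$ is a genus $0$ curve, $2\delta=\widetilde{C}^2+K\cdot\widetilde{C}+2$, and we aim to show this is $0$. We first bound $\widetilde{C}^2$. The Wahl-chain term $\bm{\chi}^TM_\cc^{-1}\bm{\chi}$ is negative unless $\bm{\chi}=0$, because $M_\cc$ is negative definite; by \cref{lma:M_inverse}, for $\bm{\chi}=\bm{e}_j$ it equals $-e_jf_j/p^2$. Combined with the $\cd$-term $(pq-1)/p^2$, this gives $\widetilde{C}^2\le (pq-1)/p^2-\min_j e_jf_j/p^2$. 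Since $\widetilde{C}^2$ is an integer, it suffices to show $\widetilde{C}^2<1$, and hence $\widetilde{C}^2\le 0$. To get $\bm{\chi}\ne0$, we use that $C$ has at least one negative puncture, so $\widetilde{C}$ meets the Wahl chain.

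For the canonical pairing, \eqref{eq:KC_pairing} gives $K\cdot\widetilde{C}=-a_0-a_n-1+\bm{k}^T\bm{\chi}$. Here $a_0=(pq-1)/p^2$ and $a_n$ come from the first column of $M_\cd^{-1}$, and $\bm{k}^T\bm{\chi}\in(-|\bm{\chi}|,0]$ by \cref{lma:discrepancies}. We then use index positivity for the regular curve $C$, which is guaranteed by \cref{lma:bijections}(ii). The index of $\widetilde{C}$ is $-2-2K\cdot\widetilde{C}$ minus orbifold corrections, and it must be at least $0$ and at most $2$. Together with the bound on $\widetilde{C}^2$, this should pin $\widetilde{C}^2+K\cdot\widetilde{C}=-2$, i.e.\ $\delta=0$.

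The main obstacle is this last arithmetic step: controlling $\bm{\chi}$ so that the quadratic Wahl term and the linear discrepancy term combine to give exactly $-2$. We would first try the argument of \cite[Proposition 5.4.3]{ABEHS25}. That is, we would show that $\bm{\chi}$ must be a single $\bm{e}_j$ with multiplicity one, using that the total intersection with the pinwheel is forced small by $F\cdot L$. We would then verify the identity $e_jf_j-(pq-1)\ge \dots$ using \eqref{eq:id_acomp}.
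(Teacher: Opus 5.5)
Your exclusion of the profile $(1,0,\ldots,0,1)$ matches the paper. With that profile, \eqref{eq:C_selfpairing} and \eqref{eq:KC_pairing} give $\widetilde C^2+K\cdot\widetilde C+2=\bm{k}^T\bm\chi+\bm\chi^TM_\cc^{-1}\bm\chi\le 0$, with equality only for $\bm\chi=0$. In turn, $\bm\chi=0$ would give a representative of $F$ missing the pinwheel; no index constraint on the building is needed. Your bound $\widetilde C^2\le a_0<1$ is also the paper's.

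The gap is embeddedness. You leave it as ``the main obstacle'' and propose to settle it by first forcing $\bm\chi$ to be a single $\bm{e}_j$, because ``the total intersection with the pinwheel is forced small by $F\cdot L$''. That step has no content. $F\cdot L$ is only defined modulo $p$ (\cref{rmk:properties_fibre}), so it bounds nothing about $\bm\chi$. In the paper, the single intersection with the Wahl chain (\cref{lma:intersection_profil_CC}) is proved afterwards, from the adjunction formula \eqref{eq:adjunction_TilC} for the already embedded $\widetilde C$. It is not an input obtained from $F\cdot L$.

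The missing observation is that you need no control of $\bm\chi$ beyond $\chi_j\ge 0$. For $\bm\xi=\bm{e}_0$ the $a_0$ terms cancel in the adjunction formula:
\[
2\sum_{x}\delta_x=\widetilde C^2+K\cdot\widetilde C+2=1-a_n+\bm{k}^T\bm\chi+\bm\chi^TM_\cc^{-1}\bm\chi .
\]
Here $a_n=1/p^2>0$ by \eqref{eq:inverse_MD}, $k_j\le 0$ by \cref{lma:discrepancies}, and $M_\cc^{-1}$ has negative entries by \cref{lma:M_inverse}. So the right-hand side is $<1$, and hence $\delta=0$.

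Your index idea can also be made to work, but only in its lower-bound form. The curve $\widetilde C$ is closed, simple (it meets $D_0$ once) and, for $J$ generic on $V$, regular in the manifold $\widetilde X$. So there are no orbifold corrections, and $-2-2K\cdot\widetilde C\ge 0$ gives $K\cdot\widetilde C\le -1$. Together with $\widetilde C^2\le 0$, this gives $2\delta\le 1$. The upper bound ``at most $2$'' coming from the total index of the building is unnecessary.
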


\begin{proof}
    Assume for the sake of a contradiction that the intersection profile with the compactifying divisor $\cd$ is given by $\bm{\xi}=(1,0\ldots,0,1)$.\footnote{Recall that $\widetilde{C}$ is the component of the limit building that intersects $D_0$, therefore this intersection is forced and hence the two possible intersection profiles are $(1,0\ldots,0)$ and $(1,0\ldots,0,1)$.}
    Then \eqref{eq:C_selfpairing} reads $\widetilde{C}\cdot\widetilde{C}=a_0+a_n + \bm{\chi}^T M_\cc^{-1}\bm{\chi}$ and \eqref{eq:KC_pairing} reads $K\cdot\widetilde{C}=-a_0-a_n-2+\bm{k}^T\bm{\chi}$.
    Plugging these two equations into the adjunction formula we obtain
    $$
        \sum_{x\in\text{Sing}(\widetilde{C)}} \delta_x = \frac{1}{2}(\widetilde{C}\cdot\widetilde{C}+K\cdot\widetilde{C})+1=\frac{1}{2}(\bm{k}^T\bm{\chi}+\bm{\chi}^T M_\cc^{-1}\bm{\chi}).
    $$
    Since all the $\delta_x$ contribute positively and the terms in the bracket on the right-hand side are non-positive\footnote{Recall that the discrepancies can be computed via $k_j=-1+\frac{e_j+f_j}{p^2}$, see \cref{lma:discrepancies}, which means that $k_j\in (-1,0]$; that all the entries of the matrix $M_\cc^{-1}$ are negative, which was shown in \cref{lma:M_inverse}; and that, by positivity of intersection, the components of $\bm{\chi}$ are non-negative integers.}, this means that $\bm{\chi}=\bm{0}$, which is a contradiction since this would imply that $\widetilde{C}$ is an embedded holomorphic sphere that corresponds to a holomorphic representative of $F$ in $X$ that does not intersect the class of the pinwheel defined via the embedding $\iota$.
    
    Since we picked $\widetilde{C}$ to be the component in the limit building that intersects $D_0$ we therefore have $\bm{\xi}=(1,0,\ldots,0)$.
    This implies that \eqref{eq:C_selfpairing} becomes $\widetilde{C}\cdot\widetilde{C}=a_0+ \bm{\chi}^T M_\cc^{-1}\bm{\chi}$ and \eqref{eq:KC_pairing} becomes $K\cdot\widetilde{C}=-a_0-a_n-1+\bm{k}^T\bm{\chi}$.
    Consider the adjunction formula for $\widetilde{C}$:
    $$
        \sum_{x\in\text{Sing}(\widetilde{C)}} \delta_x = \frac{1}{2}(\widetilde{C}\cdot\widetilde{C}+K\cdot\widetilde{C})+1=\frac{1}{2}(-a_n-1+\bm{k}^T\bm{\chi}+\bm{\chi}^T M_\cc^{-1}\bm{\chi})+1.
    $$
    From \eqref{eq:inverse_MD} it immediately follows that $a_n=1/p^2$ and therefore $-a_n-1=-(p^2+1)/p^2$.
    As before, the term in the bracket is negative, which implies that $\text{Sing}(\widetilde{C})$ is empty, because the $\delta_x$ contribute positively.
    In particular, $\widetilde{C}$ is a smoothly embedded sphere. 
    The claim about the self-intersection number of $\widetilde{C}$ is then a consequence of the fact that $a_0=\frac{pq-1}{p^2}<1$, which follows from \eqref{eq:inverse_MD} and shows that 
    \begin{equation*}
        \widetilde{C}\cdot\widetilde{C}=a_0+ \bm{\chi}^T M_\cc^{-1}\bm{\chi} \leq a_0 < 1.\qedhere
    \end{equation*}
\end{proof}

As a result of this lemma, choosing the almost complex structure generically on the complement of the embedding $\iota$, we can ensure that $\widetilde{C}$ is a $(0)$-sphere or a $(-1)$-sphere.
However, since there is a $\C$-family of rational curves representing the fibre class $F$ that we can stretch, we must find a $(0)$-sphere.\footnote{That there is a $\C$-family of rational curves was proven in \cref{lma:ruling_compactification}.}
In the following we will therefore assume that $\widetilde{C}$ is a curve of square zero.

\begin{lemma}[\normalfont{\cite[Lemma 5.4.6]{ABEHS25}}]
\label{lma:intersection_profil_CC}
    The curve $\widetilde{C}$ intersects the Wahl chain precisely once.
    Moreover, the intersection is with the first sphere in the Wahl chain.
\end{lemma}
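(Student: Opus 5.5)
We use the same numerics as in \cref{lma:intersection_profile_CD}, now exploiting the assumption $\widetilde{C}\cdot\widetilde{C}=0$ together with the fact that $\widetilde{C}$ is an embedded sphere. From the proof of \cref{lma:intersection_profile_CD} we have $\bm{\xi}=(1,0,\ldots,0)$ and $a_0=\frac{pq-1}{p^2}$, $a_n=\frac1{p^2}$. The self-intersection formula \eqref{eq:C_selfpairing} then gives
\[
    0=\widetilde{C}\cdot\widetilde{C}=\frac{pq-1}{p^2}+\bm{\chi}^TM_\cc^{-1}\bm{\chi}.
\]
The adjunction formula with empty singular set gives $K\cdot\widetilde{C}=-2$, i.e.\ via \eqref{eq:KC_pairing}
\[
    -\frac{pq-1}{p^2}-\frac{1}{p^2}-1+\bm{k}^T\bm{\chi}=-2,
    \quad\text{so}\quad
    \bm{k}^T\bm{\chi}=\frac{q}{p}-1 .
\]
Here $\bm{\chi}\in\Z_{\ge0}^m$ by positivity of intersections, and $\bm{\chi}\neq 0$ because $F$ must meet the pinwheel (\cref{rmk:properties_fibre}).

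Next I rewrite both constraints using \cref{lma:M_inverse} and \cref{lma:discrepancies}. With $n=p^2$, the first becomes
\[
    \sum_{i}e_if_i\chi_i^2+2\sum_{i<j}e_if_j\chi_i\chi_j=pq-1 .
\]
The second becomes
\[
    \sum_j\chi_j\bigl(p^2-e_j-f_j\bigr)=p(p-q).
\]
Each coefficient $p^2-e_j-f_j$ is positive, since $k_j\in(-1,0]$ and $k_j=0$ is impossible for a Wahl chain (Wahl singularities are not canonical). The plan is to show these force $\bm{\chi}=\bm{e}_1$. Indeed, for $j=1$ we have $e_1=1$ and $f_1=pq-1$, so $e_1f_1=pq-1$ and $p^2-1-(pq-1)=p(p-q)$. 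Thus $\bm{\chi}=\bm{e}_1$ satisfies both equations, which is the consistency check.

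For uniqueness I would use monotonicity of the accompanying numbers. The sequence $e_i$ is increasing from $1$ and $f_i$ is decreasing from $pq-1$. Moreover the products satisfy $e_if_i\ge e_1f_1$ for $i\ge 1$ up to the symmetric end: the sequence $e_if_i$ is concave-ish, with $e_mf_m=[pq-1]^{-1}$. So I would instead combine the two equations. The first equation, being a sum of positive terms, already bounds things. Assuming $\chi_i\ge1$ for some $i$, I get $e_if_i\le pq-1$.

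Using the identity $e_if_i-e_{i-1}f_{i+1}=pq-1$ from \eqref{eq:id_acomp}, I get $e_if_i\ge pq-1$, with equality iff $e_{i-1}f_{i+1}=0$, i.e.\ $i=1$ or $i=m$. Hence every nonzero $\chi_i$ has $i\in\{1,m\}$ and appears with $\chi_i=1$. Cross terms rule out both being nonzero, since $e_1f_m>0$ would push the left side above $pq-1$.

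It remains to exclude $\bm{\chi}=\bm{e}_m$. Here $e_m=[pq-1]^{-1}$ modulo $p^2$ and $f_m=1$, so the quadratic equation forces $e_m=pq-1$. Then the linear equation reads $p^2-pq=p(p-q)$, which is also consistent. So numerics alone do not distinguish the two ends, and this step is the expected main obstacle; it reflects the reversal symmetry $q\leftrightarrow[q]^{-1}$ of the chain. I would break the symmetry by orientation and labelling conventions: the Wahl chain is ordered so that $C_1$ corresponds to the edge of the resolution adjacent to the image of $D_0$. Alternatively, one uses the homological constraint $F\cdot L_{p,q}\equiv q\pmod p$ from \cref{rmk:properties_fibre}, which should pin the intersection to $C_1$ rather than $C_m$ when $q\neq[q]^{-1}$. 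When $q=[q]^{-1}$ the chain is palindromic and the statement holds up to relabelling.
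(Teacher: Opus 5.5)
Your two constraints are correct: $\widetilde C^2=0$ gives $\sum_i e_if_i\chi_i^2+2\sum_{i<j}e_if_j\chi_i\chi_j=pq-1$, and $K\cdot\widetilde C=-2$ gives $\sum_j\chi_j(p^2-e_j-f_j)=p(p-q)$. The gap is the localisation step, where you claim $e_if_i\ge pq-1$ with equality iff $i\in\{1,m\}$.

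It rests on $e_if_i-e_{i-1}f_{i+1}=pq-1$, the second identity in \eqref{eq:id_acomp}, and that identity does not hold in general. By \eqref{eq:rec_acomp}, this quantity changes by $(b_i-b_{i+1})e_if_{i+1}$ when $i$ increases by one. Concretely, for $(p,q)=(5,2)$ with Wahl chain $[3,5,2]$ one has $(e_1,e_2,e_3)=(1,3,14)$ and $(f_1,f_2,f_3)=(9,2,1)$. So $e_2f_2=6<9=pq-1$, and the square-zero equation alone does not confine the support of $\bm\chi$ to the ends of the chain.

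The missing idea is to add your two equations. This is exactly the adjunction formula the paper uses:
\[
\sum_j\bigl(e_jf_j\chi_j^2+(p^2-e_j-f_j)\chi_j\bigr)+2\sum_{i<j}e_if_j\chi_i\chi_j=p^2-1 .
\]
Every term is non-negative. A diagonal term with $\chi_j\ge 2$ is at least $2p^2>p^2-1$. A diagonal term with $\chi_j=1$ equals $p^2-1+(e_j-1)(f_j-1)$. Hence $\bm\chi=\bm e_j$ for a single $j$ with $(e_j-1)(f_j-1)=0$, i.e.\ $j\in\{1,m\}$.

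Your last step is also mistaken: the numerics do distinguish the two ends. By \eqref{eq:extremal_acomp}, $e_m=[pq-1]^{-1}$ is the honest integer in $(0,p^2)$, namely $p^2-pq-1=p(p-q)-1$, not just a residue mod $p^2$; and $f_m=1$. So $\bm\chi=\bm e_m$ turns the square-zero equation into $p(p-q)-1=pq-1$, and your linear equation into $pq=p(p-q)$. Either way $p=2q$, hence $(p,q)=(2,1)$, where $m=1$ and $C_m=C_1$. So $j=1$ follows directly, as in the paper.

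No labelling convention is needed, and none could prove which end $\widetilde C$ meets anyway. The congruence $F\cdot L_{p,q}\equiv q\pmod p$ is likewise unnecessary. Note also that reversing the $(p,q)$ Wahl chain gives the $(p,p-q)$ Wahl chain; it is not a $q\leftrightarrow[q]^{-1}$ symmetry. The chain is therefore palindromic only for $(2,1)$. The end-ambiguity you anticipate genuinely arises only in \cref{th:other_pinwheels_in_Bpq}, where the type of the chain is unknown.
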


\begin{proof}
    By \cref{lma:intersection_profile_CD} we know that the adjunction formula for $\widetilde{C}$ reads
    \begin{equation}\label{eq:adjunction_TilC}
        1=\frac{p^2+1}{2p^2}-\frac{1}{2}(\bm{k}^T\bm{\chi}+\bm{\chi}^T M_\cc^{-1}\bm{\chi}),
    \end{equation}
    where the first term is calculated using $a_n=1/p^2$, which follows from the equation $\bm{a}=(M_\cd)^{-1}\bm{\xi}$ and \eqref{eq:inverse_MD}.
    Observe that all the entries in $\bm{\chi}$ are non-negative integers by positivity of intersection and that all the entries of $M_\cc^{-1}$ are negative.
    Therefore, the second summand on the right-hand side contributes at least
    \begin{equation}\label{eq:contributions_adjunction}
        \sum_{j=1}^m\left(\frac{1}{2p^2}\chi^2_je_jf_j + \frac{1}{2}\chi_j\left(1 - \frac{e_j+f_j}{p^2}\right)\right),
    \end{equation}
    which is the diagonal term, to a sum that is equal to $1$.
    Assume that there is a $\chi_j\geq 2$.
    Then the sum \eqref{eq:contributions_adjunction} is at least
    $$
        \frac{1}{2p^2}\chi^2_je_jf_j +
        \frac{1}{2}\chi_j\left(1 - \frac{e_j+f_j}{p^2}\right)\geq
        1+\frac{1}{p^2}\left(2e_jf_j-e_j-f_j\right) \geq 1,
    $$
    since $e_j,f_j\geq 1$, in contradiction to \eqref{eq:adjunction_TilC}.
    
    If $\chi_j=1$ for some index $j$ we have
    $$
        \frac{1}{2p^2}\chi^2_je_jf_j + \frac{1}{2}\chi_j\left(1 - \frac{e_j+f_j}{p^2}\right)=
        \frac{1}{2}\left(1+\frac{1}{p^2}\left(e_jf_j-e_j-f_j\right)\right)\geq
        \frac{1}{2}\left(1-\frac{1}{p^2}\right).
    $$
    This means that if there are at least two $\chi_j$ that are equal to $1$ the right-hand side of \eqref{eq:adjunction_TilC} is at least
    $$
        \frac{p^2+1}{2p^2} + 1-\frac{1}{p^2}>1,
    $$
    which is a contradiction because $p\geq 2$.
    In conclusion we see that $\chi_j=1$ for exactly one $j$ and we are left to determine this index.
    By \eqref{eq:adjunction_TilC} we have 
    $$
        0=\frac{p^2+1}{2p^2}+\frac{1}{2p^2}(p^2+e_jf_j-(e_j+f_j))-1=\frac{1}{2p^2}(e_jf_j-e_j-f_j+1)=\frac{1}{2p^2}(e_j-1)(f_j-1)
    $$
    which is the case if and only if $e_j=1$ or $f_j=1$.
    By definition of the accompanying numbers this is only the case if either $j=1$ or $j=m$, i.e.\ $\widetilde{C}$ must intersect the Wahl chain at one of its ends.
    Since $\widetilde{C}$ is a $(0)$-curve we also know that
    $$
        0=a_0+\bm{\chi}^T M_\cc^{-1}\bm{\chi}=\frac{pq-1}{p^2}-\frac{1}{p^2}e_jf_j.
    $$
    This implies that $j=1$, because $(e_1,f_1)=(1,pq-1)$ and $(e_m,f_m)=([pq-1]^{-1},1)$, where $0<[pq-1]^{-1}<p^2$ is the integer such that $(pq-1)[pq-1]^{-1}\equiv 1 \mod{p^2}$ as discussed in \eqref{eq:initial_acomp} and \eqref{eq:extremal_acomp}, i.e.\ $[pq-1]^{-1}=p^2-(pq+1)$.\footnote{Of course there is one case in which $[pq-1]^{-1}=pq-1$, namely $(p,q)=(2,1)$. However, this is the case in which the Wahl chain is just a single $(-4)$-sphere.}
\end{proof}

To summarise, we have found a square zero curve $\widetilde{C}$ in $\widetilde{X}$ that intersects both $D_0$ and $C_1$ once positively and does not intersect any of the other spheres in $\cd_{p,q}$ and $\cc_{p,q}$, which implies that $\widetilde{C}$ is a smooth ruling of a regulation of $\widetilde{X}$ in the class $\widetilde{C}$.
What these considerations amount to is that the regulation of $X$ in the class $F$ "persists" under the rational blow-up along $\iota$.
In the case that $\iota=\iota_{\text{vis}}$ this is obvious from the almost toric base diagram, as illustrated in \cref{fig:trafo_regulation}.

\begin{figure}[ht]
    \centering
    \begin{tikzpicture}
    \begin{scope}[shift={(-5.5,-2)}]
        \fill[opacity=0.2] 
            (0,4) -- (0,0) -- (2,0.5) -- (3,1) -- (3.8,3) -- (4,4);
        \draw 
            (0,4) node{\tiny \(\bullet\)} -- (0,0) -- (2,0.5) node{\tiny $\bullet$} -- (3,1) node{\tiny \(\bullet\)};
        \draw
            (3.8,3) node{\tiny \(\bullet\)} -- (4,4) node{\tiny \(\bullet\)};
        \draw[dashed] 
            (1,0.5) node[cross] {} -- (0,0);
        \draw[thick, blue]
            (0.6,4) to[out=-70,in=70] (0.6,3) to[out=-110,in=135] (0.6,2) to[out=-45,in=90](0.6,0.3) -- (2.6,0.8);
        \node[text=blue] at (0.8,2.5) {\small $F$};
        \draw[thick]
            (0,4) -- node[anchor=south, yshift=-2pt] {\small $D_0$} (4,4) -- (3.8,3) -- (3.6,2.5);
        \draw[thick, densely dotted]
            (3.6,2.5) -- node[anchor=west] {\small $\cd_{p,q}$} (3.2,1.5);
        \draw[thick]
            (2,0.5) -- node[anchor=north west, xshift=-3pt, yshift=3pt] {\small $D_n$} (3,1) -- (3.2,1.5);
        \end{scope}

        \begin{scope}[shift={(1.5,-2)}]
        \fill[opacity=0.2] 
            (0,4) -- (0,0.25) -- (0.8,0.25) -- (1.2,0.3) -- (2,0.5) -- (3,1) -- (3.8,3) -- (4,4);
        \draw 
            (0,4) node{\tiny $\bullet$} -- (0,0.25) node{\tiny $\bullet$} -- (0.8,0.25) node{\tiny $\bullet$} -- (1.2,0.3) node{\tiny $\bullet$} -- (2,0.5) node{\tiny $\bullet$} -- (3,1) node{\tiny $\bullet$};
        \draw
            (3.8,3) node{\tiny $\bullet$} -- (4,4) node{\tiny $\bullet$};
        \draw[thick] 
            (0,0.25) -- node[anchor=north, yshift=2pt] {\small $C_1$} (0.8,0.25) -- node[anchor=north west, xshift=-2pt, yshift=-2pt]{\small $\cc_{p,q}$} (1.2,0.3);
        \draw[thick, blue]
            (0.6,4) to[out=-70,in=70] (0.6,3) to[out=-110,in=135] (0.6,2) to[out=-45,in=90](0.6,0.3) -- (0.6,0.25);
        \node[text=blue] at (0.8,2.5) {\small $\widetilde{C}$};
        \draw[thick]
            (0,4) -- node[anchor=south, yshift=-2pt] {\small $D_0$} (4,4) -- (3.8,3) -- (3.6,2.5);
        \draw[thick, densely dotted]
            (3.6,2.5) -- node[anchor=west] {\small $\cd_{p,q}$} (3.2,1.5);
        \draw[thick]
            (2,0.5) -- node[anchor=north west, xshift=-3pt, yshift=3pt] {\small $D_n$} (3,1) -- (3.2,1.5);
        \end{scope}
        \draw[->] (-1,0) to (1,0);
        \node at (0,0.3) {\small rational};
        \node at (0,-0.3) {\small blow-up};
        \end{tikzpicture}
        \caption{The transformation of the regulation in the visible case.}
    \label{fig:trafo_regulation}
\end{figure}

\begin{remark}\label{rmk:only_2_broken_rulings}
    Choosing $\widetilde{J}$ generically we can ensure that the irreducible components of $\cc_{p,q}$ and $\cd_{p,q}$ are the only $\widetilde{J}$-holomorphic spheres with self-intersection strictly less than $-1$, according to \cref{lma:bijections}. 
    In particular, this means that all the other $\widetilde{J}$-holomorphic embedded rational curves in $\widetilde{X}$ are either exceptional spheres or have non-negative square and that all the curves in $\cc_{p,q}\setminus C_1$ and $\cd_{p,q}\setminus D_0$ have to appear as irreducible components of a broken ruling.
    Since every broken ruling contains an $(-1)$-sphere, successively contracting these in the broken ruling will yield a "minimal model" $Y$ of $\widetilde{X}$, which contains no broken rulings. 
    The broken ruling before the last step will consist of two transversally intersecting $(-1)$-spheres.\footnote{See \cite[Corollary 4.2.8]{ABEHS25} and the surrounding discussion.}
\end{remark}

\begin{lemma}\label{lma:number_of_contractions}
    The number of contractions that lead from $\widetilde{X}$ to the minimal model $Y$ is equal to $m+n-1$.
    In particular, the second Betti number of $\widetilde{X}$ is equal to $m+n+1$.
\end{lemma}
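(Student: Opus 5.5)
The plan is to prove the second assertion first, by a direct homological count, and then deduce the first from the structure of the minimal model $Y$.

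\emph{Step 1: the Betti number.} As recorded before \cref{lma:intersection_profile_CD}, the Wahl chain $\cc_{p,q}$ and the compactifying divisor $\cd_{p,q}$ together form a basis of $H_2(\widetilde{X};\Q)$. I would justify this via the Mayer--Vietoris splitting of \cref{rmk:splitting_Chern}. The intersection is the rational homology sphere $L(p^2,pq-1)$, so
\[
H_2(\widetilde{X};\Q)\cong H_2(V;\Q)\oplus H_2(\widetilde{U}_\std;\Q).
\]
The second summand has dimension $m$, spanned by $C_1,\ldots,C_m$. For the first summand, apply the same argument to $X = V\cup U$ with $U$ a rational homology ball. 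This gives $H_2(V;\Q)\cong H_2(X;\Q)$.

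Next I need $b_2(X)=n+1$. Since $X\setminus\cd$ is the Liouville manifold $B_{p,q}$, which has $H_*(B_{p,q};\Q)=H_*(\mathrm{pt};\Q)$, Lefschetz duality gives $H_2(X;\Q)\cong H_2(\cd;\Q)=\Q^{n+1}$. Alternatively, the intersection matrix $M_\cd$ is invertible by \eqref{eq:inverse_MD}, so the $D_i$ are linearly independent, and the duality gives the spanning. Hence $b_2(\widetilde{X})=m+n+1$.

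\emph{Step 2: the number of contractions.} By \cref{rmk:only_2_broken_rulings}, successive contractions of $(-1)$-spheres in broken rulings produce a minimal model $Y$ without broken rulings. Since $\widetilde{C}$ is a smooth ruling of a regulation, $Y$ is a symplectic $S^2$-bundle over $S^2$, using the structure theory of regulations in \cite[Section 4]{ABEHS25}. Thus $b_2(Y)=2$. Each blow-down lowers $b_2$ by exactly one, so the number of contractions is $(m+n+1)-2=m+n-1$.

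\emph{Main obstacle.} The one point needing care is that $Y$ is really a ruled surface over $S^2$, so that $b_2(Y)=2$. For this I would use that a regulation with no broken rulings is a genuine $S^2$-fibration whose base is a sphere: the base is a sphere because the section $D_0$ is a sphere. A rational surface with a fibration by spheres over a sphere has $b_2=2$. If needed, this also follows from $\widetilde{X}$ being rational and $Y$ being relatively minimal with respect to the ruling.
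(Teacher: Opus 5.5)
Your proposal is correct and follows the paper's argument. Like the paper, you count $b_2(\widetilde X)=m+n+1$ from the basis $\cc_{p,q}\cup\cd_{p,q}$ and subtract $b_2(Y)=2$, one contraction per unit drop in $b_2$; the only difference is that you also justify the basis claim (Mayer--Vietoris across the rational homology sphere $L(p^2,pq-1)$, and Lefschetz duality for the rational homology ball $X\setminus\cd$) and the fact $b_2(Y)=2$ (via the section $D_0$), both of which the paper takes as known.
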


\begin{proof}
    We know that a basis of $H_2(\widetilde{X},\Q)$ is given by $\cc_{p,q}$ and $\cd_{p,q}$, and therefore we have 
    $$
        \dim(H_2(\widetilde{X},\Q))=n+(m+1).
    $$
    Denoting the number of contractions that lead from $\widetilde{X}$ to $Y$ by $k$, the equation $k+2=n+m+1$ follows, since the second Betti number of $Y$ is equal to $2$.
\end{proof}

The discussion above then implies the following theorem.

\begin{theorem}\label{thm:regulation_of_tilX}
    Assume that the almost complex structure $\widetilde{J}$ is chosen generically on the subset $V \subseteq \widetilde{X}$.
    Then there is an embedded $\widetilde{J}$-holomorphic sphere $\widetilde{C}\subset\widetilde{X}$ of square zero such that
    $$
        \widetilde{C}\cdot C_j=
        \begin{cases}
            0 \quad\mbox{if }j\neq1,\\1\quad\mbox{if }j=1
        \end{cases}
        \qquad\text{and }\qquad
        \widetilde{C}\cdot D_i=
        \begin{cases}
            0\quad\mbox{if }i\neq 0,\\ 1 \quad\mbox{if }i=0
        \end{cases}.
    $$  
    Moreover, the regulation defined by the $\widetilde{J}$-holomorphic sphere $\widetilde{C}$ has exactly one broken ruling that is given by
    $$\ct=(C_2 \cup\ldots\cup C_m) \cup (D_1 \cup \ldots \cup D_n) \cup E_{\widetilde{J}}=(\cc_{p,q}\setminus C_1) \cup (\cd_{p,q} \setminus D_0) \cup E_{\widetilde{J}},$$
    where $E_{\widetilde{J}}$ is an exceptional sphere that intersects the Wahl chain and the compactifying divisor at their terminal components.
\end{theorem}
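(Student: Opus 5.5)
The existence of $\widetilde{C}$ with the stated intersection numbers has already been established. \cref{lma:intersection_profile_CD} gives the profile $\bm{\xi}=(1,0,\ldots,0)$ and shows that $\widetilde{C}$ is embedded. The genericity of $\widetilde{J}$ on $V$ together with the $\C$-family argument makes $\widetilde{C}^2=0$. \cref{lma:intersection_profil_CC} gives $\bm{\chi}=(1,0,\ldots,0)$. So the plan concentrates on the structure of the broken rulings of the regulation generated by $\widetilde{C}$.

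First, I would invoke \cite[Proposition 4.1.4]{ABEHS25}. A square-zero embedded rational curve whose Chern pairing satisfies adjunction ($c_1(\widetilde{C})=2$) and which is regular by \cref{lma:bijections} ii) yields a non-degenerate $\widetilde{J}$-holomorphic regulation of $\widetilde{X}$ in the class $[\widetilde{C}]$. Every curve of $\cc_{p,q}\setminus C_1$ and $\cd_{p,q}\setminus D_0$ pairs to zero with $[\widetilde{C}]$, so by positivity of intersections each lies in a ruling. Moreover, $\cd'=(D_1,\ldots,D_n)$, $C_2,\ldots,C_m$ and the transversality of these to fibres force everything connected to $D_1$ and $C_2$ into broken rulings. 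Next, I would use \cref{rmk:only_2_broken_rulings}: every irreducible component of a broken ruling other than these $C_j,D_i$ is a $(-1)$-sphere, and successive contractions lead to a minimal model $Y$ with $b_2(Y)=2$. \cref{lma:number_of_contractions} says exactly $m+n-1$ contractions occur.

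The counting step goes as follows. A broken ruling with $k$ components requires $k-1$ contractions. The spheres $C_2,\ldots,C_m,D_1,\ldots,D_n$ already number $m+n-1$, and each broken ruling containing any of them must also contain at least one $(-1)$-sphere (since $C_j^2,D_i^2\le -2$ for these indices). I would argue that all of them lie in one connected broken ruling together with exactly one extra $(-1)$-sphere $E_{\widetilde{J}}$. If they were distributed among $r$ broken rulings, each containing at least one new exceptional curve, the total number of contractions would be at least $(m+n-1)+r-r\cdot 1$ plus the contributions of other rulings. That count is consistent only if there are no further exceptional curves. So the count alone is not decisive, and I will need connectedness: I would show that the chains $C_2,\dots,C_m$ and $D_1,\dots,D_n$ must be joined. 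Each broken ruling is a tree of spheres whose classes sum to $[\widetilde{C}]$, whose multiplicities are determined, and which intersects $D_0$ and $C_1$ exactly once. The ruling containing $D_1$ meets $D_0$ (via $D_0\cdot D_1=1$), and so must not meet $C_1$ elsewhere. Equivalently, its intersection number with $C_1$ equals $1=\widetilde{C}\cdot C_1$, forcing it to contain $C_2$ (the only divisor component meeting $C_1$) or a curve hitting $C_1$. Similarly, the ruling through $C_2$ must meet $D_0$. Then both chains are in a single ruling $\ct$, and the contraction count $|\ct|-1\le m+n-1$ forces $|\ct|=m+n$, that is, exactly one extra component $E_{\widetilde{J}}$ and no other broken rulings.

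Finally, I would identify $E_{\widetilde{J}}$. It is a $(-1)$-sphere which must connect the two chains, since $\ct$ is connected and the two chains are disjoint. Its position follows by contracting: the ruling must blow down to a smooth fibre. Contracting $E_{\widetilde{J}}$ must produce a new $(-1)$-curve, and so on. Using that the Wahl chain $[c_2,\ldots,c_m]$ and $[d_1,\ldots,d_n]$ glue through a $1$ into a zero continued fraction (from \cref{lma:HJ_facts} and \eqref{lma:Hj_facts_p2}), I expect that a consistent contraction is possible only when $E_{\widetilde{J}}$ meets the terminal components $C_m$ and $D_n$. The main obstacle I anticipate is the connectedness step, namely rigorously ruling out that the two chains sit in different broken rulings each with their own $(-1)$-spheres. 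If the intersection-with-$C_1$/$D_0$ argument is insufficient, I would instead compute $[\widetilde{C}]$ via the splitting \eqref{eq:new_splitting} and multiplicities via $M_\cc^{-1}$, $M_\cd^{-1}$, and show that the multiplicity equations admit no solution with two separate rulings.
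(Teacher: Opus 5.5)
There is a genuine gap, at exactly the step you flagged. Your intersection argument does not force the two tails into one ruling. The condition $\ct_{\cd}\cdot C_1=1$ is also met if the ruling through $D_1$ contains its own $(-1)$-sphere $E_{\cd}$ meeting $C_1$. Likewise $\ct_{\cc}\cdot D_0=1$ is met if the ruling through $C_2$ contains an $E_{\cc}$ meeting $D_0$. Nothing you wrote excludes this configuration.

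Your count does help here. With two broken rulings, $b_2(\widetilde X)=m+n+1$ forces each of them to contain exactly one $(-1)$-sphere. That is what makes the decisive computation possible, and the paper uses it as follows.
\begin{enumerate}
\item Write $\ct_{\cc}=\sum_{j\ge 2}\alpha_jC_j+\epsilon E_{\cc}$.
\item Pairing with $D_0$ gives $\epsilon=1$, and pairing with $C_1$ gives $\alpha_2=1$.
\item If $E_{\cc}$ meets $C_r$, the equations $\ct_{\cc}\cdot C_j=0$ give $\bm\alpha=-M_{\cc'}^{-1}\bm e_r$.
\item Apply \cref{lma:M_inverse} to $[b_2,\dots,b_m]=(pq-1)/(b_1(pq-1)-p^2)$. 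This yields $1=\alpha_2=f_r/(pq-1)$, which is impossible since $f_r<f_0=pq-1$.
\end{enumerate}
Your fallback (``the multiplicity equations admit no solution'') points in this direction, but it does not identify this computation.

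Your last step is also not right as stated: contractibility alone does not force $E_{\widetilde J}$ to meet $C_m$ and $D_n$. Since $[b_2,\dots,b_m]$ and $[d_1,\dots,d_n]$ are dual, their reversals are dual too. Hence $[b_m,\dots,b_2,1,d_1,\dots,d_n]=0$ as well, so attaching $E$ at $C_2$ and $D_1$ also gives a chain that blows down to a $0$-curve. For example, with $(p,q)=(5,2)$ both $[5,2,1,3,2,2,2]$ and $[2,5,1,2,2,2,3]$ vanish.

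What rules out the wrong attachment is the multiplicity constraint $\alpha_2=\beta_1=1$, which comes from $\ct\cdot C_1=\ct\cdot D_0=1$. If $E$ meets $C_r$ and $D_s$, this gives $\epsilon=(pq-1)/f_r=(pq-1)/h_s$, so $f_r=h_s$. Dual continued fractions share only the right accompanying number $1$, which forces $r=m$ and $s=n$.
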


The structure of the regulation in the class $\widetilde{C}$ in \cref{thm:regulation_of_tilX} is shown in \cref{fig:regulation_of_tilX}.
Before proving this theorem, let us formulate a crucial corollary of \cref{thm:regulation_of_tilX}.

\begin{corollary}\label{cor:complement_of_T_is_minimal}
    In the setup of \cref{thm:regulation_of_tilX} the complement of $\ct$, i.e.\ $\widetilde{X}\setminus \ct$, is minimal.    
\end{corollary}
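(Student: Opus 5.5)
The plan is to argue by contradiction. Suppose $\widetilde{X}\setminus\ct$ contains a symplectically embedded $(-1)$-sphere $S$. Since $\widetilde{X}$ is rational (it carries a regulation with a square-zero fibre class), for generic $\widetilde{J}$ as in \cref{thm:regulation_of_tilX} the class $[S]$ is represented by an embedded $\widetilde{J}$-holomorphic sphere; call it $S$ again. The difficulty is that $S$ need not be disjoint from $\ct$ for this particular $\widetilde{J}$. To avoid this, I would instead choose $\widetilde{J}$ so that $\ct$ and the original $S$ are both $\widetilde{J}$-holomorphic. This is possible because $S$ is disjoint from the normal crossing divisor $\ct$, and such $\widetilde{J}$ can still be taken generic away from $\ct\cup S$ on $V$. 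By \cref{rmk:only_2_broken_rulings}, \cref{thm:regulation_of_tilX} then still applies with this $\widetilde{J}$.

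Next I would compare $S$ with the regulation. The class $[\widetilde{C}]$ is represented by the broken ruling $\ct$, so $S\cdot\widetilde{C}=S\cdot\ct=0$, since $S$ is disjoint from $\ct$. I then use the standard fact about regulations (cf.\ \cite[Section 4.1]{ABEHS25}): an irreducible curve with zero intersection with the fibre class lies inside a single ruling. Indeed, through any point of $S$ passes a ruling $R$. If $S$ were not a component of $R$, positivity of intersections would force $S\cdot\widetilde{C}>0$. Hence $S$ is an irreducible component of some ruling.

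Every smooth ruling has square $0$, so $S$, being a $(-1)$-sphere, must be a component of a broken ruling. By \cref{thm:regulation_of_tilX} the only broken ruling is $\ct$. This contradicts $S\cap\ct=\emptyset$. Therefore $\widetilde{X}\setminus\ct$ contains no exceptional sphere, i.e.\ it is minimal.

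The main obstacle is the choice of almost complex structure. We must make $\ct$ and $S$ holomorphic at the same time while keeping the genericity hypotheses that \cref{thm:regulation_of_tilX} needs. In particular, the $(-1)$-sphere $E_{\widetilde{J}}$ and the uniqueness of the broken ruling must survive. I would handle this by redoing the closedness/openness argument of \cref{lma:regulation_lma1} and \cref{lma:ruling_compactification}. That argument only uses that the relevant divisors are $J$-holomorphic, so the space of structures making $\ct\cup S$ holomorphic is connected and the conclusion persists. A useful cross-check is the count: by \cref{lma:number_of_contractions}, contracting all exceptional spheres in $\ct$ in turn already accounts for the full drop in $b_2$ to the minimal model $Y$. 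So no further exceptional class can exist outside $\ct$, which gives an alternative homological route if the curve argument is troublesome.
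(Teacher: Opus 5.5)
Your holomorphic-curve argument has a genuine gap. The homological count you mention at the end, made precise, is a complete proof and is essentially the paper's.

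\textbf{Where the curve argument fails.} The gap is the sentence ``\cref{thm:regulation_of_tilX} then still applies with this $\widetilde J$''. The uniqueness of the broken ruling in that theorem is proved only for the $\widetilde J$ produced by the rational blow-up: integrable on $\widetilde U_{\std}$ and generic on $V$. This is what \cref{lma:bijections} and \cref{rmk:only_2_broken_rulings} need to guarantee that the components of $\cc_{p,q}$ and $\cd_{p,q}$ are the only spheres of square $<-1$. The theorem's proof then uses this to push the tails of the two chains into broken rulings and to run its dimension count.

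Your $S$ is only known to avoid $\ct$. It may enter $\widetilde U_{\std}$, and may even meet $C_1$, which is not a component of $\ct$. Forcing $S$ to be holomorphic then violates exactly these hypotheses.

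The repair you propose does not close this. \cref{lma:regulation_lma1} and \cref{lma:ruling_compactification} are open--closed arguments for the \emph{existence} of $E_J$ and of $F$-curves in $X$ for $J\in\cj_\tau(\cd)$. They say nothing about how many broken rulings the regulation of $\widetilde X$ has. Moreover, ``$\ct$ is the only broken ruling'' is not shown to be an open and closed condition in $J$, so connectedness of the space of admissible $J$ transfers nothing.

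\textbf{The homological argument.} Let $N$ be the span of $C_2,\dots,C_m,D_1,\dots,D_n$, the components of $\ct$ other than $E$. These belong to the basis $\cc_{p,q}\cup\cd_{p,q}$ and form two disjoint resolution chains, so $\dim N=m+n-1$ and the intersection form is negative definite on $N$.

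The class $A=[\widetilde C]=[\ct]$ satisfies $A\cdot N=0$ and $A^2=0$. Since $S$ is disjoint from every component of $\ct$, including $E$, you get $[S]\cdot N=0$ and $[S]\cdot A=0$, while $[S]^2=-1$.

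$N$ is nondegenerate and orthogonal to both $A$ and $[S]$. The Gram matrix of $A,[S]$ is $\mathrm{diag}(0,-1)$, and $A\neq 0$. Hence $N$, $A$, $[S]$ together span a subspace of dimension $m+n+1$, which is all of $H_2(\widetilde X;\Q)$ by \cref{lma:number_of_contractions}.

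But $A$ pairs trivially with $N$, with $A$ and with $[S]$, hence with everything. This is absurd, since $A\cdot D_0=1$.

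\textbf{Comparison with the paper.} The paper's proof is the same idea phrased topologically. $\widetilde X$ is obtained from a Hirzebruch surface $Y$ by blowing up along a single fibre, with all contractions taking place inside $\ct$. So $\widetilde X\setminus\ct$ sits inside $Y\setminus F$, where every closed surface is homologous to a multiple of the square-zero fibre.

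Any honest repair of your curve argument would have to pass through this count anyway, at which point the holomorphic curves are superfluous. The count in fact shows more: $\widetilde X\setminus\ct$ contains no closed surface of nonzero square.
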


\begin{proof}
    By \cref{thm:regulation_of_tilX} the symplectic manifold $\widetilde{X}$ is derived from a Hirzebruch surface $Y$ by consecutive blow-ups of a single ruling.
    This means that $\widetilde{X}\setminus \ct$ is contained in $Y \setminus F$, where $F$ is a fibre of the Hirzebruch surface.\footnote{Note that denoting this fibre by $F$ makes sense, since a smooth ruling of $\widetilde{X}$ is a fibre of $Y$ under the sequence of blow-downs.}
    Therefore, $\widetilde{X}\setminus \ct$ is minimal.
\end{proof}

\begin{proof}[Proof of \cref{thm:regulation_of_tilX}]
    The first part of the theorem is a summary of the discussion before and the second part is obvious if $(p,q)=(2,1)$, since in that case $\mathcal{C}_{2,1}$ consists of a single sphere and $\mathcal{D}_{2,1}$ consists of a positive sphere and an exceptional divisor.
    Therefore we move on to prove the second part and assume that $(p,q)\neq (2,1)$.
    
    By \cref{rmk:only_2_broken_rulings} we know that there has to be at least one broken ruling since the tails of the chains $\cc_{p,q}$ and $\cd_{p,q}$ have to be part of a broken ruling. 
    The goal is to show that there is precisely one broken ruling given by the tails of $\cc_{p,q}$ and $\cd_{p,q}$, denoted by $\cc':=\cc_{p,q}\setminus C_1$ and $\cd':=\cd_{p,q}\setminus D_0$ in the following, connected by a single exceptional sphere that intersects the tails at their ends.
    
    Assume that both tails are contained in two distinct broken rulings,\footnote{Recall that broken rulings are either disjoint, or they coincide.} which we will call $\mathcal{T}_\mathcal{C}$ and $
    \mathcal{T}_\mathcal{D}$.
    We know that the broken rulings have to contain at least one exceptional sphere and hence we have
    $$
        \dim(H_2(\widetilde{X};\Q))\geq ((n-1)+1)+(m+1).
    $$
    However, we know by \cref{lma:number_of_contractions} that this is an equality and therefore the regulation of $\widetilde{X}$ has exactly two broken rulings, namely $\ct_\mathcal{C}$ and $\ct_\mathcal{D}$, each containing a single $(-1)$-sphere, $E_\mathcal{C}$ and $E_\mathcal{D}$.
    We will now show that such a configuration cannot exist.
    Consider the expression
    $$
        \mathcal{T}_\mathcal{C}=\sum_{j=2}^m \alpha_j C_j + \epsilon E_{\mathcal{C}},
    $$
    where the $\alpha_j$ and $\epsilon$ are the covering multiplicities.
    Because $\mathcal{T}_\mathcal{C}$ is a broken ruling of the regulation in the class $[\widetilde{C}]$ we know that
    $$
        \ct_\cc \cdot C_1 = \widetilde{C}\cdot C_1 = 1 \qquad\text{and}\qquad \ct_\cc \cdot D_0 = \widetilde{C}\cdot D_0 = 1,
    $$
    which implies $\alpha_2=1$ and $\epsilon=1$.
    Now assume that $E_\cc$ intersects $\cc'$ in $C_r$.
    Then $\ct_\cc \cdot C_j = \widetilde{C} \cdot C_j= 0$ implies
    $$
        M_{\cc'} \bm{\alpha} + \bm{e}_r =0,\quad \text{i.e.\ } \bm{\alpha}=-(M_{\cc'})^{-1} \bm{e}_r,
    $$
    where $M_{\cc'}$ is the intersection matrix of $\cc'$ and $\bm{e}_r$ is the $r$th standard basis vector.
    In particular, we must have $1=\alpha_2=-(M_{\cc'})^{-1}_{1r}$.
    In the notation of \cref{lma:M_inverse} this implies $1=f_r/(pq-1)$, because the HJ continued fraction for $\cc'$ is $(pq-1)/(b_1(pq-1)-p^2)$.
    However, this is a contradiction, because $f_r<f_0=pq-1$.
    
    Therefore, the regulation of $\widetilde{X}$ has a single broken ruling $\ct$ which contains the tails $\cc'$ and $\cd'$ and a single exceptional sphere $E$.\footnote{That there cannot be more than one exceptional sphere again follows from the fact that we can compute the dimension of $H_2(\widetilde{X},\Q)$ by counting the number of contractions and alternatively by adding the lengths of the chains $\cc'$ and $\cd'$. This also shows that there is a single broken ruling.}
    What is left to show is the intersection pattern of $E$ and the tails.
    As before we write
    $$
        \mathcal{T}=\sum_{j=2}^m \alpha_j C_j + \sum_{i=1}^n \beta_i D_i+ \epsilon E,
    $$
    where again the $\alpha_j$, $\beta_i$ and $\epsilon$ are the multiplicities.
    Because $[\ct]=[\widetilde{C}]$ we have
    $$
        \ct \cdot C_1 = \widetilde{C}\cdot C_1 = 1 \qquad\text{and}\qquad \ct \cdot D_0 = \widetilde{C}\cdot D_0 = 1,
    $$
    which implies $\alpha_2=\beta_1=1$.
    Assume that $E$ meets $\cc'$ in $C_r$ and $\cd'$ in $D_s$.
    This means that we have to show that $r=m$ and $s=n$.
    The intersection identities $\mathcal{T}\cdot C_j=\widetilde{C} \cdot C_j$ and $\mathcal{T}\cdot D_i=\widetilde{C} \cdot D_i$ imply
    $$
        M_{\cc'} \bm{\alpha} + \epsilon \bm{e}_r =0,\quad\text{i.e.\ } \bm{\alpha}=-\epsilon (M_{\cc'})^{-1} \bm{e}_r,\qquad\text{ and }\qquad M_{\cd'} \bm{\beta} + \epsilon \bm{e}_s =0,\quad\text{i.e.\ } \bm{\beta}=-\epsilon(M_{\cd'})^{-1} \bm{e}_s.
    $$
    Considering the first component of each of these equations we get 
    $$
        \epsilon=-\frac{1}{(M_{\cc'})^{-1}_{1r}}=\frac{pq-1}{f_r}\qquad\text{and}\qquad \epsilon=-\frac{1}{(M_{\cd'})^{-1}_{1s}}=\frac{pq-1}{h_s},
    $$
    which implies that $f_r=h_s$.
    Since $(pq-1)/[q^2]^{-1}$ is the dual of $(pq-1)/(c_1(pq-1)-p^2)$, they only share one right accompanying number, namely $1$.
    This implies that $r=m$ and $s=n$, which concludes the proof.\footnote{Recall that $[q^2]^{-1}=p^2-d_0(pq-1)$ and that $d_0=b_1-1$. The fact that these dual HJ continued fractions only share a single right accompanying number is easy to see. Write $N=(pq-1)$ and $a=[q^2]^{-1}$. Then we have $f_r \equiv -e_r a \mod{N}$ and $h_s \equiv -g_r a \mod{N}$, because of the recursive formula \eqref{eq:rec_acomp}, which yields $(e_r+g_s)a \equiv 0 \mod{N}$. Because $\text{gcd}(a,N)=1$ this implies $e_r+g_s \equiv 0 \mod{N}$. However, the left accompanying numbers are increasing and they terminate at $e_{m-1}=N-[a]^{-1}$ and $g_n=[a]^{-1}$. This implies $0 < e_r+g_s \leq N$ and therefore the claim.}
\end{proof}

\begin{figure}[htb]
\begin{center}
    \begin{tikzpicture}
    \begin{scope}[shift={(-1,-1.375)}]
    \node at (-0.7,3) {(a)};
    \draw[thick] (0,0) -- (3,0) node (c1) [pos=0.5] {};
    \node at (c1) [above, yshift=-2pt] {\tiny $-6$};
    \node at (c1) [below] {\small $C_1$};
    
    \draw[thick] (2.75,-0.25) to[out=80,in=190] (3.75,0.75);
    \node (c2) at (3.25,0.35) {};
    \draw[thick] (3.5,0.5) to[out=80,in=190] (4.5,1.5);
    \node (c3) at (4,1.11) {};
    \draw[thick] (4.25,1.25) to[out=80,in=190] (5.25,2.25);
    \node (e) at (4.75,1.85) {};
    \draw[thick] (5,2) to[out=80,in=190] (6,3);
    \node (d1) at (5.5,2.6) {};
    
    \node at (c2) [above left] {\small $C_2$};
    \node at (c2) [xshift=1pt, yshift=-2pt] {\tiny $-2$};
    
    \node at (c3) [above left] {\small $C_3$};
    \node at (c3) [xshift=1pt, yshift=-2pt] {\tiny $-2$};
    
    \node at (e) [above left] {\small $E$};
    \node at (e) [xshift=1pt, yshift=-2pt] {\tiny $-1$};
    
    \node at (d1) [above left, xshift=3pt, yshift=3pt] {\small $D_1$};
    \node at (d1) [xshift=1pt, yshift=-2pt] {\tiny $-3$};

    \draw[thick] (0,2.7) -- (5.75,2.7) node (d0) [pos=0.5] {};
    \node at (d0) [above, yshift=-1pt] {\small $D_0$};
    \node at (d0) [below] {\tiny $+5$};

    \draw[thick] (0.25,-0.25) -- (0.25,3) node (f) [pos=0.5] {};
    \node at (f) [left] {\small $\widetilde{C}$};
    \end{scope}
    
    \begin{scope}[shift={(8,-1.5)}]
    \node at (-1.5,3.125) {(b)};
    \draw[thick] (0,-0.25) to[out=120,in=240] (0,0.75);
    \draw[thick] (0,0.5) to[out=120,in=240] (0,1.5);
    \draw[thick] (0,1.25) to[out=120,in=240] (0,2.25);
    \draw[thick] (0,2) to[out=120,in=240] (0,3);
    \node at (-0.5,0.25) {\small $C_2$};
    \node at (0.1,0.25) {\tiny $-2$};
    \node at (-0.5,1) {\small $C_3$};
    \node at (0.1,1) {\tiny $-2$};
    \node at (-0.5,1.75) {\small $E$};
    \node at (0.1,1.75) {\tiny $-1$};
    \node at (-0.5,2.5) {\small $D_1$};
    \node at (0.1,2.5) {\tiny $-3$};
    \draw[->] (0.4,1.75) to[out=10,in=170] (1.7,1.75);
    \draw[thick] (0,0) -- (-0.5,-0.5) node (c1) [pos=0.5] {};
    \draw[thick] (0,2.75) -- (-0.5,3.25) node (d1) [pos=0.5] {};
    \node at (c1) [left,xshift=-2pt, yshift=-1pt] {\small $C_1$};
    \node at (c1) [below, xshift=1pt, yshift=-1pt] {\tiny $-6$};
    \node at (d1) [left, xshift=-2pt, yshift=1pt] {\small $D_0$};
    \node at (d1) [above, xshift=1pt, yshift=1pt] {\tiny $+5$};

    \draw[thick] (2,-0.25) to[out=120,in=240] (2,0.75);
    \draw[thick] (2,0.5) to[out=120,in=240] (2,1.88);
    \draw[thick] (2,1.62) to[out=120,in=240] (2,3);
    \node at (2.1,0.25) {\tiny $-2$};
    \node at (2.1,1.19) {\tiny $-1$};
    \node at (2.1,2.31) {\tiny $-2$};
    \draw[->] (2.4,1.19) to[out=0,in=180] (3.7,1.51);
    \draw[thick] (2,0) -- (1.5,-0.5) node (c1) [pos=0.5] {};
    \draw[thick] (2,2.75) -- (1.5,3.25) node (d1) [pos=0.5] {};
    \node at (c1) [below, xshift=1pt, yshift=-1pt] {\tiny $-6$};
    \node at (d1) [above, xshift=1pt, yshift=1pt] {\tiny $+5$};

    \draw[thick] (4,-0.25) to[out=120,in=240] (4,1.62);
    \draw[thick] (4,1.38) to[out=120,in=240] (4,3);
    \node at (4.1,0.65) {\tiny $-1$};
    \node at (4.1,2.1) {\tiny $-1$};
    \draw[->] (4.4,0.65) to[out=-10,in=180] (5.4,-0.05);
    \draw[thick] (4,0) -- (3.5,-0.5) node (c1) [pos=0.5] {};
    \draw[thick] (4,2.75) -- (3.5,3.25) node (d1) [pos=0.5] {};
    \node at (c1) [below, xshift=1pt, yshift=-1pt] {\tiny $-6$};
    \node at (d1) [above, xshift=1pt, yshift=1pt] {\tiny $+5$};

    \draw[thick] (5.7,-0.25) to[out=110,in=250] (5.7,3);
    \node at (5.6,1.375) {\tiny $0$};
    \draw[thick] (5.7,0) -- (5.2,-0.5) node (c1) [pos=0.5] {};
    \draw[thick] (5.7,2.75) -- (5.2,3.25) node (d1) [pos=0.5] {};
    \node at (c1) [below, xshift=1pt, yshift=-1pt] {\tiny $-5$};
    \node at (d1) [above, xshift=1pt, yshift=1pt] {\tiny $+5$};
    
    \end{scope}
    \end{tikzpicture}
    \caption{(a) An example that illustrates the structure of the regulation in the class $\widetilde{C}$ proved in \cref{thm:regulation_of_tilX} in the case $(p,q)=(4,1)$. (b)
    The contraction process of the broken ruling for this example. In this example $d_0=5$ and the figure shows how $\widetilde{X}_{4,1}$ is derived from $\F_{5}$.}
    \label{fig:regulation_of_tilX}
\end{center}
\end{figure}

\begin{remark}
    \cref{thm:regulation_of_tilX} also implies that the minimal model of $Y$ can be arranged to be the Hirzebruch surface $\F_{d_0}$.
    This follows from the structure of $\mathcal{T}$ proven in \cref{thm:regulation_of_tilX}. 
    The last contraction is a contraction of a broken fibre that consists of two $(-1)$-spheres.
    Contracting the curve that intersects the negative section yields the Hirzebruch surface $\F_{d_0}$.
\end{remark}

\subsection{Constructing the symplectomorphism}\label{sec: constructing the symp}

In this subsection we want to show that Lagrangian pinwheels in their normal neighbourhoods are unique up to symplectomorphism.
More precisely, we want to prove the following theorem.
\begin{theorem}\label{th:symp}
    Suppose that $\iota:B_{p,q}(\varepsilon) \hookrightarrow B_{p,q}$ is a symplectic embedding.
    Then there exists a compactly supported symplectomorphism $\Phi \in \symp_c(B_{p,q})$ such that $\Phi \circ \iota =\iota_\vis$.
\end{theorem}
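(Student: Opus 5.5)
We compare the rational blow-up along $\iota$ with the rational blow-up along $\iota_\vis$, build a symplectomorphism between them, and descend it back to $X$. We may take $\iota_\vis$ to be the inclusion of a small visible pin-ball. Compactify $B_{p,q}$ to $X=X_{p,q}$ away from the images of $\iota$ and $\iota_\vis$. Then rationally blow up along each embedding to obtain $\widetilde{X}$ and $\widetilde{X}_\vis$. Each contains the Wahl chain $\cc_{p,q}$ and the untouched divisor $\cd_{p,q}$. For generic $\widetilde J$ (respectively $\widetilde J_\vis$), \cref{thm:regulation_of_tilX} gives a regulation with exactly one broken ruling $\ct=(\cc_{p,q}\setminus C_1)\cup(\cd_{p,q}\setminus D_0)\cup E$. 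The combinatorics of $\ct$ and the positions of $C_1$ and $D_0$ as sections depend only on $(p,q)$.

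\textbf{Step 1: build $\widetilde\Phi$.} Contract the broken ruling step by step in both manifolds, always in the same order (as in \cref{fig:regulation_of_tilX}). This exhibits both as iterated blow-ups of $\F_{d_0}$ along a single fibre, with the same combinatorics and, by \cref{lma:number_of_contractions}, the same number $m+n-1$ of blow-ups. The cohomology classes should agree as well. Indeed, all the relevant areas ($\omega$ on $D_i$, on the fibre, and on the $C_j$, which are fixed by $\til U_\std$) coincide, because $\iota$ and $\iota_\vis$ are embeddings of the same pin-ball into the same $X$ and $\cd$ lies in the untouched region. Via the classification of symplectic forms on rational ruled surfaces (uniqueness up to isotopy, Lalonde--McDuff and Li--Liu) together with a relative version for configurations of spheres, this should yield a symplectomorphism $\widetilde\Phi:\widetilde X\to\widetilde X_\vis$. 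We want $\widetilde\Phi$ to map $\cc_{p,q}\cup\cd_{p,q}$ to its counterpart, component by component. Concretely, I would build $\widetilde\Phi$ fibrewise using the regulations: blow down to $\F_{d_0}$, where the configuration (positive section, negative section, one fibre, and the iterated blow-up points on it) is standard up to symplectomorphism; then blow back up using standard ball packings along the fibre. \cref{cor:complement_of_T_is_minimal} gives the needed control on the complement of $\ct$.

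\textbf{Step 2: normalize near the divisors.} Next we isotope $\widetilde\Phi$ so that it is the identity near $\cd_{p,q}$ and standard near $\cc_{p,q}$, i.e.\ it intertwines the standard neighbourhoods $\til U_\std$. We use symplectic neighbourhood theorems for plumbings of spheres. Near $\cd$ we also use the weak contractibility of $\cj_\tau(\cd)$, together with a Gompf-style argument that the symplectic normal data of a normal crossing chain are determined up to isotopy by areas and self-intersections. Any remaining rotations along the components are absorbed by a Hamiltonian isotopy supported near $\cd\cup\cc$.

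\textbf{Step 3: descend to $X$ and extend.} Removing $\til U_\std$ and $\cd$, we obtain a symplectomorphism from $V\setminus\cd$ to $V_\vis\setminus\cd$, where $V=X\setminus\iota(B_{p,q}(\varepsilon))$. Since it is standard near the boundary lens space $L(p^2,pq-1)$, it is compatible with the collars. We then glue in $\iota_\vis\circ\iota^{-1}$ on $\iota(B_{p,q}(\varepsilon))$. Being the identity near $\cd$, the result restricts to a compactly supported $\Phi\in\symp_c(B_{p,q})$ with $\Phi\circ\iota=\iota_\vis$.

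\textbf{Main obstacle.} I expect the hardest part to be Step 2, specifically the boundary behaviour on $\til U_\std$. The symplectomorphism must agree with the standard identification on a collar of the lens space, not merely near the spheres $C_j$. This requires the symplectomorphism of $\til U_\std$ to be isotopic, rel the chain, to one that is the identity on a collar. One approach is to view $\til U_\std$ as a toric plumbing and use the contractibility of its compactly supported symplectomorphisms modulo the chain. The other is to shrink to the neck region $N$ of \cref{def:acs_neck}, where the almost complex structures are standard, and to apply a Moser argument on the rational homology sphere boundary, using $H^1(L(p^2,pq-1);\R)=0$.
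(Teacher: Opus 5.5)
Your architecture matches the paper's: blow up along both embeddings, use \cref{thm:regulation_of_tilX}, identify the blow-ups compatibly with the chains, then blow down and glue in $\iota_\vis\circ\iota^{-1}$. Your area bookkeeping is also fine, since $H_2(\widetilde X;\Q)$ is spanned by $\cc_{p,q}\cup\cd_{p,q}$. However, Step 1, which is the heart of the theorem, is asserted rather than proved.
\begin{itemize}
\item The Lalonde--McDuff/Li--Liu classification gives \emph{some} symplectomorphism $\widetilde X\to\widetilde X_\vis$ (or an isotopy of cohomologous forms). It gives no control over where a given configuration of spheres is sent. The relative version for configurations that you invoke is essentially the statement to be proven, and it is not available off the shelf.
\item Your fibrewise alternative has the same problem in another form. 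After blowing down to $\F_{d_0}$ you would need uniqueness up to symplectomorphism of the configuration (two sections and a fibre). You would then need uniqueness, relative to that configuration, of the iterated blow-up data: balls centred at infinitely near points on previously created exceptional curves. Neither follows from standard ball-packing connectedness results.
\item Consequently Step 2 has nothing to act on, since you do not know that $\widetilde\Phi$ preserves the chain. Even for a chain-preserving map, making it the identity on a neighbourhood of $\cc\cup\cd$ requires controlling normal-bundle twists relative to the intersection points. This is more delicate than the lens-space collar issue you single out as the main obstacle.
\end{itemize}

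The paper avoids both the blow-down and any after-the-fact normalization. It takes a standard plumbing neighbourhood $\nu$ of the \emph{full} linear chain $\cc_{p,q}\cup E\cup\cd_{p,q}$, including the sections $C_1$ and $D_0$. Since self-intersections and areas agree, the symplectic neighbourhood theorem gives embeddings $\psi_\vis,\psi$ of the same $\nu$. Their boundary is a contact $S^1\times S^2$, which in the visible model is a star-shaped hypersurface in $\TSR$. The regulation enters only through \cref{cor:complement_of_T_is_minimal}: the complement $\mathcal K$ of $\psi(\nu)$ is minimal. The paper glues $\mathcal K$ to the outside of a star-shaped domain in $\TSR$ and applies the Gromov--McDuff-type recognition theorem \cref{th:McDSalalternative}, which is deduced from the relative three-sphere theorem \cref{th:GrMcD_threespheres}. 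This produces a symplectomorphism $\widetilde X_\vis\to\widetilde X$ that \emph{equals} $\psi\circ\psi_\vis^{-1}$ near the chain. It delivers the relative uniqueness and the normal form near $\cc\cup\cd$ at once, and the gluing with $\iota\circ\iota_\vis^{-1}$ then proceeds as in \cite[Corollary 5.2.5]{ABEHS25}. The idea missing from your proposal is exactly this: work relative to a neighbourhood of the whole chain and use minimality of its complement to invoke a recognition theorem, rather than a global classification followed by normalization.
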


\begin{remark}
    Note that the uniqueness of Lagrangian $(p,q)$-pinwheels in $B_{p,q}$ up to symplectomorphism follows from this theorem, since we showed that every such pinwheel is "locally visible" in \cref{thm:loc_vis}.
\end{remark}

This theorem is very similar to \cite[Theorem 1.4.1]{ABEHS25} and its proof will indeed also follow along similar lines.
However, we will base the proof of \cref{th:symp} on a "different" method.\footnote{It will be clear from the method of proof and from the references cited therein that the root of this method is Gromov's \cite{Gr85} $S^2\times S^2$-foliation result, which is also the underlying result for the proof of \cite[Theorem 1.4.1]{ABEHS25}.} 
We shall need the following analogue, for star-shaped sets in $\TSR$, of the Gromov--McDuff Theorem \cite[Theorem 9.4.2]{McDSal02:Jcurves}, which is formulated for star-shaped sets in $\R^4$.

\begin{definition}\label{def:star_shaped}
    A star-shaped subset $W$ of a Liouville manifold $(X,d\lambda)$ is a subset that contains the Liouville skeleton of $\lambda$ and $\phi^{\lambda}_t(x)\in W$ for all $t\leq 0$ and $x\in W$.
\end{definition}

Let \(p\) be the coordinate in the cotangent direction in \(T^*S^1\) and \(r\) be the radius function on \(\R^2\).
Let \(\mathcal{X}=p\partial_p+\frac{1}{2}r\partial_r\) be the standard Liouville vector field on \(\TSR\). 

\begin{theorem}\label{th:McDSalalternative}
    Let $(X,\omega)$ be a connected minimal symplectic 4-manifold and $\mathcal{K} \subset X$ be a compact set such that there exists a symplectomorphism $\psi \colon \left(\TSR\right) \setminus \mathcal{V} \to X \setminus \mathcal{K}$, where $\mathcal{V} \subset \TSR$ is a star-shaped compact set.
    Then $(X,\omega)$ is symplectomorphic to $(\TSR,\omega_0)$.
    Moreover, for every open neighbourhood $\mathcal{U} \subset X$ of~$\mathcal{K}$, the symplectomorphism can be chosen to agree with~$\psi^{-1}$ on~$X \setminus \mathcal{U}$.
\end{theorem}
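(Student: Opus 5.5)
We follow the proof of the Gromov--McDuff theorem \cite[Theorem 9.4.2]{McDSal02:Jcurves}, replacing $\R^4\subset \C P^1\times\C P^1$ by a compactification of $\TSR$ to $S^2\times S^2$. The plan is to glue a fixed divisor at infinity onto $X$, identify the resulting closed manifold via Gromov's foliation, and then remove the divisor again.

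First we compactify. Since $\mathcal{V}$ is compact, we may choose large $R,A$ so that $\mathcal{V}$ lies in a region $\{|p|<A,\ r<R\}$. That region sits inside $S^2(2A)\times S^2(\pi R^2)$, obtained from $\TSR$ by symplectically cutting: collapse the circle directions at $p=\pm A$ and at $r=R$. The complement of this region in $S^2\times S^2$ is a neighbourhood of the divisor
\[
\Delta=(\{N,S\}\times S^2)\cup(S^2\times\{\infty\}).
\]
Using $\psi$, we glue this neighbourhood of $\Delta$ onto $X$ along the collar $X\setminus\mathcal{K}$ to obtain a closed symplectic manifold $\overline X$. It contains a copy of $\Delta$: two disjoint square-zero spheres $S_N,S_S$ in the class $B$, and one square-zero sphere $S_\infty$ in the class $A$ that meets each of them once.

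Next we identify $\overline X$ with $S^2\times S^2$. We choose $J$ tame and equal to the standard product (split) structure near $\Delta$. Near $S_\infty$ the manifold looks standard, so $\overline X$ contains a square-zero symplectic sphere with a standard neighbourhood. By McDuff's classification of rational and ruled manifolds, $\overline X$ is therefore rational or ruled; the remaining work is to show it is $S^2\times S^2$ itself.

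The key step, and the one I expect to be the main obstacle, is ruling out bubbling for curves in the class $A$. We argue as in Lemma \ref{lma:ruling_compactification}:
\begin{itemize}
\item Any bubble component either meets $\Delta$ or lies entirely in $X\setminus\mathcal{K}\cup\mathcal{K}$.
\item If $X\setminus\mathcal{K}$ corresponded to an exact (Liouville-type) region in which the whole of $\mathcal{V}$ lay, positivity of intersections with $S_N$ and $S_S$, together with $A\cdot B=1$, would force a limit curve to consist of a single component, except for components in classes of the form $kB+E$.
\item Closed components entirely inside $X$ are the problem: $X$ could a priori contain spheres in $\mathcal{K}$. Minimality of $X$ excludes exceptional spheres. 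For a general rational class, the standard argument uses that $c_1$ is positive on the relevant classes while the area of an exceptional sphere must be positive, which is contradicted by the blow-down.
\end{itemize}
Because $X$ is minimal and the region near $\Delta$ is standard, we conclude that the $A$-curves through the points of $S_N$ form a smooth foliation, and likewise the $B$-curves through the points of $S_\infty$ form a foliation. The map sending a point to the pair of leaves through it gives a diffeomorphism $\overline X\to S^2\times S^2$. This diffeomorphism is standard near $\Delta$, since there $J$ is split and the leaves are the standard ones.

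With this identification, we apply Moser's argument relative to a neighbourhood of $\Delta$, using that $H^2$ is generated by $A,B$ and that the areas agree. This upgrades the diffeomorphism to a symplectomorphism $\overline X\cong S^2\times S^2$ that equals the gluing map near $\Delta$. Removing $\Delta$ then gives $X\cong \TSR$.

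For the relative statement, we use that $\mathcal{V}$ is star-shaped. Rescaling by the Liouville flow shrinks $\mathcal{V}$ into the part of $\TSR$ whose image is contained in $\psi^{-1}$ of $\mathcal{U}$-approximating sets. We choose the compactification parameters $A,R$ after this shrinking, so that the identification already agrees with $\psi^{-1}$ outside $\mathcal{U}$. The flow-conjugation step follows \cite[Theorem 9.4.2]{McDSal02:Jcurves} verbatim.
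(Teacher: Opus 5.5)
The architecture matches the paper's route. You compactify $T^*S^1\times\R^2$ into $S^2\times S^2$ along $\Delta=(\{N,S\}\times S^2)\cup(S^2\times\{\infty\})$ (cutting at some $A'>A$, $R'>R$ so that a collar of $\Delta$ is standard). You then identify the result with $S^2\times S^2$ by Gromov's foliation argument, and use Liouville dilations for the relative statement. The difference is that the paper takes the symplectic identification from \cref{th:GrMcD_threespheres}, whose conclusion is that it is the identity on a \emph{neighbourhood} of the configuration. You try to derive this inline, and the derivation breaks exactly there.

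The claim ``this diffeomorphism is standard near $\Delta$, since there $J$ is split and the leaves are the standard ones'' is false. Uniqueness of leaves only forces the $B$-leaves near $S_N\cup S_S$ and the $A$-leaves near $S_\infty$ to be product spheres, because only those lie entirely in the split region. An $A$-leaf through $(N,w_0)$ with $r(w_0)<R$ crosses the non-standard region. Over a disc $D_N$ about $N$ it is merely the graph of some holomorphic $f\colon D_N\to S^2$ with $f(N)=w_0$, and nothing forces $f$ to be constant. The same happens for $B$-leaves near $S_\infty\cap\{|p|<A\}$.

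A counterexample exists already on $(S^2\times S^2,\omega_{a,b})$. Let $z$ be a coordinate centred at $N$, $\chi$ a cut-off equal to $1$ near $N$, and $\beta$ a bump in the compact part of the $w$-plane. For small $\epsilon$ there is a tame $J$, split near $\Delta$, whose $A$-leaves are the graphs $w=w'+\epsilon\,\chi(z)\,z\,\beta(w')$ and whose $B$-leaves are the vertical spheres. Its leaf map $(z,w)\mapsto(z,w')$ is neither the identity nor symplectic near $S_N$.

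Consequences:
\begin{itemize}
\item $\Phi_*\omega$ need not agree with the split form near $\Delta$, so the relative Moser step cannot be started.
\item The last step also fails. Deleting $\Delta$ only identifies a truncation of $X$ with the bounded box $\{|p|<A',\,r<R'\}$. You get $X\cong T^*S^1\times\R^2$ only by extending with $\psi^{-1}$ over the rest of the end, and that requires agreement with the gluing map near $\Delta$.
\end{itemize}
A repair that stays within your outline: use the foliations only to produce a diffeomorphism. Straighten it near $\Delta$ by a fibrewise isotopy; this is possible because the leaves there are graphs and $\Phi$ is already the identity near the two corner points of $\Delta$. Then apply \cref{th:GrMcD_threespheres} to the pushed-forward form, as the paper does.

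Two further steps need replacing rather than polishing.

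\textbf{Bubbling.} Your third bullet is not an argument. What works is the following. A component disjoint from $\Delta$ cannot lie in the split region off $\Delta$, since that region is exact. Its class is orthogonal to $A$ and $B$, which span a hyperbolic plane. Since $\overline X$ is rational or ruled, $b^+(\overline X)=1$, so this class has negative square. For $J$ generic away from $\Delta$, adjunction then makes its underlying simple curve an exceptional sphere inside $X$, which minimality excludes. Every remaining component meets $\Delta$, and positivity of intersections with $S_N,S_S,S_\infty$, together with $A\cdot A=0$ and $A\cdot B=1$, forbids any splitting.

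\textbf{Relative statement.} Choosing $A,R$ only yields agreement with $\psi^{-1}$ outside a box containing $\mathcal V$, never on $X\setminus\mathcal U$ for small $\mathcal U$. The mechanism is the Liouville version of McDuff--Salamon's rescaling lemma that the paper cites. If $\Theta\colon X\to T^*S^1\times\R^2$ agrees with $\psi^{-1}$ near infinity, then $g=\Theta\circ\psi$ is a symplectic embedding of $(T^*S^1\times\R^2)\setminus\mathcal V$ that is the identity outside a compact set. Because $\mathcal V$ is star-shaped, the conjugates $g_s=\delta_s\circ g\circ\delta_s^{-1}$ by the Liouville dilations ($s\le 1$) are defined on the same domain. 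For small $s$ they are the identity outside $\mathcal V\cup\psi^{-1}(\mathcal U)$. One then extends $g_s\circ g^{-1}$ to a Hamiltonian isotopy. Unlike $\R^4\setminus V$, this domain has $H^1\neq 0$, but the flux vanishes: the $S^1$-class is carried by a loop near infinity that every $g_s$ fixes.
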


For the proof of Theorem~\ref{th:McDSalalternative} we need the following result.

\begin{theorem}[\normalfont{\cite[Lemma 3.1.3]{CrGaMaMcD25} and \cite[Theorem 5.2.4]{ABEHS25}}]
\label{th:GrMcD_threespheres}
    Let $\omega_{a,b}$ be the usual split symplectic form on $S^2 \times S^2$ that gives the factors areas~$a$ and~$b$, respectively. 
    Denote by $p_N$ and $p_S$ the North and South poles of~$S^2$, and consider the three spheres $S_1 = S^2 \times \{p_S\}$ and 
    $S_2^N = \{p_N\} \times S^2$, $S_2^S = \{p_S\} \times S^2$ in~$S^2 \times S^2$.
    Let $\omega$ be a symplectic form on $S^2 \times S^2$ that agrees with $\omega_{a,b}$
    on a neighbourhood of the configuration $\mathcal{S}:=S_1 \cup S_2^N \cup S_2^S$. 
    Then there exists a symplectomorphism $\psi \colon (S^2 \times S^2, \omega_{a,b}) \to (S^2 \times S^2, \omega)$
    that is the identity on a neighbourhood of $\mathcal{S}$. 
\end{theorem}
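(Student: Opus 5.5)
We follow Gromov's foliation argument for $S^2\times S^2$. The idea is to foliate $(S^2\times S^2,\omega)$ by $J$-holomorphic spheres in the two factor classes $A=[S^2\times pt]$ and $B=[pt\times S^2]$, and use the two foliations to build a product chart. The $J$ must be compatible with $\omega$ and equal to the split complex structure $J_0$ near $\mathcal{S}$.

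First, since $\omega=\omega_{a,b}$ near $\mathcal{S}$, we have $[\omega]=[\omega_{a,b}]$ on $H_2$: the classes $A,B$ are represented inside $\mathcal{S}$. Let $\mathcal{J}_{\mathcal{S}}$ be the space of $\omega$-tame almost complex structures agreeing with $J_0$ on a fixed neighbourhood $N$ of $\mathcal{S}$. This space is nonempty and contractible. For $J\in\mathcal{J}_{\mathcal{S}}$, the spheres $S_1,S_2^N,S_2^S$ and all nearby product spheres are $J$-holomorphic.

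Next, we show that through every point there is a unique embedded $J$-sphere in class $A$, and likewise in class $B$. Both classes have square zero and $c_1=2$, so each curve has index $2>0$ and automatic transversality applies. Gromov compactness works as in \cref{lma:ruling_compactification}. A bubble tree in class $A$ splits into classes $kA+lB$ with positive area. Since $[\omega](kA+lB)=ka+lb$, the components would have to be of the form $k_iA+l_iB$ with $\sum l_i=0$ and $\sum k_i=1$. Positivity of intersection with the $J$-holomorphic $S_2^N$ (class $B$) forces $k_i\ge0$. Positivity with the $A$-spheres near $S_1$ inside $N$ forces $l_i\ge0$, provided the components are not those spheres themselves. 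Hence there is a single component of class $A$. Since this is genuinely standard, existence follows from a nonempty open-closed argument in the connected space $\mathcal{J}_{\mathcal{S}}$, exactly as in \cref{lma:regulation_lma1}. Positivity of intersections gives uniqueness and disjointness. This yields two transverse foliations $\mathcal{F}_A,\mathcal{F}_B$, each parametrized by $S^2$. We choose the parametrizations so that $\mathcal{F}_A$ is indexed by the points of $S_2^S$ and $\mathcal{F}_B$ by the points of $S_1$. Near $\mathcal{S}$, both foliations coincide with the product ones, since the product spheres there are $J$-holomorphic and the curves are unique.

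Define $\Phi:S^2\times S^2\to S^2\times S^2$ by sending $x$ to the pair (the $\mathcal{F}_B$-leaf through $x$, the $\mathcal{F}_A$-leaf through $x$). This $\Phi$ is a diffeomorphism that is the identity near $\mathcal{S}$. Moreover, $\omega$ pulled back along $\Phi^{-1}$ is nondegenerate on both the horizontal and the vertical leaves, and it is tamed by $J$. Consequently the forms $\omega_t=(1-t)\omega_{a,b}+t(\Phi^{-1})^*\omega$ are all symplectic. This is the standard Gromov–McDuff step: both forms are positive on leaves of both product foliations, which suffices for nondegeneracy of convex combinations. In addition, $\omega_t$ is constant in $t$ near $\mathcal{S}$ and has constant cohomology class.

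Finally, we apply Moser's method relative to $N$. We need $\omega_1-\omega_0=d\sigma$ with $\sigma$ vanishing near $\mathcal{S}$. Here $\omega_1-\omega_0$ is exact and supported away from a neighbourhood of $\mathcal{S}$. The complement of a regular neighbourhood of $\mathcal{S}$ is a $4$-ball minus pieces, and it has the relative topology for which $H^2$ of the pair vanishes: $\mathcal{S}$ carries $H_2(S^2\times S^2)$, and its neighbourhood's complement is simply connected. So such a $\sigma$ can be chosen. The resulting isotopy is the identity near $\mathcal{S}$, and composing it with $\Phi$ gives $\psi$.

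We expect the main obstacle to be this relative Moser step, together with checking that $\omega_t$ remains nondegenerate near $\partial N$ where the foliations are glued. The first would be handled by a relative Poincaré lemma on the complement, the second by the leafwise positivity argument.
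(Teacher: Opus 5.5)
You have the right scheme (two transverse $J$-foliations, the leaf map $\Phi$, interpolation with the split form, relative Moser), and your bubbling analysis and the leafwise-positivity argument for $\omega_t$ are fine. The paper gives no proof of this statement; it only cites \cite{CrGaMaMcD25} and \cite{ABEHS25}. The gap is in the step that carries the content of the lemma: the two foliations are \emph{not} the product ones near $\mathcal{S}$.

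Since $J=J_0$ only on $N$, a product sphere is $J$-holomorphic only if it lies entirely in $N$. That holds for $S^2\times\{y\}$ with $y$ near $p_S$, and for $\{x\}\times S^2$ with $x$ near $p_N$ or $p_S$. It fails for the $A$-leaf through $(x,y)$ with $x$ near $p_N$ and $y$ far from $p_S$. That leaf leaves $N$, and inside $V_N\times S^2\subset N$ it is only the graph of a holomorphic map $f_c\colon V_N\to S^2$, in general non-constant. The same happens for $B$-leaves near $S_1$ away from the poles.

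So $\Phi$ is not the identity near $\mathcal{S}$, and not even on $S_2^N$. With your indexing, $\Phi(p_N,y)=(p_N,h(y))$, where $h$ is the holonomy of $\mathcal{F}_A$ from $S_2^N$ to $S_2^S$. No re-indexing can make $\Phi$ the identity on both vertical spheres unless $h=\mathrm{id}$; this holonomy is exactly what the third sphere adds compared to the usual cross. Hence $(\Phi^{-1})^*\omega\neq\omega_{a,b}$ near $\mathcal{S}$ and $\omega_t$ is not constant there. The relative Moser step cannot be run, so you do not get $\psi=\mathrm{id}$ near $\mathcal{S}$, which is precisely what \cref{th:McDSalalternative} uses.

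The missing step is to modify the foliations inside $N$, where $\omega=\omega_{a,b}$, before defining $\Phi$. Let $\sigma_2$ be the area form of the second factor.
\begin{enumerate}
\item Over $V_N$, replace each leaf $\mathrm{graph}(f_c)$ by $\mathrm{graph}\bigl(x\mapsto f_c(\beta(|x|)x)\bigr)$, with $\beta$ nondecreasing, equal to $0$ near $p_N$ and $1$ near $\partial V_N$. The map $x\mapsto\beta(|x|)x$ has nonnegative Jacobian and $f_c^*\sigma_2\geq 0$, so the new leaves remain symplectic, disjoint, and transverse to the vertical leaves.
\item Do the same over $V_S$, and for the $B$-leaves over a disc $U\ni p_S$.
\item Remove the holonomy by inserting, in a collar inside $V_N$, the leaves $\{(x,g_{t(|x|)}(c))\}$, where $g_t$ is an isotopy from $h^{-1}$ to $\mathrm{id}$ in $\mathrm{Diff}_c(S^2\setminus\{p_S\})$. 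This group is connected by Smale, and $h$ lies in it because the $A$-leaves near $S_1$ are horizontal. These graphs depend only on $|x|$, so $\sigma_2$ pulls back to zero on them and they stay symplectic.
\end{enumerate}
With these foliations $\Phi$ is the identity near $\mathcal{S}$, and the rest of your argument goes through.

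Two smaller points.
\begin{itemize}
\item Your existence argument fixes a point and varies $J$, but nonemptiness is not justified, since $J_0$ need not tame $\omega$. Instead fix $J$ and vary the point: the set of points on an $A$-curve is open by automatic transversality, closed by your compactness argument, and contains $S_1$.
\item The complement of a neighbourhood of $\mathcal{S}$ is an annulus times a disc, i.e.\ $T^*S^1\times\R^2$, so it is not simply connected. The relative exactness you need does hold, but because $H^2(S^2\times S^2,\mathcal{S};\R)=0$: $\mathcal{S}$ is a tree of spheres whose classes span $H_2$.
\end{itemize}
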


Theorem~\ref{th:McDSalalternative} follows from Theorem~\ref{th:GrMcD_threespheres} exactly as the Gromov--McDuff Theorem \cite[Theorem~9.4.2]{McDSal02:Jcurves} follows from \cite[Theorem 9.4.7]{McDSal02:Jcurves}. 
Note that \cite[Lemma 9.4.10]{McDSal02:Jcurves} is also readily adapted; one just needs to replace the scaling maps $x \mapsto tx$ on $\R^4$ by the dilations 
on $T^*S^1 \times \R^2$ induced by the Liouville flow of~$\mathcal{X}$.

With these preparations in place we are ready to prove \cref{th:symp}.

\begin{figure}[ht]
    \centering
    \begin{tikzpicture}
        \fill[opacity=0.2] 
            (0,3) -- (0,0) -- (3,0) -- (6,1.5) -- (7.5,3) -- (0,3);
        \fill[opacity=0.2] 
            (0,0.5) -- (0,0) -- (3,0) -- (6,1.5) -- (7.5,3) -- (0,3) -- (0,2.5) -- (6.5,2.5) -- (6,2) -- (3,0.5) -- (0,0.5);
        \begin{scope}[shift={(0,1.5)}]
            \clip (0,1.5) -- (0,-1.5) -- (3,-1.5) -- (6,0) -- (7.5,1.5) -- (0,1.5);
            \foreach \t in {88,83,78,...,-88} {\foreach \r in {0.3,0.6,...,9} {\draw[gray,->] (\t:\r) -- ++ (\t:\r/3);};};
        \end{scope}
        \draw[thick] 
            (0,3) -- (0,0) -- (3,0) -- (6,1.5) -- (7.5,3) -- (0,3);
        \draw[very thick]
            (0,0) -- node[anchor=north] {$\mathcal{C}_\vis$} (3,0);
        \draw[very thick]
            (3,0) -- node[anchor=north west, xshift=-3pt, yshift=3pt] {$E_\vis$} (6,1.5);
        \draw[very thick]
            (6,1.5) -- node[anchor=north west, xshift=-3pt, yshift=3pt] {$\mathcal{D}_\vis$} (7.5,3) -- (0,3);
        \draw[thick] plot [smooth, tension=0.5] 
            coordinates {(0,0.2) (3,0.4) (5.8,1.7) (6.4,2.7) (0,2.8)};
        \node (S) at (-0.8,2.8) {$\Sigma_\vis$};
        \node (Z) at (-1.2,1.5) {\tiny $(0,0)$};
        \draw[->] (Z) to (-0.1,1.5);
        \draw[->] (S) to[out=-20, in=210] (0.5,2.7);
        \end{tikzpicture}
        \caption{The visible situation in the case $(p,q)=(2,1)$. The linear chain of spheres $\mathcal{T}_\vis$ is formed by the Wahl chain $\mathcal{C}_\vis$, consisting of a single $(-4)$-sphere in this case, and the compactifying divisor $\mathcal{D}_\vis$, formed by a $(+3)$-sphere and a $(-1)$-sphere, connected by an exceptional divisor $E_\vis$. The boundary $\Sigma_\vis$ of a normal neighbourhood of the configuration $\mathcal{T}_\vis=\mathcal{C}_\vis \cup E_\vis \cup \mathcal{D}_\vis$ is a contact hypersurface. $\Sigma_\vis$ is diffeomorphic to $S^1 \times S^2$ and is star-shaped with respect to the Liouville field shown in gray.}
    \label{fig:vis_case}
\end{figure}

\begin{proof}[Proof of \cref{th:symp}]
\label{prf:symp}
    Pick compactification data $(\alpha,\bm{\rho})$, as in \cref{def:compactification}, such that the embedding $\iota$ also defines an embedding of $B_{p,q}(\varepsilon)$ into $X_{p,q}(\alpha,\bm{\rho})$.
    We obtain two symplectic manifolds by rationally blowing up $X$ along $\iota$ and $\iota_\vis$, which we denote by $\widetilde{X}$ and $\widetilde{X}_\vis$.
    Recall that both of these symplectic manifolds contain a linear chain of symplectic spheres $\mathcal{T}$ and $\mathcal{T}_\vis$ that consists of the Wahl chain and the compactifying divisor, connected by an exceptional sphere, as shown in \cref{thm:regulation_of_tilX}.
    \cref{fig:vis_case} shows the visible situation.
    
    The symplectic neighbourhood theorem gives symplectic embeddings 
    \(\psi_{\vis} \colon \nu \hookrightarrow \widetilde{X}_\vis\) and
    \(\psi \colon \nu \hookrightarrow \widetilde{X}\) of a plumbing of normal bundles, whose boundary $\Sigma_\vis$ and $\Sigma$, respectively, are contact-type hypersurfaces contactomorphic to \(S^1\times S^2\).
    The hypersurface~$\Sigma_\vis$ can be assumed to be a visible contact hypersurface in a toric domain whose completion is $\TSR$ and is star-shaped with respect to a specific Liouville field, see \cref{fig:vis_case}.
    Let $\mathcal{K}_\vis$ be the closure in $\widetilde{X}_\vis$ of the complement of $\psi_{\vis}(\nu)$; its boundary is $\Sigma_{\vis}$. 
    Similarly, let $\mathcal{K}$ be the closure in~$\widetilde{X}$ of the complement of~$\psi (\nu)$. 
    The boundary of~$\mathcal{K}$ is~$\Sigma$, 
    and $\mathcal{K}$ is minimal by Corollary \ref{cor:complement_of_T_is_minimal}.
    Our goal is to extend the symplectomorphism $\psi \circ \psi_{\vis}^{-1} \colon \psi_{\vis} (\nu) \to \psi(\nu)$ to a symplectomorphism
    $$
    \Psi \colon \widetilde{X}_\vis = \psi_\vis(\nu) \cup \mathcal{K}_{\vis} \to \psi(\nu) \cup \mathcal{K} = \widetilde{X}
    $$
    that takes $\ct_\vis$ to $\ct$.
    Choose coordinates on the almost toric base diagram in \cref{fig:vis_case} so that the source of the Liouville vector field is at $(0,0)$, and take $\delta >0$ so small that the "ball" of radius $\delta$ is contained in this almost toric base diagram.
    Write $S_\vis(\delta)$ for the set in $\widetilde{X}_{\vis}$ represented by this ball.
    It is canonically symplectomorphic to the set $S(\delta):=\{p^2+r^2\leq \delta^2\} \subseteq\TSR$, where $p$ is the cotangent direction in $T^*S^1$ and $r$ is the radius function on $\mathbb{R}^2$. 
    Let $\mathcal{X}_\vis := \sum_{i=1}^2 p_i \partial_{p_i}$ be the Liouville vector field centred on $(0,0)$ defined by the action-angle coordinates, and let 
    $\mathcal{X} := p\partial_p + \frac 12 r \partial_{r}$ be the standard Liouville vector field on $\TSR$.
    
    Using the flows of $\mathcal{X}_\vis$ and $\mathcal{X}$ we construct a symplectic embedding $\mathcal{K}_{\vis} \hookrightarrow \TSR$ onto a closed star-shaped region with smooth boundary.
    We call the image $\mathcal{K}_{\str}$ and its boundary $\Sigma_{\str}$.
    Explicitly, 
    $$
    \mathcal{K}_{\str} =S^1\times\{(0,0)\} \cup \left\{ \phi_{\mathcal{X}}^t (x) \mid 
    x \in \partial S(\delta), \,-\infty < t \leq t^*(x)\right\}
    $$ 
    where $t^*(x)$ is defined by $\phi_{\mathcal{X}}^{t^*(x)}(x) \in \Sigma_{\vis}$.
    Since $\mathcal{X}_\vis$ is transverse to $\Sigma_{\vis}$, we find $\epsilon >0$ such that the trajectories $\phi_{\mathcal{X}_\vis}^t(x)$ exist for $x \in \Sigma_{\vis}$ and $0 \leq t \leq \epsilon$.
    Set
    $$
    \mathcal{U}_{\vis} \coloneqq \left\{ \phi_{\mathcal{X}_\vis}^t(x) \mid x \in \Sigma_{\vis}, \, 0 < t < \epsilon \right\} \,\subset\, \widetilde{X}_{\vis} \setminus \ct_{\vis} \hspace{0.1cm}\text{ and }\hspace{0.1cm}
    \mathcal{U}_{\str} := \left\{ \phi_{\mathcal{X}}^t(x) \mid x \in \Sigma_{\str}, \, 0 < t < \epsilon \right\}.
    $$
    Using again the flows of $\mathcal{X}_\vis$ and $\mathcal{X}$ we construct the symplectomorphism $\xi_{\vis} \colon \mathcal{U}_{\str} \to \mathcal{U}_{\vis}$ that maps flow segments to flow segments.
    Also set $\mathcal{U} \,:=\, (\psi \circ \psi_{\vis}^{-1})(\mathcal{U}_{\vis}) \subset \widetilde{X} \setminus \ct$.
    Then we also have the symplectomorphism 
    $$
    \xi :=  \psi \circ \psi_{\vis}^{-1} \circ \xi_{\vis} \colon \mathcal{U}_{\str} \to \mathcal{U} .
    $$
    Define two open symplectic manifolds $X_{\vis}$ and $X$ by
    $$
    X_{\vis} = \mathcal{U}_{\vis} \bigcup_{\xi_{\vis}} (\TSR \setminus \mathcal{K}_{\str})\quad\text{and}\quad
    X = \mathcal{U} \bigcup_{\xi} (\TSR \setminus \mathcal{K}_{\str}) .
    $$
    Then the maps 
    \begin{align*}
        \Psi_{\vis}  \colon & \TSR \setminus \mathcal{K}_{\str} \to X_{\vis}, &
        \Psi_{\vis} |_{\mathcal{U}_{\str}} &= \xi_{\vis}, & \Psi_{\vis} |_{\TSR \setminus \mathcal{K}_{\str}} & = \text{id}, \\ 
        \Psi  \colon &\TSR \setminus \mathcal{K}_{\str} \to X, & 
        \Psi |_{\mathcal{U}_{\str}} &= \xi, & \Psi |_{\TSR \setminus \mathcal{K}_{\str}} &= \text{id},
    \end{align*}
    are symplectomorphisms.
    They both extend over $\mathcal{K}_{\str}$ and hence to $\TSR$.
    More precisely, take $\mathcal{V}_{\str} := \bigcup_{\frac{\epsilon}2 < t < \epsilon} \phi_{\mathcal{X}'}^t (\Sigma_{\str})$.
    Since $\mathcal{K}$ is minimal, \cref{th:McDSalalternative} guarantees the existence of symplectomorphisms
    $$
    \widehat \Psi_{\vis} \colon \TSR \to X_{\vis} \cup \mathcal{K}_{\vis}
    \quad \mbox{and} \quad  \widehat \Psi \colon \TSR \to X \cup \mathcal{K}
    $$
    such that 
    $$
    \widehat \Psi_{\vis} |_{\mathcal{V}_{\str}} = \xi_{\vis}
    \quad \mbox{and} \quad  
    \widehat \Psi |_{\mathcal{V}_{\str}} = \xi .
    $$
    Set $\widetilde{\mathcal{K}}_{\vis} := \mathcal{K}_{\vis} \cup \mathcal{U}_{\vis}$.
    Then the map $\widehat \chi \colon \widetilde{X}_{\vis} \setminus \ct_{\vis} \to \widetilde{X} \setminus \ct$
    defined by
    $$
    \widehat{\chi}|_{\widetilde{\mathcal{K}}_{\vis}} = \widehat{\Psi} \circ \widehat{\Psi}_{\vis}^{-1}
    \quad\mbox{and}\quad  
    \widehat{\chi}|_{\widetilde{X}_{\vis} 
    \setminus (\ct_{\vis} \cup \widetilde{\mathcal{K}}_{\vis})}
    = \psi \circ \psi_{\vis}^{-1}
    $$
    is a symplectomorphism that agrees near $\ct_\vis$ with $\psi \circ \psi_{\vis}^{-1}$.
    Therefore, $\widehat \chi$ descends to a symplectomorphism
    $\chi \colon \widetilde{X}_{\vis} \to \widetilde{X}$ taking $\ct_{\vis}$ to $\ct$.
    The last step is then to glue this symplectomorphism appropriately with the symplectomorphism $\iota \circ \iota_\vis^{-1}$ on the interface that is given by the linear chain of spheres to obtain a symplectomorphism $\Phi \in \symp_c(B_{p,q})$ that intertwines the embeddings $\iota_\vis$ and $\iota$.
    The proof of this step is verbatim the proof of \cite[Corollary 5.2.5]{ABEHS25} and we therefore omit it here.
\end{proof}

\begin{figure}[ht]
    \centering
    \begin{tikzpicture}[scale=0.9]
       \begin{scope}[shift={(1,0)}]
            \node at (-0.6,3.2) {(b)};
            \fill[opacity=0.2] 
                (0,3.5) -- (0,-0.5) -- (8,-0.5) -- (8,3.5) -- (0,3.5);
            \fill[opacity=0.2] 
                (0,0.5) -- (0,0) -- (3,0) -- (6,1.5) -- (7.5,3) -- (0,3) -- (0,2.5) -- (6.5,2.5) -- (6,2) -- (3,0.5) -- (0,0.5);
            \begin{scope}[shift={(0,1.5)}]
                \clip (0,2) -- (0,-2) -- (8,-2) -- (8,2) -- (0,2);
                \foreach \t in {88,83,78,...,-88} {\foreach \r in {0.3,0.6,...,9} {\draw[gray,->] (\t:\r) -- ++ (\t:\r/3);};};
            \end{scope}
            \fill[white] 
                (-0.1,0.5) -- (3,0.5) -- (6,2) -- (6.5,2.5) -- (-0.1,2.5);
            \draw[very thick]
                (0,0) -- node[anchor=north, yshift=2pt] {$\mathcal{C}$} (3,0);
            \draw[very thick]
                (3,0) -- node[anchor=north west, xshift=-3pt, yshift=3pt] {$E$} (6,1.5);
            \draw[very thick]
                (6,1.5) -- node[anchor=north west, xshift=-3pt, yshift=3pt] {$\mathcal{D}$} (7.5,3) -- (0,3);
            \draw[thick] plot [smooth, tension=0.5] 
                coordinates {(0,0.2) (3,0.4) (5.8,1.7) (6.4,2.7) (0,2.8)};
            \draw[very thick]
                (0,0) -- (0,0.5)
                (0,3) -- (0,2.5);
            \draw[thick]
                (0,0) -- (0,-0.5)
                (0,3) -- (0,3.5);
            \node (S) at (-0.2,2) {$\Sigma_\text{star}$};
            \draw[->] (S) to[out=20, in=-80] (0.5,2.7);
        \end{scope}

        \begin{scope}[shift={(-9,0)}]
            \node at (-0.6,3.2) {(a)};
            \fill[opacity=0.2] 
                (0,3.5) -- (0,-0.5) -- (8,-0.5) -- (8,3.5) -- (0,3.5);
            \fill[opacity=0.2] 
                (0,0) -- (3,0) -- (6,1.5) -- (7.5,3) -- (0,3) -- (0,0);
            \begin{scope}[shift={(0,1.5)}]
                \clip (0,2) -- (0,-2) -- (8,-2) -- (8,2) -- (0,2);
                \foreach \t in {88,83,78,...,-88} {\foreach \r in {0.3,0.6,...,9} {\draw[gray,->] (\t:\r) -- ++ (\t:\r/3);};};
            \end{scope}
            \draw[very thick]
                (0,-0.5) -- (0,3.5);
            \draw[thick, dashed] 
                (0,3) -- (0,0) -- (3,0) -- (6,1.5) -- (7.5,3) -- (0,3);
            \draw[thick, dashed] plot [smooth, tension=0.5] 
                coordinates {(0,0.2) (3,0.4) (5.8,1.7) (6.4,2.7) (0,2.8)};
        \end{scope}
        \end{tikzpicture}
         \caption{The "visible" part of the constructions in \cref{sec: constructing the symp}. (a) shows how the Delzant polytope of $\til{X}_\vis\setminus \ct_\vis$ embeds into the standard Delzant polytope of $\TSR$, respecting the Liouville vector fields. How $\Sigma_\vis$ embeds is also shown. (b) shows how $\psi(\nu)$ can be completed to match the situation in (a).}
\end{figure}

\section{Part 2: From symp to Ham}\label{sec: symp to ham}

Our goal in this section is to compute the homotopy groups of $\symp_c(B_{p,q})$, in order to show that any compactly supported symplectomorphism is, up to Hamiltonian isotopy, a power of a pintwist (see Definition \ref{def: pintwist}).
First, we show how the space $W_{p,q}=B_{p,q}\backslash K_{p,q}$ can be seen as a star-shaped domain in $T^*Wh$, the cotangent bundle of the Whitney sphere, and then we show how this inclusion induces a weak homotopy equivalence (w.h.e.) $\symp_c(W_{p,q})\hookrightarrow \symp_c(T^*Wh)$.
Then we explain how to compute $\symp_c(B_{p,q})$ by relating it to $\symp_c(W_{p,q})$ and $\mS(K_{p,q})$, the space of symplectic cylinders which agree with $K_{p,q}$ outside a compact subset.

\begin{remark}
    We remind the reader that, since $H^1(B_{p,q};\R)=0$, any symplectic isotopy is also Hamiltonian.
    We therefore will often blur the distinction between the two notions as we see fit.
\end{remark}

\begin{remark}
    Since the compactly supported symplectomorphism groups we are going to consider have the homotopy type of countable CW-complexes, the notions of homotopy equivalence and weak homotopy equivalence coincide for them.
    However, working with homotopy equivalences is enough for our purposes.
\end{remark}

\subsection{$T^*Wh$ as a Liouville completion of $B_{p,q}\setminus K_{p,q}$}\label{sec: liouville completions}
The fundamental observation that will allow us to compute the weak homotopy type of $\symp_c(B_{p,q})$ is the fact that the manifold $W_{p,q}=B_{p,q}\setminus K_{p,q}$, equipped with the restriction of the standard symplectic form $\omega_{p,q}$, has contractible symplectomorphism group.
Here, we show that there exists a (complete) Liouville manifold $(T^*Wh,d\lambda_{\text{can}})$ such that $(W_{p,q},\omega_{p,q})$ symplectically embeds in it as a star-shaped open subset.
Recall from \cref{def:star_shaped} that a star-shaped subset of a Liouville manifold is a subset that contains the Liouville skeleton and is negatively invariant.
The main property of star-shaped subsets we will use is the following.

\begin{lemma}\label{lemma: star shaped symp homotopy}
    Let $W$ be a star-shaped subset of a Liouville manifold $(X,d\lambda)$.
    Then the inclusion $i:W\hookrightarrow X$ induces a weak homotopy equivalence $i_*:\symp_c(W,d\lambda)\rightarrow\symp_c(X,d\lambda)$.
\end{lemma}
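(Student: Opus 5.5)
The plan is to use the Liouville flow $\phi^t:=\phi^\lambda_t$ to shrink supports into $W$, and to produce a homotopy inverse to $i_*$ on compact families. We will show that $i_*$ induces isomorphisms on all homotopy groups. Concretely, we prove two things. First, every compact family $S^k\to\symp_c(X)$ is homotopic to one landing in $\symp_c(W)$; this gives surjectivity. Second, a family in $\symp_c(W)$ that is null-homotopic in $\symp_c(X)$ is already null-homotopic in $\symp_c(W)$; this gives injectivity. Both follow from a single conjugation trick, applied to maps of pairs $(D^{k+1},S^k)$.

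The key construction is conjugation by the Liouville flow. Since $\mathcal{L}_Z d\lambda=d\lambda$, we have $(\phi^t)^*d\lambda=e^{t}d\lambda$. Hence, for $\psi\in\symp_c(X)$, the map $\psi_t:=\phi^{t}\circ\psi\circ\phi^{-t}$ is again a compactly supported symplectomorphism of $(X,d\lambda)$, because the conformal factors cancel. Its support is $\phi^{t}(\operatorname{supp}\psi)$.

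Take $t\le 0$ and let $K$ be any compact set. Every point of $X$ flows, as $t\to-\infty$, into an arbitrarily small neighbourhood of the skeleton. Since $W$ is negatively invariant and contains the skeleton, compactness gives some $T$ with $\phi^{-T}(K)\subset W$. Here we use that $W$ contains an open neighbourhood of the skeleton; if that is not automatic from \cref{def:star_shaped}, we replace $W$ by its interior and assume the skeleton lies in it. The skeleton is compact because $X$ is of finite type.

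Now let $\Psi\colon D^{k+1}\to\symp_c(X)$ be a continuous family, and set $K:=\bigcup_s\operatorname{supp}\Psi(s)$, which is compact for a compact family. The path $t\mapsto \phi^{-t}\Psi(s)\phi^{t}$ for $t\in[0,T]$ is then a homotopy in $\symp_c(X)$ ending in $\symp_c(W)$. If $\Psi|_{S^k}$ already lies in $\symp_c(W)$, then by negative invariance its conjugates stay in $\symp_c(W)$ throughout the homotopy. So this is a homotopy of pairs, which proves injectivity as well as surjectivity.

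The main obstacle we expect is the point-set issue of continuity and compactness. We need to check that a compact family in $\symp_c(X)$, with the direct limit topology over compact supports, has uniformly compact support. We also need to check that conjugation depends continuously on $(t,s)$. The first holds because compact subsets of a strict direct limit of closed subspaces lie in a single stage. The second follows from the smooth dependence of $\phi^t$ on $t$ on compact sets. A secondary point is making sure the flow $\phi^{t}$ is defined for all $t$, which holds because $X$ is complete.
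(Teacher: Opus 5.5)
Your proposal is correct and follows the paper's argument: conjugating by the Liouville flow preserves $d\lambda$ because the conformal factors cancel, so it pushes the uniformly compact supports into $W$, while negative invariance keeps families already lying in $\symp_c(W)$ inside it. The only difference is packaging. You handle injectivity through maps of pairs $(D^{k+1},S^k)$ and a one-sided conjugation, whereas the paper conjugates a homotopy between two $W$-families by $\phi^\lambda_{\tau(t)}$ with a U-shaped $\tau$ vanishing at the endpoints. Your remark that $W$ must contain an open neighbourhood of the compact skeleton is an assumption the paper's proof also uses implicitly.
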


Our proof follows that of \cite[Proposition 2.1]{Ev14} with minor changes.
We include it for the convenience of the reader.

\begin{proof}
    To prove surjectivity of $i_*$ at the level of homotopy groups, assume that $g_s$ is a continuous family of compactly supported symplectomorphisms of $X$, with $s\in S^n$. 
    Since $S^n$ is compact, there is a compact subset $K\subset X$ such that the support of all elements in the image of $g$ is contained in $K$.
    Since $W$ contains the $\lambda$-skeleton, there exists a time $T>0$ such that $\phi^{\lambda}_{t}(K)\subset W$, for all $t\leq -T$.
    Therefore, for any element $g_s$ the endpoint $g_{s,-T}$ of the symplectic isotopy 
    $g_{s,t}=\phi_{-t}^\lambda\circ g_s\circ \phi^\lambda_t$ has support in $W$.
    Hence conjugation with $\phi_t^\lambda$ homotopes the family $g\in \pi_n(\symp_c(X))$ to a family in $\pi_n(\symp_c(W))$, and so $i_*$ is surjective at the level of homotopy groups.
    
    Similarly, for injectivity, consider two families $g,f:S^n\rightarrow \symp_c(W)$ and suppose they are homotopic through maps in $\symp_c(X)$.
    Conjugating by $\phi_{\tau(t)}^\lambda$, where $\tau:[0,1]\to (-\infty,0]$ is a sufficiently U-shaped smooth function satisfying $\tau(0)=\tau(1)=0$, we obtain a homotopy from $g$ to $f$ supported in $W$.
\end{proof}

We will now show that $W_{p,q}$ can be viewed as a star-shaped subset of the Liouville manifold $T^*Wh$, which we will introduce in a moment.
In combination with the lemma above, this implies that $\symp_c(T^*Wh)$ and $\symp_c(W_{p,q})$ are weakly homotopy equivalent.

The symplectic manifold $T^*Wh$ was introduced by Dimitroglou-Rizell in \cite{Dim17}, with an eye towards classification of Lagrangian submanifolds therein.
Let us swiftly recall the points that are relevant for us,
referring to \cite[Section 3]{Dim17} for details.
Let $Wh$ be a Lagrangian \textit{Whitney sphere}, i.e.\ an immersed sphere with a single transverse positive self-intersection.
The space $(D^*Wh,d\lambda)$ is the model neighbourhood of a Lagrangian Whitney sphere and it is constructed by self-plumbing the cotangent disc bundle of a sphere.
By carefully completing this Liouville domain, one obtains a Liouville manifold $(T^*Wh,d\lambda_\text{can})$ whose Liouville skeleton is the Lagrangian Whitney sphere, and in this sense $(T^*Wh,d\lambda_\text{can})$ can be viewed as the cotangent bundle of the Whitney sphere.
For a detailed exposition of this construction, see \cite[Subsections 3.1 and 3.2]{Dim17}.

The space $(T^*Wh,d\lambda_\text{can})$ also carries an explicit almost toric fibration $T^*Wh\xrightarrow{\pi} \mathbb{R}^2$, where the skeletal Whitney sphere of $T^*Wh$ is the unique singular torus fibre over the point~$(1,1)$.
Furthermore, the Liouville flow of $\lambda_\text{can}$ respects $\pi$, in the sense that it takes fibres to fibres, and the differential $D\pi$ pushes forward the Liouville vector field to the radial vector field on $\mathbb{R}^2$ emanating from the unique node at $(1,1)$, outside a small disc around the node.
This is proven in \cite[Propositions 3.7 and 3.8]{Dim17}.
The discussion is summarized in \cref{fig:atbdLiouville}.
\begin{figure}[htb]
  \begin{center}   
    \begin{tikzpicture}[scale=0.8]
    \begin{scope}[shift={(-9,0)}]
        \node at (-1.1,4.2) {(a)};
        \fill[opacity=0.2] 
            (0,4) -- (0,0) -- (4,0) -- (0,4);
        \draw[opacity=0.5]
            (-0.5,0) -- (4.5,0)
            (0,-0.5) -- (0,4.5);
        \draw[very thick] (0,4) -- (0,0) -- (4,0) -- (0,4);
        \draw[dashed, thick] (1.2,1.2) node[cross] {} -- (0,0);
        \node at (2,2) [above right] {\small $\ell_{\infty}$};
        \node at (0,0) [below left] {\small $C$};
        \node at (1.2,1.2) [below right] {\small $(1,1)$};
    \end{scope}

    \draw[->] (-7,3) to[out=20,in=160] (-4,3);
    \node at (-5.5,3.3) [above] {\small $\cdot \setminus (\ell_{\infty} \cup C)$};
    
    \begin{scope}[shift={(-3,0)}]
        \fill[opacity=0.2] 
            (0,4) -- (0,0) -- (4,0) -- (0,4);
        \draw[opacity=0.5]
            (-0.5,0) -- (4.5,0)
            (0,-0.5) -- (0,4.5);
        \clip (0,4) -- (0,0) -- (4,0) -- (0,4);
        \begin{scope}[shift={(1.2,1.2)}]
            \draw[gray] (0,0) circle (0.5);
            \fill[opacity=0.1] (0,0) circle (0.5);
            \foreach \t in {0,10,20,...,360} {\foreach \r in {0.5,0.8,...,3.5} {\draw[gray,->] (\t:\r) -- ++ (\t:\r);};};
            \foreach \t in {0,10,20,...,360} {\draw[gray,->] (\t:0) to[out=\t+60+20*sin(5*\t),in=\t+180] (\t:0.5);};
        \end{scope}
        \draw[dashed, thick] (1.2,1.2) node[cross] {} -- (0,0);
    \end{scope}
    
    \begin{scope}[shift={(5,0)}]
        \node at (-1.1,4.2) {(b)};
        \fill[opacity=0.2] 
            (-0.5,4.5) -- (-0.5,-0.5) -- (4.5,-0.5) -- (4.5,4.5);
        \draw[opacity=0.5]
            (-0.5,0) -- (4.5,0)
            (0,-0.5) -- (0,4.5);
        \clip (-0.5,4.5) -- (-0.5,-0.5) -- (4.5,-0.5) -- (4.5,4.5);
        \begin{scope}[shift={(1.2,1.2)}]
            \draw[gray] (0,0) circle (0.5);
            \fill[opacity=0.1] (0,0) circle (0.5);
            \foreach \t in {0,10,20,...,360} {\foreach \r in {0.5,0.8,...,3.5} {\draw[gray,->] (\t:\r) -- ++ (\t:\r);};};
            \foreach \t in {0,10,20,...,360} {\draw[gray,->] (\t:0) to[out=\t+60+20*sin(5*\t),in=\t+180] (\t:0.5);};
        \end{scope}
        \draw[dashed, thick] (1.2,1.2) node[cross] {} -- (-0.5,-0.5);
    \end{scope}
    \end{tikzpicture}
    \caption{(a) an almost toric base diagram of $\mathbb{C}P^2$ and the same almost toric base diagram with the line $\ell_\infty$ and the conic $C$ removed. Moreover, the Liouville vector field emanating from the Whitney sphere is indicated. The Liouville vector field is not radial in a neighbourhood of the node, which is shown schematically in the figure. (b) The almost toric base diagram of $\C P ^2\backslash (\ell_\infty\cup C)$ extends to an almost toric base diagram of $T^*Wh$ by completing with respect to the Liouville flow.} 
    \label{fig:atbdLiouville}
  \end{center}
\end{figure}

Having collected these facts, we can now use the general theory of almost toric fibrations, as explained for example in \cite{Sym02:Fourtwo, Ev23:Book} to construct the wanted symplectic embeddings $W_{p,q}\hookrightarrow T^*Wh$.
First, we do this on the level of the corresponding base diagrams.

\begin{proposition}\label{prop: w_pq in T*wh}
    The almost toric base diagram of $W_{p,q}$ admits an affine embedding into the almost toric base diagram of $T^* Wh$.
    Therefore, there exists a symplectic embedding $W_{p,q}\hookrightarrow T^*Wh$ which is fibred in the complement of an arbitrarily small neighbourhood of the singular fibre.
    In particular, we can view $W_{p,q}$ as a star-shaped subspace of $(T^*Wh,d\lambda_{can})$.
\end{proposition}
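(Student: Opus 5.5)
We first write down the almost toric base diagram of $W_{p,q}=B_{p,q}\setminus K_{p,q}$. Since $K_{p,q}$ is the visible surface over the toric boundary of $\ATF_{p,q}$, removing it leaves the open wedge bounded by $(0,1)$ and $(p^2,pq-1)$, with its boundary edges deleted. It still contains the node and the branch cut in direction $(p,q)$. The base of $T^*Wh$ is all of $\R^2$, with a single node at $(1,1)$ and a branch cut running to the origin (Figure \ref{fig:atbdLiouville}(b)). Its monodromy is that of a standard node, i.e.\ conjugate to $\pmat{1&1\\0&1}$ in the eigendirection of the cut.

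We next perform nodal slides and choose affine coordinates. Sliding the node of $\ATF_{p,q}$ towards the vertex does not change the symplectomorphism type. We then apply an integral affine transformation sending the eigendirection $(p,q)$ to $(1,1)$ and the node to $(1,1)$. Both nodes have the same local affine model, since both are simple focus--focus singularities with eigenline along the cut. Hence a neighbourhood of the node together with its cut in $\ATF_{p,q}$ is affinely isomorphic to a neighbourhood of the node in the base of $T^*Wh$.

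It remains to check that the whole open wedge, with the cut, embeds affinely into $\R^2$ with cut. The key point is that away from the cut both bases are flat. The wedge minus the cut is simply connected, and the affine structure on the base of $T^*Wh$ minus its cut is the standard one on $\R^2$ minus a ray. We therefore develop the wedge minus the cut into the plane, matching the two sides of the cut by the gluing map of the monodromy.

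The main obstacle is to verify that this developing map is injective. The wedge opens with an angle less than $\pi$, and the two sides of the cut are identified by a shear fixing the cut direction. From this we expect the developed image to be a (possibly non-convex) sector inside $\R^2$ minus the cut that avoids overlapping, and we would check this by an explicit computation with the matrices from \eqref{eq:HJ_facts_matrices}. If the computation fails as set up, we would instead move the cut of $\ATF_{p,q}$ to point out the other side, or slide the node further.

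Given the affine embedding of bases, fibred symplectic embedding follows from the standard theory of almost toric fibrations: an integral affine embedding of bases lifts to a fibred symplectic embedding away from the singular fibre (\cite{Sym02:Fourtwo, Ev23:Book}). Near the node we use a local normal form for the focus--focus singularity. Its uniqueness holds only up to a symplectic invariant, which is why the embedding is only claimed to be fibred away from an arbitrarily small neighbourhood of the singular fibre.

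Finally, for the star-shaped property, the node is at the centre and the Liouville field of $\lambda_{\mathrm{can}}$ projects to the radial field from $(1,1)$ outside a small disc. After shrinking the disc if necessary, the developed wedge is radially star-shaped about $(1,1)$. This holds because it is a region bounded by two rays emanating from the origin on the cut line, and the rays from $(1,1)$ stay inside. Hence its preimage contains the skeleton (the Whitney sphere) and is negatively invariant under the Liouville flow.
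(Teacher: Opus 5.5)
Your strategy matches the paper's: an $\mathrm{SL}_2(\Z)$ map sending the eigendirection $(p,q)$ to $(1,1)$, Symington's lifting of affine embeddings of bases, and radiality of the Liouville field for star-shapedness. However, the step you call the main obstacle is never carried out. You only \emph{expect} the image not to overlap, and you list fallbacks in case a computation fails. The step also rests on a misreading of the base diagram. The wedge $\ATF_{p,q}$ with its cut removed is already a subset of $\R^2$ with the standard affine structure, so there is no developing map to construct. Nor is there any gluing of the two sides of the cut beyond matching monodromies, which you already did. Also, the cut of $T^*Wh$ does not stop at the origin: it is the whole ray $\{t(1,1): t\le 1\}$. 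Checking that its part beyond the origin avoids your wedge is exactly the point left to verify.

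That check is immediate.
\begin{itemize}
\item Choose $a,b\ge 0$ with $bq-ap=1$. Then $M=\begin{pmatrix} q-a & b-p\\ -a & b\end{pmatrix}\in\mathrm{SL}_2(\Z)$ sends $(p,q)\mapsto(1,1)$, $(0,1)\mapsto(b-p,b)$ and $(p^2,pq-1)\mapsto(2p-b,p-b)$.
\item Since $(p,q)$ lies strictly inside the original wedge, the image is an open convex cone of opening angle $<\pi$ with $(1,1)$ in its interior. Hence it misses the ray $\R_{<0}(1,1)$.
\item After a nodal slide placing the node at $(1,1)$, the cut of $W_{p,q}$ is exactly the part of the $T^*Wh$ cut lying in the cone.
\item The monodromy of a simple node is the shear $A_u(v)=v+\det(u,v)\,u$ along its eigenvector, with the same sign convention for both nodes. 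Since $MA_uM^{-1}=A_{Mu}$, the monodromies agree.
\end{itemize}
So the plain inclusion of the transformed wedge is the required affine embedding. No computation with the Hirzebruch--Jung matrices and no moving of cuts is needed. The rest of your argument agrees with the paper: the lift via Symington, and star-shapedness because a convex cone is star-shaped about its interior point $(1,1)$ and contains the small disc where the Liouville field is not radial.
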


\begin{proof}
    Since $0<q<p$ are coprime, there exist integers $a,b\geq 0$ such that $bq-ap=1$.
    For such $a,b$, the affine transformation 
    \[
        M=
        \left(
            \begin{matrix}
            q-a & b-p\\
            -a & b
            \end{matrix}
        \right)\in \text{SL}_2(\mathbb{Z})
    \] 
    maps the standard wedge $\ATF_{p,q}$ to one in which the branch cut points in the $(1,1)$-direction.
    Since we have removed the boundary from $\ATF_{p,q}$, it is clear that the sheared base diagram is an affine subset of the affine base diagram of $T^*Wh$.
    This is shown in \cref{fig:atbdLiouville_Bpq}.
    
    Now \cite[Proposition 4.8]{Sym02:Fourtwo} guarantees that the affine embedding of the base diagrams lifts to the desired symplectic embedding.
    Finally, the fact that the embedding thereby obtained $\iota:W_{p,q}\hookrightarrow T^*Wh$ is star-shaped follows because the base diagram contains the node.
    Furthermore, since $\iota$ respects the almost toric fibrations outside of a small disc around the node, the star-shaped property for $\iota(W_{p,q})$ follows directly from the fact that the base diagram of $W_{p,q}$ is a star-shaped subset of $\mathbb{R}^2$.
\end{proof}

\begin{figure}[htb]
  \begin{center}   
    \begin{tikzpicture}[scale=0.8]
    \begin{scope}[shift={(-9.5,0)}]
        \fill[opacity=0.2] 
            (0,2) -- (0,0) -- (8,2) -- (0,2);
        \draw[opacity=0.5]
            (-0.5,0) -- (8,0)
            (0,-0.5) -- (0,2);
        \draw[thick, mid arrow,  opacity=0.2] 
            (0,0) -- node[left, opacity=1]{\tiny $\pmat{0 \\ 1}$} (0,2);
        \draw[thick, mid arrow,  opacity=0.2]
            (0,0) --  node[below, xshift=16pt, yshift=4pt, opacity=1]{\tiny $\pmat{p^2 \\ pq-1}$}(8,2);
        \draw[dashed] (2,1) node[cross] {} -- (0,0);
    \end{scope}

    \draw[->] (-1,1.3) to[out=-20,in=200] (1,1.3);
    \node at (0,1.1) [below] {\small $M$};
    
    \begin{scope}[shift={(3.5,0)}]
        \fill[opacity=0.2] 
            (-2,2) -- (0,0) -- (6,2);
        \draw[thick, mid arrow, opacity=0.2] 
            (0,0) -- node[below, xshift=-14pt, yshift=4pt, opacity=1]{\tiny $\pmat{b-p \\ b}$} (-2,2);
        \draw[thick, mid arrow, opacity=0.2]
            (0,0) --  node[below, xshift=14pt, yshift=4pt, opacity=1]{\tiny $\pmat{2p-b \\ p-b}$} (6,2);
        \draw[opacity=0.5]
            (-2,0) -- (6,0)
            (0,-0.5) -- (0,2);
        \clip (-2,2) -- (0,0) -- (6,2);
        \begin{scope}[shift={(1.2,1.2)}]
            \draw[gray] (0,0) circle (0.5);
            \fill[opacity=0.1] (0,0) circle (0.5);
            \foreach \t in {0,10,20,...,360} {\foreach \r in {0.5,0.8,...,3.5} {\draw[gray,->] (\t:\r) -- ++ (\t:\r);};};
            \foreach \t in {0,10,20,...,360} {\draw[gray,->] (\t:0) to[out=\t+60+20*sin(5*\t),in=\t+180] (\t:0.5);};
        \end{scope}
        \draw[dashed] (1.2,1.2) node[cross] {} -- (0,0);
    \end{scope}
    \end{tikzpicture}
    \caption{The almost toric base diagram of $W_{p,q}$, as a subset of the toric base diagram of $T^*Wh$.}
    \label{fig:atbdLiouville_Bpq}
  \end{center}
\end{figure}

\begin{remark}
    For the special case $(p,q)=(1,1)$, the above proof shows that $D^*Wh$, the unit cotangent disc bundle of the Whitney sphere, can be symplectically identified with $B_{1,1}(1,1)\backslash C_{1,1}$, which is exactly Rizell's point of view in \cite{Dim17}.
\end{remark}

Combining \cref{lemma: star shaped symp homotopy} and \cref{prop: w_pq in T*wh} we immediately get:

\begin{corollary}
    The symplectic embedding $W_{p,q}\hookrightarrow T^*Wh$ induces a weak homotopy equivalence at the level of compactly supported symplectomorphism groups.
\end{corollary}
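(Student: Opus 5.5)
The corollary follows by putting together \cref{prop: w_pq in T*wh} and \cref{lemma: star shaped symp homotopy}. The only work is to check that the hypotheses of the lemma really hold for the image of $W_{p,q}$. The plan is as follows.

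First, fix the embedding $\iota\colon W_{p,q}\hookrightarrow T^*Wh$ from \cref{prop: w_pq in T*wh}. It comes from the affine embedding, via the shear $M$, of the base diagram of $W_{p,q}$ into that of $T^*Wh$, and it is fibred away from a small disc around the node. Identify $\symp_c(W_{p,q},\omega_{p,q})$ with $\symp_c(\iota(W_{p,q}),d\lambda_{\text{can}})$ by conjugating with $\iota$. This is a homeomorphism of topological groups, since $\iota$ is a symplectomorphism onto its image and compact supports correspond to compact supports.

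Second, verify that $\iota(W_{p,q})$ is star-shaped in the sense of \cref{def:star_shaped}. There are two conditions.
\begin{itemize}
\item The image must contain the Liouville skeleton. The skeleton is the singular fibre over the node $(1,1)$, and the sheared wedge contains the node together with a neighbourhood of it. So the whole singular fibre and the nearby fibres lie in the image.
\item The image must be invariant under the backward Liouville flow. Outside the small disc around the node, $D\pi$ pushes the Liouville field forward to the radial field centred at $(1,1)$ (Dimitroglou-Rizell, Props.\ 3.7--3.8). The open sheared wedge is star-shaped with respect to $(1,1)$, because $(1,1)$ lies in it and the wedge is convex. Since the flow maps fibres to fibres, backward flow lines starting in the image stay in the preimage of the wedge. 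Inside the small disc everything already lies in the image.
\end{itemize}
One subtle point needs care. The embedding is only fibred outside the small disc, so one must make sure the disc on which the flow fails to be radial is contained in the region where $\iota$ agrees with the full preimage. I would arrange this by choosing the disc in \cref{prop: w_pq in T*wh} inside the neighbourhood of the node where the non-radial behaviour occurs, so that $\iota(W_{p,q})$ contains the entire preimage of that disc.

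Finally, apply \cref{lemma: star shaped symp homotopy} with $W=\iota(W_{p,q})$ and $X=T^*Wh$. This shows that inclusion induces a weak homotopy equivalence $\symp_c(\iota(W_{p,q}))\to\symp_c(T^*Wh)$. Composing with the identification from the first step gives the corollary.

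The main obstacle is the second step, near the node, where the Liouville field is not radial. Fibre-by-fibre reasoning does not directly control the flow there. To handle it, I would use the fact that both the skeleton and a full neighbourhood of the nodal fibre lie inside $\iota(W_{p,q})$, so no control of the flow there is needed.
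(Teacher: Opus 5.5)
Your proposal is correct and is the paper's argument: the corollary is just \cref{lemma: star shaped symp homotopy} applied to the image of the embedding from \cref{prop: w_pq in T*wh}, and the final assertion of that proposition already gives the star-shapedness you re-verify. In your re-check near the node, the image over the non-fibred disc is automatically the full preimage, because $\iota$ is fibred over its boundary circle. So the condition that actually matters is that the region where $\lambda_{\text{can}}$ fails to be radial lies inside the sheared wedge, not how the non-fibred disc sits relative to that region.
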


\subsection{Comb diagrams}\label{sec: comb diagrams}

Let $C$ be a symplectic submanifold of $(M,\omega)$ and let $\mathcal{S}(C)$ be the space of (unparametrized) symplectic submanifolds of $M$ symplectomorphic to $C$ which agree with $C$ outside a (not fixed) compact set.
If $\mathcal{S}(C)$ is path-connected, the natural action of $\symp_c(M)$ on $\mathcal{S}(C)$ is transitive since any path of symplectic submanifolds induces an ambient symplectic isotopy.
See \cite[Proposition 4]{Au97}.
In that case, we have the homotopy-action sequence
\begin{equation}\label{eq:action_fib}
    \begin{tikzcd}
    \pr(C) \arrow[r] & \symp_c(M) \arrow[d] \\
                       & \mS(C)      
    \end{tikzcd}
\end{equation}
where $\pr(C)$\footnote{The notation $\pr$ refers to the fact that we are considering the subgroup of symplectomorphisms that set-wise preserve the submanifold $C$, in contrast with the subgroup of symplectomorphisms $\fx$ of those symplectomorphisms that point-wise fix $C$.} is the stabilizer of $C$ under the action of $\symp_c$.
To relate $\symp_c(M\setminus C)$ to $\pr(C)$ we consider the restriction map $\rho$:
\begin{equation}\label{eq:restriction_fib}
    \fx(C)\rightarrow \pr(C)\xrightarrow{\rho} \symp_c(C)
\end{equation}
and the normal derivative map $D$:
\begin{equation}\label{eq:derivative_fib}
    \ker(D)\hookrightarrow \fx(C)\xrightarrow{D} \aut(\nu C),
\end{equation}
where $\aut(\nu C)$ is the group of symplectic automorphisms of the normal bundle of $C$, which are the identity outside of a compact set.
By a standard Moser argument, $\ker(D)$ is weakly homotopy equivalent to $\symp_c(M\setminus C)$.
Therefore all these groups fit together in the following diagram which we will refer to as a \textit{Comb diagram}.
\begin{equation}
    \begin{tikzcd}
        \symp_c(M\setminus C) \arrow[r] & \fx(C) \arrow[r] \arrow[d, "D"] & \pr(C) \arrow[r] \arrow[d, "\rho"] & \symp_c(M) \arrow[d] \\
                              & \aut(\nu C)                         & \symp_c(C)                          & \mathcal{S}(C)    
    \end{tikzcd} 
\end{equation}
The key structural property of the Comb diagram that we will use multiple times is that it allows us to relate the (compactly supported) symplectomorphisms of $M$ and those of $M\setminus C$, based on the behaviour of $C$ under the action of the symplectomorphism group.
Thus comb diagrams help us to leverage known results about symplectomorphism groups to compute new ones.

We now demonstrate how to use this diagram to compute $\symp_c(T^*Wh)$, which will be needed for the computation of $\symp_c(B_{p,q})$.
Again we will rely on \cite{Dim17} for certain technical aspects of the symplectic geometry of $T^*Wh$.
The key input is that the relevant moduli space of symplectic cylinders has the homotopy type of a punctured plane. 

\begin{theorem}\label{thrm: symp of T*Wh}
    The space $\symp_c(D^*Wh)$ is weakly contractible. 
\end{theorem}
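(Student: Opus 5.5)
We will apply the comb diagram to $M=T^*Wh$ (equivalently, by \cref{lemma: star shaped symp homotopy}, to $D^*Wh$ viewed as a star-shaped domain), taking as divisor $C$ a symplectic cylinder $Z\subset T^*Wh$ that lies over a ray of the almost toric base diagram. The natural choice, by analogy with the central cylinder, is the visible cylinder over the ray from the node in the direction opposite to the branch cut; it is the part of the conic $C$ in \cref{fig:atbdLiouville} that survives the completion, so it is asymptotically cylindrical. There are then four pieces to compute.

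\textbf{Step 1 (the complement).} Removing $Z$ from the base diagram leaves a region that contains no node, because $Z$ should be chosen to pass through the node's vanishing direction and so absorb the monodromy. I expect $T^*Wh\setminus Z$ to be identified, via the same affine embedding argument as in \cref{prop: w_pq in T*wh}, with a star-shaped domain in a toric Liouville manifold such as $\C^2$ or $T^*S^1\times\R^2$. Then $\symp_c(T^*Wh\setminus Z)$ is weakly contractible, by Gromov for $\C^2$ and by the analogous computation for $T^*S^1\times\R^2$ obtained from \cref{th:McDSalalternative}.

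\textbf{Step 2 (the vertical fibrations).} We use the standard facts $\symp_c(Z)\simeq\Z$, generated by the Dehn twist of the cylinder, and $\aut(\nu Z)\simeq \mathrm{Map}_c(Z,U(1))\simeq\Z$.

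\textbf{Step 3 (the moduli space).} This is the main obstacle. We must show that $\mS(Z)$ has the homotopy type of a punctured plane, as the text announces, i.e.\ $\mS(Z)\simeq S^1$. First we compactify $T^*Wh$ to $\C P^2$ by adding $\ell_\infty\cup C$, where $Z$ closes up to a conic or line class. We fix $J$ making the compactifying divisor holomorphic and adjusted at infinity. Following Dimitroglou-Rizell's analysis in \cite{Dim17}, we then use automatic transversality and positivity of intersections, arguing as in \cref{lma:ruling_compactification}, to show two things: every cylinder in $\mS(Z)$ is isotopic through such cylinders to a $J$-holomorphic one, and the $J$-holomorphic representatives form a family parametrized by a punctured plane, namely the curves through the fixed tangency data at infinity, with the one degenerate member removed. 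The degenerate member is the nodal curve through the node, and excluding it produces the puncture. The delicate part is ruling out further breaking, which uses the numerics of $\ell_\infty$ and $C$.

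\textbf{Step 4 (assembling the long exact sequences).} We feed these results into the comb diagram. By Step 1, $\fx(Z)\simeq\ker D$ is weakly equivalent to $\aut(\nu Z)\simeq\Z$. The restriction fibration then gives an extension of $\symp_c(Z)\simeq\Z$ by $\fx(Z)$. Finally, the action fibration gives $\pi_1(\mS(Z))=\Z\to\pi_0\pr(Z)\to\pi_0\symp_c(T^*Wh)\to 0$. We conclude by identifying the connecting maps, which I expect to be isomorphisms. The loop in $\mS(Z)$ around the puncture should map to the boundary twist, namely the product of the Dehn twist on $Z$ with the twist in the normal bundle. The remaining generator should be killed by the boundary map from $\pi_1\symp_c(Z)$, or should be realized by the Liouville-flow rotation. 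If this checking works, all homotopy groups vanish, and weak contractibility of $\symp_c(D^*Wh)$ follows.
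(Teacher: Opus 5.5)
There is a genuine gap, and it is in Step 1, on which the whole plan rests. No properly embedded cylinder $Z\subset T^*Wh$ can have complement a star-shaped domain in $\C^2$ or $\TSR$. Removing a cylinder does not change the (compactly supported) Euler characteristic, so $\chi(T^*Wh\setminus Z)=\chi(S^2\vee S^1)=1$. Also, $\pi_1(T^*Wh\setminus Z)$ still surjects onto $\pi_1(T^*Wh)\cong\Z$, since $Z$ has codimension two. But star-shaped domains in $\C^2$ are contractible, and those in $\TSR$ retract to $S^1$ and so have $\chi=0$.

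Geometrically, the base diagram of $T^*Wh$ has no boundary, so there is no visible symplectic cylinder over an interior ray; interior segments carry visible Lagrangians. Removing a surface also does not get rid of the node. The cylinder you describe, "the part of $C$ that survives the completion", is in fact empty, because $T^*Wh$ is the completion of $\C P^2\setminus(\ell_\infty\cup C)$.

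Step 3 misreads the "punctured plane". The moduli space with the homotopy type of $\C^*$ is that of conics in $B^4$. In $T^*Wh$, the smooth fibres of Dimitroglou Rizell's Lefschetz fibration are parametrised by $\C$ minus two points.

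Step 4 does not close up even on its own terms. If $\fx(Z)\simeq\Z$ and $\rho$ hit every component, then $\pi_0\pr(Z)$ would be an extension of $\Z$ by $\Z$. No cyclic group such as $\pi_1\mS(Z)\cong\Z$ surjects onto such an extension. Nor can $\pi_1\symp_c(Z)$ help, since it vanishes. Moreover, whether $D$ hits all components of $\aut(\nu Z)$ is exactly the delicate point; for $K_{p,q}\subset B_{p,q}$ the paper shows it hits only the identity component.

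The missing idea is to run the comb diagram in the opposite direction: add a cylinder to $D^*Wh$ rather than remove one. Write $D^*Wh=B^4\setminus C$ for a smooth conic $C$ tangent to the coordinate planes at the boundary. The known input is then Gromov's contractibility of $\symp_c(B^4)$.

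The argument then runs in four steps.
\begin{enumerate}
\item Show $\mS(C)\simeq\C^*$. Using Dimitroglou Rizell's Theorem 4.3, which gives families of symplectic Lefschetz fibrations on $\C P^2\setminus\ell_\infty$ with a single critical value, deform any family of $J_s$-holomorphic conics to smooth fibres of the standard Lefschetz fibration.
\item The action fibration then gives $\pr(C)\simeq\Omega\mS(C)\simeq\Z$, generated by the $A_1$ Milnor monodromy.
\item This monodromy restricts to the Dehn twist on $C$. Hence $\rho$ is a weak homotopy equivalence and $\fx(C)$ is weakly contractible.
\item The weakly contractible $\fx(C)$ maps under $D$ into a single contractible component of $\aut(\nu C)$. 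Therefore $\ker D\simeq\symp_c(D^*Wh)$ is weakly contractible.
\end{enumerate}
In this direction you never need to know which components $D$ hits, which is where your Step 4 gets stuck.
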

\begin{proof}
    For $M=B^4$ and $C$ a smooth conic tangent to the coordinate planes at the boundary, we have that $M\setminus C=D^*Wh$, as explained in detail in \cref{prop: w_pq in T*wh}; compare also with the original argument in \cite[Section 2 and 3]{Dim17}.
    Gromov shows that $\symp_c(B^4)$ is contractible, so our plan is to use the Comb diagram to deduce that $\symp_c(D^*Wh)$ is also weakly homotopy equivalent to a point.
    
    We start by showing that $\mS(C)\simeq \C^*$.
    Let $C_s$ be a family of smooth conics tangent to the coordinate planes at the boundary of the ball $B^4$, where $s$ is a spherical parameter of $S^{k}$ for some $k$.
    Pick a family of tame almost complex structures $J_s$ on $B^4$ such that $C_s$ is $J_s$ holomorphic.
    Since the space of almost complex structures is contractible, we may extend $J_s$ to a bigger family $J_{t,s}$ parametrized by $(t,s)\in I\times S^{k}$ such that $J_{0,s}$ is constantly the standard integrable complex structure and $J_{1,s}=J_{s}$.
    By Theorem 4.3 in \cite{Dim17}, the family $J_{t,s}$ gives a corresponding family of symplectic fibrations $f_{t,s}:\mathbb{C}P^2\backslash \ell_\infty\rightarrow \mathbb{C}$ with a unique singular value, say $w_{t,s}$.
    Because this singular value is unique, we can find smoothly varying complex numbers $z_{t,s}\neq w_{t,s}$.
    The $I\times S^{k}$-family of smooth symplectic conics
    \[C_{t,s}=f_{t,s}^{-1}(z_{t,s})\]
    interpolates between the original family $C_s=C_{1,s}$ and the family $C_{0,s}=f_{0,s}^{-1}(z_{0,s})$.
    Therefore any symplectic family of smooth conics can be isotoped to a family of smooth fibres of the standard Lefschetz fibration on $B^4$.
    Since the space of smooth fibres is just the smooth locus of the standard Lefschetz fibration, it is readily identified with $\mathbb{C}^*$ and thus our desired result follows.
    
    Let us move to investigating the restriction map $\rho:\pr(C) \to \symp_c(C)$ via the fibration \eqref{eq:restriction_fib}.
    Since $\symp_c(B^4)$ is contractible, we have that $\pr(C)$ is w.h.e.\ to $\mathbb{Z}$ via the fibration \eqref{eq:action_fib}, where a generator of $\Pr(C)$ is given by the monodromy over a simple loop of smooth symplectic conics around the singular one.
    This is the standard monodromy of the Milnor fibration of the $A_1$-curve singularity, which is known to be the standard Dehn twist of a cylinder, which also generates the mapping class group of $C$.
    Therefore $\rho$ sends a generator of $\pr(C)$ to a generator of $\symp_c(C)$ and so induces a weak homotopy equivalence.
    From this we also deduce that $\fx(C)=\ker \rho $ is weakly contractible.
    
    For the last step of the proof, we need to understand the derivative map $D:\text{Fix}(C)\to \aut(\nu C)$ by investigating the fibration \eqref{eq:derivative_fib}.
    Since $\fx(C)$ is weakly contractible and $\aut(\nu C)\simeq \mathbb{Z}$, the map $D$ necessarily induces a homotopy equivalence to its image and therefore its kernel, namely $\symp_c(M\setminus C)\simeq \symp_c(D^*Wh)$ must have trivial homotopy groups.\footnote{Our group $\aut(\nu C)$ is equivalent to the group $\mathcal{G}_2$ in \cite{Ev14}, i.e.\ the group of automorphisms of a symplectic rank $2$ bundle over a sphere, where over the north and south pole the automorphism equals the identity.}
    
    As explained in Section \ref{sec: liouville completions}, $T^*Wh$ is the Liouville completion of $D^*Wh$ and so $\symp_c(D^*Wh)$ is w.h.e.\ to $\symp_c(T^*Wh)$, which finishes the proof.
\end{proof}

\subsection{The extrinsic geometry of the central cylinders $K_{p,q}$}
The Comb diagram for $M=B_{p,q}$ and $C=K_{p,q}$ takes the following form: 
\begin{equation}\label{eq:comb_Bpq}
    \begin{tikzcd}
    \symp_c(D^*Wh) \arrow[r] & \fx(K_{p,q}) \arrow[d, "D"] \arrow[r] & \pr(K_{p,q}) \arrow[d, "\rho"] \arrow[r] & \symp_c(B_{p,q}) \arrow[d, "\text{transitive action}" description] \\
                                   & \aut(\nu K_{p,q})\simeq \mathbb{Z}           & \symp_c(K_{p,q})\simeq \mathbb{Z}                & \mathcal{S}(K_{p,q})\simeq pt                            
    \end{tikzcd}
\end{equation}
Since we will view $B_{p,q}$ as a partial compactification of $D^*Wh$ by adding the cylinder $K_{p,q}$ we need to understand the symplectic moduli space $\mS(K_{p,q})$, as well as the maps $D$ and $\rho$ appearing in the Comb diagram, which will then be used to deduce the weak homotopy equivalences claimed in \eqref{eq:comb_Bpq}.

\begin{proposition}
\label{prop: cylinders in Bpq are rigid}
    The symplectic moduli space $\mS(K_{p,q})$ associated to the central cylinder $K_{p,q}$ in $B_{p,q}$ for $p\geq 2$ is weakly contractible.
\end{proposition}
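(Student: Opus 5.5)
Given a family of symplectic cylinders $K_s\in\mS(K_{p,q})$, $s\in S^k$, I would contract it by compactifying and using the regulation of \cref{lma:ruling_compactification}. The model is the proof of \cref{thrm: symp of T*Wh}, but here there are no singular fibres to obstruct contraction, which is why the answer is a point rather than $\C^*$.

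\textbf{Step 1: compactify.} Choose compactification data so that all $K_s$ agree with $K_{p,q}$ outside a compact set contained in $B_{p,q}$. Then each $K_s$ closes up in $X_{p,q}$ to an embedded symplectic sphere $F_s$ in the fibre class $F$. It meets $D_0$ and $D_n$ once each, transversally, at the same points as the standard fibre (\cref{lma:central_becomes_fibre}).

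\textbf{Step 2: choose almost complex structures.} Pick a family $J_s\in\cj_\tau(\cd)$ making $F_s$ holomorphic. Such structures can be chosen standard near the divisor, and the condition is fibrewise contractible. Since $\cj_\tau(\cd)$ is weakly contractible, extend this to an $I\times S^k$ family $J_{t,s}$ with $J_{0,s}=J_0$ fixed integrable and $J_{1,s}=J_s$.

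\textbf{Step 3: track the fibre through a fixed point.} Fix a point $x\in D_0\setminus D_1$, near the point where $F_s$ meets $D_0$. By \cref{lma:ruling_compactification}, for every $J_{t,s}$ there is a unique smooth ruling through $x$. It varies continuously in $(t,s)$ by automatic transversality, and it is never broken, since the only broken ruling meets $D_0$ at $D_0\cap D_1$.

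\textbf{Step 4: interpolate.} Let $x_s:=F_s\cap D_0$, which I may choose in $D_0\setminus D_1$. Take a path $x_{t,s}$ in $D_0\setminus D_1$ with $x_{1,s}=x_s$ and $x_{0,s}=x_*$ fixed; this is possible because $D_0\setminus D_1\cong\C$ is contractible. Let $F_{t,s}$ be the $J_{t,s}$-ruling through $x_{t,s}$, so that $F_{1,s}=F_s$ by uniqueness. Removing $D_0\cup D_n$ gives a family of cylinders in $B_{p,q}$ joining $K_s$ to the constant $K_{p,q}$. The crucial difference from $\C^*$ is that no conic-type singular fibre exists in the class $F$ through points of $D_0\setminus D_1$. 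This is exactly where the Liouville property of $B_{p,q}$ and the analysis of \cref{lma:ruling_compactification} enter.

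\textbf{Step 5: fix the ends.} The cylinders must agree with $K_{p,q}$ outside a compact set, so the interpolated spheres need to be standard near $D_0\cup D_n$. This is the main obstacle. I would handle it by choosing $J_{t,s}$ to equal the standard integrable structure on a fixed neighbourhood $N$ of $D_0\cup D_n$, where the standard ruling is toric. Positivity of intersections and uniqueness then force the ruling through a point of $D_0\cap N$ to coincide with the standard toric fibre inside $N$; I would verify this via a unique-continuation argument. With this arrangement, $x_{t,s}$ can be taken in $N$ without change of the ends, and the family stays in $\mS(K_{p,q})$. The hypothesis $p\ge2$ is used only insofar as \cref{lma:ruling_compactification} and the structure of $\cd$ apply.
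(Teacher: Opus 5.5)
Your Steps 1--4 are essentially the paper's proof. The paper compactifies, takes $J$ making $\cd$ and $F$ holomorphic, chooses $J_s$ equal to $J$ near $\cd$ with $F_s$ holomorphic, connects them to $J_{0,s}=J$, and follows the unique $J_{t,s}$-ruling through the fixed point $x_0=F\cap D_0$. Two small corrections:
\begin{itemize}
\item Each $F_s$ agrees with $F$ near $\cd$, so $F_s\cap D_0=x_0$ for all $s$, and the path $x_{t,s}$ is unnecessary.
\item $J_0$ must make $F$ itself holomorphic. Integrability alone does not matter: otherwise the $t=0$ end is some fixed sphere that need not lie in $\mS(K_{p,q})$.
\end{itemize}

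\textbf{The gap is Step 5.} Taking $J_{t,s}=J$ on a neighbourhood $N$ of $D_0\cup D_n$ does not force the ruling through $x_0$ to agree with $F$ inside $N$. None of the three tools you invoke gives this:
\begin{itemize}
\item $J$-holomorphic curves have no local rigidity; many $J$-discs pass through $x_0$ transversally to $D_0$.
\item Positivity of intersections cannot be applied to $F_{t,s}$ and $F$, because $F$ is not $J_{t,s}$-holomorphic outside $N$. The identity $F_{t,s}\cdot F=0$ is compatible with a positive local intersection at $x_0$ balanced by negative ones outside $N$.
\item Unique continuation requires agreement on an open set to begin with.
\end{itemize}

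In fact the claim fails already to first order. Parametrize $F$ by $S^2$ with $0\mapsto x_0$ and $\infty\mapsto x_n$. At a $J$ integrable near $F$, a variation of $J$ supported away from $N$ moves the ruling through $x_0$ by a section $\xi$ of $N_F\cong\mathcal{O}$. This section satisfies $\bar\partial\xi=\eta$ with $\eta$ supported away from $0$ and $\infty$, and $\xi(0)=0$. Two consequences follow:
\begin{itemize}
\item $\xi$ is holomorphic near $0$ but in general not identically zero there, so the ruling tilts at $x_0$.
\item $\xi(\infty)=\frac{1}{2\pi i}\int_{S^2}\eta\wedge\frac{dz}{z}$ is in general non-zero, so the ruling meets $D_n$ away from $x_n$.
\end{itemize}
The paper's proof passes over this point: it simply asserts that the rulings pass through $x_0$ and $x_n$ and give a deformation supported away from $\cd$. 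So you were right to single it out. But it must be handled by a correction after the fact, not by rigidity.

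\textbf{How to repair it.}
\begin{enumerate}
\item By positivity of intersections with the $J_{t,s}$-holomorphic components of $\cd$, each $F_{t,s}$ meets $D_0$ transversally only at $x_0$. It meets $D_n$ transversally at a single point $y_{t,s}\in D_n\setminus D_{n-1}\cong\C$, and misses $D_1,\dots,D_{n-1}$. For $t\in\{0,1\}$ it already agrees with $F$ near $\cd$.
\item Since $D_n\setminus D_{n-1}$ is contractible, there is a family of symplectomorphisms that moves $y_{t,s}$ to $x_n$. They can be taken supported near $D_n$, preserving $\cd$, and equal to the identity for $t\in\{0,1\}$.
\item Deform the germs at $x_0$ and $x_n$ to the germ of $F$ through symplectic discs positively transverse to $\cd$, relative to $t\in\{0,1\}$. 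This works because the space of such germs is contractible. On tangent planes, the planes positively transverse to a symplectic plane $P$ are the graphs of $A\colon P^{\omega}\to P$ with $1+\det A>0$ (in symplectic bases), a star-shaped set. The higher-order part is a standard cut-off argument.
\end{enumerate}
Put differently, Steps 1--4 contract the space of embedded symplectic spheres in class $F$ through $x_0$ that meet $\cd$ transversally and only in $D_0\cup D_n$. The straightening shows that the inclusion of $\mS(K_{p,q})$ into that space is a weak homotopy equivalence.
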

\begin{proof}
    The proof follows the same strategy as that of \cref{thrm: symp of T*Wh}.
    We momentarily drop $p,q$ from the notation for the sake of readability.
    Take an $S^k$-family of cylinders $K_s$ in $\mathcal{S}(K)$.
    Since the indexing set is compact there is a large compact set outside which all of the $K_s$ agree with the central cylinder $K$.
    Compactify the rational homology ball to $X$ such that the compactification does not interact with the compact set and write $F$ for the symplectic sphere corresponding to $K$ and $F_s$ for those corresponding to $K_s$.\footnote{Recall that the compactification procedure was explained in \cref{sec:compactification} and the geometry of the sphere $F$ was discussed in \cref{lma:central_becomes_fibre}.}
    By construction we have that $x_0=F \cap D_0 =F_s \cap D_0$ and $x_n=F \cap D_n= F_s \cap D_n$ and that $[F_s]=[F]$ for all $s$.
    Now pick an almost complex structure $J \in \cj_\tau(\cd)$ that makes $F$ holomorphic.
    Recall that by \cref{lma:ruling_compactification} this implies that $F$ is the unique smooth ruling passing through $x_0$.
    Choose a family $J_s \in \cj_\tau(\cd)$ of almost complex structures that agree near $\cd$ with $J$ and that make the corresponding $F_s$ holomorphic.
    We can now extend this family to an $I\times S^k$-family $J_{t,s}$ such that $J_{0,s}=J$ and $J_{1,s}=J_s$ since the space $\mathcal{J}_\tau(\mathcal{D})$ is weakly contractible.\footnote{That $\mathcal{J}_\tau(\mathcal{D})$ is weakly contractible is, for example, shown in \cite[Appendix]{Ev14}.}
    
    By \cref{lma:ruling_compactification}, there is a unique family of embedded symplectic spheres $F_{t,s}$ through $x_0$ and $x_n$ that are $J_{t,s}$-holomorphic.
    This provides a deformation of the family $F_s$ supported away from the compactifying divisor $\mathcal{D}$ through embedded symplectic spheres to $F$, and hence it provides the desired compactly supported homotopy of $K_s$ to $K$. 
\end{proof}

\begin{remark}
    The essential difference between the geometry of the ball and the geometry of a $B_{p,q}$ when $p\geq 2$ is exactly the fact that the cylinders $K_{p,q}$ are \textit{rigid}, i.e.\ there exists a \textit{unique} one for each $J$ when $p\geq 2$. In contrast, the central cylinder $K_{1,1}$ comes in a $\mathbb{C}^*$-family. 
\end{remark}

We now move on to investigate the restriction map $\rho$ and the derivative map $D$ of the Comb diagram.
These maps are usually easier to understand when the ambient manifold is compact because the symplectic curve $K$ is then a closed sphere and so $\symp_c(K)$ and $\aut(\nu K)$ are actually connected.
Therefore it is easy to show that the maps $\rho$ and $D$ are surjective, by building local Hamiltonian isotopies.
However, when $K$ is a cylinder, both $\symp_c(K)$ and $\aut(\nu K)$ have infinitely many (contractible) connected components, so understanding which of these components are hit by the maps becomes more delicate.

\begin{proposition}[Infinitesimal symplectic variations of $K_{p,q}$]
\label{prop: infinitesimal variations}
    The image of the restriction map $\rho:\pr(K_{p,q})\rightarrow \symp_c(K_{p,q})$ is exactly the connected components containing the multiples of $\tau^p$, the $p$-th power of the standard Dehn twist along a cylinder.
    The image of the derivative map $D:\fx(K_{p,q})\rightarrow \aut(\nu K_{p,q})$ is exactly the connected component of $\aut(\nu K_{p,q})$ containing the identity.
\end{proposition}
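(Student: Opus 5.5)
Each of the two claims has a containment part and a realization part. The restriction map is handled first, then the derivative map.

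\textbf{Step 1: $\tau^{p\Z}$ lies in the image of $\rho$.} By \cref{lem: basic prop pintwists}(2), $\tau_{p,q}$ preserves $K_{p,q}$ and $\tau_{p,q}|_{K_{p,q}}$ is isotopic to $\tau^p$. Hence $\tau_{p,q}^n\in\pr(K_{p,q})$ restricts to the component of $\tau^{pn}$. Since $\symp_c(K_{p,q})$ has contractible components indexed by $\Z$, nothing more is needed for this containment.

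\textbf{Step 2: the image of $\rho$ lies in $\tau^{p\Z}$.} Let $\phi\in\pr(K_{p,q})$ and suppose $\phi|_{K_{p,q}}\simeq\tau^k$. We must show $p\mid k$. I would detect the twist number through the lift to the universal cover $A_{p-1}\to B_{p,q}$, where $K_{p-1}\to K_{p,q}$ is the connected $p$-fold cover.
\begin{itemize}
\item Compactly supported symplectomorphisms of $B_{p,q}$ lift to compactly supported, $\Z_p$-equivariant symplectomorphisms of $A_{p-1}$. The lift $\tilde\phi$ preserves $K_{p-1}$.
\item A Dehn twist $\tau^k$ on the base cylinder lifts to a $\Z_p$-equivariant compactly supported map of $K_{p-1}$ only when $p\mid k$. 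The reason is that the lift of $\tau^k$ which is the identity near one end acts near the other end by the deck transformation of rotation $k/p$.
\item Compact support of $\tilde\phi$ on $K_{p-1}$ then forces $k\equiv 0 \pmod p$.
\end{itemize}
This is essentially a covering-space check. It uses that $\phi$ is the identity near both ends of $K_{p,q}$, and that both ends lie in the region where $\tilde\phi$ is the identity, since the support is compact.

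\textbf{Step 3: the image of $D$ lies in the identity component.} This is the main obstacle. Here $\aut(\nu K_{p,q})\simeq\Z$, with components indexed by the winding of the normal framing along the cylinder, relative to the fixed framings at the ends. The naive local argument fails, because a normal twist supported near $K$ is not obviously realizable. My plan is to obstruct it through the compactification $X_{p,q}$.

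Take $\phi\in\fx(K_{p,q})$ and extend it by the identity to $X$ near $\cd$, so that $\phi$ fixes the fibre $F$ pointwise. The derivative $D\phi$ is then an automorphism of $\nu F$ that is the identity near the two points $x_0=F\cap D_0$ and $x_n=F\cap D_n$. Its component is measured by a relative winding class in $\pi_1(U(1))=\Z$ around a loop in $F$ separating $x_0$ from $x_n$.

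To compute this class, I would use the regulation of \cref{lma:ruling_compactification} and choose $J$ with $\phi_*J=J$ near $\cd$. The unique $J$-rulings and $\phi_*J$-rulings through points of $D_0$ near $x_0$ then give nearby fibres, which identify $\nu F$ with $T_{x_0}D_0$. Since $\phi$ is the identity on $D_0$, this trivialization is preserved up to homotopy, via the contractibility of $\cj_\tau(\cd)$ and uniqueness of rulings. It follows that $D\phi$ has winding $0$.

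An alternative for Step 3 is to pass to $A_{p-1}$ again and use Evans' computation $\symp_c(A_{p-1})\simeq\mathrm{Br}_p$, reading off the normal twist of lifted maps that fix $K_{p-1}$.

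\textbf{Step 4: realization for $D$.} The identity component is hit trivially, by $\mathrm{id}\in\fx(K_{p,q})$.
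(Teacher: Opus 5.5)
Your Steps 1--2 for $\rho$ are correct and follow the paper's argument, with the covering check done more carefully. You lift to $A_{p-1}$, note that a compactly supported equivariant lift forces $p\mid k$, and realise $\tau^p$ by the pintwist. Step 4 is trivial. The gap is Step 3, which is the real content of the second claim.

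\textbf{The invariant is misidentified.} For $g\in\aut(\nu K_{p,q})$, the winding around a loop in $F$ separating $x_0$ from $x_n$ is always $0$. Such a loop is homotopic in $K_{p,q}$ to a loop near an end, where $g=\mathrm{id}$. The $\Z$ is detected by the winding along an \emph{arc} from $x_0$ to $x_n$.

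\textbf{The trivialisation argument does not prove what you need.} A path $J_t$ from $J$ to $\phi_*J$ in $\cj_\tau(\cd)$, keeping $F$ holomorphic, does give a homotopy between the ruling trivialisations $\Theta_J$ and $\Theta_{\phi_*J}$ of $\nu F$. This homotopy is fixed at $x_0$ but not at $x_n$. At $x_n$ the trivialisation is composed with the derivative at $x_0$ of the ruling-transport map $D_0\to D_n$, and $[D\phi]$ is exactly the winding of that loop. Contractibility of $\cj_\tau(\cd)$ and uniqueness of rulings therefore only re-express the invariant; they do not show it vanishes.

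\textbf{A counterexample shows the ingredients you use are insufficient.} Take $X=S^2\times S^2$, $\cd=S^2\times\{N,S\}$ and $K=\{pt\}\times(S^2\setminus\{N,S\})$. Here there is a regulation for every $J\in\cj_\tau(\cd)$, rulings are unique, $\cj_\tau(\cd)$ is contractible, and $\phi$ is the identity near $\cd$. Yet the conclusion fails:
\begin{itemize}
\item Let $H$ be the height function on the first factor, with $2\pi$-periodic flow, normalised so that $H(pt)=0$ at a pole.
\item Let $\theta(s)$ rise from $0$ to $2\pi$ along the cylinder coordinate $(s,\alpha)$.
\item The time-one map of $H(z)\theta(s)$ is $(z,s,\alpha)\mapsto(\phi^H_{\theta(s)}(z),s,\alpha\pm H(z)\theta'(s))$.
\item This map is compactly supported in $X\setminus\cd$ and fixes $K$ pointwise.
\item Its normal derivative is $e^{i\theta(s)}$, which has winding one.
\end{itemize}
So a correct argument must use something your Step 3 never touches. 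In this example, $X\setminus\cd$ is not exact, and the two ends of $K$ exit through different ends of $X\setminus\cd$. Neither holds for $B_{p,q}$.

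\textbf{How the paper closes the gap.} It lifts to $A_{p-1}$ and uses exactness there. The Liouville flow has as skeleton the chain of matching spheres, which is disjoint from $K_{p-1}$. This lets one factor any $\psi\in\fx(K_{p-1})$ as a map supported off $K_{p-1}$ composed with a Hamiltonian $\phi\in\fx(K_{p-1})$. The homotopy exact sequence of the action of $\symp_c(A_{p-1})$ on $\mS(K_{p-1})$, together with rigidity of the central cylinder, then shows that $\widetilde D(\phi)$ is null-homotopic. Finally, a non-identity component in the image of $D$ would lift to one in the image of $\widetilde D$. Your alternative via $\mathrm{Br}_p$ only points in a direction and does not supply this argument.
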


\begin{proof}
    For both statements, we will consider the universal $p$-fold covering $A_{p-1}\xrightarrow{\pi}B_{p,q}$ introduced in \cref{sec: intro bpq}.
    
    The first part follows from topological considerations.
    First, we notice that any $\psi\in \pr(K_{p,q})$ lifts to an element $\tilde{\psi}\in \symp(A_{p-1})$ that preserves the central cylinder of $A_{p-1}$.
    Since $\pi$ is a $p$-fold covering, if the restriction of $\tilde{\psi}$ on the central cylinder is $\tau^k$, then the restriction of $\psi$ is $\tau^{pk}$, as explained in \cref{lem: basic prop pintwists}.
    Also by this lemma, for the pintwist $\tau_{p,q}$ we have that the restriction to the central cylinder is $\tau^{p}$ and so $\rho(\tau_{p,q})$ generates the biggest subgroup actually possible.
    
    Moving to the derivative $D$, let us consider the corresponding, lifted, map on the universal cover $\widetilde{D}:\fx(K_{p-1})\rightarrow\aut(\nu K_{p-1})$.
    Since $K_{p-1}$ is exactly a $p$-fold cover of $K_{p,q}$, if the image of $D$ contained a component of $\aut(\nu K_{p,q})$ other than the identity, so would the image of $\widetilde{D}$.
    Therefore, it is enough to show that $\widetilde{D}$ is homotopically constant.
    
    To do this we argue as follows:
    it is well known that the Milnor fibre of the $A_{p-1}$ singularity is symplectomorphic to the linear plumbing of the cotangent bundles of $p-1$ Lagrangian spheres.
    Therefore there exists a Liouville form, say $\lambda$, whose flow has as skeleton a chain of Lagrangian spheres, which can also be taken as the matching spheres of the usual Lefschetz fibration on $A_{p-1}$. 
    Evidently, the matching spheres are disjoint from the central cylinder, so by conjugating with the flow of $\lambda$, we see that for any $\psi\in\fx(K_{p-1})\subseteq \symp_c(A_{p-1})$ there exists some Hamiltonian map $\phi$ such that $\psi\circ \phi$ is supported away from $K_{p-1}$.
    Since $\psi$ and $\psi\circ \phi$ are in $\fx(K_{p-1})$, so is $\phi$.
    Therefore our claim will follow if we can show that $[\widetilde{D}(\psi)]=[\widetilde{D}(\psi\circ\phi)]$ for such a Hamiltonian symplectomorphism $\phi\in \text{Ham}_c(A_{p-1})$.
    Equivalently, we will show that $[\widetilde{D}(\phi)]=0\in \Z\simeq \aut(\nu K)$.
    Since \cref{prop: cylinders in Bpq are rigid} can also be proven for the central $K_{p-1}$ in $A_{p-1}$, we have that $\pi_0(\mathcal{S}(K_{p-1}))\cong 1$.\footnote{This can be proven in exactly the same way as \cref{prop: cylinders in Bpq are rigid}. However, one then needs the result analogous to \cref{lma:ruling_compactification}, which is proven in \cite[Section 2]{Bu25}.}
    Thus, the fact $[\widetilde{D}(\phi)]=0$ can be deduced by examining the long exact sequence on homotopy groups: 
    \[\begin{tikzcd}[column sep=small]
        \pi_1(\symp_c(A_{p-1})) \arrow[r] & \pi_1(\mS(K_{p-1})) \arrow[r] & \pi_0(\pr(K_{p-1})) \arrow[r] & \pi_0(\symp_c (A_{p-1})) \arrow[r] &  \pi_0(\mathcal{S}(K_{p-1})).
    \end{tikzcd}\]
    Since $\phi$ is Hamiltonian, it is trivial in $\pi_0(\symp_c(A_{p-1}))$ and therefore the exact sequence shows that it is the image of the symplectic monodromy around some loop in $\mathcal{S}(K_{p-1})$. Any such monodromy, by definition, preserves the horizontal vector field defined by the velocity of the loop.
    Therefore $\widetilde{D}(\phi)$ has to be nullhomotopic, concluding the proof.
\end{proof}

\subsection{Computing $\symp_c(B_{p,q})$ and finishing the proof}
We now have all the ingredients to perform our calculation.

\begin{theorem}\label{thrm: symp B_pq}
    The group $\symp_c(B_{p,q})$ is weakly homotopy equivalent to $\mathbb{Z}$, where the generator can be taken to be the \textit{pintwist} $\tau_{p,q}$.
\end{theorem}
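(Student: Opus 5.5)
We chase the Comb diagram \eqref{eq:comb_Bpq} from left to right, feeding in the facts established above. The overall logic is this: the leftmost group is weakly contractible, the moduli space at the right end is contractible, and so the homotopy type of $\symp_c(B_{p,q})$ is forced to be that of the image of the restriction map $\rho$.

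\textbf{Step 1: $\fx(K_{p,q})$.} By the Corollary following \cref{prop: w_pq in T*wh}, and by \cref{thrm: symp of T*Wh}, the group $\ker(D)\simeq\symp_c(B_{p,q}\setminus K_{p,q})=\symp_c(W_{p,q})$ is weakly contractible. Now consider the fibration \eqref{eq:derivative_fib}. Each component of $\aut(\nu K_{p,q})$ is contractible, and by \cref{prop: infinitesimal variations} the image of $D$ is exactly the identity component. The long exact sequence therefore gives that $\fx(K_{p,q})$ is weakly contractible. One technical point needs checking here: $D$ is a Serre fibration onto its image, which is a union of path components. This follows from the usual Moser/Weinstein neighbourhood argument, applied as in \cite{Ev14}.

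\textbf{Step 2: $\pr(K_{p,q})$.} Apply the same reasoning to the fibration \eqref{eq:restriction_fib}. The components of $\symp_c(K_{p,q})\simeq\Z$ are contractible, and \cref{prop: infinitesimal variations} identifies the image of $\rho$ with the components of $\tau^{pk}$, $k\in\Z$. The fibre $\fx(K_{p,q})$ is weakly contractible by Step 1. Hence $\rho$ is a weak homotopy equivalence onto $p\Z\cong\Z$, so $\pr(K_{p,q})\simeq\Z$. By \cref{lem: basic prop pintwists}~(2), $\rho(\tau_{p,q})$ lies in the component of $\tau^p$, so the class of $\tau_{p,q}$ generates $\pi_0(\pr(K_{p,q}))$.

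\textbf{Step 3: $\symp_c(B_{p,q})$.} By \cref{prop: cylinders in Bpq are rigid}, $\mS(K_{p,q})$ is weakly contractible, and in particular path-connected. Hence the action of $\symp_c(B_{p,q})$ on it is transitive by \cite{Au97}, and \eqref{eq:action_fib} is a fibration with weakly contractible base. The inclusion $\pr(K_{p,q})\hookrightarrow\symp_c(B_{p,q})$ is therefore a weak homotopy equivalence, giving $\symp_c(B_{p,q})\simeq\Z$ generated by $\tau_{p,q}$. The "therefore" clause of \cref{thrm_intro: C} follows because $\pi_0(\symp_c(B_{p,q}))\cong\Z$ is generated by $[\tau_{p,q}]$. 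Explicitly, any $\psi$ is symplectically isotopic to some $\tau_{p,q}^n$, and since $H^1(B_{p,q};\R)=0$ that isotopy is Hamiltonian.

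The main obstacle is Step 3, specifically verifying that orbit map $\symp_c(B_{p,q})\to\mS(K_{p,q})$ is a genuine (Serre) fibration. The difficulty is that we are dealing with non-compact cylinders, which are only fixed near infinity. I would handle this by compactifying as in the proof of \cref{prop: cylinders in Bpq are rigid}: a compact family of cylinders becomes a family of spheres through the fixed points $x_0,x_n$, and the relative isotopy extension argument of \cite{Au97} then applies in $X_{p,q}$, keeping everything fixed near $\cd_{p,q}$.
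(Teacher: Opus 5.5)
Your proposal is correct and is essentially the paper's own argument: a left-to-right chase of the Comb diagram \eqref{eq:comb_Bpq}, using weak contractibility of $\symp_c(W_{p,q})\simeq\symp_c(T^*Wh)$, \cref{prop: infinitesimal variations} to identify the images of $D$ and $\rho$ (giving $\fx(K_{p,q})\simeq\mathrm{pt}$ and $\pr(K_{p,q})\simeq\Z\langle\tau_{p,q}\rangle$), and \cref{prop: cylinders in Bpq are rigid} to pass from $\pr(K_{p,q})$ to $\symp_c(B_{p,q})$. The paper takes for granted that $D$, $\rho$ and the orbit map to $\mS(K_{p,q})$ are fibrations, so your checks of these points add care rather than change the route.
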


\begin{proof}
    We know all the necessary parts of the Comb diagram for a $B_{p,q}$:
    \[
        \begin{tikzcd}
            \symp_c(D^*Wh) \arrow[r, hook] & {\fx(K_{p,q})} \arrow[r, hook] \arrow[d, "D"] & {\pr(K_{p,q})} \arrow[r, hook] \arrow[d, "\rho"] & \symp_c(B_{p,q}) \arrow[d, two heads] \\
                                         & {\aut(\nu K_{p,q})}                           & {\symp_c(K_{p,q})}                                & {\mS(K_{p,q})}                      
        \end{tikzcd}
    .\]
    By \cref{prop: infinitesimal variations} we have that $D$ is homotopic to the constant map and therefore the inclusion map $\symp_c(D^*Wh)\hookrightarrow 
    \fx(K_{p,q})$ is a w.h.e., so Theorem \ref{thrm: symp of T*Wh} implies that $\fx(K_{p,q})$ is weakly contractible.
    Therefore $\pr(K_{p,q})$ is w.h.e.\ to its image under $\rho$, which by Proposition \ref{prop: infinitesimal variations} and the fact that the components of $\symp_c(K_{p,q})$ are contractible is w.h.e.\ to $\mathbb{Z}\langle\tau_{{p,q}}\rangle$.
    
    Finally, since by \cref{prop: cylinders in Bpq are rigid} $\mS(K_{p,q})$ is weakly contractible, the inclusion of $\mathbb{Z}\langle\tau_{{p,q}}\rangle$ to $\symp_c(B_{p,q})$ is a weak homotopy equivalence.
\end{proof}

\begin{corollary}[Extracting the Hamiltonian]
\label{cor: ham extract}\label{cor:uniqueness_star_shaped}
    Suppose that $Y \subseteq B_{p,q}$ is star-shaped and that $L\subseteq Y$ is a Lagrangian $(p,q)$-pinwheel.
    Then, there exists a compactly supported Hamiltonian diffeomorphism $\phi^H \in \text{\emph{Ham}}_c(Y)$ such that $\phi^H(L)=L_{p,q}^\std$.
\end{corollary}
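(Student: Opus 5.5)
The plan is to reduce to the case $Y=B_{p,q}$ and then combine \cref{th:symp}, \cref{thm:loc_vis} and \cref{thrm: symp B_pq}. First, \cref{thm:loc_vis} gives a symplectic embedding $\iota\colon B_{p,q}(\epsilon)\hookrightarrow Y\subseteq B_{p,q}$ with $\iota(\Lvis)=L$. Then \cref{th:symp} provides $\Phi\in\symp_c(B_{p,q})$ with $\Phi\circ\iota=\iota_\vis$, hence $\Phi(L)=\Lvis=\Lstd$. By \cref{thrm: symp B_pq} we can write $\Phi=\tau_{p,q}^n\circ\phi$ with $\phi\in\text{Ham}_c(B_{p,q})$. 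So $\phi(L)=\tau_{p,q}^{-n}(\Lstd)$. By \cref{lem: basic prop pintwists}(3), $\tau_{p,q}(\Lstd)$ is Hamiltonian isotopic to $\Lstd$; iterating (and inverting) the isotopy shows the same for $\tau_{p,q}^{-n}(\Lstd)$. Composing, we obtain $\phi'\in\text{Ham}_c(B_{p,q})$ with $\phi'(L)=\Lstd$. This already settles the case $Y=B_{p,q}$.

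For a general star-shaped $Y$, the Hamiltonian must be supported in $Y$. Here I would use the conjugation trick from the proof of \cref{lemma: star shaped symp homotopy}. Let $\phi'=\phi^1_H$ with $H_t$ compactly supported in $B_{p,q}$, and consider the Liouville flow $\phi^\lambda_t$ for $t\le 0$. Because $Y$ is negatively invariant and contains the skeleton $\Lstd$, there is $T>0$ with $\phi^\lambda_{-T}(\operatorname{supp}H)\subset Y$. The conjugate $\phi^\lambda_{-T}\circ\phi'\circ\phi^\lambda_{T}$ is then a Hamiltonian diffeomorphism supported in $Y$ (it is generated by $e^{-T}H_t\circ\phi^\lambda_T$). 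It maps $\phi^\lambda_{-T}(L)$ to $\phi^\lambda_{-T}(\Lstd)=\Lstd$; the last equality holds since the skeleton is invariant, at least after choosing $\Lstd$ to be the Liouville skeleton.

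It remains to move $L$ to $\phi^\lambda_{-T}(L)$ by a Hamiltonian isotopy compactly supported in $Y$. The family $L_t=\phi^\lambda_{-t}(L)$, $t\in[0,T]$, is an isotopy of Lagrangian pinwheels inside $Y$. Each $L_t$ is Lagrangian because $\phi^\lambda$ is conformally symplectic, and $L_t\subseteq Y$ by negative invariance. Its union over $t$ is compact in $Y$. I would extend this Lagrangian isotopy to an ambient Hamiltonian isotopy supported near $\bigcup_t L_t$, by the Banyaga/flux argument already used in \cref{lem: basic prop pintwists}(3). This uses that $H^1(L_{p,q};\Q)=0$ and that pinwheels have Weinstein-type (Khodorovskiy) neighbourhoods, so the isotopy extension applies to the immersed-but-standard-near-the-core situation. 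Composing this isotopy with the conjugated Hamiltonian gives the required $\phi^H\in\text{Ham}_c(Y)$.

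The main obstacle I expect is this final isotopy-extension step for a singular Lagrangian. One must check that $\phi^\lambda_{-t}$ preserves the pinwheel model of \cref{def:pinwheel_prelim}(3), or at least that $L_t$ stays a pinwheel in the paper's strong sense. One must also check that flux vanishes, so that a Lagrangian isotopy of pinwheels is generated by a compactly supported Hamiltonian. If this is awkward, a fallback is to apply the $Y=B_{p,q}$ case directly to the flowed-back pinwheel and absorb the flow by conjugation throughout.
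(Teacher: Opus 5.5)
Your argument is correct. It uses the same ingredients as the paper, namely \cref{thm:loc_vis} with \cref{th:symp}, the factorisation of \cref{thrm: symp B_pq}, \cref{lem: basic prop pintwists}(3), the Liouville flow, and the $H^1=0$ upgrade from Lagrangian to Hamiltonian isotopy. What differs is the order in which you assemble them.

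The paper shrinks \emph{first}:
\begin{itemize}
\item It flows the compactification domain $Z=X_{p,q}\setminus\cd_{p,q}$ into $Y$.
\item It reruns the arguments of Sections~\ref{sec: Uniq up to symp} and~\ref{sec: symp to ham} inside $Z'=\phi^\lambda_T(Z)$. This uses that the flow only rescales $\omega$, \cref{lemma: star shaped symp homotopy}, and that $\tau_{p,q}$ can be localised near $\Lstd$ so that it still generates $\pi_0\symp_c(Z')$.
\item It concatenates $L\rightsquigarrow\phi^\lambda_T(L)$, the Hamiltonian step in $Z'$, and $\tau_{p,q}^{-k}(\Lstd)\rightsquigarrow\Lstd$ into a single Lagrangian isotopy, and upgrades that.
\end{itemize}
You instead run everything globally in $B_{p,q}$ and shrink \emph{afterwards}, by conjugating the resulting Hamiltonian with $\phi^\lambda_{-T}$.

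Your ordering has some advantages:
\begin{itemize}
\item It needs no rescaled version of those sections and no localisation of the pintwist.
\item It controls supports more cleanly: the Hamiltonian isotopies from \cref{lem: basic prop pintwists}(3) are automatically pushed into $Y$. The paper leaves this point implicit.
\end{itemize}

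The one step you must carry out directly in $Y$ is the Liouville isotopy $L_t=\phi^\lambda_{-t}(L)$. The paper relies on it too, in its final sentence. The obstacle you flag there is mild, for two reasons.
\begin{itemize}
\item A map with $\phi^*\omega=c\,\omega$ preserves the normal form of \cref{def:pinwheel_prelim}(3). You post-compose the Darboux chart with $(\tau,x,z)\mapsto(\tau,cx,c^{1/2}z)$ and reparametrise $s$.
\item The deformation form of $L_t$ is $\pm\lambda|_{L_t}$. It is closed, and its class vanishes because $H^1(L_{p,q};\R)=0$. This is exactly the Banyaga-type upgrade the paper invokes in \cref{lem: basic prop pintwists}(3).
\end{itemize}

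Your stated fallback would not remove this step. Conjugating by the flow only ever relates the \emph{flowed-back} pinwheel to $\Lstd$, so some isotopy from $L$ to $\phi^\lambda_{-T}(L)$ inside $Y$ is unavoidable.
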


\begin{proof}
    Pick some compactification $X_{p,q}$ as in \cref{def:compactification}.
    The subset $Z=X_{p,q}\backslash \mathcal{D}_{p,q}$ is a star-shaped domain of $B_{p,q}$.
    Therefore, there exists $T\leq 0$ such that $\phi_T^{\lambda}(Z):=Z'\subset Y$.
    Since $Y$ is star-shaped, $\phi_T^{\lambda}(L):=L'\subset Z'$.
    Recall that $\phi_t^{\lambda}$ acts on the symplectic form only by rescaling, so all the arguments of \cref{sec: Uniq up to symp}, in particular \cref{thrm: symp B_pq}, can be applied to $Z'$ in order to construct a symplectomorphism $\psi\in \symp_c(Z')$ such that $\psi (L')=L^{\std}_{p,q}$.
    
    Similarly, the arguments of \cref{sec: symp to ham} apply: $Z'$ is a star-shaped subset of $B_{p,q}$ and therefore the inclusion $Z'\hookrightarrow B_{p,q}$ induces a weak homotopy equivalence on the groups of compactly supported symplectomorphisms.
    Combined with the fact that the pintwist $\tau_{p,q}$ can be arranged to have support arbitrarily close to $L^{\std}_{p,q}$, we get that $\tau_{p,q}$ generates the symplectic mapping class group of $Z'$ and thus $\psi$, by \cref{thrm: symp B_pq}, factors as $\psi=\tau_{p,q}^k\circ \phi^H$, for $k\in \Z$ and $H$ a time-dependent Hamiltonian.
    
    The Hamiltonian diffeomorphism $\phi^H=\tau_{p,q}^{-k}\circ\psi$ maps $L'$ to $\tau_{p,q}^{-k}(L^{\std})$ which is Lagrangian isotopic to $L^{\std}$, by \cref{lem: basic prop pintwists}.
    Therefore, $L$ is Lagrangian isotopic to $L^{\std}$ and ultimately Hamiltonian isotopic to it, since $H^1(L^{\std},\R)=0$, as in \cref{lem: basic prop pintwists} (3).
\end{proof}

\section{Applications}
\label{sec:application}

\subsection{Non-squeezing theorems}
\label{subsec:nonsqueezing}

This section sheds light on the non-squeezing theorem proven by the first and third authors in \cite{AdHa25}.
There, a non-squeezing theorem for $B_{n,1}$ was proven for all $n \geq 1$. 
We reprove this theorem here and extend the theorem to all $B_{p,q}$. 
Moreover, we prove the non-squeezing theorem for two symplectically different pin-cylinders.

Recall that the pin-ellipsoids and pin-balls were defined via the almost toric base diagram $\ATF_{p,q}$ of $B_{p,q}$ in analogy to how the usual ball and ellipsoids in $\mathbb{C}^2$ can be defined by their moment image as shown in \cref{fig:atfBpq}, and that $B_{p,q}(\lambda):=E_{p,q}(\lambda,\lambda)$.
By a slight abuse of notation, we define for $\lambda>0$ the two pin-cylinders $E_{p,q}(\lambda,\infty)$ and $E_{p,q}(\infty,\lambda)$ via the almost toric base diagram $\ATF_{p,q}(\lambda,\infty)$ and $\ATF_{p,q}(\infty,\lambda)$, as shown in \cref{fig:atfrationalhomologycylinders}. 
\begin{figure}[ht]
  \centering
  \begin{tikzpicture}
    \begin{scope}[shift={(-6,-2)}]
        \fill[opacity=0.2] (0,1) -- (0,0) -- (4,1) -- (4,2);
        \draw[thick] (0,1) -- (0,0) -- (4,1);
        \draw[thick,dash pattern=on 7pt off 3pt] (0,1) -- (4,2);
        \draw[dashed] (1,0.5) node[cross] {} -- (0,0);
        \draw [decorate,decoration={brace,amplitude=5pt,raise=1ex}] 
            (0,0) -- (0,1) node[midway,xshift=-3ex]{\footnotesize $\lambda$};
        \node at (0,1) [above left] {\footnotesize $(0,\lambda)$};
    \end{scope}

    \begin{scope}[shift={(1,-2)}]
        \fill[opacity=0.2] (0,2) -- (0,0) -- (4,1) -- (4,2);
        \draw[thick] (0,2) -- (0,0) -- (4,1);
        \draw[dashed] (1,0.5) node[cross] {} -- (0,0);
        \draw[thick,dash pattern=on 7pt off 3pt] (4,1) -- (4,2);
        \draw [decorate,decoration={brace,amplitude=5pt,raise=1ex}] 
            (4,1) -- (0,0) node[midway,xshift=.8ex,yshift=-3.5ex]{\footnotesize $\lambda$};
        \node at (4,1) [right] {\footnotesize $(\lambda p^2,\lambda (pq-1))$};
    \end{scope}
  \end{tikzpicture}
  \caption{On the left, the almost toric base diagram $\ATF_{p,q}(\infty,\lambda)$ defining $E_{p,q}(\infty,\lambda)$; on the right, the corresponding almost toric base diagram for $E_{p,q}(\lambda,\infty)$.}
  \label{fig:atfrationalhomologycylinders}
\end{figure}

\begin{remark}
    It is important to observe that if $p\geq 3$, the two pin-cylinders $E_{p,q}(\lambda,\infty)$ and $E_{p,q}(\infty,\lambda)$ are not symplectomorphic.
    That the usual standard cylinder $E(\lambda,\infty)$ is symplectomorphic to $E(\infty,\lambda)$ is obvious by exchanging the coordinate planes and that $E_{2,1}(\lambda,\infty)$ and $E_{2,1}(\infty,\lambda)$ are symplectomorphic follows from the fact that $\ATF_{2,1}(\lambda,\infty)$ and $\ATF_{2,1}(\infty,\lambda)$ are related by an integral affine transformation.
    That this symmetry no longer holds true for $p\geq 3$ will be clear from the proofs of \cref{th:nonsqueezing_1} and \cref{th:nonsqueezing_2} and is carried out explicitly in \cref{re:symetry_cylinders}.
\end{remark}

\begin{theorem}\label{th:nonsqueezing_1}
    If there exists a symplectic embedding $B_{p,q}(\alpha) \xhookrightarrow{s} E_{p,q}(\lambda,\infty)$, then $\alpha \leq \lambda$.
\end{theorem}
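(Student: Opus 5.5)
Assume $s\colon B_{p,q}(\alpha)\hookrightarrow E_{p,q}(\lambda,\infty)$ exists. The plan is to compactify the target to one of the manifolds $X_{p,q}$ of \cref{def:compactification}, rationally blow up along $s$, and read off the inequality from the area of the exceptional sphere in the unique broken ruling given by \cref{thm:regulation_of_tilX}.

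\textbf{Step 1: compactification.} The diagram $\ATF_{p,q}(\lambda,\infty)$ is the wedge truncated by the vertical-direction cut, with the edge through $(\lambda p^2,\lambda(pq-1))$ at lattice length $\lambda$ from the origin. By compactness the image of $s$ lies in a bounded part. Cut horizontally at some large height $h$ and resolve the new corners, as in \cref{sec:compactification}. This gives a closed manifold $X$ with a compactifying divisor $\mathcal{D}$. In it, $E_{p,q}(\lambda,\infty)$ is the complement of the divisor components lying over the cut edges. Here I must check that the fibre class $F$ (the compactified central cylinder) and \cref{lma:ruling_compactification} still apply. Up to the non-toric blow-up at the distinguished edge, $X$ should again be derived from a Hirzebruch surface, with the rulings transverse to the edge of length $\lambda$.

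\textbf{Step 2: blow up and apply \cref{thm:regulation_of_tilX}.} Rationally blow up $X$ along $s$, neck-stretching with $J\in\cj_N(\mathcal{D})$. The arguments of \cref{sec:trafo_regulation} (\cref{lma:intersection_profile_CD}, \cref{lma:intersection_profil_CC}, \cref{rmk:only_2_broken_rulings}) are purely homological and adjunction-based, so they go through verbatim. They give a regulation of $\widetilde X$ in the class $\widetilde C$ with exactly one broken ruling $\mathcal{T}=(\mathcal{C}\setminus C_1)\cup(\mathcal{D}\setminus D_0)\cup E$. In this ruling the exceptional sphere $E$ appears with multiplicity $\epsilon=(pq-1)/f_m=pq-1$, and the $\alpha_j,\beta_i$ are as computed in that proof.

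\textbf{Step 3: areas.} Integrate $\widetilde\omega$ over $[\widetilde C]=[\mathcal{T}]$. This gives
\[
\omega(\widetilde C)=\sum_{j\ge2}\alpha_j\,\widetilde\omega(C_j)+\sum_{i\ge1}\beta_i\,\omega(D_i)+\epsilon\,\omega(E),
\]
and $\omega(E)>0$ since $E$ is $\widetilde J$-holomorphic. Next I compute $\omega(\widetilde C)$ from \cref{lma:new_splitting}: it is the area of $F$ in $X$ minus a correction determined by the Wahl-chain coefficients $c_j=(M_\cc^{-1}\bm\chi)_j$ with $\bm\chi=\bm e_1$. The blow-up areas $\widetilde\omega(C_j)$ are taken arbitrarily small, with the corresponding size of the neck in $\widetilde U_\std$ essentially recovering $\alpha$. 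The aim is to reduce positivity of $\omega(E)$ to $\lambda-\alpha>0$ up to errors that vanish as the resolution areas go to $0$, and then let those areas tend to $0$ to get $\alpha\le\lambda$.

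\textbf{Main obstacle.} The hard part is this bookkeeping of areas. It has two pieces. First, relate the area in $\widetilde X$ of the relevant combination of $\widetilde C$ and the divisor classes to $\lambda$; this uses the $D_i$-areas fixed by the toric picture. Second, relate the contribution of $C_1$ and the Wahl-chain coefficients to $\alpha$. I would calibrate both using the visible embedding $\iota_\vis$ of $B_{p,q}(\lambda)$ into $E_{p,q}(\lambda,\infty)$. There the broken fibre is visible and $E$ has area exactly proportional to $\lambda-\alpha$. Since all the classes and the coefficients $\alpha_j,\beta_i,\epsilon$ agree with the visible case, the same linear expression holds for $s$.
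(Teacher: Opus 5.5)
Your mechanism is sound: use positivity of the exceptional sphere $E$ in the unique broken ruling of \cref{thm:regulation_of_tilX}, compute its area homologically, and calibrate against the visible model. But Step~3 as written cannot produce the inequality.

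Since $E$ meets only $C_m$ and $D_n$, each once, its rational class is $[E]=\sum_i (M_\cd^{-1})_{in}D_i+\sum_j (M_\cc^{-1})_{jm}C_j$. Your detour through $[\widetilde C]=[\mathcal T]$ gives the same thing. So $\omega(E)$ is a fixed linear combination of the $\omega(D_i)$ and the $\widetilde\omega(C_j)$. The $D_i$ lie in $V$, so their areas do not depend on the embedding, and no homological area sees the size of the neck. Hence $\alpha$ can enter only through the Wahl-chain areas. If you send all $\widetilde\omega(C_j)\to 0$, as you propose, positivity of $\omega(E)$ collapses to positivity of the divisor term and says nothing about $\alpha$. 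In the visible model $E$ is then almost the entire bottom edge, not a curve of area $\lambda-\alpha$. You need the opposite choice. Resolve $\cb_{p,q}(\alpha)$ with $C_m$ of area close to $\alpha p^2/(p^2-(pq+1))$, the longest segment in that direction fitting into $\Delta_{p,q}(\alpha,\alpha)$, and the other $C_j$ negligible. Since $(M_\cc^{-1})_{mm}=-e_m/p^2=-(p^2-(pq+1))/p^2$, the Wahl-chain contribution is then $-\alpha$.

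Symmetrically, $\lambda$ enters only through $\sum_i (M_\cd^{-1})_{in}\omega(D_i)$. Calibrating with a tiny visible ball shows this equals the lattice length of the bottom edge (the one along $(p^2,pq-1)$) of the compactified diagram. That edge carries part of the compactified central cylinder $F$, so it is not transverse to the rulings; after the blow-up its remnant is exactly $E$. Cutting horizontally at large height $h$ with small resolution areas makes this length about $h/(pq-1)$, and the bound becomes useless. So Step~1 must choose the compactification data so that this length is $\lambda+\delta$.

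In the minimal compactification of \cref{def:compactification} this is possible. All edges of the minimal resolution of $x_{\orb}$ point into $\{x>0\}$, so you can start the chain at $((\lambda+\delta)p^2,(\lambda+\delta)(pq-1))$ and let it rise to height $h$. The interior still contains the image of $s$, and \cref{thm:regulation_of_tilX} applies as stated. A vertical cut instead produces a non-minimal divisor containing an extra $(-1)$-sphere, for which \cref{sec:trafo_regulation} does not go through verbatim.

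With both corrections you get $0<\omega(E)\approx\lambda+\delta-\alpha$ directly. This is a genuinely different route from the paper's. The paper first uses \cref{cor:uniqueness_star_shaped} to make the embedding agree with $\iota_\vis$ near $L^\std_{p,q}$ and compactifies with a vertical cut at $(\lambda+\delta)p^2$. It then blows up $B_{p,q}(t\alpha)$, reads off $E_t$ as a visible toric edge for small $t$, and carries it to $t=1$ by deformation-stability of exceptional spheres. Your version would avoid both the uniqueness theorem and the deformation argument.
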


\begin{remark}
    It should be mentioned that the proof here follows, in spirit, the strategy of \cite[Theorem 1.5.2]{ABEHS25} rather than the one in \cite[Theorem D]{AdHa25}.
    Namely, it uses \cref{th:symp}---the fact that Lagrangian pinwheels are unique up to symplectomorphism in their neighbourhoods---and then employs visible curves to obstruct the symplectic embedding, using techniques akin to those used in \cite{McDSie25:singalg,McDSie25:superpot}.
    The main difference is that in \cite{AdHa25}, the obstruction arises in a more ad hoc way, by finding a well-chosen exceptional curve in the compactification $\widetilde{X}_{p,q}$.
    The main observation of \cite{ABEHS25} is that the exceptional curve that provides the obstruction naturally appears as a component of a broken fibre of a regulation of $\widetilde{X}_{p,q}$.
    Thus we avoid the complicated combinatorics of \cite{AdHa25}.
\end{remark}

\begin{proof}[Proof of \cref{th:nonsqueezing_1}]
    Assume that there is a symplectic embedding $\iota: B_{p,q}(\alpha) \hookrightarrow E_{p,q}(\lambda,\infty)$.
    By \cref{cor:uniqueness_star_shaped} we can assume that $\iota$ and the visible embedding $\iota_\vis$ coincide on a small neighbourhood of $L^\std_{p,q}$, i.e.\ on $B_{p,q}(\varepsilon)$ for some small $0<\varepsilon$.
    Now, view $\iota$ as an embedding into a "non-minimal" compactification as shown in \cref{fig:nonsqueezingpictorially} (a).\footnote{This compactification is exactly the one that was discussed in Footnote \ref{foot:compactification}.}
    \begin{figure}[ht]
        \centering
        \begin{tikzpicture}
        \begin{scope}[shift={(-1,1)}]
            \node at (-0.7,1.3) {(b)};
            \fill[opacity=0.2] 
                (0,1.5) -- (0,0) -- (0.4,0)
                -- ++ (0.2,0.01)
                -- ++ (3,0.3)
                -- ++ (2,0.4) 
                -- ++ (1.9,0.49)
                -- ++ (1,0.3);
            \draw[thick]
                (0,0) node{\tiny \(\bullet\)} 
                -- (0.4,0) node{\tiny \(\bullet\)}
                -- ++ (0.2,0.01) node{\tiny \(\bullet\)}
                -- ++ (3,0.3) node{\tiny \(\bullet\)} 
                -- ++ (2,0.4) node{\tiny \(\bullet\)}
                -- ++ (1.9,0.49) node{\tiny \(\bullet\)}
                -- ++ (1,0.3);
            \draw[thick, mid arrow] 
                (0.6,0.01) -- node[below, text=black]{\tiny $\pmat{p^2-(pq+1) \\ pq-(q^2+1)}$} node[above]{\footnotesize $C_m$}
                (3.6,0.31);
            \draw[thick, mid arrow]
                (3.6,0.31) -- node[below, text=black]{\tiny $\pmat{p^2 \\ pq-1}$} node[above]{\footnotesize $E_t$}
                (5.6,0.71);
            \draw[thick, mid arrow]
                (5.6,0.71) -- node[below, text=black]{\tiny $\pmat{pq+1 \\ q^2}$} node[above]{\footnotesize $D_{n}$}
                (7.5,1.2);
            \draw[thick]
                (0,1.5) -- (0,0);
        \end{scope}

        \begin{scope}[shift={(-1,-2.5)}]
            \node at (-0.7,8.5/5-0.3) {(c)};
            \draw[thick, mid arrow] 
                (0,0.5) -- node[above left, text=black, fill=white,rounded corners=2pt,inner sep=1pt]{\tiny $\pmat{p^2-(pq+1) \\ pq-(q^2+1)}$}
                (8.5,8.5/5);
            \fill[opacity=0.2] (0,1) -- (0,0) -- (8.5,8.5/5);
            \draw[thick] (0,1) -- (0,0) -- (8.5,8.5/5);
            \draw[dashed] (0.8,0.4) node[cross] {} -- (0,0);
            \draw[thick,dash pattern=on 7pt off 3pt] (0,1) -- (0.25*8.5,1+0.25*3.5/5);
            \draw[thick,dash pattern=on 7pt off 3pt] (0.5*8.5,1+0.5*3.5/5) -- (8.5,8.5/5);
            \node at (6,0.5) {\footnotesize $\mathfrak{A}_{p,q}(t\alpha,t\alpha)$};
        \end{scope}

        \begin{scope}[shift={(-8,-2.7)}]
            \node at (-0.7,5.3) {(a)};
            \fill[opacity=0.2] 
                (0,5.6) -- (0,0) -- (3.6,0.9) -- (3.6,5.6) -- (0,5.6);
            \fill[opacity=0.2] 
                (0,5) -- (0,0) -- (4,1) -- ++ (0.1,0.04) -- ++ (0.08,0.08) -- ++ (0.06,0.12) -- ++ (0.04,0.18) -- ++ (0.02,0.3) -- (4.3,5) -- cycle;
            \draw[thick] 
                    (0,5) -- (0,0) -- (4,1);
            \draw[very thick]
                    (4,1) node{\tiny \(\bullet\)}
                    -- ++ (0.1,0.04) node{\tiny \(\bullet\)}
                    -- ++ (0.08,0.08) node{\tiny \(\bullet\)}
                    -- ++ (0.06,0.12) node{\tiny \(\bullet\)}
                    -- ++ (0.04,0.18) node{\tiny \(\bullet\)}
                    -- ++ (0.02,0.3) node{\tiny \(\bullet\)}
                    -- (4.3,5) node{\tiny \(\bullet\)}
                    -- (0,5) node{\tiny \(\bullet\)};
            \fill[opacity=0.2]
                      (0,0) -- (0,1)
                      .. controls +(1.0,0) and +(-0.6,0.6) ..
                      (2,3)
                      .. controls +(0.6,-0.6) and +(-0.6,-0.6) ..
                      (2,1)
                      .. controls +(0.6,0.6) and +(-1.0,0) ..
                      (4*0.8,0.8) -- cycle;
            \draw[dashed] (0.8,0.4) node[cross] {} -- (0,0);
            \node (big) at (5,0.4)  {\tiny $((\lambda+\delta) p^2,(\lambda+\delta) (pq-1))$};
            \node (small) at (2.5,-0.2) {\tiny $(\lambda p^2,\lambda (pq-1))$};
            \draw[->] (big) to[out=100, in=-30] (4,0.9);
            \draw[->] (small) to[out=80, in=220] (3.55,0.8);
            \node at (4.8,3) {\footnotesize $\mathcal{D}_{p,q}$};
            \node at (1,5.3) {\footnotesize $E_{p,q}(\lambda,\infty)$};
            \node at (1,2.4) {\footnotesize $\iota\big(B_{p,q}(\alpha)\big)$};
        \end{scope}
        \end{tikzpicture}
        \caption{(a) The almost toric base diagram of the compactification used in the proof of \cref{th:nonsqueezing_1}. The darker shaded region indicates how the embedded pin-ball $\iota\big(B_{p,q}(\alpha)\big)$ sits inside the compactification after the initial adjustment. (b) After rationally blowing up the ball $B_{p,q}(t\alpha)$ along the embedding $\iota$ for $t\alpha \leq \varepsilon$, which means precisely along $\iota_\vis$, the Wahl chain and the compactifying divisor are connected by a visible exceptional divisor on the toric boundary. The vectors can easily be computed from \eqref{eq:HJ_facts_matrices}. (c) The data used for the rational blow-up in the proof of \cref{th:nonsqueezing_1}.}
        \label{fig:nonsqueezingpictorially}
    \end{figure}
    Rationally blowing up the balls $B_{p,q}(t\alpha)$ for $t\in (0,1]$ along the embedding $\iota$ yields a family of symplectic manifolds $\big(\widetilde{X}_{p,q}(t),\widetilde{\omega}(t)\big)$.
    We choose the rational blow-up data such that the last sphere in the Wahl chain $C_m$ has maximal symplectic area.
    For the sake of clarity of the proof, we assume that all the other spheres have vanishing symplectic area. 
    It is not hard to see that this setup results in the following:
    \begin{enumerate}[label={(\roman*)}]
        \item\label{area}
            The symplectic area of $C_m$ is
            $$\int_{C_m}\widetilde{\omega}(t)= \frac{t\alpha p^2}{p^2-(pq+1)},$$
            as shown in \cref{fig:nonsqueezingpictorially} (c).\footnote{A computation in toric geometry relying on \eqref{eq:HJ_facts_matrices} shows that the direction associated to $C_m$ in the minimal resolution is given by $(p^2-(pq+1),pq-(q^2+1))$. Then the length of the longest segment that can fit into $\mathfrak{A}_{p,q}(t\alpha,t\alpha)$ is given by the term in the equation. See also \cite[Appendix B]{AdHa25} for the $(p,q)=(n,1)$ case.}
        \item 
            For $t\in (0,1]$ such that $t\alpha \leq \varepsilon$, the rational blow-up is visible and the piece of the toric boundary connecting the end of the Wahl chain $\mathcal{C}_{p,q}$ to the end of the compactifying divisor $\mathcal{D}_{p,q}$ is a symplectically embedded $(-1)$-sphere $E_t$. This sphere
            $E_t$ intersects $\mathcal{C}_{p,q}$ in exactly one point on $C_m$ and similarly for $\mathcal{D}_{p,q}$.
            See \cref{fig:nonsqueezingpictorially} (b).
    \end{enumerate}
    As in \cref{sec: Uniq up to symp}, we write 
    $$
    E_t=\sum a_i D_i + \sum c_j C_j \in H_2\big(\widetilde{X}_{p,q}(t),\mathbb{Q}\big).
    $$
    It is well known that $(-1)$-spheres are stable under deformations of the symplectic form as explained in \cite[Chapter 5]{Wen18:Jhollow}.\footnote{In the proof of \cite[Theorem 1.5.2]{ABEHS25} this was justified by invoking spherical Gromov invariants.}
    This means that we have for all $t \in (0,1]$
    $$
    0<\int_{E_t} \widetilde{\omega}(t) = \sum a_i\int_{D_i} \widetilde{\omega}(t) + \sum c_j \int_{C_j}\widetilde{\omega}(t).
    $$
    Since the compactifying divisor is disjoint from the embedding, the sum involving the terms connected to $\mathcal{D}_{p,q}$ will be constant.
    Considering the limit $t\to 0$ we see that the sum is, in fact, equal to $\lambda+\delta$.
    Since we assumed that the symplectic area of all but the last sphere in the Wahl chain vanishes, the inequality reads
    $$
    0<(\lambda +\delta) + c_m \int_{C_m}\widetilde{\omega}(t).
    $$
    We can calculate the coefficient $c_m$ via the equation $\boldsymbol{c}=M_{\mathcal{C}}^{-1}\boldsymbol{e}_m$, where $\bm{e}_m$ is the intersection profile of $E_t$ with $\cc_{p,q}$ mentioned in (ii), i.e.\ the $m$th canonical basis vector.
    From \eqref{eq:extremal_acomp} we deduce that 
    $$c_m=-\frac{1}{p^2}[pq-1]^{-1}=-\frac{1}{p^2}(p^2-(pq+1)),$$
    which follows from \cref{lma:M_inverse} because $(M_{\mathcal{C}}^{-1})_{mm}=-(e_mf_m)/p^2=-e_m/p^2$. 
    Using (i) the inequality therefore reads
    $$
    t\alpha\leq \lambda + \delta,
    $$
    where $\delta > 0$ can be chosen arbitrarily small.
    This means $\alpha\leq \lambda$, which is what we wanted to show.
\end{proof}

\begin{remark}
    The curve that is used to obstruct the embedding in this proof is exactly the one that was used in the proof of the non-squeezing theorem in the $(p,q)=(n,1)$-case by the first and third authors in \cite[Theorem D]{AdHa25}.
    In the proof above the geometric situation becomes much more transparent, because the uniqueness theorem for Lagrangian pinwheels is available.
\end{remark}

\begin{theorem}\label{th:nonsqueezing_2}
    If there exists a symplectic embedding $B_{p,q}(\alpha) \xhookrightarrow{s} E_{p,q}(\infty,\lambda)$, then $\alpha \leq \lambda$.
\end{theorem}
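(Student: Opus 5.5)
The plan mirrors the proof of \cref{th:nonsqueezing_1}, with the roles of the two edges of $\ATF_{p,q}$ exchanged. Suppose $\iota: B_{p,q}(\alpha)\hookrightarrow E_{p,q}(\infty,\lambda)$ is a symplectic embedding.

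\textbf{Step 1 (normalising the embedding).} By \cref{cor:uniqueness_star_shaped} I may assume that $\iota$ agrees with $\iota_\vis$ on $B_{p,q}(\varepsilon)$ for some small $\varepsilon>0$.

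\textbf{Step 2 (compactification).} The cylinder $E_{p,q}(\infty,\lambda)$ is bounded at height $\lambda$ along the vertical edge $(0,1)$ and is unbounded in the $(p^2,pq-1)$ direction. I therefore compactify $\ATF_{p,q}(\infty,\lambda+\delta)$ by capping with a horizontal cut at height $\lambda+\delta$. This is exactly the compactification $X_{p,q}$ of \cref{def:compactification}. The divisor $\cd_{p,q}$ is disjoint from $\iota(B_{p,q}(\alpha))$, and $D_0$ sits at height $\lambda+\delta$.

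\textbf{Step 3 (rational blow-up and the obstructing sphere).} Rationally blowing up $B_{p,q}(t\alpha)$ along $\iota$ gives the family $\widetilde{X}_{p,q}(t)$. For $t\alpha\le\varepsilon$ the blow-up is visible, and the obstructing curve should be a visible $(-1)$-sphere $E_t$. Because the vertical edge now plays the role the slanted edge played before, I expect $E_t$ to be the analogue of the vertical edge in \cref{fig:nonsqueezingpictorially}: a sphere connecting $C_1$, the end of the Wahl chain along the $(0,1)$-edge, to the divisor.

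I would weight the blow-up data so that $C_1$ carries all the area. Its direction is read off from \eqref{eq:HJ_facts_matrices} (essentially $(1,0)$ after the resolution). Its maximal area in $\ATF_{p,q}(t\alpha,t\alpha)$ should come out to $t\alpha p^2/(pq-1)$, the length of the horizontal segment fitting under the $(0,t\alpha)$ vertex.

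A subtlety is that $\widetilde{C}$ from \cref{thm:regulation_of_tilX} already meets $C_1$ and $D_0$ and has square $0$, so it is not exceptional. The exceptional curve should instead be $E_t$, meeting $C_1$ once and a component $D_i$ of $\cd_{p,q}$ once. Its existence follows from the toric picture, and it persists for $t\in(0,1]$ since $(-1)$-spheres are stable under deformation.

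\textbf{Step 4 (area inequality).} I write $E_t=\sum a_iD_i+\sum c_jC_j$ and note that the $\cd$-part of the area is constant in $t$. Letting $t\to0$ identifies this constant as $\lambda+\delta$, the area of the fibre-type portion up to $D_0$. The coefficient $c_1$ comes from $\bm c=M_\cc^{-1}\bm e_1$, giving $c_1=-e_1f_1/p^2=-(pq-1)/p^2$ by \cref{lma:M_inverse} and \eqref{eq:initial_acomp}. Positivity of area then gives
$$0<\lambda+\delta-\frac{pq-1}{p^2}\cdot\frac{t\alpha p^2}{pq-1}=\lambda+\delta-t\alpha.$$
Setting $t=1$ and letting $\delta\to0$ yields $\alpha\le\lambda$.

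\textbf{Main obstacle.} The hard step is pinning down the correct visible exceptional sphere and verifying two things: that its $\cd$-contribution is exactly $\lambda+\delta$ (so that it crosses the band of height $\lambda$ rather than going along the slanted edge), and that the maximal area of $C_1$ matches the factor $p^2/(pq-1)$. I would check this first in the toric picture for $(p,q)=(5,2)$ and then for $(n,1)$ against \cite{AdHa25}, adjusting which end of the Wahl chain carries the area if the numerics fail.
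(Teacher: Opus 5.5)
\textbf{There is a genuine gap, in Step~2, and it carries over into Step~3.} The pin-cylinder $E_{p,q}(\infty,\lambda)$ is the strip $\{x\ge 0,\ \tfrac{pq-1}{p^2}x\le y\le \lambda+\tfrac{pq-1}{p^2}x\}$. Its upper boundary is parallel to the slanted edge $(p^2,pq-1)$, not horizontal.

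\emph{The compactification does not contain the image.} A horizontal cut at height $\lambda+\delta$ does not cap this strip. The triangle $X^{\mathrm{orb}}_{p,q}(\lambda+\delta)$ contains only the part of the strip with $y<\lambda+\delta$. A compact set such as $\iota(B_{p,q}(\alpha))$ can run far out along the strip, above that height. So $\iota$ need not land in $X_{p,q}\setminus\cd_{p,q}$, and the argument does not start.

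\emph{The obstructing sphere does not exist in $X_{p,q}$.} After the visible blow-up, the sphere through $C_1$ and $\cd_{p,q}$ that you see is the vertical remnant. It ends on $D_0$ and is the square-zero fibre $\widetilde C$, as you note yourself. A class with $\bm\chi=\bm e_1$ and $\bm\xi=\bm e_k$ has square $(M_{\cd}^{-1})_{kk}-\tfrac{pq-1}{p^2}$. This is $0$ for $k=0$, and in general it is not $-1$ for $k\ge 1$. For example, for $(p,q)=(3,1)$ the only other choice $k=1$ gives $-\tfrac49-\tfrac29=-\tfrac23$, which is not even integral. So the $(-1)$-sphere $E_t$ "from the toric picture" is not there.

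\emph{Your area bookkeeping is really that of $\widetilde C$.} The $\cd$-part of area $\lambda+\delta$ "up to $D_0$" is the area of the fibre. That would give a valid bound, since $\widetilde C$ is $\widetilde J$-holomorphic by Theorem~\ref{thm:regulation_of_tilX}. But the bound only applies to embeddings into the horizontal triangle, not into the pin-cylinder.

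\textbf{The fix.} Use the compactification that is genuinely analogous to the one in the proof of Theorem~\ref{th:nonsqueezing_1}. Cut along the line through $(0,\lambda+\delta)$ parallel to $(p^2,pq-1)$, and close off far to the right; this is harmless because the image of the pin-ball is compact.
\begin{itemize}
\item The new corner at $(0,\lambda+\delta)$ is the dual of the Wahl singularity; it has order $p^2$ and its chain is the reversed dual Wahl chain.
\item The lattice points $(0,-1),(-1,-1),\dots$ lie on the boundary of the relevant convex hull. Hence the resolving edge adjacent to the vertical edge has direction $(-1,-1)$.
\item After rationally blowing up along $\iota_{\mathrm{vis}}$, the remnant of the vertical edge is therefore a visible $(-1)$-sphere joining $C_1$ to that chain.
\end{itemize}
Take this sphere as $E_t$. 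Your Step~4 then goes through verbatim: $\int_{C_1}\widetilde\omega(t)=t\alpha p^2/(pq-1)$, $c_1=-(pq-1)/p^2$, and the area is $\lambda+\delta-t\alpha$, up to the small corner chop.

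\textbf{A quicker route.} Consider the integral affine map $(x,y)\mapsto A(y,x)$ with $A=\begin{pmatrix}p^2 & -(pq-1)\\ p^2-pq-1 & q^2-pq+1\end{pmatrix}$. It takes $\ATF_{p,q}$ to $\ATF_{p,p-q}$, swapping the two edges and sending the branch cut direction $(p,q)$ to $(p,p-q)$. Hence $E_{p,q}(\infty,\lambda)\cong E_{p,p-q}(\lambda,\infty)$ and $B_{p,q}(\alpha)\cong B_{p,p-q}(\alpha)$. The statement is then exactly Theorem~\ref{th:nonsqueezing_1} for $(p,p-q)$.
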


\begin{proof}
    The proof is analogous to the proof of \cref{th:nonsqueezing_1} and is therefore omitted.
\end{proof}

\begin{remark}\label{re:symetry_cylinders}
    Note that the proof of \cref{th:nonsqueezing_1} also shows that for $p \geq 3$, $E_{p,q}(\alpha,1)$ embeds into $E_{p,q}(1,\infty)$ if and only if $\alpha \leq 1$. Since $E_{p,q}(\alpha,1)$ embeds trivially into $E_{p,q}(\infty,1)$ for all $\alpha >1$, it follows that the two pin-cylinders cannot be symplectomorphic.
\end{remark}

Having established the non-squeezing theorem for pin-balls and recalling that in \cite[Theorem 1.5.2]{ABEHS25} it was proven that the set
$$
    \mathcal{A}_{p,q}:=\left\{(\alpha,\beta) \in \mathbb{R}^2_{>0} \hspace{0.1cm}\middle|\hspace{0.1cm} E_{p,q}(\alpha,\beta) \xhookrightarrow{s} \mathbb{C}P^2(1)\right\}
$$
is either empty or carries a symplectic staircase structure, a natural question emerges.

\begin{question}
    Do the symplectic problems of embedding pin-ellipsoids into pin-balls carry the structure of symplectic staircases?
\end{question}

In the forthcoming paper \cite{Ha25} this question will be answered in the positive for $(p,q)=(2,1)$ and the corresponding ellipsoid domains in $B_{2;1,1}\cong T^*S^2$, by showing that the set
$$
\mathcal{E}_{2,1}:=\left\{(\alpha,\beta) \in \mathbb{R}^2_{>0} \hspace{0.1cm}\middle|\hspace{0.1cm} E_{2,1}(\alpha,\beta) \xhookrightarrow{s} B_{2,1}(1)\right\}
$$
coincides with $\mathcal{A}_{2,1}$ and similarly for the $(2;1,1)$-case. 
These two sets are particularly interesting to compute since $E_{2,1}(\alpha,\beta)$ and $E_{2;1,1}(\alpha,\beta)$ are domains in $T^*\mathbb{R}P^2$ and $T^*S^2$, i.e.\ they carry inherent physical meaning and, moreover, they are easy to describe in terms of Finsler structures on the respective cotangent bundles.

\subsection{Eliashberg--Polterovich's theorem}

We illustrate the methods of this paper in a slightly different situation by reproving an old theorem of Eliashberg and Polterovich: 
\begin{theorem}[\normalfont{\cite[Theorem 1.1.A]{EliPol96}}]
\label{th:Eliashberg_Polterovich}
    Suppose that $\Delta \subseteq (\R^4,\omega_\std)$ is an embedded
    Lagrangian plane that coincides with a flat Lagrangian plane $\Delta_0$ at infinity.
    Then $\Delta$ and $\Delta_0$ are isotopic via a compactly supported Hamiltonian isotopy.
\end{theorem}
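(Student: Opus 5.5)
The plan is a relative version of \cref{th:symp} in the case $(p,q)=(2,1)$, followed by a Gromov-type argument to upgrade the symplectomorphism to a Hamiltonian isotopy.

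\textbf{Reduction to a compact problem.} Choose $R$ large so that $\Delta=\Delta_0$ outside the ball $B^4(R)$. Perform the standard symplectic cut along $\partial B^4(R')$ for some $R'>R$, collapsing the Hopf fibration. This compactifies the ball to $\C P^2$ with $\C P^1_\infty$ added at infinity. Since $\Delta_0$ is a flat plane, its intersection with $\partial B^4(R')$ is a Legendrian great circle. Its image under the cut is a real line in $\C P^1_\infty$, and the flat disc closes up to the standard Lagrangian $\R P^2_0=\R P^2\subseteq\C P^2$. Because $\Delta$ agrees with $\Delta_0$ near the boundary, $\Delta$ likewise closes up to a Lagrangian $\R P^2$, call it $P$, which agrees with $\R P^2_0$ near $\C P^1_\infty$. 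The theorem then reduces to the following claim: there is $\Phi\in\symp(\C P^2)$, equal to the identity near $\C P^1_\infty$, with $\Phi(P)=\R P^2_0$.

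\textbf{Construction of $\Phi$.} By Weinstein's theorem (equivalently \cref{thm:loc_vis} with $p=2$), $P$ has a Khodorovskiy neighbourhood $B_{2,1}(\varepsilon)$ that agrees with the standard one near $\C P^1_\infty$. Rationally blowing up $P$ and $\R P^2_0$ produces $\widetilde X$ and $\widetilde X_\vis$. Each contains a $(-4)$-sphere $C_1$, and its complement in $\C P^2$ is the conic, which is the analogue of $K_{2,1}$.

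Next, use the regulation by lines through a fixed point $x\in\C P^1_\infty$. This requires a relative version of \cref{lma:ruling_compactification}, obtained by blowing up $x$ so as to land on $\F_1$, with $\C P^1_\infty$ playing the role of $D_0$. Then repeat the adjunction computations of \cref{lma:intersection_profile_CD}, \cref{lma:intersection_profil_CC} and \cref{thm:regulation_of_tilX}, using $\lambda$-stretching along $\partial B_{2,1}(\varepsilon)$ and almost complex structures fixed near $\C P^1_\infty$. The outcome should be a regulation of $\widetilde X$ with a single broken ruling of the same shape as in the visible case. All curves must be kept $J$-holomorphic and standard near $\C P^1_\infty$. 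Since $P$ and $\R P^2_0$ coincide there, the regulations coincide near the divisor.

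Once this is in place, the proof of \cref{th:symp} goes through verbatim. \cref{cor:complement_of_T_is_minimal} and \cref{th:McDSalalternative} give a symplectomorphism $\widetilde X_\vis\to\widetilde X$ that is the identity near the divisor configuration, which now includes $\C P^1_\infty$. Blowing back down and gluing as in \cite[Corollary 5.2.5]{ABEHS25} produces $\Phi$.

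\textbf{From symplectomorphism to Hamiltonian isotopy.} Removing $\C P^1_\infty$, the map $\Phi$ gives an element of $\symp_c(B^4)$ sending $\Delta$ to $\Delta_0$. By Gromov, $\symp_c(B^4)$ is contractible, so $\Phi$ is compactly supported Hamiltonian isotopic to the identity. This yields a compactly supported Hamiltonian isotopy taking $\Delta$ to $\Delta_0$ and proves the theorem.

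\textbf{Main obstacle.} The delicate step is the relative version of the regulation analysis. The neck-stretching limit curves must not break at $\C P^1_\infty$ or collide with it, and the relevant broken ruling must be unique while the structures are held fixed near $\C P^1_\infty$. The first attempt will be to blow up the base point $x$, which puts us exactly in the setting of $X_{2,1}$. Then $\mathcal{D}_{2,1}=(D_0,D_1)$, with $D_0^2=+3$ a conic-type section and $D_1$ the exceptional sphere, and \cref{thm:regulation_of_tilX} applies directly.
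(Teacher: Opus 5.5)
There is a genuine gap. Your construction treats $\C P^1_\infty$ as a compactifying divisor lying in the complement of the Khodorovskiy neighbourhood. But, as you yourself note, the closure of $\Delta_0$, and hence of $\Delta$, meets $\C P^1_\infty$ in the real line at infinity, so $P\cap\C P^1_\infty$ is a circle.

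The machinery of \cref{sec: Uniq up to symp} needs $\cd\cap\iota(B_{p,q}(\varepsilon))=\emptyset$. This covers \cref{lma:bijections}, neck-stretching with $J\in\cj_N(\cd)$, the intersection-profile and adjunction computations, and the final map that is the identity near $\cd\cup\mathcal T$. That hypothesis fails here. If you keep $\C P^1_\infty$ $J$-holomorphic while stretching along $\partial B_{2,1}(\varepsilon)$, the line $\C P^1_\infty$ itself crosses the neck and breaks. After the rational blow-up there is no sphere "$\C P^1_\infty$" left to play $D_0$ or to be fixed by $\widetilde X_\vis\to\widetilde X$.

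The set-up is also internally inconsistent:
\begin{itemize}
\item After blowing up $x\in\C P^1_\infty$, the proper transform of $\C P^1_\infty$ is a member of the pencil (square $0$), not a section.
\item Landing "exactly in $X_{2,1}$" requires a $(+3)$-section, i.e.\ a symplectic conic through $x$ that is disjoint from $P$ and standard near $\C P^1_\infty$. You do not have one: any conic meets $\C P^1_\infty$, so it must be controlled there, and nothing prevents the standard one from hitting $P$ where $P$ is knotted.
\item Even with such a conic, \cref{th:symp} would only give a map compactly supported off that conic, not one fixing a neighbourhood of $\C P^1_\infty$.
\end{itemize}

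The missing idea is to track what the rational blow-up does to $\C P^1_\infty$; this is the paper's \cref{lma:realRP2_blow_up}.
\begin{itemize}
\item Choose the Weinstein neighbourhoods of $P$ and of the standard $\R P^2$ to coincide near $\C P^1_\infty$, given there by an explicit model around the shared Möbius band.
\item In this model, the two boundary circles of the annulus $\C P^1_\infty\cap D^*_\varepsilon$ are orbits of the geodesic flow on $S^*_\varepsilon\R P^2$. So the cut collapses them, and the two complementary discs of $\C P^1_\infty$ close up to symplectic spheres $F^\pm$ of square $0$, each meeting the new $(-4)$-sphere $\Sigma$ once.
\item This gives an explicit chain $\mathcal T=(F^+,\Sigma,F^-)$, identical for both Lagrangians.
\end{itemize}

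With this chain, no neck-stretching or broken-ruling analysis is needed. The class $F=[F^\pm]$ is primitive with $\Sigma\cdot F=1$, so automatic transversality gives a ruling in class $F$ with no broken fibre. Hence $\widetilde X\setminus\mathcal T$ is a disc bundle over an annulus, which is minimal. Then \cref{th:McDSalalternative} yields symplectomorphisms $(\widetilde X,\mathcal T)\to(\F_4,\mathcal T_\vis)$ that agree with the neighbourhood identifications near $\mathcal T$. Composing and blowing back down gives a symplectomorphism of $\C P^2$ that fixes a neighbourhood of $\C P^1_\infty$ and takes $P$ to the standard $\R P^2$. Your final step, using Gromov's contractibility of $\symp_c(B^4)$, is then exactly what the paper does.
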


\begin{remark}
    Within the context of this paper, this theorem should be understood to be the $(p,q)=(1,1)$ case of \cref{th: nearby lagrangian for pinwheels}.
    The strategy of proof is very similar to that of \cref{th: nearby lagrangian for pinwheels}.
    The main difference from the rest of this paper is that a "$(1,1)$-pinwheel" is an embedded Lagrangian disc and therefore we have to carefully control the boundary behaviour of the embedded disc.
\end{remark}

\begin{remark}
    \cref{th:Eliashberg_Polterovich} is a foundational result in symplectic topology, as it shows that there are no "local" Lagrangian knots, i.e.\ there does not exist a local operation that knots a Lagrangian submanifold in dimension four.
    Eliashberg and Polterovich's proof uses the technique of filling by holomorphic discs.
    Since one cannot fill a Lagrangian plane by discs, they introduce an auxiliary object called a "plug",
    see also the survey \cite{PolSch24}.
    In our proof the auxiliary object will be a compactifying divisor, which allows us to use rational closed $J$-curves, i.e.\ regulations, in order to unknot the Lagrangian plane.
\end{remark}

The model of a flat Lagrangian plane that will make our proof most transparent is the Lagrangian plane that is given by the fixed locus of the anti-symplectic involution $\tau$ on $\mathbb{C}^2$ defined by $\tau(z_1,z_2) =(\overline{z}_2,\overline{z}_1)$.
Denote this fixed point set by $\Delta_\tau$.
Then 
$$\Delta_\tau=\{(z,\overline{z})\in \mathbb{C}^2\mid z\in \mathbb{C}\}$$
and in polar coordinates $z_j=r_j e^{i\theta_j}$ on $\mathbb{C}^2$ we have 
$$
\Delta_\tau=\{(z_1,z_2)\mid r_1=r_2 \text{ and } \theta_1=-\theta_2\},
$$
which means that $\Delta_\tau$ is the visible Lagrangian that projects to the diagonal ray under the standard toric moment map on $\mathbb{C}^2$.

\begin{proof}[Proof of \cref{th:Eliashberg_Polterovich}]
Without loss of generality we assume that $\Delta$ coincides with $\Delta_\tau$ outside $B^4(1-\varepsilon)$.
The proof of \cref{th:Eliashberg_Polterovich} proceeds in four steps.

\subsubsection{Compactifying Lagrangian planes}
\label{subsec:compactifying_planes}

The first step is to transfer the problem to a compact situation.
Performing a symplectic cut along the standard $S^3 \subseteq \mathbb{C}^2$, with respect to the standard diagonal Hamiltonian action, we obtain $(\mathbb{C}P^2,\omega_{\textsc{FS}})$.
Moreover, the Lagrangian planes $\Delta$ and $\Delta_\tau$ correspond to two embedded Lagrangian $\mathbb{R}P^2$s, which we denote by $L$ and $L_\tau$ in the following, that coincide in a neighbourhood of the line at infinity $\mathbb{C}P^1_{\infty}\subseteq \mathbb{C}P^2$.\footnote{Since the involution that defines $\Delta_\tau$ descends to $\mathbb{C}P^2$, it follows that the Lagrangians $L$ and $L_\tau$ are obtained by gluing Lagrangian discs to a Lagrangian Möbius strip along their boundaries.
This is also clear from the toric moment map, as illustrated in \cref{fig:Lagrangian planes}.}

\begin{figure}[htb]
  \centering
      \begin{tikzpicture}
        \begin{scope}[shift={(-4,0)}]
        \node at (-0.5,3.3) {(a)};
            \fill[opacity=0.2] (0,3.8) -- (0,0) -- (3.8,0) -- (3.8,3.8);
            \draw[thick] (0,3.8) -- (0,0) -- (3.8,0);
            \draw (0,3) -- (3,0);
            \node[text=red] at (2.8,3.3) {\footnotesize $\Delta_\tau$};
            \node at (0.5,3) {\footnotesize $S^3$};
            \draw[thick, blue]
                  plot[smooth,tension=0.8] coordinates 
                    {(1.4,1.4) (0.8,0.8) (1,0.5) (0.5,0.5) (0.3,2.4)};
            \draw[thick, red]
                (0,0) -- (3.8,3.8);
            \node[text=blue] at (0.65,2) {\footnotesize $\Delta$};
        \end{scope}

        \draw[->] (0.3,2.4) to[out=200,in=-40] (-1,2.8);

        \begin{scope}[shift={(0.5,2)}, scale=0.6]
            \fill[opacity=0.2] (0,2) -- (0,0) -- (2,0) -- (2,2);
            \draw[thick, red] (2,0) -- (0,2);
            \draw (0,2) -- (0,0) -- (2,0) -- (2,2) -- (0,2);
            \draw[->] (0,2) -- (1,2);
            \draw[->] (0,0) -- (1,0);
            \draw[->] (0,0) -- (0,1);
            \draw[->] (2,0) -- (2,1);
        \end{scope}

        \begin{scope}[shift={(3.5,0)}]
            \node at (-0.5,3.3) {(b)};
            \fill[opacity=0.2] (0,3) -- (0,0) -- (3,0);
            \fill[opacity=0.2] (0,3) -- (0,2.8) -- (2.8,0) -- (3,0);
            \fill[opacity=0.2] (0.3,3.8) -- (0.3,3.3) -- (3.3,0.3) -- (3.8,0.3) -- (3.8,3.8);
            \draw[thick] (0.3,3.8) -- (0.3,3.3) -- (3.3,0.3) -- (3.8,0.3);
            \draw[thick, red] (1.8,1.8) node[circlecross]{} -- (3.8,3.8);
            \draw[thick, blue]
                  plot[smooth,tension=0.8] coordinates 
                    {(1.5,1.5) (1.4,1.4) (0.8,0.8) (1,0.5) (0.5,0.5) (0.3,2.4)};
            \draw[thick, red] (1.5,1.5) -- (0,0);
            \node[text=red] at (0.54,0.18) {\footnotesize $L_\tau$};
            \draw[thick] (0,3) -- (0,0) -- (3,0) -- (0,3);
            \node[text=blue] at (0.63,1.8) {\footnotesize $L$};
            \draw[thick, red] (1.8,1.8) node[circlecross]{} -- (3.8,3.8);
        \end{scope}

        \begin{scope}[shift={(8.8,0)}]
            \node at (-0.5,3.3) {(c)};
            \fill[opacity=0.2] (0,3) -- (0,0) -- (3,0);
            \fill[opacity=0.2] (1,2) -- (2,1) -- (1.5,0.5) -- (0.5,1.5);
            \draw[thick] (0,3) -- (3,0);
            \draw[thick, blue]
                (0,0.1) to[out=45,in=225] (0.5,0.5) -- (1.5,1.5) node[circlecross]{};
            \node[text=blue] at (0.25,0.65) {\footnotesize $L$};
            \fill[opacity=0.2] (0.5,3.8) -- (3.8,0.5) -- (3.8,2) -- (2,3.8);
            \node[text=red] at (3.05,3.05) {\footnotesize $\gamma$};
            \node at (2.6,3.6) {\footnotesize $C_+$};
            \node at (3.65,2.55) {\footnotesize $C_-$};
            \draw
                (0.5,3.8) -- (3.8,0.5)
                (2,3.8) -- (3.8,2);
            \begin{scope}[rotate=-45]
                \fill[opacity=0.2]
                (0.7071,3.0406)
                arc (90:-90:-0.2 and -0.5303)
                -- (-0.7071,3.0406+2*0.5303)
                arc (-90:90:-0.2 and -0.5303)
                -- cycle;
            \end{scope}
            \begin{scope}[rotate=-45]
                \fill[opacity=0.2]
                (0.7071,3.0406)
                arc (90:-90:0.2 and -0.5303)
                -- (-0.7071,3.0406+2*0.5303)
                arc (-90:90:0.2 and -0.5303)
                -- cycle;
            \end{scope}
            \draw[rotate around={-45:(2.15,2.15)}, red]
                (2.15,2.15) arc (90:-90:0.2 and -2.1213/4);
            \draw[dashed, rotate around={-45:(2.15,2.15)}, red]
                (2.15,2.15) arc (90:-90:-0.2 and -2.1213/4);
            \draw[rotate around={-45:(2.65,1.65)}]
                (2.65,1.65) arc (90:-90:0.2 and -2.1213/4);
            \draw[rotate around={-45:(1.65,2.65)}] 
                (1.65,2.65) arc (90:-90:0.2 and -2.1213/4);
            \draw[dashed, rotate around={-45:(2.65,1.65)}]
                (2.65,1.65) arc (90:-90:-0.2 and -2.1213/4);
            \draw[dashed, rotate around={-45:(1.65,2.65)}]
                (1.65,2.65) arc (90:-90:-0.2 and -2.1213/4);
            \draw[->] (2,2) -- (1.7,1.7);
            \draw[thick, red]
                (0,0) -- (1.5,1.5) node[circlecross]{};
            \node[text=red] at (0.65,0.25) {\footnotesize $L_\tau$};
        \end{scope}
    \end{tikzpicture}
    \caption{(a) Schematic illustration of the initial situation of \cref{th:Eliashberg_Polterovich}. The plane $\Delta$ coincides with $\Delta_\tau$ outside $B^4(1-\varepsilon)$. (b) The situation after the symplectic cut as described in \cref{subsec:compactifying_planes}. The Lagrangian $\mathbb{R}P^2$s $L$ and $L_\tau$ coincide in a small neighbourhood of the line at infinity $\mathbb{C}P^1_{\infty}\subseteq \mathbb{C}P^2$, which is shaded darker in the figure. On the complementary piece obtained by the symplectic cut we obtain two Lagrangian Möbius strips that coincide. (c) The explicit Weinstein neighbourhood of the embedded, shared, Möbius strip shaded in darker gray and the configuration in $\mathbb{C}P^1_\infty$, which projects to the toric boundary.}
  \label{fig:Lagrangian planes}
\end{figure}

\subsubsection{Constructing a symplectomorphism of the complements of the Lagrangian $\mathbb{R}P^2$s in $\mathbb{C}P^2$}
\label{subsec:Compactly_supported_symp}

Recall that the rational blow-up of a Lagrangian $\mathbb{R}P^2$ in a symplectic manifold $(X,\omega)$ can be viewed as an operation induced by a symplectic cut.
Pick a Weinstein neighbourhood of the Lagrangian $\mathbb{R}P^2$, which, in particular, defines a symplectic embedding of the unit cotangent disc bundle $D^*_\varepsilon\mathbb{R}P^2$ for some small $\varepsilon >0$, where $D^*_\varepsilon\mathbb{R}P^2$ is defined with respect to the round metric on $\mathbb{R}P^2$.
Then a symplectic cut can be performed along the periodic geodesic flow on $\partial D^*_\varepsilon\mathbb{R}P^2:=S^*_\varepsilon\mathbb{R}P^2$, since it is induced by a Hamiltonian function.
See \cite[Section 2.2.1]{BoLiWu14} for details on this construction and \cite{Aud07, Ler95} for the original references.

In our situation we can investigate the effect of rationally blowing up $L$ and $L_\tau$ on $\mathbb{C}P^2$ explicitly.
Note that since $L$ and $L_\tau$ coincide in a neighbourhood of $\mathbb{C}P^1_\infty$ the effect of the rational blow-up on $\mathbb{C}P^1_\infty$ will be the same if the Weinstein neighbourhoods of $L$ and $L_\tau$ coincide near $\mathbb{C}P^1_\infty$.
Hence, we will consider $L$ for the moment.

\begin{lemma}
\label{lma:realRP2_blow_up}
    $L$ can be rationally blown up in such a way that the rational blow-up $(X,\omega)$ contains a chain of embedded symplectic spheres $\ct=(F^+,\Sigma,F_-)$ of self-intersection $[F^\pm]^2=0$ and $[\Sigma]^2=-4$, where $F^+ \cup F^-$ is the "proper transform" of the line $\mathbb{C}P^1_\infty$.
\end{lemma}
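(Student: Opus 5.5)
We propose to work entirely locally near $\mathbb{C}P^1_\infty$, using the explicit toric picture of \cref{fig:Lagrangian planes}~(c). Since $L$ coincides with $L_\tau$ near $\mathbb{C}P^1_\infty$, and $L_\tau$ is the visible $\mathbb{R}P^2$ lying over the diagonal of the moment triangle of $\mathbb{C}P^2$, we first choose a Weinstein neighbourhood of $L$ that agrees, near $\mathbb{C}P^1_\infty$, with the standard visible Weinstein neighbourhood of $L_\tau$. Such a choice is possible by the relative Weinstein neighbourhood theorem, because the two Lagrangians agree on an open set containing the shared Möbius band. The cut along $S^*_\varepsilon\mathbb{R}P^2$ is then performed with respect to a Hamiltonian that generates the periodic geodesic flow and agrees with the visible one near $\mathbb{C}P^1_\infty$. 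The result is a closed symplectic manifold $(X,\omega)$, containing the reduced locus $\Sigma\cong S^*\mathbb{R}P^2/S^1$ as a symplectic sphere.

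The computation $[\Sigma]^2=-4$ does not depend on $L$. It is local, since $\Sigma$ is the quotient of the unit cotangent bundle of $\mathbb{R}P^2$ by its geodesic flow, i.e.\ the conic. It agrees with the rational blow-up of $B_{2,1}$, whose Wahl chain $\frac{4}{1}=[4]$ consists of a single $(-4)$-sphere, as recalled in \cref{subsec:res_blow_neck}.

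The curves $F^\pm$ come from the line. Near $\mathbb{C}P^1_\infty$ everything is visible, so we read off the transform of the line from the toric/almost toric diagram, as in \cref{fig:Lagrangian planes}~(c). The line meets $L_\tau$ in the circle $\gamma$, the core of the Möbius band, which lies over the point where the diagonal meets the hypotenuse. After cutting the neighbourhood, the line becomes two discs $C_\pm$, one on each side of $\gamma$. Each disc is capped off by the reduction of its boundary circle to a point of $\Sigma$, producing spheres $F^+$ and $F^-$, each meeting $\Sigma$ transversally once. In the visible model the cut introduces a new edge of the base, which is $\Sigma$. The edge of the hypotenuse is split into two edges adjacent to it, and the self-intersections can be read off from the determinants of consecutive primitive edge vectors. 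We propose to check $[F^\pm]^2=0$ either this way, or homologically: $[\mathbb{C}P^1_\infty]^2=1$, and the line meets $L$ with $\mathbb{Z}_2$-intersection one. Using the splitting $H_2(X;\mathbb{Q})=\mathbb{Q}[\Sigma]\oplus H_2(\mathbb{C}P^2\setminus L;\mathbb{Q})$, as in \cref{lma:new_splitting} with $p=2$, write $F^\pm=\tfrac12 H_\pm+a[\Sigma]$ with $F^\pm\cdot\Sigma=1$. This gives $a=-\tfrac14$, and hence
\[
(F^\pm)^2=\tfrac14\cdot 1\cdot\tfrac{?}{}\ldots
\]
The bookkeeping for the self-intersection is cleanest with the toric determinants, so we will use those. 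Because $F^\pm$ and $\Sigma$ are defined entirely in a region where $L=L_\tau$ and all data are standard, the same computation holds for $L$ and for $L_\tau$.

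The main obstacle is the first step. We must ensure that the cut for a non-visible $L$ can be arranged to coincide with the visible cut near $\mathbb{C}P^1_\infty$, so that the transform of the line is literally the same configuration. This requires the Weinstein embedding of $D^*_\varepsilon\mathbb{R}P^2$ to be standard over the Möbius band, and the geodesic-flow Hamiltonian to be matched there. We would first try to achieve this with the relative Weinstein theorem, applied with the round metric restricted near the core circle $\gamma$. Failing that, we would interpolate the two Weinstein embeddings on a collar of the Möbius band via a Moser argument.
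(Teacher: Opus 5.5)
Your localisation step is fine, but it is not where the difficulty lies. The paper does the same thing: it fixes an explicit Weinstein embedding $\iota$ of $D^*_\varepsilon M_\rho$ around the M\"obius band, and chooses the Weinstein neighbourhood of $L$ to agree with $\iota$ there and to miss $\mathbb{C}P^1_\infty$ elsewhere. The genuine gap is that the content of the lemma is never proved.

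(a) The two discs of $\mathbb{C}P^1_\infty$ close up to embedded spheres meeting $\Sigma$ once, transversally and positively, only if the two circles $\mathbb{C}P^1_\infty\cap\psi(S^*_\varepsilon\mathbb{R}P^2)$ are single orbits of the cut circle action. The paper checks this from the geodesic equations (the circles $\{s=p_s=0,\ p_\theta=\pm\varepsilon\}$ are closed geodesics); you take it for granted.

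(b) Your homological computation breaks off exactly where the work is. From $F^\pm=\tfrac12H_\pm-\tfrac14\Sigma$ you still have to determine $H_\pm$, and the mod~$2$ intersection of the line with $L$ does not do this. \cref{lma:new_splitting} only gives $H_\pm=k_\pm[\mathbb{C}P^1_\infty]$ with $k_\pm\in\mathbb{Z}$, so $(F^\pm)^2=\tfrac{k_\pm^2-1}{4}$. Even adjunction (canonical class $-3[\mathbb{C}P^1_\infty]-\tfrac12\Sigma$) still allows $k_\pm=5$, i.e.\ square $6$. The missing input is that $F^+$ and $F^-$ are disjoint: they come from different discs, collapsed to different points of $\Sigma$. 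Hence $0=F^+\cdot F^-=\tfrac{k_+k_--1}{4}$, which forces $k_+k_-=1$ and so $(F^\pm)^2=0$. Together with (a), this gives a complete argument that needs no toric model at all.

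(c) The toric fallback you settle on does not exist in the picture you cite. In the standard moment triangle (locally $T^*S^1\times\mathbb{C}$ with moment map $(p_\tau,|z|^2/2)$) the rational blow-up is not a toric cut. A level set of the moment map of a circle subgroup meets the edge over $\mathbb{C}P^1_\infty$ in at most one circle, or else contains the whole edge. The cut hypersurface, however, must meet that edge in the two circles bounding the removed annulus. Concretely, the standard torus acts through $\iota$ by shifting $\theta$ and rotating the $(s,p_s)$-plane, and this does not preserve $S^*_\varepsilon M_\rho$. So there is no new edge $\Sigma$ splitting the hypotenuse, and no determinants to read off.

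To make your route work you need a torus action for which the blow-up of $L_\tau$ genuinely is toric. One choice is the Oakley--Usher action on $\mathbb{C}P^2\setminus L_\tau$ recalled in the remark after the lemma; another is the almost toric picture of $\mathbb{C}P^2$ as a smoothing of $\mathbb{P}(1,1,4)$. There the two halves of $\mathbb{C}P^1_\infty$ are the two edges adjacent to the missing vertex. Cutting at $H=\alpha$ gives the $\mathbb{F}_4$ polygon, whose edges next to the $(-4)$-edge are fibres of square $0$. You then match the Weinstein neighbourhood and cut Hamiltonian of $L$ to \emph{that} model near $\mathbb{C}P^1_\infty$. Geodesics close to the core circle stay close to it, so neighbourhoods of $F^\pm$ coincide for $L$ and $L_\tau$. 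This is exactly the alternative proof sketched in that remark.

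The paper's main proof avoids any global model and computes $(F^+)^2$ directly. The normal coordinate $z$ of the line is invariant under the geodesic flow along $C_+$, so the normal section $\partial_u$ descends through the collapsed point to a nowhere-vanishing section of $N_{F^+/X}$.
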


\begin{proof}
    Consider the sphere $S^2\subseteq \mathbb{R}^3$, equipped with the round metric $g$, and parametrize the equator by $\gamma(\theta)=(\cos(\theta),\sin(\theta),0)$.
    Let $s$ be the signed geodesic distance from the equator.
    Then
    $$\gamma_s(\theta)=(\cos{s}\cos{\theta},\cos{s}\sin{\theta},\sin{s})$$
    is the displacement of $\gamma(\theta)$ by distance $s$ along the normal geodesic to the equator.
    The round metric in these coordinates is given by $g=ds^2+\cos^2(s)d\theta^2$. 
    Taking the quotient $S^2/{\sim}=\mathbb{R}P^2$ by the antipodal map we find a tubular neighbourhood of the equator:
    $$
        M_\rho:=(S^1 \times (-\rho,\rho) )/{\sim}\quad\text{where }(\theta,s) \sim (\theta+\pi,-s) \text{ and } \rho<\frac{\pi}{2},
    $$
    which is a Möbius strip.
    The metric $g$ descends to the quotient because it is invariant under the equivalence relation. 
    Consider the cotangent bundle of the Möbius strip $T^*M_\rho$ with coordinates $(s,\theta,p_s,p_\theta)$.
    Then taking the dual metric on $T^*M_\rho$, the codisc bundle of radius $\varepsilon>0$ and its boundary are given by
    $$
        D^*_{\varepsilon}M_\rho=\left\{p_s^2+\frac{p_\theta^2}{\cos^2(s)}\leq \varepsilon^2\right\} \quad\text{and}\quad S^*_{\varepsilon}M_\rho=\left\{p_s^2+\frac{p_\theta^2}{\cos^2(s)}= \varepsilon^2\right\}.
    $$
    Near the equator $\gamma$ of $\mathbb{C}P_\infty^1$, which is the core circle of $L$, we have coordinates of the form $T^*S^1_{(\tau,p_\tau)} \times \mathbb{C}_{z=u+iv}$, in which the symplectic form $\omega_\text{FS}$ reads $dp_\tau\wedge d\tau \oplus du\wedge dv$, such that $\gamma=\{p_\tau=z=0\}$ and the part of $\mathbb{C}P_\infty^1$ contained in this chart is given by $\{z=0\}$.
    These coordinates come directly from the toric geometry as shown in \cref{fig:Lagrangian planes} (c).
    In these coordinates define the embedding $\iota:D^*_\varepsilon M_\rho \hookrightarrow \mathbb{C}P^2$ via
    \begin{equation}
        \label{eq:embedding_unitcodisc_moebius}
        \iota\left([\theta,s,p_\theta,p_s]\right):=\left(2\theta,\frac{1}{4}(s^2+p_s^2) + \frac{1}{2}p_\theta,e^{-i\theta}(s-ip_s)\right),
    \end{equation}
    which is easily seen to be well-defined and symplectic. 
    Under the usual moment map on $T^*S^1 \times \mathbb{C}$, given by $\mu(\tau,p_\tau,z):=(p_\tau,\lvert z \rvert^2/2)$, the codisc bundle $D_\varepsilon^*M_\rho$ projects to a thin strip around the ray $\{(\lambda,2\lambda)\mid \lambda \in \mathbb{R}_{\geq 0}\}\subseteq \R \times \mathbb{R}_{\geq 0}$.\footnote{This discussion is completely analogous to the immersion of Lagrangian cylinders in \cite[Section 5.3]{Ev23:Book}.}
    The intersection of $\mathbb{C}P^1_\infty$ with $\iota(S^*_\varepsilon M)$ in this chart is given by
    \begin{equation}
    \label{eq:int_Linfinity_WeinsteinM}
    \iota(S^*_\varepsilon M)\cap \mathbb{C}P^1_\infty=\{z=0,\quad p_\tau=\pm \varepsilon/2\}
    \end{equation}
    meaning that the intersection $\iota(S^*_\varepsilon M_\rho)\cap \mathbb{C}P^1_\infty$ consists of the images of the two circles
    $$
    C_{\pm}:=\{s=0,p_s=0,p_\theta=\pm \varepsilon\} \subseteq S^*_\varepsilon M_{\rho},
    $$
    see \cref{fig:Lagrangian planes} (c).
    The geodesic flow on $S^*_\varepsilon M_\rho$ is induced by the Hamiltonian flow of the standard quadratic Hamiltonian $H=\frac{1}{2}\lVert \cdot \rVert^2$ and the geodesic equations are easily computed from the Hamiltonian equations:
    \begin{equation}
    \label{eq:geodesic_eq}
    \Dot{\theta}=\frac{p_\theta}{\cos^2(s)},\qquad \Dot{s}=p_s,\qquad \Dot{p}_\theta=0,\qquad \Dot{p}_s=-p_\theta^2\frac{\sin(s)}{\cos^3(s)}. 
    \end{equation}
    This implies that $C_{\pm}$ are genuine geodesics, because the geodesic equations \eqref{eq:geodesic_eq} on $C_\pm$ are $\Dot{\theta}=\pm \varepsilon, \Dot{s}=0, \Dot{p}_\theta=0, \Dot{p}_s=0$, i.e.\ the geodesic flow just rotates $C_{\pm}$.
    
    Now, consider a symplectic embedding $\psi:D^*_\varepsilon \mathbb{R}P^2 \hookrightarrow \mathbb{C}P^2$ obtained by restricting a Weinstein neighbourhood of $L \subseteq \mathbb{C}P^2$.
    Choosing $\varepsilon >0$ and $\rho >0$ small enough we can assume that $\psi|_{D^*_\varepsilon M_\rho}$ coincides with $\iota$, defined in \eqref{eq:embedding_unitcodisc_moebius}, and that $\psi|_{D^*_\varepsilon \mathbb{R}P^2\setminus D^*_\varepsilon M_\rho}$ is disjoint from $\mathbb{C}P_\infty^1$.
    Denote the two symplectic discs in $\mathbb{C}P^1_\infty$ that form the complement of the annulus $\mathbb{C}P^1_\infty \cap \psi(D^*_\varepsilon M_\rho)$ by $D^\pm$, as shown in \cref{fig:Lagrangian planes}.
    Perform the rational blow-up of $L$ by symplectically cutting along the embedded copy of $S^*_\varepsilon \mathbb{R}P^2$ and denote the resulting symplectic manifold by $(X,\omega)$. 
    Since the boundaries of $D^\pm$ are geodesics, these two discs are two embedded symplectic spheres in $(X,\omega)$, which we denote by $F^\pm$.
    
    The goal is now to compute the self-intersection number of these spheres.
    The computation is completely symmetric for the two spheres, and hence we carry it out for $F^+$.
    In order to do so we compute the Euler number of its normal bundle.
    The normal bundle $N_{\mathbb{C}P^1_\infty/ \mathbb{C}P^2}$ is trivialized over a neighbourhood of $C^+$ by $\iota$, and we can define the local section $\partial_u=\partial_{\mathfrak{R}(z)}$ in this trivialization.
    Moreover, because the normal bundle $N_{\mathbb{C}P^1_\infty/ \mathbb{C}P^2}$ is trivial over the disc $D_+$ we can extend this section to a section $\nu_+$ over $D^+$.
    Since the geodesic flow leaves the normal coordinate invariant, the section $\nu_+$ descends to give a nowhere vanishing section of $N_{F^+/ X}$, which implies:
    \begin{equation*}
        F^+\cdot F^+ = e\big(N_{F^+/ X}\big) =0.
    \qedhere
    \end{equation*}
\end{proof}

\begin{remark}
    \cref{lma:realRP2_blow_up} can also be proven in another way, by working in complex coordinates.
    Oakley and Usher \cite[Section 3]{OaUs16} construct an explicit Weinstein neighbourhood of $L_\tau$ and a Hamiltonian torus action on $\C P^2\backslash L_\tau$.\footnote{This construction was also described by Wu \cite[Section 3]{Wu15}.}
    This construction then allows one to perform the rational blow-up along $L_\tau$ in a visible manner.
    Since $L$ and $L_\tau$ coincide near the line at infinity, this will yield an alternative proof of \cref{lma:realRP2_blow_up}.
    
    We view $T^*\R P^2$ as the quotient of $\{(p,q)\in \R^3\oplus \R^3 \mid |q|=1,\langle p,q\rangle=0\}$ by the usual $\Z_2$-action and denote by $D_1^* \R P^2$ the unit codisc bundle.
    We view $\C P^2$ as the symplectic reduction of the standard sphere with radius $\sqrt{2}$, i.e.\ $S^5(\sqrt{2})\subset \C^3$, and thus we understand elements of $\C P^2$ as tuples $[z_1:z_2:z_3]$ satisfying $|z_1|^2+|z_2|^2+|z_3|^2=2$, up to multiplication by $e^{i\theta}$.

    Oakley and Usher construct an explicit Weinstein neighbourhood of $L_\tau$ via a map $\varphi: D_1^*\R P^2\rightarrow \C P^2$, which is a symplectomorphism onto its image $\C P^2\backslash Q$, where $Q$ is the Fermat quadric $\{z_1^2+z_2^2+z_3^2=0\}\subset \C P^2$.
    Moreover, there is an explicit Hamiltonian torus action on $\C P^2\backslash L_\tau$ given by
    \begin{equation}
    \label{eq:Hamiltonian_action}
        G([z_1,z_2,z_3]):=\text{Im}(\bar{z}_1z_2) \quad\text{and}\quad H([z_1,z_2,z_3]):=\frac{1}{4}\sqrt{4-\left|\sum z_i^2\right|^2}.
    \end{equation}
    The moment image is shown in \cref{fig:delzant-transform}.
    \begin{figure}[ht]
        \centering
        \begin{tikzpicture}[scale=1.2]
        \begin{scope}
            \coordinate (A) at (-2,1);
            \coordinate (B) at (2,1);
            \coordinate (C) at (0,0);
            \fill[gray!15] (A) -- (B) -- (C) -- cycle;
            \draw (A) node{\tiny \(\bullet\)} -- (B) node{\tiny \(\bullet\)} -- (C) -- cycle;
            \draw[opacity=0.5, ->]
                (-2.5,0) -- (2.5,0);
            \draw[opacity=0.5, ->]
                (0,-0.5) -- (0,1.5);
            \draw[fill=white, line width=1pt] (C) circle (2pt);
            \node[left]  at (A) {\small $(-1,1/2)$};
            \node[right] at (B) {\small $(1,1/2)$};
            \node[below left]       at (C) {\small $(0,0)$};
            \node at (0.3,1.5) {$H$};
            \node at (2.8,0) {$G$};
        \end{scope}
        
        \begin{scope}[xshift=8cm]
            \coordinate (D) at (-2,1);
            \coordinate (E) at (-2,0);
            \coordinate (F) at (2,1);
            \fill[gray!15] (D) -- (E) -- (F) -- cycle;
            \draw (D) node{\tiny \(\bullet\)} -- (E) -- (F) node{\tiny \(\bullet\)} -- cycle;
            \draw[fill=white, line width=1pt] (E) circle (2pt);
        \end{scope}

        \draw[->]
        (3.0,0.5) .. controls (3.8,0.75) and (4.6,0.75) .. (5.4,0.5);
        
        \end{tikzpicture}
        
        \caption{On the left, the moment image of $(G,H)$, with the origin missing. Note that it is clear from \eqref{eq:Hamiltonian_action} that $L_\tau$ sits over the origin. On the right, the moment image after an affine $SL_{2}(\Z)$ transformation.}
        \label{fig:delzant-transform}
    \end{figure}
    The line at infinity $\C P^1_{\infty}$ is given by the equation $\{z_3=0\}$ and, since it has real coefficients, intersects $L_\tau$ in a circle (an algebraic line of $\R P^2$).
    Thus $\C P^1_\infty$ splits into two discs in $\C P^2\backslash L_\tau$, related by the involution $\tau$.
    It is straightforward to check that each of these discs projects to one of the slanted boundary segments of the moment image.

    The rational blow-up is then given by performing a symplectic cut along the visible hypersurface $H^{-1}(\alpha)$, for some $0<\alpha<1/2$, and the resulting moment image is the usual moment image of the Hirzebruch surface $\F_4$.
\end{remark}

To recapitulate: performing the rational blow-up along the embedding $\psi$, introduced in the proof of \cref{lma:realRP2_blow_up}, transforms $\mathbb{C}P^1_\infty$ into two embedded symplectic $(0)$-spheres $F^{\pm}$, which intersect the $(-4)$-sphere $\Sigma$, introduced by the rational blow-up, each in a single point, i.e.\ $X$ contains the linear chain of symplectic spheres $\mathcal{T}:=(F^+,\Sigma,F^-)$.
Since the rational blow-up preserves rationality, see for example \cite[Lemma 2.3]{BoLiWu14} or \cite[Corollary 3.10]{AdHa25}, we know that $(X,\omega)$ is diffeomorphic to $S^2 \times S^2$ and that $[F^\pm]\in H_2(X;\mathbb{Z})$ is a fibre.

Since the argument that $(X,\omega)$ is rational and ruled can be carried out quite easily, we give it for the sake of completeness.
Define $F:=[F^\pm] \in H_2(X;\mathbb{Z})$ and for $J\in \mathcal{J}(X,\omega)$ consider the moduli space $\mathcal{M}_{0,1}(F,J)$ of rational $J$-holomorphic curves representing $F$ with one marked point.\footnote{Note that a priori we only know that $[\Sigma]$ and $[F]$ form an integral basis of the free part of $H_2(X;\mathbb{Z})$ because of an argument involving the intersection matrix of $[\Sigma]$ and $[F]$ and using the fact that $b_2(X;\mathbb{Q})=2$. However, since $F^\pm$ are symplectic spheres, they cannot represent a torsion element in $H_2(X;\mathbb{Z})$, so the integer homology classes they represent coincide.}
Moreover, define the following subspace of the space of $\omega$-tame almost complex structures
$$
\mathcal{J}(\mathcal{T}):=\{J\in \mathcal{J}(X,\omega)\mid \text{each component of }\mathcal{T}\text{ is }J\text{-holomorphic}\}.
$$
The space $\mathcal{J}(\mathcal{T})$ is non-empty since $\mathcal{T}$ is a symplectic normal crossing divisor.
\begin{lemma}
    For every $J \in \mathcal{J}(\mathcal{T})$, the moduli space $\mathcal{M}_{0,1}(F,J)$ is a compact manifold of dimension $\dim(\mathcal{M}_{0,1}(F,J))=4$.
    Moreover, $(X,\omega)$ is regulated by the rational curves in $\mathcal{M}_{0,0}(F,J)$.
    In particular, the regulation has no broken fibre and $(X,\omega)$ is rational and ruled.
\end{lemma}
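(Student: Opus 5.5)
The plan follows the template of \cref{lma:ruling_compactification}, replacing the compactifying divisor $\cd$ by the chain $\ct=(F^+,\Sigma,F^-)$. The first step is the index computation. By adjunction for the embedded $(0)$-sphere $F^+$ we have $c_1(F)=2$. Hence for $u\in\mathcal{M}_{0,0}(F,J)$,
\[
\operatorname{ind}(u)=-2+2c_1(F)=2,
\]
and adding a marked point gives dimension $4$. Since $\operatorname{ind}(u)=2>0=c_N-\text{something negative}$, automatic transversality (\cite{HoLiSi97}, \cite[Chapter 2]{Wen18:Jhollow}) holds for every embedded sphere in class $F$. So $\mathcal{M}_{0,1}(F,J)$ is a smooth $4$-manifold wherever it consists of embedded curves. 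By adjunction, every somewhere-injective curve in class $F$ is embedded, since $F^2=0$ and $c_1(F)=2$.

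The main step, and the expected obstacle, is \textbf{compactness}: we must rule out nodal degenerations in $\overline{\mathcal{M}}_{0,0}(F,J)$ for $J\in\mathcal{J}(\mathcal{T})$. Suppose a stable limit $u_\infty$ has components $S_1,\dots,S_N$. Since $F^2=0$ and $F\cdot\Sigma=1$, the intersection numbers control the decomposition. First take components not contained in $\ct$. Each must hit $\ct$: the complement $X\setminus\ct$ is the complement of a neighbourhood of $\mathbb{C}P^1_\infty\cup L$ in $\mathbb{C}P^2$, i.e.\ the Liouville domain $B^4\setminus$(Weinstein nbhd of $\Delta$). Concretely, $\omega$ is exact on $X\setminus\ct$, so no closed curve lies there. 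Next write
\[
[u_\infty]=[S]+a[\Sigma]+b_+[F^+]+b_-[F^-].
\]
The relations $F\cdot F^\pm=0$, $F\cdot\Sigma=1$, $\Sigma^2=-4$, together with positivity of intersections, give the following. Intersecting with $\Sigma$ yields $1=S\cdot\Sigma-4a+b_++b_-$. Intersecting with $F^\pm$ yields $0=S\cdot F^\pm+a$, which forces $a=0$ and $S\cdot F^\pm=0$. Homologically $F^+=F^-=F$, so $[S]=(1-b_+-b_-)F$, and positivity of area gives $b_\pm=0$ unless $S$ is empty. If $S$ is empty, then $[u_\infty]=F$ is represented by $F^+$ or $F^-$ itself, which is a smooth curve. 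Otherwise $[S]=F$ and every component of $S$ has positive intersection with $\Sigma$. As $S\cdot\Sigma=1$, there is exactly one component meeting $\Sigma$. Any further component $A$ would be disjoint from $\ct$, contradicting exactness, so $S$ is irreducible. Multiple covers are excluded because $F$ is primitive. Hence the limit is a smooth embedded curve.

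It remains to conclude that $\mathcal{M}_{0,1}(F,J)$ is a compact $4$-manifold and that $\operatorname{ev}:\mathcal{M}_{0,1}(F,J)\to X$ has degree one. Compactness and smoothness follow from the two previous paragraphs. The moduli space is non-empty because $F^+$ is $J$-holomorphic. For the degree, pick a point $x\in F^+$. The only curve through $x$ is $F^+$ itself, since any other curve in class $F$ would meet $F^+$ with $F^2=0$, contradicting positivity of intersections. This transverse single preimage gives degree $1$. Distinct curves are disjoint for the same reason, so the curves foliate $X$ and form a regulation without broken fibres (cf.\ \cite[Proposition 4.1.4]{ABEHS25}). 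The quotient is then an $S^2$-bundle over $S^2$, with the sphere $\Sigma$ as a section, so $(X,\omega)$ is rational and ruled.
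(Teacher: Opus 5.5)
Your proof is correct and follows the paper's argument: all curves in class $F$ are simple because $F\cdot\Sigma=1$; automatic transversality follows from $\operatorname{ind}(u)=2$; compactness comes from positivity of intersections with the $J$-holomorphic chain $\mathcal{T}$; the evaluation map has degree one; and $\Sigma$ serves as a section. (In the transversality step, the criterion for closed curves is simply $\operatorname{ind}(u)=2>c_N(u)=0$, so the ``something negative'' should be dropped.) The only difference is one of detail: the paper compresses compactness into a single sentence, while you spell it out in the style of the regulation lemma for $X_{p,q}$, using exactness of $\omega$ on $X\setminus\mathcal{T}$ (an open subset of $\mathbb{C}^2=\mathbb{C}P^2\setminus\mathbb{C}P^1_\infty$) and then the intersection numerics with $\Sigma$ and $F^{\pm}$, which is a valid completion of the paper's sketch.
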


\begin{proof}
    Note that $F$ is a primitive homology class, because $\Sigma\cdot F=1$, which in particular implies that all curves in $\mathcal{M}_{0,1}(F,J)$ are simple.
    Therefore, it follows by automatic transversality, see for example \cite[Chapter 2]{Wen18:Jhollow} or \cite{HoLiSi97}, that $\mathcal{M}_{0,1}(F,J)$ is a smooth manifold of dimension $\dim(\mathcal{M}_{0,1}(F,J))=4$, because $c_1(F)=F\cdot F + 2 = 2$ and hence the index of a curve $u \in \mathcal{M}_{0,1}(F,J)$ is
    $$
    \text{ind}(u)=-2+2c_1([u])=2>-2.
    $$
    This implies that the evaluation map $\text{ev}$ has degree one.
    The statement about the compactness of $\mathcal{M}_{0,1}(F,J)$ follows directly from the fact that $F$ is primitive and from positivity of intersection.
    Moreover, $\Sigma$ defines a section of the regulation and so $(X,\omega)$ is a rational and ruled surface, diffeomorphic to $S^2\times S^2$.
\end{proof}

\begin{figure}[htb]
\begin{center}
    \begin{tikzpicture}[scale=0.95]
        \begin{scope}[shift={(-8,0)}]
            \node at (-0.8,3.1) {(a)};
            \draw[thick] (3.2,-0.2) to[out=135,in=-45] (-0.2,3.2);
            \node (L) at (1.3,2.4) {\footnotesize $\mathbb{C}P^1_{\infty}$};
            \draw[thick] (0,3.2) -- (0,0.5) to[out=-90,in=180] (0.5,0) --  (3.2,0);
            \node (Q) at (-0.3,1.5) {\footnotesize $Q$};
            \draw[opacity=0.2, line width=14pt, line cap=round]
                (1.5,1.5) -- (0.5,0.5);
            \draw[thick, red]
                (1.5,1.5) node[circlecross]{} -- (0.5,0.5);
            \node[text=red] at (2.2,1.5) {\footnotesize $L$};
        \end{scope}
    
        \begin{scope}[shift={(-3,0)}]
            \draw[thick] (3.2,-0.2) -- (1.6,1.4);
            \draw[thick] (-0.2,3.2) -- (1.4,1.6);
            \draw[thick] (1.3+0.2,1.7+0.2) -- (1.3-0.6,1.7-0.6) to[out=225,in=225] (1.7-0.6,1.3-0.6) --  (1.7+0.2,1.3+0.2);
    
            \draw (-0.2,2.95) to[out=-40,in=130] (1.3,1.5);
            \draw (-0.2,2.6) to[out=-30,in=130] (1.2,1.4);
            \draw (-0.2,2.2) to[out=-20,in=135] (1.1,1.3);
            \draw (-0.2,1.8) to[out=-10,in=140] (1,1.2);
            \draw (-0.2,1.3) to[out=10,in=150] (0.9,1.1);
            \draw (-0.2,0.8) to[out=30,in=160] (0.8,1);
            \draw (-0.2,0.3) to[out=40,in=190] (0.85,0.95);
    
            \draw (0,0) -- (0.9,0.9);
            
            \draw (2.95,-0.2) to[out=130,in=320] (1.5,1.3);
            \draw (2.6,-0.2) to[out=120,in=320] (1.4,1.2);
            \draw (2.2,-0.2) to[out=110,in=315] (1.3,1.1);
            \draw (1.8,-0.2) to[out=100,in=310] (1.2,1);
            \draw (1.3,-0.2) to[out=80,in=300] (1.1,0.9);
            \draw (0.8,-0.2) to[out=60,in=290] (1,0.8);
            \draw (0.3,-0.2) to[out=50,in=260] (0.95,0.85);
    
            \node at (2.65,0.85) {\footnotesize $F^+$};
            \node at (0.85,2.65) {\footnotesize $F^-$};
            \draw[thick] (0,3.2) -- (0,0.5) to[out=-90,in=180] (0.5,0) --  (3.2,0);
            \node[fill=white,rounded corners=2pt,inner sep=1pt] at (-0.3,1.4) {\footnotesize $Q$};
            \node at (1.3+0.4,1.7+0.4) {\footnotesize $\Sigma$};
        \end{scope}

        \draw[->] (-5.2,1.5) to[out=20,in=160] (-4,1.5);

        \begin{scope}[shift={(3,0.5)}]
            \node at (-0.8,2.4) {(b)};
            \fill[opacity=0.2] 
                (0,2) -- (0,0) -- (2,0) -- (6,2) -- (0,2);
            \begin{scope}
                \clip (0,2) -- (0,0) -- (2,0) -- (6,2) -- (0,2);
                \draw[opacity=0.2, line width=17pt, line cap=round]
                    (0,2) -- (0,0) -- (2,0) -- (6,2);
            \end{scope}
            \draw[thick] 
                (0,2) -- (0,0) -- (2,0) -- (6,2) -- (0,2);
            \draw[very thick]
                (0,0) -- node[anchor=north] {$\Sigma_\vis$} (2,0);
            \draw[very thick]
                (2,0) -- node[anchor=north west, xshift=-3pt, yshift=3pt] {$F^-_\vis$} (6,2);
            \draw[very thick]
                (0,0) -- node[anchor=east] {$F^+_\vis$} (0,2);
        \end{scope}
    \end{tikzpicture}
    \caption{(a) A schematic illustration of the effect of the rational blow-up of~$L$. Shaded in gray is the Weinstein neighbourhood discussed in \cref{subsec:Compactly_supported_symp}. The "proper transform" of the line $\mathbb{C}P^1_\infty$ is given by the two fibres $F^+$ and $F^-$. (b) The reference model, namely the standard toric fibration on $\F_4$, with the configuration $\ct_\vis=(F^+_\vis,\Sigma_\vis,F^-_\vis)$ and the neighbourhood $\nu_\vis$ shaded in darker gray.}
    \label{fig:compactification_birational}
\end{center}
\end{figure}

\begin{remark}
    In \cref{fig:compactification_birational} we illustrate how the regulation of $(X,\omega)$ is related to the initial situation in $\mathbb{C}P^2$.
    While it is general knowledge that in the complement of an embedded Lagrangian $\mathbb{R}P^2$ in $\mathbb{C}P^2$ there is always a smooth quadric, as can be shown, for example, by neck-stretching, in our case this follows a posteriori: in a normal neighbourhood of the configuration $\mathcal{T}$ we find a positive section, of self-intersection $+4$, of the regulation in the class $F$, which therefore also exists in $\mathbb{C}P^2\setminus L$.
\end{remark}

\subsubsection{Constructing a compactly supported symplectomorphism of $\mathbb{C}^2$ that identifies the Lagrangian planes.}

By \cref{subsec:Compactly_supported_symp}, we find symplectic embeddings $\psi, \psi_\tau:D^*_\varepsilon \mathbb{R}P^2 \hookrightarrow \mathbb{C}P^2$, obtained by restricting Weinstein neighbourhoods of $L,L_\tau \subseteq \mathbb{C}P^2$, that coincide on $D_\varepsilon^*M_\rho$.
Performing the rational blow-up along the embedded copies of $S^*_\epsilon \mathbb{R}P^2$ yields two symplectic manifolds $(X,\omega)$ and $(X_\tau,\omega_\tau)$.

Moreover, these rational blow-ups contain linear chains of symplectic spheres $\mathcal{T}:=(F^+,\Sigma,F^-)$ and $\mathcal{T}_\tau:=(F_\tau^+,\Sigma_\tau,F_\tau^-)$.
By the symplectic neighbourhood theorem there are symplectic embeddings $\phi:\nu \hookrightarrow (\F_4,\omega_4)$ and $\phi_\tau:\nu_\tau \hookrightarrow (\F_4,\omega_4)$ of normal neighbourhoods of these chains, such that $\phi_\tau(\nu_\tau)=\nu_\vis$ and $\phi_\tau(\ct_\tau)=\ct_\vis$ and similarly for $\phi$, as shown in \cref{fig:compactification_birational}.
Here $(\F_4,\omega_4)$ denotes the $4$th Hirzebruch surface equipped with a symplectic form $\omega_4$ that realises the symplectic data of $\ct$ and $\ct_\tau$.
Since $X$ is rational and ruled, the complement $X\setminus \ct$ is a disc bundle over an annulus and hence minimal.
Therefore, \cref{th:McDSalalternative} yields a symplectomorphism
$$
\Phi:((X,\omega),\ct) \to ((\F_4,\omega_4),\ct_\vis),
$$
as in the proof of \cref{th:symp}, and similarly $\Phi_\tau$.
Using these symplectomorphisms we obtain a symplectomorphism $\Psi:((\mathbb{C}P^2,\omega_\text{FS}),L) \to ((\mathbb{C}P^2,\omega_\text{FS}),L_\tau)$ that fixes a neighbourhood of the line at infinity $\mathbb{C}P^1_\infty$ and hence lifts to a compactly supported symplectomorphism of $(\mathbb{C}^2,\omega_\text{st})$ that takes $\Delta$ to $\Delta_\tau$.

\subsubsection{Upgrading the symplectomorphism to a compactly supported Hamiltonian diffeomorphism}

To conclude the argument, it is now enough to observe that Gromov proved that $\text{Symp}_c(B^4,\omega_{\text{st}})$ is contractible.
In particular, this implies that the symplectomorphism $\Psi\in \text{Symp}_c(\mathbb{C}^2,\omega_\text{st})$ that takes $\Delta$ to $\Delta_\tau$ and is compactly supported in $B^4$, is actually a Hamiltonian diffeomorphism $\Psi^H \in \text{Ham}_c(\mathbb{C}^2,\omega_\text{st})$, with compact support in $B^4$.
It follows that $\Delta$ and $\Delta_\tau$ are isotopic via an ambient compactly supported Hamiltonian isotopy.
\end{proof}

\begin{remark}
    This compactification is not the only choice that makes this argument work.
    One could equally well have chosen a monotone $S^2 \times S^2$ as compactification.
    The Lagrangian planes then transform to embedded Lagrangian spheres in $S^2 \times S^2$ and the theorem follows along similar lines.
\end{remark}

\subsection{Lagrangian pinwheels in $B_{p,q}$}
The goal of this subsection is to show that apart from $(p,q)$- and $(p,p-q)$-pinwheels, no other Lagrangian pinwheel embeds into $B_{p,q}$, i.e.\ we prove the following theorem.
\begin{theorem}\label{th:other_pinwheels_in_Bpq}
    If $L_{m,n}\subseteq B_{p,q}$ with $0<n<m$ coprime is a Lagrangian pinwheel, then 
    $$(m,n) \in \{(p,q),(p,p-q)\}.$$
\end{theorem}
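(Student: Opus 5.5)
We follow the neck-stretching strategy of \cite[Section 5.4]{ABEHS25}, as sketched after \cref{app:3}. By \cref{thm:loc_vis}, a Lagrangian pinwheel $L_{m,n}\subseteq B_{p,q}$ has a Khodorovskiy neighbourhood, i.e.\ a symplectic embedding $\iota\colon B_{m,n}(\varepsilon)\hookrightarrow B_{p,q}$. Compactifying away from $\iota$ gives $\iota\colon B_{m,n}(\varepsilon)\hookrightarrow X_{p,q}$ with the divisor $\cd_{p,q}$, the fibre class $F$ and, for $J\in\cj_\tau(\cd)$, the regulation of \cref{lma:ruling_compactification}. We then rationally blow up along $\iota$ to obtain $\widetilde X$ containing the Wahl chain $\cc_{m,n}$ (continued fraction $m^2/(mn-1)=[b_1,\dots,b_k]$), disjoint from $\cd_{p,q}$. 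Note that $b_2(\widetilde X)=b_2(X_{p,q})+k$, since $\partial B_{m,n}(\varepsilon)$ is a rational homology sphere.

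Next we neck-stretch the fibre through a point $x\in D_0$, exactly as before \cref{lma:intersection_profile_CD}. This produces a component $\widetilde C$ meeting $D_0$. Write $\widetilde C=\sum a_iD_i+\sum c_jC_j$ in $H_2(\widetilde X;\Q)$ as in \eqref{eq:KandC}, with the discrepancies $k_j=-1+(e_j+f_j)/m^2$ of $\frac1{m^2}(1,mn-1)$ from \cref{lma:discrepancies}. The adjunction arguments of \cref{lma:intersection_profile_CD} and \cref{lma:intersection_profil_CC} then go through verbatim with $m$ in place of $p$ on the Wahl-chain side, since they only use $k_j\in(-1,0]$, $M^{-1}_\cc<0$ and the numbers $a_0=(pq-1)/p^2$, $a_n=1/p^2$ from \eqref{eq:inverse_MD}, which depend on the ambient $(p,q)$. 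Two facts enter here. First, $\widetilde C$ must hit the Wahl chain: otherwise it would give a fibre disjoint from $L_{m,n}$, and this is impossible because $F$ is Poincaré dual to $c_1(B_{p,q})$, which is nonzero in $H^2(B_{p,q};\Z)\cong\Z_p$ (we use $p\ge2$; for $p=1$ there is nothing to prove). Second, the adjunction forces $\bm\xi=\bm e_1$, $\widetilde C$ embedded, and $\bm\chi=\bm e_j$ with $(e_j-1)(f_j-1)=0$, so $\widetilde C$ meets an end of $\cc_{m,n}$. The square-zero condition then reads
\[
0=\frac{pq-1}{p^2}-\frac{e_jf_j}{m^2}.
\]
At the two ends $e_jf_j\in\{mn-1,\ m^2-(mn+1)\}$. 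This gives either $\frac{mn-1}{m^2}=\frac{pq-1}{p^2}$ or $\frac{m(m-n)-1}{m^2}=\frac{pq-1}{p^2}$, where we use $[mn-1]^{-1}=m^2-mn-1$.

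We then solve these arithmetic equations. Since $\gcd(mn-1,m^2)=1$ and $\gcd(pq-1,p^2)=1$, both fractions are in lowest terms. Hence $m^2=p^2$ and $mn-1=pq-1$ (resp.\ $m(m-n)-1=pq-1$), giving $(m,n)=(p,q)$ or $(p,p-q)$. The main obstacle is justifying that a square-zero ruling component $\widetilde C$ exists, i.e.\ that $\widetilde C^2\ne-1$ for generic $J$. Here we argue as in the paper: the $\C$-family of fibres through varying $x\in D_0$ cannot all limit to rigid $(-1)$-curves, since there are only finitely many such curves in the relevant classes. We must also check that the ambient accounting of \cref{lma:intersection_profile_CD}, with two different singularity types, still forces $\bm\xi=\bm e_1$. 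It does, because $a_n=1/p^2>0$ still comes from $\cd_{p,q}$ alone.
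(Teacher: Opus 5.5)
Your proposal is correct and follows the same scheme as the paper: Khodorovskiy neighbourhood, stretching the fibre through $D_0$, and adjunction for $\widetilde C$ using the $\cd_{p,q}$-numerics $a_0=(pq-1)/p^2$, $a_n=1/p^2$ and the Wahl-chain numerics over $m^2$. The only genuine difference is how you obtain $m=p$.

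\begin{itemize}
\item The paper first uses $\iota^*c_1(B_{p,q})=c_1(B_{m,n})$ together with primitivity of these classes \cite[Lemma 2.13]{EvSm18} to get $m\mid p$. It then writes $p=lm$ and reads off $l=1$ from integrality in the adjunction equation.
\item You drop the divisibility input. Instead you compare reduced fractions in the square-zero condition $\frac{e_jf_j}{m^2}=\frac{pq-1}{p^2}$, using $\gcd(mn-1,m^2)=\gcd(m(m-n)-1,m^2)=1$. This is more elementary and yields $m=p$ and the two possible values of $n$ at once.
\end{itemize}

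Two justifications need tightening.

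First, the adjunction step is not literally verbatim when $m$ and $p$ may differ. For $\bm\chi=\bm e_j$ it gives $(e_j-1)(f_j-1)=1-m^2/p^2$, so you must invoke integrality to conclude $(e_j-1)(f_j-1)=0$ and hence that $\widetilde C$ meets an end of $\cc_{m,n}$. Incidentally, this integrality argument alone already forces $m=p$. So neither $m\mid p$ nor your lowest-terms comparison is strictly needed for that part, although your comparison still cleanly pins down $n$.

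Second, $\widetilde C$ must meet the Wahl chain, but not because $c_1(B_{p,q})\neq 0$ in $H^2(B_{p,q};\Z)$. A nonzero class can restrict to zero near $L_{m,n}$; for instance, every homomorphism $\Z_p\to\Z_m$ vanishes when $\gcd(m,p)=1$. What you actually need, as in the paper's footnote, is $\iota^*c_1(B_{p,q})=c_1(B_{m,n})=n\neq 0$ in $\Z_m$, which holds since $m\geq 2$.

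A minor point: excluding the profile $(1,0,\dots,0,1)$ does not use $a_n$. There the $\cd$-contributions cancel, and only $k_j\le 0$, $M_\cc^{-1}<0$ entrywise, and $\bm\chi\neq 0$ matter. The value $a_n=1/p^2$ only enters afterwards, for the profile $(1,0,\dots,0)$.
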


\begin{proof}
    By the neighbourhood theorem for Lagrangian pinwheels, proven in \cite[Section 3]{Kho13} or \cref{thm:pinwheels_locally_visible}, we find a symplectic embedding $\iota:B_{m,n}(\varepsilon) \hookrightarrow B_{p,q}$.
    In particular, we have that $\iota^*c_1(B_{p,q})=c_1(B_{m,n})$.
    Moreover, the Chern classes are primitive by \cite[Lemma 2.13]{EvSm18} and therefore we have $m \mid p$.\footnote{Recall that $H^2(B_{p,q},\Z) \cong \mathbb{Z}_p$.}
    We will now run the argument carried out in \cref{sec:trafo_regulation}.
    For the sake of brevity we will skip some steps in order to arrive at the important part of the argument quickly.
    The arguments are completely analogous to those in \cref{sec:trafo_regulation} and are therefore mainly a matter of bookkeeping.
    
    A few points are crucial to observe at this point: the compactification $X_{p,q}$ of $B_{p,q}$ is as before, i.e.\ the intersection matrix of the compactifying divisor $\mathcal{D}_{p,q}$, denoted by $M_\mathcal{D}$, has the same numerology as before, whereas the numerology of the intersection matrix $M_\mathcal{C}$, associated with the Wahl chain $\mathcal{C}_{m,n}$, is governed by the numerology derived from $(m,n)$.
    The fibre class $F$ also has non-trivial intersection number with $L_{m,n}$.\footnote{Assume that $[L_{m,n}]\cdot F=0$. Then, as explained in \cite[Appendix C]{AdHa25}, the class $F$ can be represented by an embedded submanifold disjoint from $L_{m,n}$.
    However, this implies that $\iota^*c_1(B_{p,q})=0$, which is a contradiction.}
    
    As before, we stretch the neck along the contact boundary of $\iota(B_{m,n}(\varepsilon))$ and consider the limit of curves in the class $F$.
    We again consider the component $C$ of the holomorphic limit building that intersects the divisor $D_0$ and rationally blow up $X_{p,q}$ along $\iota$ to obtain a symplectic manifold $\widetilde{X}$. 
    We denote the curve that corresponds to the component of the limit building that intersects the compactifying divisor in $D_0$ under this procedure by $\widetilde{C}$. 
    Expressing the canonical class $K$ of $\widetilde{X}$ and $\widetilde{C}$ in the basis defined by $\mathcal{D}_{p,q}$ and $\mathcal{C}_{m,n}$ we obtain, in analogy to \eqref{eq:KandC}:
    $$
    K=-F-\sum D_i+\sum k_jC_j,\qquad \widetilde{C}=\sum a_i D_i +\sum c_j C_j.
    $$
    Repeating verbatim the proof of \cref{lma:intersection_profile_CD} we see that $\widetilde{C}$ is embedded and a sphere of non-positive self-intersection.
    We can assume that $\widetilde{C}$ is a $(0)$-sphere.
    We want to investigate the adjunction formula for this curve in order to determine $(m,n)$ in terms of $(p,q)$.
    The adjunction formula for $\widetilde{C}$ reads, in analogy to \eqref{eq:adjunction_TilC}:
    $$
    1= \frac{p^2+1}{2p^2}- \frac{1}{2}\left(\bm{k}^T\bm{\chi} + \bm{\chi}^T M_\mathcal{C}^{-1} \bm{\chi}\right).
    $$
    As in the proof of \cref{lma:intersection_profil_CC} we find that $\widetilde{C}$ intersects the Wahl chain $\mathcal{C}_{m,n}$ precisely once, which means $\chi_j=1$ for exactly one $j$ in the notation from before. 
    Then the adjunction formula takes the form
    $$
    1=\frac{p^2+1}{2p^2} - \frac{1}{2}\left(-1+\frac{e_j+f_j}{m^2}-\frac{e_jf_j}{m^2}\right)
    $$
    which is, by writing $p=lm$ for a positive integer $l$, equivalent to
    $$
    0=\frac{p^2+1}{2p^2} +\frac{1}{2m^2}(m^2+e_j f_j-(e_j + f_j))-1=\frac{1}{2p^2}(1+l^2 e_j f_j-l^2(e_j + f_j))
    $$
    meaning
    $$
    0=\left(\frac{1}{l^2}-1+(e_j-1)(f_j-1)\right).
    $$
    This implies that $l^2=1$, i.e.\ $p=m$ because the term $(e_j-1)(f_j-1)$ is an integer, and therefore that $e_j=1$ or $f_j=1$, which means that $\widetilde{C}$ intersects $\mathcal{C}_{m,n}$ at one of its ends. 
    Because $\widetilde{C}$ is a $(0)$-curve we have
    $$
    0=a_0+\bm{\chi}^T M_\mathcal{C}^{-1} \bm{\chi}=\frac{pq-1}{p^2}-\frac{1}{p^2}e_jf_j,
    $$
    which implies $pq-1=e_jf_j$.
    Since $\widetilde{C}$ intersects the Wahl chain at one of its ends we have $e_jf_j\in \{mn-1,[mn-1]^{-1}\}=\{pn-1,[pn-1]^{-1}\}$ and because $[pn-1]^{-1}=p^2-(pn+1)=p(p-n)-1$ this means $n=q$ or $n=p-q$.
\end{proof}

\begin{corollary}
    If there is a symplectic embedding of a star-shaped subset of $B_{m,n}$ into $B_{p,q}$, then $B_{m,n}$ is symplectomorphic to $B_{p,q}$.
\end{corollary}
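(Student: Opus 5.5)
The plan is to reduce the statement to \cref{th:other_pinwheels_in_Bpq} and then use the standard identification of $B_{p,q}$ with $B_{p,p-q}$.

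\textbf{Step 1: produce a pinwheel in $B_{p,q}$.} Let $\iota\colon Y\hookrightarrow B_{p,q}$ be a symplectic embedding of a star-shaped subset $Y\subseteq B_{m,n}$. By \cref{def:star_shaped}, $Y$ contains the Liouville skeleton of $B_{m,n}$, which is the standard pinwheel $L^{\std}_{m,n}$. Moreover, $Y$ contains an open neighbourhood of it, after shrinking along the Liouville flow if necessary. Hence $\iota(L^{\std}_{m,n})$ is a Lagrangian $(m,n)$-pinwheel in $B_{p,q}$. The local normal form near the core circle in \cref{def:pinwheel_prelim}(3) is preserved, because $\iota$ is a symplectic embedding defined on a neighbourhood of the pinwheel.

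\textbf{Step 2: apply the classification.} By \cref{th:other_pinwheels_in_Bpq}, Step 1 gives $(m,n)\in\{(p,q),(p,p-q)\}$. In the first case there is nothing to prove.

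\textbf{Step 3: the case $(m,n)=(p,p-q)$.} Here we show $B_{p,p-q}\cong B_{p,q}$.

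1. Take the group action \eqref{eq:action_weights} on $A_{p-1}$ with weights $(1,-1,q)$.
2. Compose it with the automorphism $\zeta\mapsto\zeta^{-1}$ of $\bm{\mu}_p$. This gives the weights $(-1,1,-q)\equiv(-1,1,p-q)$.
3. Swap $z_1$ and $z_2$. This is a unitary map of $\C^3$ preserving $\{z_1z_2=z_3^p+1\}$, and it turns the weights into $(1,-1,p-q)$.
4. The two $\bm{\mu}_p$-actions therefore have the same image subgroup of $U(3)$, up to conjugation by this Kähler map. Their quotients are symplectomorphic, so $B_{p,q}\cong B_{p,p-q}$.

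Equivalently, on the singularity side: the two Wahl singularities $\frac{1}{p^2}(1,pq-1)$ and $\frac{1}{p^2}(1,p(p-q)-1)$ are isomorphic. Indeed $(pq-1)(p(p-q)-1)\equiv 1 \pmod{p^2}$, and exchanging the coordinates identifies $\frac1n(1,a)$ with $\frac1n(1,[a]^{-1})$. Alternatively, the ATF base diagrams $\ATF_{p,q}$ and $\ATF_{p,p-q}$ are related by an integral affine map with determinant $-1$, combined with a reflection.

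\textbf{Main obstacle.} The only real content is Step 3. One must make sure the identification is by a symplectomorphism, not an anti-symplectic map. This is why I would use the explicit unitary swap on $A_{p-1}$ rather than the orientation-reversing reflection of base diagrams.
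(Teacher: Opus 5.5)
Your argument is correct and has the same structure as the paper's proof. The star-shaped set contains $L^{\std}_{m,n}$, whose image is an $(m,n)$-pinwheel in $B_{p,q}$. Then \cref{th:other_pinwheels_in_Bpq} forces $(m,n)\in\{(p,q),(p,p-q)\}$, and one concludes from $B_{p,q}\cong B_{p,p-q}$.

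The only difference is how this last identification is justified. The paper simply cites \cite[Remark 2.8]{ABEHS25}: $\ATF_{p,q}$ and $\ATF_{p,p-q}$ are related by an integral affine transformation. Your swap $z_1\leftrightarrow z_2$ is unitary and preserves $A_{p-1}$. It conjugates $\operatorname{diag}(\zeta,\zeta^{-1},\zeta^{q})$ to $\operatorname{diag}(\zeta^{-1},\zeta,\zeta^{q})$, which is the $(p,p-q)$-action of $\zeta^{-1}$. So it descends to an explicit symplectomorphism $B_{p,q}\to B_{p,p-q}$ directly from \cref{def:Bpq}, which is a more self-contained route.

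However, the \emph{main obstacle} you flag is not real. An integral affine map of the base with determinant $-1$ only changes the chosen basis of $H_1$ of the fibres. That is, it relabels the action--angle coordinates of the same Lagrangian fibration, so the total space and its symplectic form are unchanged, and no anti-symplectic map arises. The paper itself reflects along $x=y$ when identifying $x_{\orb}$.

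Concretely, $\begin{pmatrix}1-pq & p^2\\ 1-q(p-q) & p(p-q)-1\end{pmatrix}$ has determinant $-1$ and sends
\[(0,1)\mapsto(p^2,p(p-q)-1),\quad (p^2,pq-1)\mapsto(0,1),\quad (p,q)\mapsto(p,p-q).\]
So it carries $\ATF_{p,q}$ onto $\ATF_{p,p-q}$ on its own, and the phrase \emph{combined with a reflection} in your alternative should be dropped.
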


\begin{proof}
    The star-shaped subset contains an $(m,n)$-pinwheel; therefore, by Theorem \ref{th:other_pinwheels_in_Bpq}, $(m,n)\in\{(p,q),(p,p-q)\}$, and \cite[Remark 2.8]{ABEHS25} shows that $B_{p,q}$ and $B_{p,p-q}$ are symplectomorphic since their base diagrams are related by an integral affine transformation. 
\end{proof}

\begin{remark}
    Note that a weaker version of this result could be obtained without evoking pseudoholomorphic curve techniques, just by considering the soft obstructions of Lagrangian pinwheels, namely that $c_1(L_{m,n})=n \mod m$ and $L_{m,n}^2=-1 \mod m$ (see \cite[Remark 2.8]{AdHa25}).
    However, these considerations do not exclude the case of a symplectic embedding $B_{4,3}\hookrightarrow B_{8,1}$ for example. 
\end{remark}

\section{Appendix}

\subsection*{Pinwheels and their neighbourhoods}
In this appendix, we elaborate on some aspects regarding pinwheels and their neighbourhoods.
We introduce a weaker (perhaps more natural) definition of a Lagrangian pinwheel, show in \cref{lemma:minimal=good} that it is equivalent to the one given in the main text, and demonstrate how these admit a ``Weinstein'' neighbourhood symplectomorphic to $B_{p,q}$ after completion.
The neighbourhood theorem is originally due to Khodorovskiy \cite[Section 3]{Kho13}, and we improve upon it by showing that the symplectomorphism into $B_{p,q}$ takes the pinwheel to the visible one with respect to the standard ATF on $B_{p,q}$.

\begin{definition}
    Define a \textit{topological $p$-pinwheel $P_p$} to be the topological space $D^2/{\sim}$, where the equivalence relation is given by $z \sim z'$ if and only if $z,z'\in \partial D^2$ and $z=e^{2\pi i k/p}z'$ for some $k\in \mathbb{N}$.
\end{definition}

Recall that we are always assuming, as in the rest of this paper, that $0<q<p$ are two coprime integers.
We now define a Lagrangian $(p,q)$-pinwheel differently from \cref{def:pinwheel_prelim}.

\begin{definition}\label{def:pinwheel_equivariance}
    A \textit{Lagrangian $(p,q)$-pinwheel} in a symplectic 4-manifold $(X,\omega)$ is an immersion $f:D^2 \subseteq \mathbb{C} \looparrowright X$ such that
    \begin{enumerate}[label={\roman*)}]
        \item 
            The restriction of $f$ onto the interior of the disc $f|_{D^2\setminus \partial D^2}$ is a Lagrangian embedding and there exists an embedding $\gamma:\partial D^2 \hookrightarrow X$ such that $C:=f(\partial D^2)=\gamma(\partial D^2)$, which we will call the \textit{core circle};
        \item 
            The map $f_\partial:=f|_{ \partial D^2}$ factors through a continuous embedding of $P_p$;
        \item 
        Furthermore, let $\Lambda \to C$ be the $S^1$-bundle whose fibre over $x\in C$ consists of Lagrangian $2$-planes in $T_xX$ that contain $T_xC$.
    Choose a trivialization $\Phi:\Lambda \to C \times S^1$.
    Pulling this trivialization back by $f_\partial$ gives a trivialization of the pullback bundle $(f|_\partial)^*\Lambda \to \partial D^2$.
    The immersion defines a section $s:=f_*(TD^2)$ in $(f|_\partial)^*\Lambda$, which in the trivialization defines a map $\vartheta:S^1 \to S^1$. 
    We then require that this section satisfies
    \begin{equation}\label{eq:def_pinwheel_equivariance}
        \vartheta(t+\delta)-\vartheta(t)=q\delta,
    \end{equation}
    where $\delta:=\frac{2\pi}{p}$.
    \end{enumerate}
\end{definition}
\begin{remark}
    Before we move on to investigate this definition in more detail, let us elaborate on the definition itself and its connections to earlier definitions of Lagrangian pinwheels. 
    Both Khodorovskiy \cite[Definition 3.1]{Kho13} and Evans--Smith \cite[Definition 2.3]{EvSm18} give a definition of a Lagrangian $(p,q)$-pinwheel that exchanges the last part of our definition with the weaker condition that the relative winding number, which is only defined modulo $p$, of the flanges around the core is equal to $q$.
    \begin{enumerate}[label=\roman*)]
        \item 
            For the purposes of Khodorovskiy and Evans--Smith the weaker definition suffices because in an arbitrarily small $C^0$-neighbourhood of a pinwheel satisfying this weaker definition there is a $(p,q)$-pinwheel that admits a standard neighbourhood. 
            See \cite[Definition 3.3]{Kho13} and \cite[Definition 2.10]{EvSm18}.
        \item 
            For us, however, the weaker definition is insufficient: to show Hamiltonian uniqueness there is an obstruction coming from the configuration space of $p$-lines in the plane.
            Indeed, observe that, by definition, at a point on the core circle the immersion $f$ defines $p$ lines in the symplectic normal bundle.
            These lines cannot, in general, be taken to another configuration of $p$ lines in the symplectic normal bundle by a linear symplectic (or even just smooth) map.
            Hence, it is absolutely crucial to fix the geometric tangent data along the core circle, which is exactly the last part of our \cref{def:pinwheel_equivariance} (see also the proof of \cref{lemma:minimal=good}).
    \end{enumerate}
    Theorem \ref{th: nearby lagrangian for pinwheels} shows uniqueness up to Lagrangian isotopy in $B_{p,q}$ of these weakly defined pinwheels.
    This follows from our theorem together with the fact that weakly defined Lagrangian $(p,q)$-pinwheels are isotopic through weak Lagrangian $(p,q)$-pinwheels to a Lagrangian $(p,q)$-pinwheel, as just defined, and this isotopy can be localised around the core circle; cf. \cite[Lemma 3.3]{Kho13}.
\end{remark}
Let $f:D^2 \looparrowright X$ define a Lagrangian $(p,q)$-pinwheel in $(X,\omega)$ and $\gamma:\partial D^2 \hookrightarrow X$ the parametrization of the core circle given in the definition. 
Since $\gamma$ is isotropic we find coordinates around this core circle of the form $T^*S^1 \times \mathbb{C}$, with coordinates $(\tau,x)$ on $T^*S^1$, where $\tau$ is the base coordinate, and $z=u+iv$ on $\mathbb{C}$, such that the core circle is given by $S^1 \times \{0\}$.
Moreover, the symplectic form is identified with $d\tau \wedge dx + du \wedge dv$.
This means that, if we write $\Sigma:S^1 \times [0,\epsilon)\to T^*S^1 \times \mathbb{C}$ for $f$ in these coordinates, we can assume that
\begin{equation}\label{eq:local_expression_pinwheel}
    \Sigma(t,s)=(pt,x(t,s),z(t,s)).
\end{equation}
Note that there is a natural choice of Lagrangian immersion of this collar that satisfies the conditions on the collar part in \cref{def:pinwheel_equivariance}:
\begin{equation}\label{eq:pinwheel_good_parametrization}
    \Sigma^g(t,s)=\left(pt,\frac{q}{2p}s^2,s e^{iqt}\right)
\end{equation}
This immersion is exactly the immersion that naturally appears in toric geometry.
See for example \cite[Section 5.3]{Ev23:Book}.
This parametrization warrants a definition.

\begin{definition}[\normalfont{\cite[Lemma 3.3]{Kho13}}]
\label{def:good_pinwheel}
    A \textit{good Lagrangian $(p,q)$-pinwheel} is a Lagrangian immersion $f:D^2\looparrowright (X,\omega)$ such that we find coordinates around the core circle in which $f$ is expressed as $\Sigma^g$, defined in \eqref{eq:pinwheel_good_parametrization}.
\end{definition}

In the main text, we have defined Lagrangian $(p,q)$-pinwheels as good Lagrangian $(p,q)$-pinwheels; this is allowed by the following lemma.

\begin{lemma}\label{lemma:minimal=good}
    \cref{def:pinwheel_equivariance} and the \say{good} \cref{def:good_pinwheel} of Lagrangian $(p,q)$-pinwheels agree.
\end{lemma}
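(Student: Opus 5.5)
One direction is immediate. Suppose $f$ is a good pinwheel, written as $\Sigma^g$ in coordinates $T^*S^1\times\C$. The tangent plane along the core is spanned by $\partial_\tau$ together with $\partial_s\Sigma^g|_{s=0}=(0,0,e^{iqt})$. In the trivialization of $\Lambda$ induced by these coordinates, the angle function is $\vartheta(t)=qt$. Shifting $t$ by $\delta=2\pi/p$ then gives $\vartheta(t+\delta)-\vartheta(t)=q\delta$. Conditions i) and ii) are visible from the formula, since $t\mapsto pt$ is $p$-to-$1$ onto the core. Changing the trivialization $\Phi$ alters $\vartheta$ by a function pulled back from $C$. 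Such a function is $\delta$-periodic in $t$, so it does not affect \eqref{eq:def_pinwheel_equivariance}, and the condition is therefore independent of choices.

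For the converse, I take a pinwheel as in \cref{def:pinwheel_equivariance} and construct coordinates in which it becomes $\Sigma^g$. I would proceed in three steps.

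\textbf{Step 1 (first-order data).} Choose Weinstein coordinates $T^*S^1\times\C$ near the isotropic core, as in \eqref{eq:local_expression_pinwheel}, so that $\Sigma(t,s)=(pt,x(t,s),z(t,s))$. The Lagrangian planes containing $\partial_\tau$ are exactly $\R\partial_\tau\oplus\ell$ with $\ell\subset\C$ a real line, so $\vartheta(t)$ is the angle of $\partial_s z(t,0)$, which is a real line in $\C$. Condition \eqref{eq:def_pinwheel_equivariance} says that $\vartheta(t)-qt$ is $\delta$-periodic, so it descends to a function $\phi$ on the core $C$. Applying the symplectic change of coordinates $(\tau,x,z)\mapsto(\tau,x,e^{-i\phi(\tau)}z)$, corrected in $x$ by the term needed to stay symplectic, rotates the lines so that $\partial_s z(t,0)\in\R e^{iqt}$. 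After reparametrizing $s$, we get $\partial_s\Sigma(t,0)=(0,0,e^{iqt})$. One should check here that the $x$-correction, of order $|z|^2\phi'$, does not spoil this first-order statement; it does not, because it is quadratic in $z$.

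\textbf{Step 2 (matching the flanges to first order, then exactly).} Now $\Sigma$ and $\Sigma^g$ are two Lagrangian immersions of the collar with the same core map and the same tangent planes along the core. Both are unions of $p$ flanges, and each flange is an embedded Lagrangian half-strip with boundary on $C$. I would construct a local symplectomorphism, fixing $C$ and tangent to the identity along $C$, that carries $\Sigma$ onto $\Sigma^g$. The natural route is Weinstein's theorem relative to the immersion: generate the map by a Hamiltonian $H$, vanishing to second order along $C$, whose time-one map sends the graph-like flanges of $\Sigma$ to those of $\Sigma^g$.

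Locally near each flange one can write $\Sigma$ as the image of $\Sigma^g$ under the graph of $dS_k$ for a function $S_k$ on the $k$th flange that vanishes to second order at the core. The $p$ flanges meet only along $C$, with pairwise transverse tangent directions since the lines $e^{iq(t+k\delta)}$ are distinct. This transversality means a single function $H$ near $C$ can be found restricting appropriately to all flanges simultaneously, for instance by a Whitney-type extension or by working in polar coordinates in $z$.

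\textbf{Step 3 (Moser/isotopy).} Interpolate linearly, $\Sigma_r$, between the two immersions; each $\Sigma_r$ is Lagrangian for small collar width, since they agree to first order. Then apply a Moser-type argument: find a Hamiltonian isotopy, compactly supported near $C$ and fixing $C$, realizing $\Sigma_r$. Composing the coordinate chart with this isotopy gives coordinates in which $f=\Sigma^g$.

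I expect Step 2–3 to be the main obstacle. The difficulty is producing one ambient symplectic map that simultaneously straightens all $p$ flanges, which meet along $C$, while remaining smooth at $C$. I would first try the Hamiltonian extension along the union of flanges, using the transversality of the $p$ lines at each point to show that the required $1$-jet data along $C$ are compatible. The first-order compatibility supplied by Step 1 is precisely what makes this possible; this is where the weaker mod-$p$ winding definition would fail.
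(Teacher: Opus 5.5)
Your Step~1 is the paper's first step. The paper checks $\partial_s x(t,0)=0$ directly from the Lagrangian condition, which your description of the Lagrangian planes containing $\partial_\tau$ already gives, and it straightens the angle with $\Phi_{\eta_p}(\tau,x,z)=(\tau,x-\tfrac12\eta_p'(\tau)|z|^2,e^{-i\eta_p(\tau)}z)$.

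The gap is in Steps~2--3.

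\textbf{The interpolation.} The linear interpolation $\Sigma_r=(1-r)\Sigma^g+r\Sigma$ is not Lagrangian. First-order agreement only kills $\Sigma_r^*\omega$ along $s=0$. The cross term $r(1-r)\bigl(\omega(\partial_t\Sigma^g,\partial_s\Sigma)+\omega(\partial_t\Sigma,\partial_s\Sigma^g)\bigr)$ is in general nonzero for $s>0$, and shrinking the collar does not change that.

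\textbf{The single Hamiltonian.} More seriously, a single Hamiltonian restricting correctly to all $p$ flanges need not exist. At a point of $C$, its normal $k$-jet is a degree-$k$ polynomial on $\C$ with $k+1$ coefficients, and it must reproduce prescribed data on $p$ rays. So Whitney-type compatibility conditions arise, and first-order agreement does not supply them. (Polar coordinates in $z$ are also not smooth at $z=0$.)

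\textbf{A counterexample.} This is a genuine obstruction, not a technicality. For $(p,q)=(2,1)$ take the collar $f(t,s)=\bigl(2t,\tfrac14|w(s)|^2,w(s)e^{it}\bigr)$ with $w(s)=s+is^2$ near $s=0$, cut off to $w(s)=s$ further out.
\begin{itemize}
\item It is Lagrangian: the condition $2x_s=-\operatorname{Im}(\bar z_t z_s)$ integrates to $x=\tfrac14|w|^2$.
\item It is embedded for $s>0$ and has the model first-order data, so it satisfies \cref{def:pinwheel_equivariance}.
\item Over each $\tau$, in the frame $e^{i\tau/2}$, the two flanges form the curve $\sigma\mapsto\sigma+i\sigma|\sigma|$.
\end{itemize}
So this M\"obius band is $C^1$ but not $C^2$ along $C$, whereas the image of $\Sigma^g$ is a smooth M\"obius band. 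No smooth chart can make this pinwheel good.

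\textbf{The paper's route.} For the second half the paper argues differently.
\begin{itemize}
\item It passes to the $p$-fold cover of $T^*S^1\times\C$ induced by $\tau\mapsto p\tau$, where the collar lifts to an embedded half-annulus $\widetilde\Sigma$.
\item It writes $\widetilde\Sigma$ as the graph of $dH$ over $\widetilde\Sigma^g$, using $H^1(S^1\times[0,\epsilon),S^1\times\{0\})=0$.
\item It asserts that $H$ can be chosen deck-invariant, so that its flow descends.
\end{itemize}
This is meant to avoid your simultaneous-straightening problem, but it does not. The deck transformation $\tilde\tau\mapsto\tilde\tau+\delta$ carries the inward direction $e^{iq\tilde\tau}$ of the lift at $\tilde\tau$ to the direction $e^{iq\tilde\tau}$ at $\tilde\tau+\delta$. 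There the lift points along $e^{iq(\tilde\tau+\delta)}\neq e^{iq\tilde\tau}$. So the lifted half-annulus is not deck-invariant. Its $p-1$ translates pass through the lifted core in the other $p-1$ directions, so $p$ flanges again meet along a circle, and the same obstruction reappears upstairs.

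You located the crux correctly, but first-order compatibility does not resolve it. The equivalence of the two definitions needs an extra hypothesis: for example, that the flanges agree with $\Sigma^g$ to infinite order along $C$. Alternatively, it could be weakened to a statement up to a Lagrangian isotopy supported near $C$, in the spirit of \cite[Lemma 3.3]{Kho13}.
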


\begin{proof}
    Our first claim is that coordinates around the core of any Lagrangian $(p,q)$-pinwheel can be chosen so that it is a good Lagrangian $(p,q)$-pinwheel to first order.

    Consider the local expression $\Sigma(t,s)=(pt,x(t,s),z(t,s))$ obtained in \eqref{eq:local_expression_pinwheel}.
    Along the core circle we have $\Sigma(t,0)=(pt,0,0)$ and hence $\partial_t\Sigma(t,0)=(p,0,0)$.
    Moreover, 
    $$
    \partial_s\Sigma(t,0)=(0,\partial_sx(t,0),V(t)),
    \quad\text{where }
    V(t):=\partial_s z(t,0)\in\mathbb C.
    $$
    We first show that $\partial_sx(t,0)=0$.
    Indeed, since $\Sigma$ is Lagrangian,
    $$
        0=\omega(\partial_t\Sigma,\partial_s\Sigma)=p\partial_sx+du\wedge dv(\partial_tz,\partial_sz).
    $$
    Using
    $$
        du\wedge dv(X,Y) = \operatorname{Im}(\overline{dz(X)}dz(Y)),
    $$
    this gives
    $$
        p\partial_sx = -\operatorname{Im}(\overline{\partial_t z}\,\partial_s z).
    $$
    Restricting to $s=0$, the right hand side vanishes because $z(t,0)=0$, and hence $\partial_tz(t,0)=0$. Therefore,
    $$
        \partial_sx(t,0)=0.
    $$
    Thus $\partial_s\Sigma(t,0)=(0,0,V(t))$, where $V(t) \neq 0$. 
    Using the polar decomposition: $V(t)=\lambda(t) e^{i\psi(t)}$ for $\lambda(t)=\lvert V(t) \rvert>0$.
    If we apply the diffeomorphism of the domain that is given by scaling the radial directions by $1/ \lambda(t)$ and work in this new coordinate system we obtain $\partial_s \Sigma(t,0)=(0,0,e^{i\psi(t)})$.
    
    By the definition of a Lagrangian $(p,q)$-pinwheel, in particular \eqref{eq:def_pinwheel_equivariance}, we must have $\psi(t+\delta)=\psi(t)+q\delta$, where $\delta:=2\pi / p$.
    Define $\eta(t):=\psi(t)-qt$ and observe:
    $$
    \eta(t+\delta)=\psi(t+\delta)-q(t+\delta)=\psi(t)+q\delta-q(t+\delta)=\eta(t).
    $$
    This precisely means that $\eta$ descends to a smooth function under the projection, i.e.\ we can define $\eta_p(pt):=\eta(t)$.
    Now consider the symplectomorphism defined by
    \begin{equation}\label{eq:symplecto_straightening}
        \Phi_{\eta_p}(\tau,x,z)=\left(\tau,x-\frac{1}{2}\eta_p'(\tau)\lvert z \rvert^2,e^{-i\eta_p(\tau)}z\right).   
    \end{equation}
    That $\Phi_{\eta_p}$ indeed defines a symplectic map follows from
    $$
    d\tau\wedge d\left(x-\frac{1}{2}\eta_p'(\tau)\lvert z \rvert^2\right)=d\tau \wedge dx-\frac{1}{2}\eta_p'(\tau)d\tau \wedge d\lvert z \rvert^2
    $$
    and, defining $w:=e^{-i\eta_p(\tau)}z$,
    $$
    \frac{i}{2}dw \wedge d\overline{w}=\frac{i}{2}dz \wedge d\overline{z} + \frac{1}{2}\eta_p'(\tau) d\tau \wedge d \lvert z\rvert^2.
    $$
    The symplectomorphism $\Phi_{\eta_p}$ fixes the core pointwise and its differential rotates the complex normal direction by exactly $e^{-i\eta_p(\tau)}$.
    In particular, this means that if we apply the coordinate change $\Phi_{\eta_p}$ on the ambient coordinates and express $\Sigma$ in these new coordinates we obtain
    $$
    \partial_t\Sigma(t,0)=(p,0,0) \quad\text{and}\quad \partial_s\Sigma(t,0)=(0,0,e^{iqt}).
    $$
    So the tangent planes of $\Sigma^g$ and $\Sigma$ along the core circle coincide.
    To sum up: we are now working in a coordinate system in which $\Sigma$ agrees to first order with the model $\Sigma^g$ along the core circle.
    
    Our second claim is that now \textit{$\Sigma$ can be Hamiltonian isotoped to $\Sigma^g$.}
    Consider the $p$-fold cover of $S^1$ explicitly given by $\pi:S^1 \to S^1$, where $\pi(t)=pt$ and the $p$-fold covering this induces on $T^*S^1 \times \mathbb{C}$.
    Lifting the Lagrangian immersions $\Sigma$ and $\Sigma^g$ gives Lagrangian embeddings
    $$
    \widetilde{\Sigma}(t,s)=\left(t,x(t,s),z(t,s)\right) \quad\text{and}\quad \widetilde{\Sigma}^g(t,s)=\left(t,\frac{q}{2p}s^2,s e^{iqt}\right).
    $$
    Since $\widetilde{\Sigma}$ and $\widetilde{\Sigma}^g$ agree along the core to first order, the Weinstein neighbourhood theorem implies that, after possibly shrinking the collar, $\widetilde{\Sigma}$ can be written as the graph of a closed one-form $\alpha \in \Omega^1_{dR}(\widetilde{\Sigma}^g)$ on $\widetilde{\Sigma}^g$ that vanishes on the boundary $S^1 \times \{0\}$.
    Because $H^1_{dR}(S^1 \times [0,\epsilon),S^1\times \{0\})=0$ it follows that $\alpha$ is exact, meaning that there exists a smooth function $H$, vanishing on the boundary, such that $\alpha=dH$.
    
    Viewing $H$ as a function on the Weinstein neighbourhood of $\widetilde{\Sigma}^g$, the Hamiltonian flow of $H$ maps $\widetilde{\Sigma}^g$ to $\widetilde{\Sigma}$. 
    The crucial observation is now that the lifted core circle and the first order data along the core circle are invariant under the deck transformation.
    Consequently, the one-form $\alpha$ and its primitive $H$ can be chosen to be deck-invariant, which ensures that the Hamiltonian flow generated by $H$ is equivariant with respect to the deck transformation.
    Therefore, the Hamiltonian flow descends to the quotient, showing that in a neighbourhood of the core circle the immersion $\Sigma$ is Hamiltonian isotopic to $\Sigma^g$.
\end{proof}

\begin{figure}[htb]
  \centering
      \begin{tikzpicture}
        \begin{scope}[shift={(-8,0)}]
            \node at (-0.5,1.3) {(a)};
            \clip (-0.7,-1.5) rectangle (3,1.5);
            \fill[opacity=0.2] (0,1.5) -- (0,-1.5) -- (3,-1.5) -- (3,1.5);
            \draw[thick] (0,1.5) -- (0,-1.5);
            \draw[->]
                (1,-0.2) -- node[above, text=black]{\tiny $\pmat{p \\ q}$}
                (1.8,0.2);
            \draw[opacity=0.2, line width=14pt, line cap=round]
                (4,1) -- (2.4,0.2);
            \node[text=red] (L) at (-0.4,-0.52) {\footnotesize $L$};
            \draw[thick, red]
                (3,0.5) -- (0,-1) node[circlecross]{};
        \end{scope}
    
        \draw[->] (-4.7,-0.6) to[out=170,in=290] (-5.4,0.2);

        \begin{scope}[shift={(-4.5,-1.2)}, scale=0.6]
            \fill[opacity=0.2] (0,2) -- (0,0) -- (2,0) -- (2,2);
            \begin{scope}
                \clip (0,0) rectangle (2,2);
                \draw[opacity=0.2,line width=8pt]
                    (2.5,0) -- (1,3)
                    (2,-1) -- (0,3)
                    (1,-1) -- (-0.5,2);
            \end{scope}
            \draw[thick, red] (2,1) -- (1.5,2) (1.5,0) -- (0.5,2) (0.5,0) -- (0,1);
            \draw (0,2) -- (0,0) -- (2,0) -- (2,2) -- (0,2);
            \draw[->] (0,2) -- (1,2);
            \draw[->] (0,0) -- (1,0);
            \draw[->] (0,0) -- (0,1);
            \draw[->] (2,0) -- (2,1);
        \end{scope}

        \begin{scope}[shift={(-2,-1.5)}]
        \node at (-0.5,2.8) {(b)};
            \fill[opacity=0.2] (0,3) -- (0,0) -- (3,0);
            \draw[thick] (0,3) -- (0,0) -- (3,0);
            \draw[thick,dash pattern=on 7pt off 3pt] (0,3) -- (3,0);
            \node[text=red] at (1.6,2) {\footnotesize $D_{\text{st}}$};
            \begin{scope}
                \clip (0,3) -- (0,0) -- (3,0);
                \draw[opacity=0.2,line width=14pt]
                    (-1,-1) -- (4,4);
                \draw[opacity=0.2, line width=14pt, line cap=round]
                    (-1,-1) -- (0.1,0.1);
            \end{scope}
            \draw[thick, red]
                (0,0) -- (1.5,1.5);
        \end{scope}

        \draw[->] (0.6,-0.15) to[out=240,in=-30] (-0.6,-0.3);

        \begin{scope}[shift={(0.5,0)}, scale=0.6]
            \fill[opacity=0.2] (0,2) -- (0,0) -- (2,0) -- (2,2);
            \begin{scope}
                \clip (0,0) rectangle (2,2);
                \draw[opacity=0.2,line width=8pt]
                    (-1,3) -- (3,-1);
            \end{scope}
            \draw[thick, red] (2,0) -- (0,2);
            \draw (0,2) -- (0,0) -- (2,0) -- (2,2) -- (0,2);
            \draw[->] (0,2) -- (1,2);
            \draw[->] (0,0) -- (1,0);
            \draw[->] (0,0) -- (0,1);
            \draw[->] (2,0) -- (2,1);
        \end{scope}

        \begin{scope}[shift={(3,-1.5)}]
            \node at (-0.5,2.8) {(c)};
            \fill[opacity=0.2] (0,3) -- (0,0) -- (3,0);
            \draw[thick] (0,3) -- (0,0) -- (3,0);
            \draw[thick,dash pattern=on 7pt off 3pt] (0,3) -- (3,0);
            \node[text=red] at (1.6,2) {\footnotesize $D_{\text{Aur}}$};
            \begin{scope}
                \clip (0,3) -- (0,0) -- (3,0);
                \draw[opacity=0.2,line width=14pt]
                    (-1,-1) -- (4,4);
                \draw[opacity=0.2, line width=14pt, line cap=round]
                    (-1,-1) -- (0.1,0.1);
            \end{scope}
            \draw[thick, red]
                (0.15,0.15) -- (1.5,1.5);
            \draw[dashed] (0.15,0.15) node[cross]{} -- (0,0);
        \end{scope}
    \end{tikzpicture}
    \caption{(a) A neighbourhood $U_\Sigma$ of the core of a Lagrangian $(p,q)$-pinwheel $L$ admits a Lagrangian torus fibration as shown. Shaded in darker gray is a Weinstein neighbourhood of the annulus $\Gamma$. (b) The standard toric fibration of the ball and the visible Lagrangian disc $D_{\text{st}}$. A Weinstein neighbourhood of the annulus is shown in darker gray. Note that a neighbourhood of the centre of the disc contains a saturated neighbourhood of the origin, i.e.\ the fibre tori close to the origin are completely contained in the neighbourhood, while away from the origin the Weinstein neighbourhood is fibred by Lagrangian annuli. (c) The neighbourhood after a nodal trade, which yields a Weinstein neighbourhood of the disc $D_{\text{Aur}}$.}
  \label{fig:ATF_neigh_1}
\end{figure}

\begin{theorem}\label{thm:pinwheels_locally_visible}
        Suppose that $L_{p,q} \subseteq (X,\omega)$ is a Lagrangian pinwheel in a symplectic manifold. Then for some $\epsilon >0$ there exists a symplectic embedding $\psi: B_{p,q}(\epsilon) \hookrightarrow (X,\omega)$ such that $\psi(L_{p,q}^\text{vis})=L_{p,q}$, i.e.\ there exists a neighbourhood $U_L$ of $L_{p,q}$ that admits an ATF such that $L_{p,q}$ is a visible pinwheel.
\end{theorem}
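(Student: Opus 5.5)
By \cref{lemma:minimal=good}, we may first put $L_{p,q}$ into good form. That is, we choose coordinates $T^*S^1\times\C$ near the core circle in which the collar of the pinwheel is given by $\Sigma^g(t,s)=(pt,\frac{q}{2p}s^2,se^{iqt})$.

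The idea is then to build the neighbourhood in two pieces and glue them along a Lagrangian annulus, exactly as suggested by \cref{fig:ATF_neigh_1}.

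\textbf{Piece near the core.} In the chart $T^*S^1\times\C$ we use the toric moment map $\mu(\tau,x,z)=(x,|z|^2/2)$. The collar $\Sigma^g$ is visible: it projects to the ray in direction $(p,q)$ emanating from the boundary point $(0,0)$. This is the standard construction of \cite[Section 5.3]{Ev23:Book}. So a neighbourhood $U_\Sigma$ of the core carries a toric fibration in which the collar sits over a segment of the $(p,q)$-ray. The portion of the collar away from the core is a Lagrangian annulus $\Gamma$ fibred by circles over that segment. A Weinstein neighbourhood of $\Gamma$ is then fibred by Lagrangian annuli compatible with the torus fibration.

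\textbf{Piece near the rest of the disc.} The complement of the collar in the pinwheel is an embedded Lagrangian disc $D$. The Weinstein neighbourhood theorem identifies a neighbourhood of $D$ with a neighbourhood of the standard visible disc $D_{\mathrm{st}}$ in $B^4$, which projects to the diagonal in the standard moment map. We choose this identification so that, on the overlap annulus, it matches the Weinstein neighbourhood of $\Gamma$ coming from the first piece. This is possible because both are neighbourhoods of the same Lagrangian annulus, and the relative Weinstein theorem applies. After a nodal trade at the corner, as in \cref{fig:ATF_neigh_1}(c), $D_{\mathrm{st}}$ becomes the visible disc $D_{\mathrm{Aur}}$ over the branch cut, with its boundary annulus fibred by circles over a segment of the eigenray.

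\textbf{Gluing.} Glue the two ATF neighbourhoods along the annulus-fibred region, using an integral affine map that sends the eigenline of the node to the $(p,q)$-ray. The resulting base is a neighbourhood of a branch cut of direction $(p,q)$ ending on a boundary edge. After shrinking, this is affinely isomorphic to a small copy of $\ATF_{p,q}(\epsilon,\epsilon)$ with the pinwheel lying over the cut, meeting each fibre in one circle, so it is $L^{\vis}_{p,q}$. Symington's lifting result \cite[Proposition 4.8]{Sym02:Fourtwo} (or uniqueness of ATFs with a given base and a Lagrangian section, up to fibred symplectomorphism) then gives the embedding $\psi$.

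\textbf{Main obstacle.} I expect the hardest step to be the compatibility on the overlap. The Weinstein neighbourhood of $D$ must be chosen so that its induced annulus fibration agrees, as a Lagrangian fibration and not just as a symplectic neighbourhood, with the toric one near the core. The monodromy must also come out correctly, so that the glued node really has eigen-direction $(p,q)$ and the result is $B_{p,q}$ rather than some other local model. My first attempt would be to make both fibrations standard $T^*(S^1\times I)$ fibrations over the overlap, by pulling back via a fibre-preserving symplectomorphism of $T^*\Gamma$. I would then check the monodromy by computing the affine gluing of the two integral affine charts directly.
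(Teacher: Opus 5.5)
\textbf{There is a genuine gap at the gluing-and-shrinking step.}

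The Weinstein neighbourhood of the disc $D$ that you import from $B^4$ is only a \emph{thin} neighbourhood of $D_{\mathrm{st}}$. It contains whole torus fibres only very close to the corner, where the tori $\{|z_1|=|z_2|=r\}$ shrink to a point. Over the rest of the diagonal it contains only neighbourhoods of the circles $D_{\mathrm{st}}\cap T$, i.e.\ Lagrangian annuli. Likewise, the toric chart around the core circle is saturated only over a small half-disc around the boundary point.

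So after gluing, the fibration on $U_\Sigma\cup U_\Delta$ has torus fibres only near the two ends of the segment (near the core and near the node). Over the whole middle portion it has annulus fibres, exactly as in \cref{fig:ATF_neigh_2}(a). This is not an almost toric fibration, so Symington's lifting result does not apply. Shrinking cannot repair this, because any neighbourhood of $L_{p,q}$ still has to lie over the entire segment from the boundary point to the node. The problem is visible even at the level of bases. A thin strip around a long segment does not contain a copy of $\ATF_{p,q}(\epsilon)$, since such a copy must contain the corner, the node, and a full two-dimensional neighbourhood of the segment between them.

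\textbf{The missing idea is to move the node towards the core before restricting.} The paper does this with a nodal slide along the eigenline, using the interpretation of Groman--Varolgunes \cite[Section 7.3/7.4]{GroVar23}. That interpretation still makes sense in this partially fibred setting. The preimage of the eigenline carries a Hamiltonian $S^1$-action, and changing the foliation of the reduced surface (\cref{fig:ATF_neigh_slide}) slides the node into the saturated region near the core while keeping $L_{p,q}$ visible. Only then can one restrict to a copy of $\ATF_{p,q}(\epsilon)$, as in \cref{fig:ATF_neigh_2}(b),(c).

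\textbf{A secondary point.} A nodal trade does not by itself make $D_{\mathrm{st}}$ visible. The disc that is visible after the trade is $D_{\mathrm{Aur}}$, and the paper uses \cref{th:Eliashberg_Polterovich}, inside the small saturated region near the corner, to replace one disc by the other.

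By contrast, the issues you single out as the main obstacle are comparatively routine. The overlap compatibility follows from the relative Weinstein theorem, which extends $D^*\Gamma$ to $D^*\Delta$. The monodromy is automatic, since a single focus-focus node on the $(p,q)$-ray from a point of a straight boundary edge is already the $B_{p,q}$ model.
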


\begin{proof}
The idea of the proof is to produce ATFs in the neighbourhoods of the core circle and the capping disc and glue them appropriately. 
By \cref{lemma:minimal=good} there is a neighbourhood $U_\Sigma$ of the core circle of $L_{p,q}$ that is symplectomorphic to $S^1 \times D^3$ such that $L_{p,q}$ is visible in this neighbourhood as shown in \cref{fig:ATF_neigh_1} (a). Consider the annulus $\Gamma$ defined in the figure (to be the gluing region); the ATF defines an embedding of a small cotangent disc bundle of an annulus such that the zero section is identified with $\Gamma$.
\begin{figure}[ht]
  \centering
      \begin{tikzpicture}
        \begin{scope}[shift={(-6,0)}]
            \node at (-0.5,1.7) {(a)};
            \begin{scope}
                \clip (0,-0.5) rectangle (4,2);
                \draw[opacity=0.2,line width=26pt, line cap=round]
                    (-0.4,-0.2) -- (3,1.5);
                \clip (0,-0.5) rectangle (1,2);
                \draw[opacity=0.2,line width=26pt, line cap=round]
                    (-0.4,-0.2) -- (1.3,0.65);
            \end{scope}
            \begin{scope}
                \clip (2,2) -- (4,-1) -- (4,2) -- (2,2);
                \draw[opacity=0.2,line width=26pt, line cap=round]
                    (1.3,0.65) -- (3,1.5);
            \end{scope}
            \draw[thick] (0,0.51) -- (0,-0.505);
            \draw[thick, red]
                (3,1.5) -- (0,0) node[circlecross]{};
            \node[text=red] (L) at (-0.5,0.45) {\footnotesize $L_{p,q}$};
            \draw[dashed] (3,1.5) node[cross]{} -- (1.11*3,1.11*1.5);
        \end{scope}
    
        \begin{scope}[shift={(-1,0)}]
            \node at (-0.5,1.7) {(b)};
            \begin{scope}
                \clip (0,-0.5) rectangle (4,2);
                \draw[opacity=0.2,line width=26pt, line cap=round]
                    (-0.4,-0.2) -- (3,1.5);
                \clip (0,-0.5) rectangle (1,2);
                \draw[opacity=0.2,line width=26pt, line cap=round]
                    (-0.4,-0.2) -- (1.3,0.65);
            \end{scope}
            \begin{scope}
                \clip (2,2) -- (4,-1) -- (4,2) -- (2,2);
                \draw[opacity=0.2,line width=26pt, line cap=round]
                    (1.3,0.65) -- (3,1.5);
            \end{scope}
            \draw[thick] (0,0.51) -- (0,-0.505);
            \draw[thick, red]
                (0.8,0.4) -- (0,0) node[circlecross]{};
            \node[text=red] (L) at (-0.5,0.45) {\footnotesize $L_{p,q}$};
            \draw[dashed] (0.8,0.4) node[cross]{} -- (1.11*3,1.11*1.5);
        \end{scope}
        
        \begin{scope}[shift={(4,0)}]
            \node at (-0.5,1.7) {(c)};
            \filldraw[opacity=0.2] (0,1) -- (0,0) -- (4,1) -- cycle;
            \draw[thick] (0,1) -- (0,0) -- (4,1);
            \draw[dashed] (0.8,0.4) node[cross] {} -- (0,0);
            \draw[thick, red]
                (0.8,0.4) -- (0,0) node[circlecross]{};
            \node[text=red] (L) at (-0.5,0.45) {\footnotesize $L_{p,q}$};
            \draw[thick,dash pattern=on 7pt off 3pt] (0,1) -- (4,1);
        \end{scope}
    \end{tikzpicture}
    \caption{(a) The base diagram of the Lagrangian fibration constructed on a neighbourhood of a Lagrangian pinwheel $L_{p,q}\subseteq (X,\omega)$. (b) The Lagrangian fibration on the same subset after a nodal slide and (c) a restriction of the neighbourhood given by taking the preimage of $\ATF_{p,q}(\epsilon)$, which is contained in (b). To see that (b) contains a base of the form $\ATF_{p,q}(\epsilon)$ change the branch cut in (b).}
    \label{fig:ATF_neigh_2}
\end{figure}
This embedding extends to an embedding of a small cotangent disc bundle of the disc such that the zero section is mapped to the embedded capping disc $\Delta$.
We denote the image of this embedding by $U_\Delta$.
The union of the neighbourhoods $U:=U_\Sigma \cup U_\Delta$ defines a neighbourhood of $L_{p,q}$.
Moreover, this construction carries a Lagrangian fibration, which we now explain.
On $U_\Delta$ we have a fibration as shown in \cref{fig:ATF_neigh_1} (b) and after a nodal trade we see that $U_\Delta$ admits a fibration as shown in \cref{fig:ATF_neigh_2} (c).\footnote{Note that this is a non-trivial operation. A priori it is not clear that $D_\text{Aur}$ can be assumed to coincide with $D_\text{st}$.
However, this is guaranteed to be the case after a Hamiltonian isotopy by \cref{th:Eliashberg_Polterovich}.}
On $U_\Sigma$ we have a Lagrangian fibration too, as explained in the beginning of the proof.
Now, after shrinking $U$, we obtain a fibration as shown in \cref{fig:ATF_neigh_2} (a) by gluing the two fibrations on the overlapping part. 

We are not done: the fibres over the lighter-gray shaded regions are annuli and not tori.
The goal is now to perform a nodal slide in order to pull the node over the region over which the Lagrangian fibres are only annuli and not full tori: this corresponds to the change of base diagram from \cref{fig:ATF_neigh_2} (a) to (b).
As this is a non-standard situation we have to use the interpretation of a nodal slide that Groman and Varolgunes give in \cite[Section 7.3/7.4]{GroVar23}.
The idea is to consider the preimage of the eigenline in \cref{fig:ATF_neigh_2}.
This preimage carries a Hamiltonian $S^1$-action with stabilizer given by the focus-focus singularity. 
If we now quotient the preimage by this Hamiltonian $S^1$-action, we obtain a situation as shown in \cref{fig:ATF_neigh_slide} (a).
Then changing the foliation as illustrated in the change from \cref{fig:ATF_neigh_slide} (a) to (b) changes the fibration on $U$ from \cref{fig:ATF_neigh_2} (a) to (b).
Then after shrinking $U$ again we obtain a neighbourhood $U_L$ of $L_{p,q}$ as shown in \cref{fig:ATF_neigh_2} (c).
\end{proof}

\begin{figure}[ht]
  \centering
  \begin{tikzpicture}
    \begin{scope}[shift={(-7.5,0)}, scale=0.9]
        \node at (-0.7,1.75) {(a)};
        \draw
            (0,2) -- (8,2)
            (0,0) -- (1,0)
            (7,0) -- (8,0)
            (2,0.8) -- (6,0.8)
            (1,0) to[out=0, in=180] (2,0.8)
            (6,0.8) to[out=0, in=180] (7,0);
        \draw[dashed]
            (2,1.2) -- (6,1.2)
            (0.7,0) to[out=0, in=180] (2,1.2)
            (6,1.2) to[out=0, in=180] (7.3,0);
        \draw
            (0.5,0) arc (90:-90:0.3 and -1)
            (7.5,0) arc (90:-90:0.3 and -1)
            (8,0) arc (90:-90:0.3 and -1)
            (0,0) arc (90:-90:0.3 and -1)
            (0,0) arc (90:-90:-0.3 and -1);
        \draw[dashed]
            (0.5,0) arc (90:-90:-0.3 and -1)
            (8,0) arc (90:-90:-0.3 and -1)
            (7.5,0) arc (90:-90:-0.3 and -1);
        \begin{scope}
            \clip (3,0.8) rectangle (5,2);
            \draw (4,0) arc (90:-90:0.3 and -1);
        \end{scope}
        \begin{scope}
            \clip (3,1.2) rectangle (5,2);
            \draw[dashed] (4,0) arc (90:-90:-0.3 and -1);
        \end{scope}
        \draw[red] (0.738,1.6) -- (8.238,1.6);
        \draw (0.738,1.6) node[cross]{};
    \end{scope}

    \begin{scope}[shift={(1.5,0)}, scale=0.9]
    \node at (-0.7,1.75) {(b)};
        \draw
            (0,2) -- (8,2)
            (0,0) -- (1,0)
            (7,0) -- (8,0)
            (2,0.8) -- (6,0.8)
            (1,0) to[out=0, in=180] (2,0.8)
            (6,0.8) to[out=0, in=180] (7,0);
        \draw[dashed]
            (2,1.2) -- (6,1.2)
            (0.7,0) to[out=0, in=180] (2,1.2)
            (6,1.2) to[out=0, in=180] (7.3,0);
        \draw
            (8,0) arc (90:-90:0.3 and -1)
            (0,0) arc (90:-90:0.3 and -1)
            (0,0) arc (90:-90:-0.3 and -1);
        \draw[dashed]
            (0.5,0) arc (90:-90:-0.3 and -1)
            (8,0) arc (90:-90:-0.3 and -1)
            (7.5,0) arc (90:-90:-0.3 and -1);
        \draw 
            (4,2) to[out=0, in=110, looseness=0.5] 
            (4.02,1.98) to[out=290, in=90, looseness=0.3]
            (0.6,1.6) to[out=270, in=110, looseness=0.3] 
            (4.27,1.2) to[out=290, in=90, looseness=0.5]
            (4.3,0.8);
        \draw 
            (7.5,2) to[out=0, in=110, looseness=0.5] 
            (7.6,1.9) to[out=290, in=0, looseness=0.1]
            (2,1.7) to[out=180, in=90, looseness=0.2]
            (0.738,1.6) to[out=270, in=180, looseness=0.2]
            (2,1.5) to[out=0,in=110, looseness=0.1]
            (7.7,1.3) to[out=290, in=0, looseness=0.5]
            (7.5,0);
        \draw 
            (0.5,2) to[out=0, in=90, looseness=0.9]
            (0.65,1.85) to[out=270, in=90, looseness=0.9]
            (0.45,1.6) to[out=270, in=110, looseness=0.9]
            (0.75,1.3) to[out=290, in=0, looseness=0.5]
            (0.5,0);
        \begin{scope}
            \clip (3,1.2) rectangle (5,2);
            \draw[dashed] (4,0) arc (90:-90:-0.3 and -1);
        \end{scope}
        \draw[red] (0.738,1.6) -- (8.238,1.6);
        \draw (0.738,1.6) node[cross]{};
    \end{scope}

  \end{tikzpicture}
  \caption{(a) The reduced space. (b) The reduced space with the changed foliation that induces the nodal slide.}
  \label{fig:ATF_neigh_slide}
\end{figure}

\begin{remark}
    Note that this is a refinement of the Weinstein-type theorem that Khodorovskiy proves in \cite[Section 3]{Kho13}.
    We emphasize that this theorem allows us to pass freely from embeddings of Lagrangian pinwheels to embeddings of small rational homology balls.
\end{remark}

\emergencystretch=1em
\printbibliography

@misc{AdHa25,
    title={Pinwheels in symplectic rational and ruled surfaces and non-squeezing of rational homology balls}, 
    author={N. Adaloglou and J. Hauber},
    year={2025},
    eprint={2503.16250},
    archivePrefix={arXiv},
    primaryClass={math.SG},
    url={https://arxiv.org/abs/2503.16250}, 
}

@misc{ABEHS25,
    title={Markov staircases}, 
    author={N. Adaloglou and J. Brendel and J. Evans and J. Hauber and F. Schlenk},
    year={2025},
    eprint={2509.03224},
    archivePrefix={arXiv},
    primaryClass={math.SG},
    url={https://arxiv.org/abs/2509.03224}, 
}

@article{Abreu98,
    author = {Abreu, M.},
    title = {Topology of symplectomorphism groups of {{\(S^2\times S^2\)}}},
    fjournal = {Inventiones Mathematicae},
    journal = {Invent. Math.},
    issn = {0020-9910},
    volume = {131},
    number = {1},
    pages = {1--23},
    year = {1998},
    language = {English},
    doi = {10.1007/s002220050196},
    zbMATH = {1146048},
    Zbl = {0902.53025}
}

@article{AbMcD00,
    author = {Abreu, M. and McDuff, D.},
    title = {Topology of symplectomorphism groups of rational ruled surfaces},
    fjournal = {Journal of the American Mathematical Society},
    journal = {J. Am. Math. Soc.},
    issn = {0894-0347},
    volume = {13},
    number = {4},
    pages = {971--1009},
    year = {2000},
    language = {English},
    doi = {10.1090/S0894-0347-00-00344-1},
    zbMATH = {1498870},
    Zbl = {0965.57031}
}

@article{Hi04,
    author = {Hind, R.},
    title = {Lagrangian spheres in {$S^2\times S^2$}},
    fjournal = {Geometric and Functional Analysis. GAFA},
    journal = {Geom. Funct. Anal.},
    issn = {1016-443X},
    volume = {14},
    number = {2},
    pages = {303--318},
    year = {2004},
    language = {English},
    doi = {10.1007/s00039-004-0459-6},
    zbMATH = {2121744},
    Zbl = {1066.53129},
}

@article{AK18,
    author = {Abouzaid, M. and Kragh, T.},
    title = {Simple homotopy equivalence of nearby {Lagrangians}},
    fjournal = {Acta Mathematica},
    journal = {Acta Math.},
    issn = {0001-5962},
    volume = {220},
    number = {2},
    pages = {207--237},
    year = {2018},
    language = {English},
    doi = {10.4310/ACTA.2018.v220.n2.a1},
    zbMATH = {6925264},
    Zbl = {1396.53104},
}

@book{Ev23:Book, 
    place={Cambridge}, 
    series={London Mathematical Society Student Texts}, 
    title={Lectures on Lagrangian Torus Fibrations}, 
    DOI={10.1017/9781009372671},
    publisher={Cambridge University Press}, 
    author={Evans, J.}, 
    year={2023}, 
    collection={London Mathematical Society Student Texts},
}

@misc{Ev24:KIAS,
 author = {Evans, J.},
 title = {{KIAS} {Lectures} on {Symplectic} {Aspects} of {Degenerations}},
 year = {2024},
 howpublished = {Preprint, {arXiv}:2403.03519 [math.{SG}]},
 url = {https://arxiv.org/abs/2403.03519},
 arXiv = {arXiv:2403.03519}
}

@article{HPW,
    author = "R. Hind and M. Pinsonnault and W. Wu",
    title = "{Symplectomophism groups of non-compact manifolds, orbifold balls, and a space of Lagrangians}",
    journal = "J. Symplectic Geom.",
    volume = "14(1)",
    pages = "203--226 ",
    year = "2016",
}

@misc{Bu25,
    title={Lagrangian Spheres and Cyclic Quotient T-singularities}, 
    author={M. R. Buck},
    year={2025},
    eprint={2509.18976},
    archivePrefix={arXiv},
    primaryClass={math.SG},
    url={https://arxiv.org/abs/2509.18976}, 
}

@article{Wu14,
    author = {Wu, W.},
    title = {Exact {Lagrangians} in $A_n$-surface singularities},
    fjournal = {Mathematische Annalen},
    journal = {Math. Ann.},
    issn = {0025-5831},
    volume = {359},
    number = {1-2},
    pages = {153--168},
    year = {2014},
    language = {English},
    doi = {10.1007/s00208-013-0993-3},
    zbMATH = {6312288},
    Zbl = {1315.53099},
}

@article{LiWu12,
    author = {Li, T.-J. and Wu, W.},
    title = {Lagrangian spheres, symplectic surfaces and the symplectic mapping class group},
    fjournal = {Geometry \& Topology},
    journal = {Geom. Topol.},
    issn = {1465-3060},
    volume = {16},
    number = {2},
    pages = {1121--1169},
    year = {2012},
    language = {English},
    doi = {10.2140/gt.2012.16.1121},
    zbMATH = {6068622},
    Zbl = {1253.53073},
}

@misc{Kho13,
    title={Symplectic Rational Blow-up}, 
    author={T. Khodorovskiy},
    year={2013},
    eprint={1303.2581},
    archivePrefix={arXiv},
    primaryClass={math.SG},
    url={https://arxiv.org/abs/1303.2581}, 
}

@article{Ad24,
    author = {Adaloglou, N.},
    title = {Embeddings and disjunction of Lagrangian pinwheels via rational blow-ups},
    fjournal = {The Journal of Symplectic Geometry},
    journal = {J. Symplectic Geom.},
    issn = {1527-5256},
    volume = {24},
    number = {1},
    pages = {105--129},
    year = {2026},
    language = {English},
    doi = {10.4310/JSG.260423234950},
   
}

@article{AbCoGuKr25,
    author = {Abouzaid, M. and Courte, S. and Guillermou, S. and Kragh, T.},
    title = {Twisted generating functions and the nearby {Lagrangian} conjecture},
    fjournal = {Duke Mathematical Journal},
    journal = {Duke Math. J.},
    issn = {0012-7094},
    volume = {174},
    number = {5},
    pages = {949--1011},
    year = {2025},
    language = {English},
    doi = {10.1215/00127094-2024-0052},
    url = {projecteuclid.org/journals/duke-mathematical-journal/volume-174/issue-5/Twisted-generating-functions-and-the-nearby-Lagrangian-conjecture/10.1215/00127094-2024-0052.full},
    zbMATH = {8050362}
}

@misc{AbAGCoKr,
    title={Normal invariant of nearby Lagrangians via twisted derivative}, 
    author={Abouzaid, M. and Álvarez-Gavela, D. and Courte, S. and Kragh, T.},
    year={2025},
    eprint={2505.12515},
    archivePrefix={arXiv},
    primaryClass={math.SG},
    url={https://arxiv.org/abs/2505.12515}, 
}

@article{Ba78,
 author = {Banyaga, A.},
 title = {Sur la structure du groupe des diff{\'e}omorphismes qui preservent une forme symplectique},
 fjournal = {Commentarii Mathematici Helvetici},
 journal = {Comment. Math. Helv.},
 issn = {0010-2571},
 volume = {53},
 pages = {174--227},
 year = {1978},
 language = {French},
 doi = {10.1007/BF02566074},
 url = {https://eudml.org/doc/139732},
 zbMATH = {3610489},
 Zbl = {0393.58007}
}

@article{Sei99,
    author = {Seidel, P.},
    title = {Lagrangian two-spheres can be symplectically knotted},
    fjournal = {Journal of Differential Geometry},
    journal = {J. Differ. Geom.},
    issn = {0022-040X},
    volume = {52},
    number = {1},
    pages = {145--171},
    year = {1999},
    language = {English},
    doi = {10.4310/jdg/1214425219},
    zbMATH = {1465075},
    Zbl = {1032.53068}
}

@article{ShSmi20,
    author = {Shevchishin, V. and Smirnov, G.},
    title = {Symplectic triangle inequality},
    fjournal = {Proceedings of the American Mathematical Society},
    journal = {Proc. Am. Math. Soc.},
    issn = {0002-9939},
    volume = {148},
    number = {4},
    pages = {1389--1397},
    year = {2020},
    language = {English},
    doi = {10.1090/proc/14842},
    zbMATH = {7176128},
    Zbl = {1433.14031}
}

@article{EvSm20,
    author = {Evans, J. and Smith, I.},
    title = {Bounds on {Wahl} singularities from symplectic topology},
    fjournal = {Algebraic Geometry},
    journal = {Algebr. Geom.},
    issn = {2313-1691},
    volume = {7},
    number = {1},
    pages = {59--85},
    year = {2020},
    language = {English},
    doi = {10.14231/AG-2020-003},
    zbMATH = {7262961},
    Zbl = {1458.14054}
}

@Article{Ow20:nonsymp,
    Author = {Owens, B.},
    Title = {Smooth, nonsymplectic embeddings of rational balls in the complex projective plane},
    FJournal = {The Quarterly Journal of Mathematics},
    Journal = {Q. J. Math.},
    ISSN = {0033-5606},
    Volume = {71},
    Number = {3},
    Pages = {997--1007},
    Year = {2020},
    Language = {English},
    DOI = {10.1093/qmathj/haaa013},
    zbMATH = {7277648},
    Zbl = {1470.53012}
}

@Article{Ow2018:equiemb,
    Author = {Owens, B.},
    Title = {Equivariant embeddings of rational homology balls},
    FJournal = {The Quarterly Journal of Mathematics},
    Journal = {Q. J. Math.},
    ISSN = {0033-5606},
    Volume = {69},
    Number = {3},
    Pages = {1101--1121},
    Year = {2018},
    Language = {English},
    DOI = {10.1093/qmath/hay016},
    URL = {eprints.gla.ac.uk/158935/19/158935.pdf},
    zbMATH = {6945159},
    Zbl = {1405.57007}
}

@InCollection{LiPa24,
    Author = {Lisca, P. and Parma, A.},
    Title = {On almost complex embeddings of rational homology balls},
    BookTitle = {Frontiers in geometry and topology. Summer school and research conference, The Abdus Salam International Centre for Theoretical Physics, Trieste, Italy, August 1--12, 2022},
    ISBN = {978-1-4704-7087-6},
    Pages = {183--193},
    Year = {2024},
    Publisher = {Providence, RI: American Mathematical Society (AMS)},
    Language = {English},
    DOI = {10.1090/pspum/109/01995},
    zbMATH = {7936424}
}

@misc{EtHyPi2023,
    title={Small symplectic caps and embeddings of homology balls in the complex projective plane}, 
    author={Etnyre, J. B. and Min, H. and Piccirillo, L. and Roy, A.},
    year={2023},
    eprint={2305.16207},
    archivePrefix={arXiv},
    primaryClass={math.GT},
    url={https://arxiv.org/abs/2305.16207}, 
}

@Article{LiPa22,
    Author = {Lisca, P. and Parma, A.},
    Title = {On {Stein} rational balls smoothly but not symplectically embedded in {{\(\mathbb{CP}^2\)}}},
    FJournal = {Bulletin of the London Mathematical Society},
    Journal = {Bull. Lond. Math. Soc.},
    ISSN = {0024-6093},
    Volume = {54},
    Number = {3},
    Pages = {949--960},
    Year = {2022},
    Language = {English},
    DOI = {10.1112/blms.12607},
    zbMATH = {7729878},
    Zbl = {1522.57056}
}

@misc{GoOw25,
 author = {Golla, M. and Owens, B.},
 title = {The {Farey} tree and embeddings of lens spaces and rational balls in {$\mathbb{CP}^2$}},
 year = {2025},
 howpublished = {Preprint, {arXiv}:2512.09183 [math.{GT}]},
 url = {https://arxiv.org/abs/2512.09183},
 arXiv = {arXiv:2512.09183}
}

@book{McDSa:Intro,
    author = {McDuff, D. and Salamon, D.},
    title = {Introduction to symplectic topology},
    edition = {3rd edition},
    fseries = {Oxford Graduate Texts in Mathematics},
    series = {Oxf. Grad. Texts Math.},
    volume = {27},
    isbn = {978-0-19-879489-9},
    year = {2016},
    publisher = {Oxford: Oxford University Press},
    language = {English},
    zbMATH = {6638013},
    Zbl = {1380.53003},
}

@article{Se20,
    author = {Seidel, P.},
    title = {Graded {Lagrangian} submanifolds},
    fjournal = {Bulletin de la Soci{\'e}t{\'e} Math{\'e}matique de France},
    journal = {Bull. Soc. Math. Fr.},
    issn = {0037-9484},
    volume = {128},
    number = {1},
    pages = {103--149},
    year = {2000},
    language = {English},
    doi = {10.24033/bsmf.2365},
    url = {https://eudml.org/doc/87820},
    zbMATH = {1463890},
    Zbl = {0992.53059},
}

@article{Ev14,
    author = {Evans, J.},
    title = {Symplectic mapping class groups of some {Stein} and rational surfaces},
    fjournal = {The Journal of Symplectic Geometry},
    journal = {J. Symplectic Geom.},
    issn = {1527-5256},
    volume = {9},
    number = {1},
    pages = {45--82},
    year = {2011},
    language = {English},
    doi = {10.4310/JSG.2011.v9.n1.a4},
    zbMATH = {5957820},
    Zbl = {1242.58004}
}

@misc{KeSmWe26,
    title={Splitting symplectic monodromy}, 
    author={A. Keating and I. Smith and M. Wemyss},
    year={2026},
    eprint={2601.20438},
    archivePrefix={arXiv},
    primaryClass={math.SG},
    url={https://arxiv.org/abs/2601.20438}, 
}

@misc{Kho13Bounds,
    title={Bounds on Embeddings of Rational Homology Balls in Symplectic 4-manifolds}, 
    author={T. Khodorovskiy},
    year={2013},
    eprint={1307.4321},
    archivePrefix={arXiv},
    primaryClass={math.SG},
    url={https://arxiv.org/abs/1307.4321}, 
}

@article{BrSch24,
    author = {Brendel, J. and Schlenk, F.},
    title = {Pinwheels as {Lagrangian} barriers},
    fjournal = {Communications in Contemporary Mathematics},
    journal = {Commun. Contemp. Math.},
    issn = {0219-1997},
    volume = {26},
    number = {5},
    pages = {21},
    note = {Id/No 2350020},
    year = {2024},
    language = {English},
    doi = {10.1142/S0219199723500207},
    zbMATH = {7838060},
    Zbl = {1547.53093}
}

@article{FuSeSm08,
    author = {Fukaya, K. and Seidel, P. and Smith, I.},
    title = {Exact {Lagrangian} submanifolds in simply-connected cotangent bundles},
    fjournal = {Inventiones Mathematicae},
    journal = {Invent. Math.},
    issn = {0020-9910},
    volume = {172},
    number = {1},
    pages = {1--27},
    year = {2008},
    language = {English},
    doi = {10.1007/s00222-007-0092-8},
    zbMATH = {5261615},
    Zbl = {1140.53036}
}

@misc{Kar18,
      title={Microlocal Sheaves on Pinwheels}, 
      author={D. Karabas},
      year={2018},
      eprint={1810.09021},
      archivePrefix={arXiv},
      primaryClass={math.SG},
      url={https://arxiv.org/abs/1810.09021}, 
}

@article{Kra13,
    author = {Kragh, T.},
    title = {Parametrized ring-spectra and the nearby {Lagrangian} conjecture},
    fjournal = {Geometry \& Topology},
    journal = {Geom. Topol.},
    issn = {1465-3060},
    volume = {17},
    number = {2},
    pages = {639--731},
    year = {2013},
    language = {English},
    doi = {10.2140/gt.2013.17.639},
    zbMATH = {6172908},
    Zbl = {1267.53081}
}

@article{EvSm18,
    author = {Evans, J. and Smith, I.},
    title = {Markov numbers and {Lagrangian} cell complexes in the complex projective plane},
    fjournal = {Geometry \& Topology},
    journal = {Geom. Topol.},
    issn = {1465-3060},
    volume = {22},
    number = {2},
    pages = {1143--1180},
    year = {2018},
    language = {English},
    doi = {10.2140/gt.2018.22.1143},
    url = {eprints.lancs.ac.uk/id/eprint/132430/1/1606.08656.pdf},
    zbMATH = {6828606},
    Zbl = {1381.53159},
}

@article{LeMa14,
    author = {Lekili, Y. and Maydanskiy, M.},
    title = {The symplectic topology of some rational homology balls},
    fjournal = {Commentarii Mathematici Helvetici},
    journal = {Comment. Math. Helv.},
    issn = {0010-2571},
    volume = {89},
    number = {3},
    pages = {571--596},
    year = {2014},
    language = {English},
    doi = {10.4171/CMH/327},
    zbMATH = {6361413},
    Zbl = {1315.53098},
}

@article{Sym98:RatBlo,
    author = {Symington, M.},
    title = {Symplectic rational blowdowns},
    fjournal = {Journal of Differential Geometry},
    journal = {J. Differ. Geom.},
    issn = {0022-040X},
    volume = {50},
    number = {3},
    pages = {505--518},
    year = {1998},
    language = {English},
    doi = {10.4310/jdg/1214424968},
    zbMATH = {1368011},
    Zbl = {0935.57035},
}

@article{Sym01:GenRatBlo,
    author = "M. Symington",
    title = "{Generalized symplectic rational blowdowns}",
    journal = "Algebr. Geom. Topol.",
    volume = "1",
    pages = "503--518",
    year = "2001",
}

@incollection{Sym02:Fourtwo,
    AUTHOR = {Symington, M.},
    TITLE = {Four dimensions from two in symplectic topology},
    BOOKTITLE = {Topology and geometry of manifolds ({A}thens, {GA}, 2001)},
    SERIES = {Proc. Sympos. Pure Math.},
    VOLUME = {71},
     PAGES = {153--208},
    PUBLISHER = {Amer. Math. Soc., Providence, RI},
    YEAR = {2003},
    ISBN = {0-8218-3507-6},
    MRCLASS = {53D35 (53D20 55R55 57R17)},
    MRNUMBER = {2024634},
    MRREVIEWER = {Vicente Mu\~{n}oz},
    DOI = {10.1090/pspum/071/2024634},
    URL = {https://doi.org/10.1090/pspum/071/2024634},
}

@article{Dim17,
  author  = {Dimitroglou Rizell, G.},
  title   = {The classification of Lagrangians nearby the Whitney immersion},
  journal = {Geometry \& Topology},
  volume  = {23},
  year    = {2019},
  pages   = {3367--3458},
  doi     = {10.2140/gt.2019.23.3367},
  url     = {https://msp.org/gt/2019/23-7/p04.xhtml}
}

@article{DRGI16,
    author = {Dimitroglou Rizell, G. and Goodman, E. and Ivrii, A.},
    title = {Lagrangian isotopy of tori in {$S^2 \times S^2$} and {$\mathbb{C} P^2$}},
    fjournal = {Geometric and Functional Analysis. GAFA},
    journal = {Geom. Funct. Anal.},
    issn = {1016-443X},
    volume = {26},
    number = {5},
    pages = {1297--1358},
    year = {2016},
    language = {English},
    doi = {10.1007/s00039-016-0388-1},
    zbMATH = {6668657},
    Zbl = {1358.53089},
}

@article{Ad25,
    author = {Adaloglou, N.},
    title = {Uniqueness of {Lagrangians} in {$T^*\mathbb{R}P^2$}},
    fjournal = {Annales Math{\'e}matiques du Qu{\'e}bec},
    journal = {Ann. Math. Qu{\'e}.},
    issn = {2195-4755},
    volume = {49},
    number = {1},
    pages = {215--222},
    year = {2025},
    language = {English},
    doi = {10.1007/s40316-024-00238-3},
    zbMATH = {8030618},
    Zbl = {1564.53074},
}

@misc{UrZu25:BirMarkov,
    title={The birational geometry of Markov numbers}, 
    author={G. Urz\'ua and J. P. Z\'u{\~n}iga},
    year={2025},
    eprint={2310.17957},
    archivePrefix={arXiv},
    primaryClass={math.AG},
    url={https://arxiv.org/abs/2310.17957}, 
}

@misc{Ur25:Lecture,
    title={Negative continued Fractions in Birational Geometry: A guide to Degenerations of Surfaces with Wahl Singularities}, 
    author={G. Urz\'ua},
    year={2025},
    url={https://www.mat.uc.cl/~urzua/libro.pdf}, 
}

@article{haktelurz,
    author = {Hacking, P. and Tevelev, J. and Urz{\'u}a, G.},
    title = {Flipping surfaces},
    fjournal = {Journal of Algebraic Geometry},
    journal = {J. Algebr. Geom.},
    issn = {1056-3911},
    volume = {26},
    number = {2},
    pages = {279--345},
    year = {2017},
    language = {English},
    doi = {10.1090/jag/682},
    zbMATH = {6678659},
    Zbl = {1360.14055}
}

@article{Au97,
    author = {Auroux, D.},
    title = {Asymptotically holomorphic families of symplectic submanifolds},
    fjournal = {Geometric and Functional Analysis. GAFA},
    journal = {Geom. Funct. Anal.},
    issn = {1016-443X},
    volume = {7},
    number = {6},
    pages = {971--995},
    year = {1997},
    language = {English},
    doi = {10.1007/s000390050033},
    zbMATH = {1123717},
    Zbl = {0912.53020},
}

@article{BhuOno12,
    author = {Bhupal, M. and Ono, K.},
    title = {Symplectic fillings of links of quotient surface singularities},
    fjournal = {Nagoya Mathematical Journal},
    journal = {Nagoya Math. J.},
    issn = {0027-7630},
    volume = {207},
    pages = {1--45},
    year = {2012},
    language = {English},
    zbMATH = {6081392},
    Zbl = {1258.53088},
}

@article{PaPaShUr18,
    author = {Park, H. and Park, J. and Shin, D. and Urz{\'u}a, G.},
    title = {Milnor fibers and symplectic fillings of quotient surface singularities},
    fjournal = {Advances in Mathematics},
    journal = {Adv. Math.},
    issn = {0001-8708},
    volume = {329},
    pages = {1156--1230},
    year = {2018},
    language = {English},
    doi = {10.1016/j.aim.2018.03.002},
    zbMATH = {6863466},
    Zbl = {1390.14018},
}

@article{KoShBa88,
    author = {Koll{\'a}r, J. and Shepherd-Barron, N. I.},
    title = {Threefolds and deformations of surface singularities},
    fjournal = {Inventiones Mathematicae},
    journal = {Invent. Math.},
    issn = {0020-9910},
    volume = {91},
    number = {2},
    pages = {299--338},
    year = {1988},
    language = {English},
    doi = {10.1007/BF01389370},
    url = {https://eudml.org/doc/143542},
    zbMATH = {4045919},
    Zbl = {0642.14008}
}

@article{OaUs16,
    author = {Oakley, J. and Usher, M.},
    title = {On certain {Lagrangian} submanifolds of {{\(S^2\times S^2\)}} and {{\(\mathbb{C}\operatorname{P}^n\)}}},
    fjournal = {Algebraic \& Geometric Topology},
    journal = {Algebr. Geom. Topol.},
    issn = {1472-2747},
    volume = {16},
    number = {1},
    pages = {149--209},
    year = {2016},
    language = {English},
    doi = {10.2140/agt.2016.16.149},
    zbMATH = {6553357},
    Zbl = {1335.53105}
}

@article{Wu15,
    author = {Wu, W.},
    title = {On an exotic {Lagrangian} torus in {{\(\mathbb{C}P^2\)}}},
    fjournal = {Compositio Mathematica},
    journal = {Compos. Math.},
    issn = {0010-437X},
    volume = {151},
    number = {7},
    pages = {1372--1394},
    year = {2015},
    language = {English},
    doi = {10.1112/S0010437X14007945},
    zbMATH = {6486865},
    Zbl = {1325.53105}
}

@book{McDSal02:Jcurves,
    AUTHOR = {McDuff, D. and Salamon, D.},
    TITLE = {$J$-holomorphic curves and symplectic topology},
    SERIES = {American Mathematical Society Colloquium Publications},
    VOLUME = {52},
    PUBLISHER = {American Mathematical Society, Providence, RI},
    YEAR = {2004},
    PAGES = {xii+669},
    ISBN = {0-8218-3485-1},
    MRCLASS = {53D45 (32Q65 53D40 57R17)},
    MRNUMBER = {2045629},
    MRREVIEWER = {Ignasi Mundet-Riera},
    DOI = {10.1090/coll/052},
    URL = {https://doi.org/10.1090/coll/052},
}

@misc{CrGaMaMcD25,
    author = {Cristofaro-Gardiner, D. and Magill, N. and McDuff, D.},
    title = {Curvy points, the perimeter, and the complexity of convex toric domains},
    year = {2025},
    howpublished = {Preprint, {arXiv}:2506.23498 [math.{SG}]},
    url = {https://arxiv.org/abs/2506.23498},
    arXiv = {arXiv:2506.23498},
}

@article{Gr85,
    author = "M. Gromov",
    title = "Pseudo holomorphic curves in symplectic manifolds",
    journal = "Inventiones mathematicae",
    volume = "82",
    pages = "307--347",
    year = "1985",
}

@book{Wen18:Jhollow,
    author = {Wendl, C.},
    title = {Holomorphic curves in low dimensions. From symplectic ruled surfaces to planar contact manifolds},
    fseries = {Lecture Notes in Mathematics},
    series = {Lect. Notes Math.},
    issn = {0075-8434},
    volume = {2216},
    isbn = {978-3-319-91369-8},
    year = {2018},
    publisher = {Berlin: Springer},
    language = {English},
    doi = {10.1007/978-3-319-91371-1},
    zbMATH = {6875550},
    Zbl = {1432.57055},
}

@article{McDSie25:singalg,
    author = {McDuff, D. and Siegel, K.},
    title = {Singular algebraic curves and infinite symplectic staircases},
    fjournal = {Inventiones Mathematicae},
    journal = {Invent. Math.},
    issn = {0020-9910},
    volume = {242},
    number = {2},
    pages = {387--459},
    year = {2025},
    language = {English},
    doi = {10.1007/s00222-025-01359-4},
    zbMATH = {8103859}
}

@article{McDSie25:superpot,
    author = {McDuff, D. and Siegel, K.},
    title = {Ellipsoidal superpotentials and singular curve counts},
    fjournal = {IMRN. International Mathematics Research Notices},
    journal = {Int. Math. Res. Not.},
    issn = {1073-7928},
    volume = {2025},
    number = {21},
    pages = {37},
    note = {Id/No rnaf285},
    year = {2025},
    language = {English},
    doi = {10.1093/imrn/rnaf285},
    zbMATH = {8115445}
}

@unpublished{Ha25,
    author = {Adaloglou, N. and Hauber, J.},
    title  = {Symplectic staircases for domains in cotangent bundles},
    note   = {Forthcoming}
}

@article{Ab12,
    author = {Abouzaid, M.},
    title = {Nearby {Lagrangians} with vanishing {Maslov} class are homotopy equivalent},
    fjournal = {Inventiones Mathematicae},
    journal = {Invent. Math.},
    issn = {0020-9910},
    volume = {189},
    number = {2},
    pages = {251--313},
    year = {2012},
    language = {English},
    doi = {10.1007/s00222-011-0365-0},
    zbMATH = {6110397},
    Zbl = {1261.53077}
}

@article{GroVar23,
    author = {Groman, Y. and Varolgunes, U.},
    title = {Locality of relative symplectic cohomology for complete embeddings},
    fjournal = {Compositio Mathematica},
    journal = {Compos. Math.},
    issn = {0010-437X},
    volume = {159},
    number = {12},
    pages = {2551--2637},
    year = {2023},
    language = {English},
    doi = {10.1112/S0010437X23007492},
    zbMATH = {7755910},
    Zbl = {1527.53077}
}

@article{EliPol96,
     author = {Eliashberg, Y. and Polterovich, L.},
     title = {Local {Lagrangian} 2-knots are trivial},
     fjournal = {Annals of Mathematics. Second Series},
     journal = {Ann. Math. (2)},
     issn = {0003-486X},
     volume = {144},
     number = {1},
     pages = {61--76},
     year = {1996},
     language = {English},
     doi = {10.2307/2118583},
     zbMATH = {935301},
     Zbl = {0872.57030}
}

@misc{PolSch24,
     author = {Polterovich, L. and Schlenk, F.},
     title = {Lagrangian knots and unknots -- an essay.},
     year = {2024},
    eprinttype  = {arxiv},
    eprint      = {2406.15967},
    eprintclass = {math.SG},
     note = {With an appendix by G. Dimitroglou Rizell},
     url = {https://arxiv.org/abs/2406.15967},
     arXiv = {arXiv:2406.15967}
}

@article{Aud07,
     author = {Audin, M.},
     title = {Lagrangian skeletons, periodic geodesic flows and symplectic cuttings},
     fjournal = {Manuscripta Mathematica},
     journal = {Manuscr. Math.},
     issn = {0025-2611},
     volume = {124},
     number = {4},
     pages = {533--550},
     year = {2007},
     language = {English},
     doi = {10.1007/s00229-007-0134-y},
     zbMATH = {5248331},
     Zbl = {1132.53042}
}

@article{BoLiWu14,
     author = {Borman, M. S. and Li, T.-J. and Wu, W.},
     title = {Spherical {Lagrangians} via ball packings and symplectic cutting},
     fjournal = {Selecta Mathematica. New Series},
     journal = {Sel. Math., New Ser.},
     issn = {1022-1824},
     volume = {20},
     number = {1},
     pages = {261--283},
     year = {2014},
     language = {English},
     doi = {10.1007/s00029-013-0120-z},
     zbMATH = {6261571},
     Zbl = {1285.53074}
}

@article{HoLiSi97,
     author = {Hofer, H. and Lizan, V. and Sikorav, J.-C.},
     title = {On genericity for holomorphic curves in four-dimensional almost-complex manifolds},
     fjournal = {The Journal of Geometric Analysis},
     journal = {J. Geom. Anal.},
     issn = {1050-6926},
     volume = {7},
     number = {1},
     pages = {149--159},
     year = {1997},
     language = {English},
     doi = {10.1007/BF02921708},
     zbMATH = {1192507},
     Zbl = {0911.53014}
}

@article{BEHWZ03,
     author = {Bourgeois, F. and Eliashberg, Y. and Hofer, H. and Wysocki, K. and Zehnder, E.},
     title = {Compactness results in symplectic field theory},
     fjournal = {Geometry \& Topology},
     journal = {Geom. Topol.},
     issn = {1465-3060},
     volume = {7},
     pages = {799--888},
     year = {2003},
     language = {English},
     doi = {10.2140/gt.2003.7.799},
     url = {https://eudml.org/doc/128488},
     zbMATH = {2062477},
     Zbl = {1131.53312}
}

@article{CieMoh05,
     author = {Cieliebak, K. and Mohnke, K.},
     title = {Compactness for punctured holomorphic curves},
     fjournal = {The Journal of Symplectic Geometry},
     journal = {J. Symplectic Geom.},
     issn = {1527-5256},
     volume = {3},
     number = {4},
     pages = {589--654},
     year = {2005},
     language = {English},
     doi = {10.4310/JSG.2005.v3.n4.a5},
     zbMATH = {5121973},
     Zbl = {1113.53053}
}

@misc{AtMaWu25,
     author = {M. Atallah and C. Y. Mak and W. Wu},
     title = {{$C^0$}-rigidity of the {Hamiltonian} diffeomorphism group of symplectic rational surfaces},
     year = {2025},
     howpublished = {Preprint, {arXiv}:2508.20285 [math.{SG}]},
     url = {https://arxiv.org/abs/2508.20285},
     arXiv = {arXiv:2508.20285}
}

@incollection{ElGiHo00,
    author = {Eliashberg, Y. and Givental, A. and Hofer, H.},
    title = {Introduction to {Symplectic} {Field} {Theory}},
    booktitle = {GAFA 2000. Visions in mathematics---Towards 2000. Proceedings of a meeting, Tel Aviv, Israel, August 25--September 3, 1999. Part II.},
    pages = {560--673},
    year = {2000},
    publisher = {Basel: Birkh{\"a}user},
    language = {English},
    zbMATH = {1643843},
    Zbl = {0989.81114}
}

@article{Vi16,
    author = {Ferreira de Velloso Vianna, R.},
    title = {Infinitely many exotic monotone {Lagrangian} tori in {{\(\mathbb{C}\mathbb{P}^{2}\)}}},
    fjournal = {Journal of Topology},
    journal = {J. Topol.},
    issn = {1753-8416},
    volume = {9},
    number = {2},
    pages = {535--551},
    year = {2016},
    language = {English},
    doi = {10.1112/jtopol/jtw002},
    zbMATH = {6598132},
    Zbl = {1350.53102}
}

@article{Ler95,
    author = {Lerman, E.},
    title = {Symplectic cuts},
    fjournal = {Mathematical Research Letters},
    journal = {Math. Res. Lett.},
    issn = {1073-2780},
    volume = {2},
    number = {3},
    pages = {247--258},
    year = {1995},
    language = {English},
    doi = {10.4310/MRL.1995.v2.n3.a2},
    zbMATH = {817751},
    Zbl = {0835.53034}
}

\end{document}